\tikzset{join/.code=\tikzset{after node path={%
\ifx\tikzchainprevious\pgfutil@empty\else(\tikzchainprevious)%
edge[every join]#1(\tikzchaincurrent)\fi}}}
\tikzset{>=stealth',every on chain/.append style={join},
         every join/.style={->}}
\tikzstyle{labeled}=[execute at begin node=$\scriptstyle,
\newcommand{\showcomments}{yes}
\newsavebox{\commentbox}
\theoremstyle{plain}
\newtheorem{theorem}{Theorem}[section]
\newtheorem{corollary}[theorem]{Corollary}
\newtheorem{lemma}[theorem]{Lemma}
\newtheorem{proposition}[theorem]{Proposition}
\newtheorem{problem}[theorem]{Problem}
\theoremstyle{definition}
\newtheorem*{remark}{Remark}
\newcommand{\Def}{\noindent {\bf Definition}: }
\newcommand{\spc}{\hspace{1 mm}}
\newcommand{\bbr}{\mathbb{R}}
\newcommand{\bbn}{\mathbb{N}}
\newcommand{\bbq}{\mathbb{Q}}
\newcommand{\bbp}{\mathbb{P}}
\newcommand{\bndry}{\partial}
\newcommand{\C}{\mathcal{C}}
\newcommand{\relbndry}{\bndry (\Gamma,\bbp)}
\newcommand{\pairs}{\Theta_2\bndry X}
\newcommand{\R}{\mathcal{R}}
\newcommand{\N}{\mathcal{N}}
\newcommand{\D}{\mathcal{D}}
\newcommand{\caygs}{\Upsilon(G,S)}
\newcommand{\caygt}{\Upsilon(G,T)}
\newcommand{\blockbndry}{\bndry (H,\bbq)}
\newcommand{\compbndry}{\bndry (H,\bbq)}
\newcommand{\relbndryG}{\bndry (G,\bbp)}
\newcommand{\A}{\mathcal{A}}
\newcommand{\B}{\mathcal{B}}
\newcommand{\U}{\mathcal{U}}
\newcommand{\sP}{\mathcal{P}}
\DeclareMathOperator{\CAT}{CAT}
\DeclareMathOperator{\stab}{Stab}
\DeclareMathOperator{\Stab}{Stab}
\DeclareMathOperator{\Hull}{Hull}
\DeclareMathOperator{\orb}{Orb}
\DeclareMathOperator{\Ends}{Ends}
\DeclareMathOperator{\Orb}{Orb}
\DeclareMathOperator{\Fr}{Fr}
\DeclareMathOperator{\Int}{Int}
\DeclareMathOperator{\im}{im}
\DeclareMathOperator{\Sat}{Sat}
\DeclareMathOperator{\Jn}{Join}
\DeclareMathOperator{\Jump}{Jump}
\newcommand{\gJn}{\Jn^{+}}
\newcommand{\set}[2]{\{\,{#1} \mid {#2} \,\}}
\newcommand{\bigset}[2]{ \bigl\{ \, {#1} \bigm| {#2} \, \bigr\} }
\begin{document}

\title[Local cut points and relatively hyperbolic groups]{%
 Local cut points and splittings
 of relatively hyperbolic groups}

\author[M.~Haulmark]{Matthew Haulmark}
\address{Department of Mathematics\\
1326 Stevenson Center\\
Vanderbilt University\\
Nashville, TN 37240 USA}
\email{m.haulmark@vanderbilt.edu}


\begin{abstract}
In this paper we show that the existence of a non-parabolic local cut point in the Bowditch boundary $\relbndryG$ of a relatively hyperbolic group $(G,\bbp)$ implies that $G$ splits over a $2$-ended subgroup. This theorem generalizes a theorem of Bowditch from the setting of hyperbolic groups to relatively hyperbolic groups. As a consequence we are able to generalize a theorem of Kapovich and Kleiner by classifying the homeomorphism type of $1$-dimensional Bowditch boundaries of relatively hyperbolic groups which satisfy certain properties, such as no splittings over $2$-ended subgroups and no peripheral splittings.

In order to prove the boundary classification result we require a notion of ends of a group which is more general than the standard notion. We show that if a finitely generated discrete group acts properly and cocompactly on two generalized Peano continua $X$ and $Y$, then $\Ends(X)$ is homeomorphic to $\Ends(Y)$. Thus we propose an alternate definition of $\Ends(G)$ which increases the class of spaces on which $G$ can act.
\end{abstract}

\keywords{JSJ-Splittings, Relatively Hyperbolic Groups, Ends of Spaces, Group Boundaries}

\date{\today}

\maketitle

\section{Introduction}
\label{sec:Introduction}

The notion of a group $G$ being hyperbolic relative to a class of subgroups $\bbp$ was introduced by Gromov \cite{Gro87} to generalize both word hyperbolic and geometrically finite Kleinian groups. The subgroups in the class $\bbp$ are called {\it peripheral subgroups}, and when $G$ is hyperbolic relative to $\bbp$ we often say $(G,\bbp)$ is relatively hyperbolic. Introduced by Bowditch \cite{Bow1} there is a boundary for relatively hyperbolic groups. The Bowditch boundary $\relbndryG$ generalizes the Gromov boundary of a word hyperbolic group and the limit set of a geometrically finite Kleinian group. The homeomorphism type of the Bowditch boundary is known to be a quasi-isometry invariant of the group \cite{Grof13} under modest hypotheses on the peripheral subgroups. Consequently, it is desirable to describe the topological features of the Bowditch boundary. Topological features of the boundary are closely related to algebraic properties of the group; in particular they are often related to splittings of the group as a fundamental group of a graph of groups.

A point $p$ in $\relbndryG$ is a {\it local cut point} if $\relbndryG\setminus\{p\}$ is disconnected, or $\relbndryG\setminus\{p\}$ is connected and has more than one end. For $1$-ended hyperbolic groups Bowditch \cite{Bow98} shows that the existence of a splitting over a $2$-ended subgroup (see Section \ref{subsec:splittings}) is equivalent to the existence of a local cut point in the Gromov boundary. As evidenced by the work of  Kapovich and Kleiner \cite{KK00}, this result has proved useful in classifying the homeomorphism type of $1$-dimensional boundaries of hyperbolic groups. Kapovich and Kleiner's result relies on the topological characterization of the Menger curve \cite{A58a, A58b}, which requires that the boundary has no local cut points. Because the existence or non-existence of $2$-ended splittings can be verified directly in many natural settings, Kapovich and Kleiner's results provide techniques for constructing examples of hyperbolic groups with Menger curve or Sierpinski carpet boundary. Obstructions to $2$-ended splittings are well understood for hyperbolic 3-manifold groups \cite{Myr93}, Coxeter groups \cite{MT09}, and random groups \cite{DGP11}.

Papasoglu-Swenson \cite{PS05,PS09}, and Groff \cite{Grof13} have extended Bowditch's results \cite{Bow98} from hyperbolic groups to $\CAT(0)$ and relatively hyperbolic groups respectively. Their results describe the relationship between $2$-ended splittings and cut pairs in the boundary. In particular, their results make no mention of local cut points. Guralnik \cite{Gur} has observed that many of Bowditch's local cut point results extend to relatively hyperbolic groups, provided that the Bowditch boundary has no global cut points. However, that assumption is quite restrictive. Bowditch has shown \cite{Bow01} that the Bowditch boundary often has many global cut points. Thus a general theorem relating local cut points in the Bowditch boundary to $2$-ended splittings was still missing from the literature. The primary result of this paper addresses the general setting with the following theorem that makes no assumption about the existence or non-existence of global cut points in the Bowditch boundary.

\begin{theorem}[Splitting Theorem]
\label{theorem:Splitting Theorem}
Let $(G,\bbp)$ be a relatively hyperbolic group with tame peripherals. Assume that $\relbndryG$ is connected and not homeomorphic to a circle. If $G$ does not split over a $2$-ended subgroup, then $\relbndryG$ does not contain a non-parabolic local cut point. Moreover, if $G$ splits over a non-parabolic $2$-ended subgroup relative to $\bbp$, then $\relbndryG$ contains a non-parabolic local cut point.
\end{theorem}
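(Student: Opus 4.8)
The plan is to prove the two implications separately. The ``Moreover'' clause (a splitting yields a local cut point) is a fairly direct converse, whereas the main implication (no $2$-ended splitting forces no non-parabolic local cut point) I would prove contrapositively, by extracting a non-parabolic cut pair from the local cut point and then invoking the cut-pair/splitting correspondence of Groff and Papasoglu--Swenson. (The circle is excluded throughout because there every point is a local cut point, so the equivalence would otherwise degenerate into the virtually Fuchsian case.)

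I would begin with the converse. Suppose $G$ splits as $A\ast_H B$ over a non-parabolic $2$-ended subgroup $H$ relative to $\bbp$. Then $H$ contains a hyperbolic element $h$ whose fixed-point pair $\{a,b\}=\Lambda H\subset\relbndryG$ consists of conical, hence non-parabolic, points. The elementary direction of the splitting/cut-pair correspondence shows $\{a,b\}$ is a cut pair: the two sides of the splitting meet the boundary only along $\Lambda H$, so $\relbndryG\setminus\{a,b\}$ has at least two components $C_1,C_2,\dots$. I would then promote this to show that the single pole $a$ is a local cut point. After replacing $h$ by a power we may assume $h$ fixes each $C_i$; since $a$ is the attracting fixed point, iterating $h$ carries every point of $C_i$ to $a$, so $a\in\overline{C_i}$ and at least two components accumulate at $a$. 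If $\relbndryG\setminus\{a\}$ is disconnected then $a$ is a global cut point and we are done; otherwise any small neighborhood $U$ of $a$ with $b\notin U$ has $U\setminus\{a\}\subset\relbndryG\setminus\{a,b\}$ meeting at least two of the $C_i$, so $U\setminus\{a\}$ is disconnected and $\relbndryG\setminus\{a\}$ therefore has at least two ends. In either case $a$ is a non-parabolic local cut point.

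For the main implication I would argue contrapositively and reduce to the known theorem: it suffices to manufacture a non-parabolic cut pair from a non-parabolic local cut point $p$, since Groff's result then produces a splitting over a $2$-ended subgroup relative to $\bbp$. Since the global cut points of a connected Bowditch boundary are parabolic, the point $p$ is not a global cut point, so $\relbndryG\setminus\{p\}$ is connected with at least two ends. Using the local connectedness of $\relbndryG$ provided by the tame peripherals hypothesis together with the convergence action of $G$ on the boundary, I would separate two of these ends by a second point $q$ and verify that $\{p,q\}$ is a cut pair stabilized by a $2$-ended subgroup. The natural source of $q$ is the dynamics witnessing that $p$ is conical: there is a sequence $g_n\in G$ and a point $q$ with $g_n x\to p$ uniformly on compact subsets of $\relbndryG\setminus\{q\}$, suggesting that $q$ is the companion separating the ends of $\relbndryG\setminus\{p\}$.

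The main obstacle is precisely the production of a \emph{non-parabolic} cut pair in the presence of parabolic global cut points, a difficulty with no counterpart in Bowditch's original hyperbolic argument, where the boundary has no global cut points whatsoever. Parabolic points can themselves separate $\relbndryG$ and can lie between the ends of $\relbndryG\setminus\{p\}$, so the heart of the matter is to show that the two ends can be separated by a non-parabolic companion $q$, rather than the separation being forced through the peripheral structure; the latter would only detect a peripheral splitting and not the desired $2$-ended one. The tame peripherals hypothesis is what renders this feasible, ensuring enough local connectivity of $\relbndryG$ for the end-separation argument to locate a genuine non-parabolic cut pair, after which the cut-pair/splitting correspondence finishes the proof.
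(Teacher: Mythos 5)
Your ``Moreover'' direction is essentially the paper's: the paper quotes a lemma of Groves--Manning (Lemma \ref{lemma: Groves Manning}), in which the relative splitting realizes the cusped space as a tree of cusped spaces glued in the pattern of the Bass--Serre tree, so that the non-parabolic $2$-ended subgroup $H$ quasi-isometrically embeds, coarsely separates, and $\Lambda H$ is a pair of non-parabolic local cut points. Your dynamical promotion of the cut pair $\{a,b\}$ to a local cut point is fine once you know $\{a,b\}$ is a cut pair (every complementary component of a cut pair in a Peano continuum accumulates at both points, since otherwise one of $a,b$ would be a global cut point, contradicting conicality of loxodromic fixed points), but your assertion that ``the two sides of the splitting meet the boundary only along $\Lambda H$'' is precisely the coarse-separation statement that requires the cusped-space argument; it is not the elementary direction of anything, and as written it is assumed rather than proved.

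The main implication has a genuine gap. You propose to extract a partner $q$ so that $\{p,q\}$ is a non-parabolic cut pair and then invoke the cut-pair/splitting correspondence of Groff \cite{Grof13} and Papasoglu--Swenson. First, the existence of $q$ is not a soft consequence of conical dynamics: in Bowditch's scheme \cite{Bow98} it is the statement that $M^*(2)$ has no singleton $\sim$-classes (together with the valence-$\geq 3$ analysis), whose proof uses cocompactness of the action on triples in the hyperbolic case and, relatively hyperbolically, the control Lemma \ref{lemma:control theorem}; moreover all of that machinery is valid only when the boundary has no global cut points. The paper spends Section \ref{section:Reduction} constructing the monotone upper semi-continuous retraction onto blocks exactly to reduce to that case; your appeal to ``tame peripherals ensuring enough local connectivity'' does not substitute for this reduction, because parabolic global cut points genuinely sit between the ends of $\relbndryG\setminus\{p\}$ and block the end-separation argument. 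Second, even granted a cut pair, the correspondence you cite is established in the setting without global cut points (this is the restriction in Guralnik \cite{Gur} and the reason the paper identifies the general statement as missing from the literature), and a bare cut pair does not suffice: when $p$ lies in a necklace the cut pairs through $p$ need not be inseparable, and no splitting can be read off from them directly. The paper instead (i) proves Theorem \ref{theorem:Collection thm}: $p$ lies in a translate-inseparable loxodromic cut pair or a necklace of a block, which requires the new Proposition \ref{prop:Key prop} (relative quasiconvexity of necklace stabilizers); (ii) for a translate-inseparable loxodromic pair, converts it into a parabolic global cut point of the loxodromic extension $(G,\bbp')$ (Osin \cite{Osin04}, Dahmani \cite{Dahm03}, Yang \cite{Y14}) and then applies the peripheral-splitting machinery (Corollary \ref{corollary:theorem :non-trivial splitting}) rather than any cut-pair JSJ theorem; and (iii) for a circle block, uses Tukia's theorem \cite{T88} to recognize a virtually Fuchsian block and split over a $2$-ended subgroup (Lemma \ref{lemma:component a circle}). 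Your proposal omits the necklace and circle cases entirely and leaves the construction of $q$, the heart of the matter, unproved.
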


A relatively hyperbolic group $(G,\bbp)$ has {\it tame peripherals}, if  every $P\in\bbp$ is finitely presented, one- or two-ended, and contains no infinite torsion subgroup. Bowditch has shown \cite{Bow01} that if $(G,\bbp)$ has tame peripherals and the Bowditch boundary $\relbndryG$ is connected, then $\relbndryG$ is locally connected. In this paper we will always assume that $\relbndryG$ is connected and that $(G,\bbp)$ has tame peripherals. Other terms used in the statement of Theorem \ref{theorem:Splitting Theorem} will be defined in Section \ref{section:Preliminaries}.

Kapovich and Kleiner's classification result for $1$-dimensional boundaries of hyperbolic groups \cite{KK00} shows that under certain group theoretic conditions if the Gromov boundary is $1$-dimensional, then it must be a circle, a Sierpinski carpet, or a Menger curve. Theorem \ref{theorem:Splitting Theorem} is used by the author in \cite{Hau17b} to generalize the Kapovich-Kleiner result to $1$-dimensional visual boundaries of CAT(0) groups with isolated flats which do not split over $2$-ended subgroups. (We point out that visual boundary and the Bowditch boundary are not the same in general \cite{Tran}.) The application of Theorem \ref{theorem:Splitting Theorem} is a critical step in the proof in Theorem 1.2 of \cite{Hau17b} and requires an understanding of the general case where the Bowditch boundary has global cut points.

In the present paper we use Theorem \ref{theorem:Splitting Theorem} to obtain an alternate generalization of the Kapovich-Kleiner Theorem for $1$-dimensional Bowditch boundaries of relatively hyperbolic groups with $1$-ended and tame peripherals.

\begin{theorem}[Classification Theorem]
\label{theorem: Classification thm}
Let $(G,\bbp)$ be a $1$-ended relatively hyperbolic group with tame peripherals, and let $\mathcal{P}$ be the set of all subgroups of elements of $\bbp$. Assume that $G$ does not split over a virtually cyclic subgroup and does not split over any subgroup in $\mathcal{P}$. If every $P\in\bbp$ is one-ended and $\relbndryG$ is 1-dimensional, then one of the following holds:
\begin{enumerate}
\item $\relbndryG$ is a circle
\item $\relbndryG$ is a Sierpinski carpet
\item $\relbndryG$ is a Menger curve.
\end{enumerate}
\end{theorem}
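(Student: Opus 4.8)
The plan is to apply the topological characterization of the Menger curve due to Anderson, which requires a connected, locally connected, one-dimensional compact metric space with no local cut points and which is nowhere planar, together with the corresponding characterization of the Sierpinski carpet due to Whyburn. I would structure the argument as a case analysis driven by these two characterization theorems, reducing everything to verifying a short checklist of topological hypotheses on $\relbndryG$.

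\begin{proof}[Proof sketch]
First I would record the standing topological properties of the boundary. Since $(G,\bbp)$ has tame peripherals and $\relbndryG$ is connected, Bowditch's local connectedness result \cite{Bow01} gives that $\relbndryG$ is a locally connected continuum; it is also compact and metrizable, and by hypothesis one-dimensional. The key structural input is the \textbf{Splitting Theorem} (Theorem \ref{theorem:Splitting Theorem}): because $G$ does not split over any $2$-ended (virtually cyclic) subgroup and $\relbndryG$ is not a circle, that theorem guarantees $\relbndryG$ contains no non-parabolic local cut point. To rule out local cut points entirely I must also argue that parabolic points cannot be local cut points under the present hypotheses: here the assumption that every $P\in\bbp$ is one-ended, together with the hypothesis that $G$ does not split over any subgroup in $\mathcal{P}$, should force the local structure at a parabolic point to be connected with one end, so no parabolic local cut point can occur. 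The global cut point theory of Bowditch \cite{Bow01} enters to handle global cut points: one-endedness of $G$ and of the peripherals, plus the no-$\mathcal{P}$-splitting hypothesis, should eliminate global cut points as well, leaving $\relbndryG$ with no cut points of any kind.

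Having established that $\relbndryG$ is a locally connected, one-dimensional continuum with no local cut points, I would invoke the dichotomy between planar and non-planar behavior. If $\relbndryG$ embeds in the plane (equivalently, in $S^2$), then Whyburn's characterization applies: a compact, connected, locally connected, nowhere-dense subset of $S^2$ with no local cut points and empty interior is homeomorphic to the Sierpinski carpet, giving case (2); the degenerate planar possibility that $\relbndryG$ is itself a circle is excluded by hypothesis, so the only other planar continuum without local cut points that could arise, namely $S^2$ itself, is ruled out by one-dimensionality. If on the other hand $\relbndryG$ is nowhere planar—no point has a planar neighborhood—then Anderson's topological characterization of the Menger curve \cite{A58a, A58b} applies directly and yields case (3).

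The main obstacle, and the step I expect to require the most care, is the treatment of \emph{parabolic} local cut points and the interaction between parabolic points and the global cut point structure. The Splitting Theorem only controls non-parabolic local cut points, so the entire burden of showing $\relbndryG$ has \emph{no} local cut points rests on the parabolic analysis, and this is precisely where the one-endedness of the peripheral subgroups and the no-$\mathcal{P}$-splitting hypothesis must be used in an essential way. Concretely, one must understand the local topology of $\relbndryG$ near a parabolic fixed point $p$ with stabilizer $P$: the link of $p$ is governed by the boundary behavior of $P$, and one-endedness of $P$ should prevent $\relbndryG\setminus\{p\}$ from being disconnected or having more than one end locally at $p$. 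Converting this intuition into a rigorous argument—and simultaneously ensuring that no \emph{global} cut points survive, so that the clean planar/non-planar dichotomy above can be applied—is the delicate heart of the proof; the final appeal to the Anderson and Whyburn characterizations is then essentially a matter of verifying hypotheses.
\end{proof}
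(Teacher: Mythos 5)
Your overall architecture matches the paper's proof (Anderson/Whyburn checklists, Theorem \ref{theorem:Splitting Theorem} for non-parabolic local cut points, one-ended peripherals for parabolic points, the no-$\mathcal{P}$-splitting hypothesis for global cut points), but there is a genuine gap in your non-planar case: the dichotomy ``planar or nowhere planar'' is not exhaustive. A continuum can fail to embed in the plane while still containing non-empty planar open subsets, and Anderson's characterization requires that \emph{no} non-empty open subset be planar; nothing in your checklist delivers this, so Anderson does not ``apply directly.'' The paper closes exactly this gap with a dynamical argument you are missing: if $\relbndryG$ is not planar, the Claytor embedding theorem \cite{Cla} produces an embedded non-planar graph $K$; choosing a conical limit point $x\notin K$ (conical points are dense) with $g_ix\to a$ and $g_i\to b$ locally uniformly on $\relbndryG\setminus\{x\}$, the translates $g_i(K)$ shrink into any neighborhood of $b$, and minimality of the $G$-action \cite{Bow1} then moves $b$ into an arbitrary open set $V$, planting a copy of $K$ in every non-empty open subset. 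Without this convergence-group homogeneity step (or a substitute), the Menger curve case is unproven.

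On the parabolic step, which you rightly flag as delicate: your intuition is correct in direction but the attribution of hypotheses is slightly off, and the missing tool is precisely Theorem \ref{theorem:ends theorem}. The no-$\mathcal{P}$-splitting hypothesis is used only to kill \emph{global} cut points, via Theorem \ref{theorem:global cut point implies splitting} (a global cut point forces a non-trivial peripheral splitting, whose edge groups lie in $\mathcal{P}$); it plays no direct role at the local structure of a parabolic point. Once global cut points are excluded, $\relbndryG\setminus\{p\}$ is a connected open subset of a Peano continuum, hence a generalized Peano continuum, and Bowditch \cite{Bow1} showed $P=\stab(p)$ acts properly and cocompactly on it; Corollary \ref{corollary:one ended} (a consequence of Theorem \ref{theorem:ends theorem}) then gives that $\relbndryG\setminus\{p\}$ is one-ended, which is Corollary \ref{corrolary: p not local cut point}. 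The reason your ``link of $p$ is governed by the boundary behavior of $P$'' heuristic cannot be cashed out by standard results is that $\relbndryG\setminus\{p\}$ is neither a CW-complex nor known to carry an equivariant proper geodesic metric, which is exactly why the paper develops the ends invariance theorem for generalized Peano continua in the first place.
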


A $\CAT(0)$ group with isolated flats is relatively hyperbolic with respect virtually abelian subgroups \cite{HK1} and thus is relatively hyperbolic with respect to a collection of tame $1$-ended peripherals. However, there are distinctions between Theorem \ref{theorem: Classification thm} of this paper and Theorem 1.2 of \cite{Hau17b} worth mentioning here. First and foremost, the Bowditch boundary $\relbndryG$ is generally a quotient space of the visual boundary \cite{Tran}. Second, peripheral splittings (see Section \ref{subsec:splittings}) play a role in both theorems.  Peripheral splittings are not allowed for Theorem \ref{theorem: Classification thm}, whereas in the isolated flats setting only certain types of peripheral splittings are excluded. Lastly, in the $\CAT(0)$ setting the boundary has no global cut points \cite{PS09}, but the Bowditch boundary of a relatively hyperbolic group may have many global cut points in general.

\subsection{Method of proof}
\label{subsec:method of proof}

The proof of Theorem \ref{theorem:Splitting Theorem} utilizes arguments of Bowditch \cite{Bow98} developed for hyperbolic groups; however, because we are interested in the relatively hyperbolic setting and Bowditch's results depend on hyperbolicity in an essential way additional techniques are required. In particular, in Lemmas 5.2 and 5.17 of \cite{Bow98} are key steps in which Bowditch explicitly uses hyperbolicity. Using results of Tukia \cite{Tukia98}, Guralnik \cite{Gur} proved a relatively hyperbolic version of Bowditch's Lemma 5.2 which may be found as Lemma \ref{lemma:control theorem} in this exposition. In Section \ref{subsec:first key lemma} we provide a simple self-contained proof of Lemma \ref{lemma:control theorem} using techniques different from those of \cite{Gur}. Guralnik also observed that given Lemma \ref{lemma:control theorem} and given the Bowditch boundary has no global cut points, then some of Bowditch's local cut points carry over to the relatively hyperbolic setting using Bowditch's exact arguments. In Lemma 5.17 \cite{Bow98} Bowditch shows that the stabilizer of a necklace (see Section \ref{subsec:Cut Point Structures In Metric Spaces} for the definition of a necklace) in the boundary of a relatively hyperbolic group is quasiconvex. Proposition \ref{prop:Key prop} provides a relatively hyperbolic version of this result. Namely, we show that the stabilizer of a necklace in the Bowditch boundary of a relatively hyperbolic group is relatively quasiconvex. The importance of Lemma \ref{lemma:control theorem} and Proposition \ref{prop:Key prop} is that they allow us to use Bowditch's arguments verbatim to determine the local cut point structure of the Bowditch boundary in the special case that the Bowditch boundary does not contain any global cut points.

If the boundary of a relatively hyperbolic group $(G,\bbp)$ is connected, then it is a Peano continuum. However, $\relbndryG$ may have many global cut points \cite{Bow01}. Our strategy involves demonstrating that it suffices to consider only the case when $\relbndryG$ has no global cut points. In particular, using the theory of peripheral splittings \cite{Bow01} and basic decomposition theory we are able to restrict our attention to ``blocks'' of $\relbndryG$, where a {\it block} of $\relbndryG$ is a maximal subcontinuum consisting of points which cannot be separated from one another by global cut points. Blocks have two key features. The first is that a block of $\relbndryG$ is the limit set of a relatively hyperbolic subgroup $(H,\bbq)$ of $(G,\bbp)$ (see Theorem \ref{theorem:treelike structure}(4)). The second is that there is a retraction of  $\relbndryG$ onto any given block; moreover, the retraction map has nice decomposition theoretic properties. This combination of Bowditch's theory of peripheral splittings with decomposition theory techniques is one of the major contributions of this paper, and it is the focus of Section \ref{section:Reduction}. Using these techniques allows us to reduce the proof of Theorem \ref{theorem:Splitting Theorem} to proving Theorem \ref{theorem:collecting conical}, which describes non-parabolic local cut points in a boundary without global cut points. The proof of Theorem \ref{theorem:collecting conical} can be found in Section \ref{subsec:collecting local} and relies on the observation from the above paragraph that Lemma \ref{lemma:control theorem} and Proposition \ref{prop:Key prop} allow us to directly use the arguments \cite{Bow98}.

The other main result of this paper is Theorem \ref{theorem: Classification thm}. Two key tools used in the proof of Theorem \ref{theorem: Classification thm} are the topological characterization of the Menger curve due to R.D. Anderson \cite{A58a, A58b} and the topological characterization of the Sierpinski carpet due to Whyburn \cite{Whyburn}. Anderson's theorem states that a compact metric space $M$ is a Menger curve provided $M$ is $1$-dimensional, $M$ is connected, $M$ is locally connected, $M$ has no local cut points, and no non-empty open subset of $M$ is planar. We note that if the last condition is replaced with ``$M$ is planar,'' then we have the topological characterization of the Sierpinski carpet (see Whyburn \cite{Whyburn}).

In order to apply Anderson and Whyburn's theorems we must rule out the existence of local cut points. Theorem \ref{theorem:Splitting Theorem} can be used to rule out non-parabolic local cut points, but we also need to rule out the existence of parabolic local cut points. In Theorem \ref{theorem: Classification thm} we are in a setting where $\relbndryG$ contains no global cut points, so $\relbndryG\setminus\{p\}$ is connected. Thus we need only know that $\relbndryG\setminus\{p\}$ is $1$-ended. Because the group $P=\stab(p)$ is $1$-ended, and Bowditch \cite{Bow1} has shown that $P$ acts properly and cocompactly on $\relbndryG\setminus\{p\}$, a reader familiar with geometric group theory may think that we are done. However, the author was unable to find sufficiently general results in the literature. To define ends of a group one must make a choice of a space on which the group acts, and it is well known that any two CW-complexes on which $G$ acts properly and cocompactly have the same number of ends \cite{Geo, Gui13}.

 In this paper we require an understanding of the ends of a connected open subset of a Peano continuum on which a group acts properly and cocompactly. The study of ends as introduced by Freudenthal \cite{Freu31} can be described as inverse limits in the setting of generalized continua (i.e. locally compact, locally connected, $\sigma$-compact, connected Hausdorff spaces) as explained by Baues and Quintero \cite{BQ01}. Given a finitely generated discrete group $G$ acting properly and cocompactly on two ``nice'' topological spaces $X$ and $Y$ a natural question to ask is: What topological properties are required to guarantee that $\Ends(X)$ homeomorphic to $\Ends(Y)$? In other words, what is the natural setting in which $\Ends(G)$ is well defined?

 A {\it generalized Peano continuum} is a generalized continuum which is metrizable. This general class of spaces includes open connected subspaces of Peano continua, proper geodesic metric spaces, and locally finite CW-complexes. Theorem \ref{theorem:ends theorem} extends known ends results to generalized Peano continua. For groups acting properly and compactly on metric spaces with a proper equivariant geodesic metric, Theorem \ref{theorem:ends theorem} follows from Proposition I.8.29 of \cite{BH1}. However, it is unknown whether a generalized Peano continuum with a proper, cocompact action admits an equivariant proper geodesic metric (see Section 6 of \cite{GuiMor17}). Thus Theorem \ref{theorem:ends theorem} is a new contribution the literature. Theorem \ref{theorem:ends theorem} has already proved useful outside of this paper. Groves and Manning (see Section 7 of \cite{GM16}) use Theorem \ref{theorem:ends theorem} to prove a result similar to Theorem \ref{theorem:Splitting Theorem} for the restricted case where $\relbndryG$ has no global cut points, and the peripheral subgroups are $1$-ended and tame.

 \begin{theorem}
\label{theorem:ends theorem}
Let $X$ and $Y$ be two generalized Peano continua, and assume that $G$ is a finitely generated group acting properly and cocompactly by homeomorphisms on $X$ and $Y$. Then $\Ends(X)$ is homeomorphic to $\Ends(Y)$.
\end{theorem}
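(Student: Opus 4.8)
The plan is to reduce the statement to a comparison between $X$ and a single auxiliary locally finite complex on which the classical theory already applies. Since the hypotheses on $X$ and $Y$ are identical, it suffices to prove that for \emph{one} generalized Peano continuum $X$ carrying a proper cocompact $G$-action we have $\Ends(X)\cong\Ends(G)$, where $\Ends(G)$ denotes the end space of a Cayley graph of $G$; applying this to $X$ and to $Y$ and composing the two homeomorphisms then yields the theorem. To carry this out I would introduce the \emph{nerve} $\N$ of a well-chosen $G$-invariant open cover of $X$. Because $\N$ is a locally finite simplicial complex admitting a proper cocompact $G$-action, its realization $|\N|$ is a generalized Peano continuum for which $\Ends(|\N|)\cong\Ends(G)$ is exactly the known invariance of ends for locally finite CW-complexes carrying proper cocompact actions \cite{Geo,Gui13}. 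The whole problem is thereby concentrated into proving $\Ends(X)\cong\Ends(|\N|)$.

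First I would build the cover and the comparison map. Using local connectedness together with cocompactness, choose a connected open set $U$ with $\overline{U}$ compact and $GU=X$, and let $\N$ be the nerve of the cover $\{gU:g\in G\}$, with a vertex for each translate and a simplex for each finite subcollection with nonempty common intersection. Properness forces this cover to be locally finite, the stabilizer $\Stab(U)$ to be finite, and the simplices to fall into finitely many $G$-orbits, so the $G$-action on $\N$ is proper and cocompact; connectedness of $X$ and of the sets $gU$ makes $|\N|$ connected. Next I would produce a $G$-equivariant continuous map $\phi\colon X\to|\N|$ from an equivariant partition of unity $\{\psi_g\}$ subordinate to $\{gU\}$ (available by normality of $X$ and local finiteness of the cover), sending $x$ to the point with barycentric coordinates $(\psi_g(x))_g$. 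A short check shows $\phi$ is proper: the preimage of the open star of a vertex $gU$ is contained in $\overline{gU}$, so the preimage of any compact subset of $|\N|$ lies in a finite union $\bigcup_{g\in F}\overline{gU}$ and is compact. Hence $\phi$ induces a continuous map $\phi_*\colon\Ends(X)\to\Ends(|\N|)$ of the (compact, metrizable) end spaces.

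The hard part will be showing that $\phi_*$ is a homeomorphism, and this is precisely where the absence of an equivariant proper geodesic metric blocks a direct appeal to quasi-isometry invariance of ends. I would argue instead from the inverse-limit description $\Ends(X)=\varprojlim\pi_0(X\setminus K)$. Properness makes the compacta $C_F:=\bigcup_{g\in F}\overline{gU}$, indexed by finite $F\subseteq G$, cofinal among all compact subsets of $X$, while the closed stars of finite vertex-sets are cofinal in $|\N|$. The key lemma to be proved is a dictionary between the two inverse systems: because $X$ is covered by the connected sets $gU$ and is locally connected, any path in $X\setminus C_F$ is carried by a chain of translates $gU$ with $g\notin F$ whose consecutive members overlap, hence by an edge-path in $\N$ avoiding $F$, and conversely such edge-paths are realized by paths in $X$; this yields mutually inverse bijections between $\pi_0(X\setminus C_F)$ and the components of $|\N|$ minus the corresponding subcomplex. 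The main technical obstacle is that these bijections match up only after absorbing the bounded overlap of translates, i.e.\ after enlarging $F$ by its finite neighborhood in $\N$, so one must verify that the correspondence is nonetheless compatible with the bonding maps of the two systems, which is exactly what is needed for it to descend to a homeomorphism of the inverse limits. Granting this, $\phi_*$ is a homeomorphism, whence $\Ends(X)\cong\Ends(|\N|)\cong\Ends(G)$; the same reasoning applied to $Y$ gives $\Ends(X)\cong\Ends(Y)$.
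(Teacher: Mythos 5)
Your proposal is correct in outline, but it takes a genuinely different route from the paper, so let me compare the two. The paper also pivots through the Cayley graph, but it reverses the direction of the comparison map: using a connected compact $K$ with $\Orb_G(K)=X$ (its Lemma \ref{lemma:connected compact set}), it takes the tailored generating set $S=\bigset{s\in G}{K\cap sK\neq\emptyset}$ and builds a proper map $\Phi\colon\caygs\rightarrow X$ sending vertices to orbit points and edges to chosen paths; mapping a graph \emph{into} $X$ needs only local path connectedness, whereas your map out of $X$ requires the nerve and an equivariant partition of unity (where, incidentally, you must average over the finite setwise stabilizer of $U$, since distinct elements $g,g'$ can satisfy $gU=g'U$, and you need $\{\psi_{gU}>0\}\subseteq gU$ for your star-preimage properness argument to work as stated). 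In place of your inverse-limit bookkeeping, the paper proves injectivity of $\Phi^*$ via the proper-ray and ladder description of ends (Proposition \ref{prop:ladder prop} and Lemma \ref{lemma: ladder equivalence}): given a ladder in $X$ joining $\Phi\alpha$ to $\Phi\beta$, each rung is replaced by a path running through a chain of overlapping translates of $K$, which by the choice of $S$ pulls back to an edge path in $\caygs$, and properness of the ladder forces the replacement rungs to escape every compact set. This rung replacement is the same combinatorial engine as your dictionary between $\pi_0(X\setminus C_F)$ and complementary components in the nerve, but the ladder formalism absorbs exactly the index shifting you identify as your main obstacle. What your route buys is modularity: all of the group theory is outsourced to the classical invariance for locally finite complexes \cite{Geo,Gui13} --- in fact, since $|\N|$ with its simplicial path metric is a proper geodesic space on which $G$ acts cocompactly by isometries after barycentric subdivision, Proposition I.8.29 of \cite{BH1}, which the paper itself cites, already yields $\Ends(|\N|)\cong\Ends(G)$ --- whereas the paper needs only the weaker classical input of invariance of ends of Cayley graphs under change of generating set \cite{SW79} (its Theorem \ref{theorem: change gen set}). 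The one step you leave unexecuted, namely that your level-shifted component bijections commute with the bonding maps and are induced by $\phi$, is genuinely the crux of your argument; it is, however, the standard interleaving (pro-isomorphism) argument for inverse systems indexed over cofinal families, nothing obstructs it, and your proof goes through once it is written out.
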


A generalized Peano continuum $X$ is locally path connected (see for example Exercise 31C.1 of \cite{Wil1}). The proof of Theorem \ref{theorem:ends theorem} depends heavily on this fact. In particular, the existence of proper rays in $X$ plays an important role in the proof of Theorem \ref{theorem:ends theorem}. The author would be interested to know if there is an alternate argument that could be extended from the metric setting to the more general, non-metric, setting of generalized continua.

\begin{problem}
Let $X$ and $Y$ be two generalized continua, and $G$ be a finitely generated discrete group acting properly and cocompactly on $X$ and $Y$. Is $\Ends(X)$ homeomorphic to $\Ends(Y)$?
\end{problem}

Theorem \ref{theorem:ends theorem} is also closely related to the recent work of Guilbault-Moran \cite{GuiMor17}. Their work implies Theorem \ref{theorem:ends theorem} for geometric actions on proper metric ARs.



\subsection{Acknowledgments}
First I would like to thank my advisor Chris Hruska for providing comments and guidance throughout this project. Second I would like to give special thanks to Genevieve Walsh for encouraging me to write up the details of these results. Lastly, I would like to thank Craig Guilbault for valuable conversations regarding Freudenthal ends.

\section{Preliminaries}
\label{section:Preliminaries}
In this section we review basic facts and definitions required by the exposition of this paper. The topics in this section include convergence groups, relatively hyperbolic groups, splittings, cut point structures in metric spaces, and ends of generalized continua.

\subsection{Convergence group actions}
\label{subsection:Convergence Groups}
A detailed account of convergence group actions may be found in \cite{Bow99d}. Let $M$ be a compact metrizable space. Let $G$ be a group acting by homeomorphisms on $M$. The group $G$ is called a {\it convergence group} if for every sequence of distinct group elements $(g_k)$ there exist points $\alpha, \beta\in M$ (not necessarily distinct) and a subsequence $(g_{k_i})\subset (g_k)$ such that $g_{k_i}(x)\rightarrow\alpha$ locally uniformly on $M\setminus\{\beta\}$, and $g_{k_i}^{-1}(x)\rightarrow\beta$ converges locally uniformly on $M\setminus\{\alpha\}$. By {\it locally uniformly} we mean, if $C$ is a compact subset of $M\setminus\{\beta\}$ and $U$ is any open neighborhood of $\alpha$, then there is an $N\in\bbn$ such that $g_{k_i} C\subset U$ for all $i>N$.

Elements of convergence groups can be classified into three types: elliptic, loxodromic, and parabolic. A group element is {\it elliptic} if it has finite order. An element $g$ of $G$ is {\it loxodromic} if has infinite order and fixes exactly two points of $M$. A subgroup of $G$ is loxodromic if it is virtually infinite cyclic. If $g\in G$ has infinite order and fixes a single point of $M$ then $g$ is {\it parabolic}. An infinite subgroup $P$ of $G$ is {\it parabolic} if it contains no loxodromic elements and stabilizes a single point $p$ of $M$.  The point $p$ is uniquely determined by $P$, and the point $p$ is called a {\it parabolic} point. We call $p$ a {\it bounded parabolic} point if $P$ acts properly and cocompactly on $M\setminus\{p\}$.

A point $x\in\relbndryG$ is a {\it conical} {\it limit} point if there exists a sequence of group elements $(g_n)\in G$ and distinct points $\alpha,\beta\in M$ such that $g_nx\rightarrow \alpha$ and $g_ny\rightarrow\beta$ for every $y\in M\setminus\{x\}$. Tukia has shown (see \cite{Tukia98}) that:

\begin{proposition}
\label{prop:conical vs parabolic}
 A conical limit point cannot be a parabolic point
\end{proposition}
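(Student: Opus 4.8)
The plan is to argue by contradiction, combining the collapsing dynamics of a conical sequence with those of the parabolic stabilizer so as to violate the defining convergence property. Suppose $p$ is simultaneously conical and parabolic, and let $P=\Stab(p)$ be the associated infinite parabolic subgroup, which contains no loxodromic elements and fixes $p$ as its only fixed point. Since a space with $|M|\le 2$ admits no parabolic points, we may assume $|M|\ge 3$. Choosing a sequence $(g_n)$ and distinct points $a,b$ witnessing conicality, so that $g_n p\to a$ and $g_n y\to b$ for every $y\neq p$, I would first pin down the dynamical roles of these points: passing to the subsequence supplied by the convergence property, with attracting point $A^{+}$ and repelling point $A^{-}$, the condition $g_n y\to b$ for all $y\neq p$ forces $A^{+}=b$ and $A^{-}=p$ (otherwise $g_n p\to b=a$, contradicting $a\neq b$). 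Thus $p$ is the repelling point of $(g_n)$ and $b$ the attracting point, so dually $g_n^{-1}\to p$ locally uniformly on $M\setminus\{b\}$, while $g_n p\to a\neq b$.

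The next step is to show that the limit set of $P$ is exactly $\{p\}$; that is, every sequence of distinct elements $h_j\in P$ converges locally uniformly to the constant map $p$ on $M\setminus\{p\}$. Applying the convergence property to $(h_j)$ yields, along a subsequence, an attracting point $a'$ and a repelling point $b'$. Since each $h_j$ fixes $p$, a routine argument shows $p\in\{a',b'\}$. The only subtlety is to exclude $a'\neq b'$: if the attracting and repelling points were distinct, then choosing closed neighborhoods $A\ni a'$ and $B\ni b'$ with disjoint closures, one has $h_j(M\setminus B)\subset A$ and $h_j^{-1}(M\setminus A)\subset B$ for large $j$, and a single such $h_j$ then exhibits strictly contracting north--south dynamics, hence is a loxodromic element of $P$; this is the standard fact that genuine north--south dynamics in a convergence group produces a loxodromic. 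As $P$ contains no loxodromics, we must have $a'=b'=p$, so $(h_j)$ collapses to $p$ as claimed.

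With both collapsing phenomena in hand, the final step is to manufacture from $(g_n)$ and $P$ a sequence of \emph{distinct} group elements that fails to collapse, which is impossible in a convergence group (equivalently, this contradicts proper discontinuity of the $G$--action on the space of distinct triples of $M$). The natural candidates are the conjugates $\phi_n=g_n h_n g_n^{-1}$ with $h_n\in P$: each $\phi_n$ is a nontrivial parabolic fixing $g_n p\to a$, so the sequence of fixed points is controlled, and the idea is that conjugating a fixed parabolic ``motion'' by the recurrent maps $g_n$ produces elements whose action stays spread out. Concretely, I would fix three distinct points and, using compactness of $M$, choose the $h_n\in P$ by a diagonal extraction so that the images of these three points under $\phi_n$ converge to three \emph{distinct} points of $M$; such a sequence of distinct elements keeps a triple non-degenerate and therefore violates the convergence property, completing the contradiction.

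The main obstacle is precisely this last extraction, and it is where conicality must be used quantitatively. The convergence axiom controls $g_n$ only away from neighborhoods of its repelling point $p$, whereas the conjugate $g_n h_n g_n^{-1}$ is built from the behavior of $g_n$ on points tending to $p$ --- exactly the regime about which the axiom says nothing. Overcoming this requires exploiting the non-degeneracy $a\neq b$ (the recurrence recorded in the conical sequence) together with the collapse of $P$ to $p$ to guarantee that the conjugates do not themselves collapse, and then extracting, by compactness of $M$, a subsequence along which $\phi_n$ converges to a nontrivial homeomorphism. This delicate dynamical control near the repelling point, carried out purely within the convergence--group framework, is the heart of the argument and the content of Tukia's theorem.
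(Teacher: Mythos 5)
The paper offers no proof of this proposition at all---it is quoted directly from Tukia, with the citation \cite{Tukia98} standing in for the argument---so the question is whether your proposal stands on its own as a proof. Its first two steps do. Identifying $p$ as the repelling point and $b$ as the attracting point of $(g_n)$ from the condition $g_n y\to b$ for $y\neq p$ together with $a\neq b$ is correct, and your step showing that every sequence of distinct elements of $P$ collapses to $p$ (i.e.\ that the limit set of a parabolic subgroup is its fixed point) is the standard argument, including the correct use of the lemma that genuine north--south dynamics forces a loxodromic element.

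The genuine gap is your third step, and you have in effect named it yourself. The assertion that one can choose $h_n\in P$ ``by a diagonal extraction so that the images of these three points under $\phi_n=g_nh_ng_n^{-1}$ converge to three distinct points'' is not justified by compactness: compactness of $M$ yields convergent subsequences but says nothing about the limits being distinct, and in fact the natural diagonal choice works against you. For fixed $n$, your own step 2, conjugated by $g_n$, shows that as $h$ runs through a sequence of distinct elements of $P$ one has $g_nhg_n^{-1}\to g_np$ locally uniformly off $g_np$; hence choosing $h_n$ ``deep'' in $P$ produces $\phi_n$ collapsing to $a$ (attracting point equal to repelling point equal to $a$), which is perfectly consistent with the convergence property and yields no contradiction, while choosing $h_n$ ``shallow'' places $\phi_n$ in the regime governed by the values of $g_n$ on points tending to $p$---exactly the regime you correctly note the convergence axiom does not control, and the proposal supplies no mechanism to control it. Since you explicitly concede that overcoming this obstacle is ``the heart of the argument and the content of Tukia's theorem,'' the proposal amounts to two correct preliminary reductions plus a plan whose decisive extraction is missing; as a proof of the proposition it is incomplete, which is presumably why the paper itself simply cites \cite{Tukia98} rather than arguing it.
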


\medskip
 A convergence group $G$ acting on $M$ is called {\it uniform} if every point of $M$ is a conical limit point, or equivalently the action on space of distinct triples of $M$ is proper and cocompact (see \cite{Bow99d}). Bowditch has shown \cite{Bow98b} $G$ is a uniform convergence group if and only if it is hyperbolic. $G$ is called {\it geometrically finite} if every point of $M$ is a conical limit point or a bounded parabolic point.

\subsection{Relatively hyperbolic groups and their boundaries}
\label{subsec:relhypgrps}

We refer the reader to \cite{Bow1} for a more thorough introduction to relatively hyperbolic groups. Let $G$ be a group acting properly and isometrically on a $\delta$-hyperbolic space $X$. Tukia \cite{Tuk94} has shown that $G$ acts on the Gromov boundary of $X$ as a convergence group. Let $\mathbb{P}$ a collection of infinite subgroups of $G$ that is closed under conjugation, called {\it peripheral subgroups}.
\medskip

\Def We say that $G$ is {\it hyperbolic relative to} $\bbp$ if:
\begin{enumerate}
\item $\bbp$ is the set of all maximal parabolic subgroups of $G$
\item There exists a $G$-invariant system of disjoint open horoballs based at the parabolic points of $G$, such that if $\mathcal{B}$ is the union of these horoballs, then $G$ acts cocompactly on $X\setminus\mathcal{B}$.
\end{enumerate}

Any action of a group $G$ on a proper $\delta$-hyperbolic space satisfying the above definition is called {\it cusp uniform}. In \cite{Bow1} Bowditch shows:

\begin{theorem}
\label{theorem:finitely many orbits of maximal parabolic}
 If $G$ is hyperbolic relative to $\bbp$, then $\bbp$ consists of only finitely many conjugacy classes.
\end{theorem}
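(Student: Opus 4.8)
The plan is to translate the statement about conjugacy classes into a statement about orbits of horoballs, and then exploit the cocompactness built into the definition of a cusp-uniform action.

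First I would set up the dictionary. Since $\bbp$ is by definition the set of all maximal parabolic subgroups, and each parabolic subgroup $P$ fixes a unique parabolic point $p\in M$ with $P=\Stab(p)$ (its maximality matching the fact that $\Stab(p)$ is the largest parabolic fixing $p$), the assignment $P\mapsto p$ is a bijection between $\bbp$ and the set of parabolic points. Conjugation $P\mapsto gPg^{-1}$ corresponds to $p\mapsto gp$, so conjugacy classes in $\bbp$ correspond exactly to $G$-orbits of parabolic points. Finally, the $G$-invariant system of disjoint open horoballs has exactly one horoball $B_p$ based at each parabolic point $p$, with $gB_p=B_{gp}$ and $\Stab(B_p)=\Stab(p)=P$; hence $G$-orbits of parabolic points coincide with $G$-orbits of horoballs. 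Thus the theorem is equivalent to the assertion that \emph{the horoballs fall into finitely many $G$-orbits}.

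Next, the cocompactness step. Let $\mathcal{B}$ be the union of the horoballs; by hypothesis $G$ acts cocompactly on $Y:=X\setminus\mathcal{B}$, so I fix a compact $K\subseteq Y$ with $GK=Y$ (enlarging if necessary, so that $K=\overline{B(x_0,R)}\cap Y$, which is compact since $X$ is proper). For each horoball $B$ the horosphere $\partial B$ lies in $Y$ and is nonempty, so some $G$-translate of it meets $K$; that is, every $G$-orbit of horoballs has a representative $B$ with $\partial B\cap K\ne\emptyset$. Therefore the number of orbits is bounded above by the number of horoballs whose horosphere meets $K$, and it remains only to see that this number is finite.

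The crux — and the step I expect to be the main obstacle — is the \emph{local finiteness} of the horoball system: only finitely many horoballs meet the compact set $K$. This is where the geometry of the cusp-uniform action is essential, rather than mere disjointness. I would argue that the $G$-invariant horoball family is uniformly separated, i.e. there is $\epsilon>0$ with $d(B_i,B_j)\ge\epsilon$ for distinct horoballs; this separation is a feature of how the horoballs are arranged in a cusp-uniform action, the point being that cocompactness of the action on the complement $Y$ forces a definite gap between horoballs. One sees this by translating any near-tangent pair of horoballs into $K$ via cocompactness and extracting a limit, using properness of the $G$-action on the proper space $X$. Granting uniform separation, choosing one boundary point $z_i\in\partial B_i\cap K$ for each horoball meeting $K$ produces points lying in the bounded set $K$ and pairwise at distance $\ge\epsilon$; since $X$ is proper, $K$ is totally bounded and can contain only finitely many such points. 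Hence only finitely many horoballs meet $K$, yielding finitely many orbits and completing the proof. The delicate point throughout is extracting the uniform separation (equivalently, local finiteness) from the cocompactness hypothesis, since disjointness of the horoballs alone does not supply it.
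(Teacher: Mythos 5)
The paper states this result as a citation to Bowditch \cite{Bow1} and gives no proof of its own, so the benchmark is the standard argument from that paper. Your reduction is exactly that argument's skeleton and is correct: conjugacy classes in $\bbp$ correspond to $G$-orbits of parabolic points (maximality gives $P=\Stab(p)$, and $gPg^{-1}=\Stab(gp)$), these correspond to $G$-orbits of horoballs since the invariant system has $gB_p=B_{gp}$, each frontier $\Fr(B)$ is a nonempty subset of $Y=X\setminus\B$ (disjointness of the open horoballs keeps frontiers out of $\B$), and cocompactness of $G$ on $Y$ puts a representative horosphere of every orbit into a fixed compact $K$. So everything reduces, as you say, to local finiteness of the horoball system.

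The gap is in how you attack that crux. Your claimed uniform separation $d(B_i,B_j)\geq\epsilon$ for distinct horoballs is \emph{false} for a general invariant system satisfying the definition: the Ford circles for $PSL_2(\mathbb{Z})$ acting on $\mathbb{H}^2$ (horoballs of Euclidean diameter $1/q^2$ based at the rationals $p/q$, together with $\{y>1\}$) form a $G$-invariant family of pairwise disjoint open horoballs with cocompact complement in which adjacent horoballs are \emph{tangent}, so $d(B_i,B_j)=0$ and frontier points of distinct horoballs in a compact set can be arbitrarily close. The proposed limit argument cannot rescue the claim: tangency is perfectly compatible with disjointness of open horoballs, so no contradiction arises in the limit, and in any case extracting a limit \emph{within the system} from translated near-tangent pairs presupposes a closedness or local finiteness of the family that is essentially what you are trying to prove. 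Two repairs are available. (i) Keep your architecture but first shrink every horoball equivariantly by a fixed $\epsilon$: disjointness of the original system then gives $\epsilon$-separation of the shrunk one, the complement of the shrunk system lies in the closed $\epsilon$-neighborhood of $Y$ and hence is still cocompact, and orbits are unchanged; your total-boundedness count of frontier points in $K$ then works verbatim. (ii) Alternatively, dispense with separation and prove local finiteness directly from hyperbolicity, which is closer to Bowditch: if disjoint horoballs based at $p_1\neq p_2$ both meet $B(x_0,R)$, then each contains the Busemann horoball $\bigset{x\in X}{b_{p_i}(x)<-R}$ (normalizing $b_{p_i}$ at $x_0$), and if the Gromov product $(p_1|p_2)_{x_0}$ exceeded $R+C\delta$ the two sublevel sets would share a point on the common initial segment of the rays to $p_1$ and $p_2$; hence the basepoints of all horoballs meeting $B(x_0,R)$ are uniformly separated in the visual metric on the compact boundary $\bndry X$, so only finitely many horoballs meet any compact set. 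Either way the theorem follows; as written, though, the separation step is a genuine error, not merely a delicate point.
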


The Bowditch boundary $\partial (G,\bbp)$ is defined to be the Gromov boundary of $X$, i.e the set of equivalence classes of geodesic rays of $X$, where two geodesic rays are equivalent if their Hausdorff distance is finite. It is a result of Bowditch \cite{Bow1} that $\relbndryG$ does not depend on the choice of $X$.

We say that a relatively hyperbolic group $(G,\bbp)$ has {\it tame peripherals} if  every $P\in\bbp$ is finitely presented, one- or two-ended, and contains no infinite torsion subgroup. Under the assumption of tame peripherals Bowditch has shown the following two results in \cite{Bow99a} and \cite{Bow01}, respectively.

\begin{theorem}
Suppose that $(G,\bbp)$ is relatively hyperbolic with tame peripherals and that $\relbndryG$ is connected. Then every global cut point of $\relbndryG$ is a parabolic point.
 \end{theorem}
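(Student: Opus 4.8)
The plan is to reduce the statement to a purely dynamical fact about conical limit points, exploiting the dichotomy supplied by geometric finiteness. Write $M=\relbndryG$. Since $(G,\bbp)$ is relatively hyperbolic, $G$ acts on $M$ as a geometrically finite convergence group, so every point of $M$ is either a conical limit point or a bounded parabolic point. A bounded parabolic point is in particular a parabolic point, so it suffices to prove that no conical limit point can be a global cut point. Thus the entire argument reduces to the claim: if $p\in M$ is conical, then $M\setminus\{p\}$ is connected.

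To prove this I would argue by contradiction. Suppose $p$ is a conical limit point and that $M\setminus\{p\}=U\sqcup V$ is a separation into nonempty sets that are open in $M$. By the definition of conical there is a sequence $(g_n)$ in $G$ and distinct points $\alpha,\beta\in M$ with $g_n p\to\alpha$ and $g_n y\to\beta$ locally uniformly for $y\in M\setminus\{p\}$; by the convergence property the inverse sequence satisfies $g_n^{-1}y\to p$ locally uniformly on $M\setminus\{\beta\}$. Here local connectedness enters: because $\relbndryG$ is connected and $(G,\bbp)$ has tame peripherals, $M$ is locally connected by the result of Bowditch quoted above, so I may choose a connected open neighborhood $W$ of $\beta$ with $\alpha\notin\overline{W}$. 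The set $M\setminus W$ is compact and avoids $\beta$, so $g_n^{-1}$ converges to $p$ uniformly on it; hence for every neighborhood $N$ of $p$ one has $g_n^{-1}(M\setminus W)\subseteq N$, and therefore $g_n^{-1}(W)\supseteq M\setminus N$, for all large $n$.

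Now fix points $u\in U$ and $v\in V$ and choose $N$ small enough that $u,v\notin N$. For large $n$ both $u$ and $v$ lie in $M\setminus N\subseteq g_n^{-1}(W)$. But $g_n^{-1}(W)$ is connected, being a homeomorphic image of the connected set $W$, and any connected subset of $M$ meeting both $U$ and $V$ must contain the separating point $p$. Hence $p\in g_n^{-1}(W)$, that is $g_n p\in W$, for all large $n$; this contradicts $g_n p\to\alpha\notin\overline{W}$. Therefore $p$ is not a cut point, completing the reduction and hence the proof.

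The substance of the argument is the second step: the idea of looking at the \emph{expanding} inverse maps $g_n^{-1}$ and blowing up a connected neighborhood of the attracting point $\beta$ until it engulfs points on both sides of the putative cut, which traps $p$ and contradicts the escape of the separating image $g_n p$ to $\alpha$. The two imported ingredients are exactly where the hypotheses are used: relative hyperbolicity gives the geometrically finite convergence action, and hence the conical/parabolic dichotomy, while connectedness together with tame peripherals supplies the local connectedness needed to select the connected neighborhood $W$. The main subtlety to verify carefully is that local uniform convergence of $g_n^{-1}$ may legitimately be applied on the compact set $M\setminus W$, and that $W$ can indeed be taken connected with $\overline{W}$ missing $\alpha$; both are routine once local connectedness of $M$ is in hand.
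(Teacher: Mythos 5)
Your dynamical argument is internally sound: granting that every point of $\relbndryG$ is a conical limit point or a bounded parabolic point (Proposition \ref{proposition:cusp uniform is geom fin}), and granting that $\relbndryG$ is locally connected, the expansion trick with the inverse maps $g_n^{-1}$ does show that a conical limit point cannot be a global cut point. (Two small repairs: the paper's definition of conical point gives only pointwise convergence $g_ny\to\beta$, so you must pass to a subsequence via the convergence property and identify the attracting and repelling points of that subsequence with $\beta$ and $p$ respectively; and one should note $M$ has at least three points, which is automatic for a connected boundary with a cut point.) The genuine problem is where local connectedness comes from. This theorem is quoted by the paper from \cite{Bow99a}, while local connectedness (Theorem \ref{theorem:Bow Local conn}) is quoted from \cite{Bow01}, and in Bowditch's actual development the second result is proved \emph{using} the first: one first shows that global cut points are parabolic, then builds the cut-point tree and the peripheral splitting, uses accessibility, and deduces local connectedness block by block. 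So as a proof of the stated theorem your argument is circular: it derives the statement from a result that lies downstream of it. What it legitimately establishes is only the weaker implication that in a connected, \emph{locally connected} boundary, conical limit points are never global cut points.

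This also explains why the argument looks suspiciously soft and why the tame peripherals hypothesis plays no visible role in it except as a license to import Theorem \ref{theorem:Bow Local conn}. In the hyperbolic special case, where every boundary point is conical, your claim becomes precisely the cut point theorem of Bowditch and Swarup, and no short convergence-dynamics proof of that is known, because local connectedness (via Bestvina--Mess type arguments) only becomes available \emph{after} cut points have been ruled out. Bowditch's proof of the present theorem in \cite{Bow99a} accordingly proceeds without any local connectedness assumption: the global cut points are encoded in a dendrite, producing an action on an $\mathbb{R}$-tree, and an accessibility argument is applied to that action --- this is where tameness of the peripheral subgroups is genuinely used. To salvage your write-up you would either have to cite local connectedness as a black box while explicitly flagging that this makes the argument a consistency check rather than a proof, or replace the choice of the connected neighborhood $W$ by an argument that survives without local connectedness, which is exactly the hard content of \cite{Bow99a}.
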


Assume that $\relbndryG$ is connected. A {\it global cut point} is a point whose removal disconnects $\relbndryG$.

\begin{theorem}
\label{theorem:Bow Local conn}
 If $(G,\bbp)$ is relatively hyperbolic with tame peripherals and $\relbndryG$ is connected, then $\relbndryG$ is locally connected.
\end{theorem}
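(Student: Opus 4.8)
The plan is to translate local connectedness into a criterion from continuum theory and then to exploit the geometrically finite convergence action of $G$ on $M:=\relbndryG$. Since $\relbndryG$ is compact, metrizable, and (by hypothesis) connected, it is a metric continuum, and every point of $M$ is either a conical limit point or a bounded parabolic point. For a compact metric space, local connectedness is equivalent to being connected im kleinen at every point, and by Whyburn's theorem the sole obstruction is a nondegenerate \emph{continuum of convergence}. So I would argue by contradiction. If $M$ is not connected im kleinen at a point $x$, there is an $\epsilon>0$ and points $y_n\to x$ such that, inside the closed $\epsilon$-ball about $x$, the component $C_n$ containing $y_n$ does not contain $x$; a boundary-bumping argument forces $\diam C_n\ge \epsilon/2$, and after passing to a Hausdorff-convergent subsequence the $C_n$ accumulate onto a nondegenerate subcontinuum $C\ni x$. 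This is the local witness I must rule out, and the key move is to magnify it to unit scale using the dynamics at $x$.

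I would then split according to the type of $x$. Suppose first that $x$ is a conical limit point, so there are $g_n\in G$ and distinct $\alpha,\beta\in M$ with $g_nx\to\alpha$ and $g_nz\to\beta$ locally uniformly for $z\ne x$. Choosing the magnifying sequence at the scale of the $C_n$, the images $g_nC_n$ are pushed out to definite diameter while the separation of $y_n$ from $x$ inside the ball is preserved; in the Hausdorff limit this yields a closed separation of $M$ at unit scale near $\alpha$. Either this is a genuine disconnection of $M$, contradicting connectedness, or it localizes to a single separating point derived from the conical point $x$, which would then have to be a global cut point and hence parabolic by the preceding theorem---impossible for a point produced by magnification at a conical point, by Proposition \ref{prop:conical vs parabolic}. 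Either way the witness $C$ is destroyed.

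Suppose instead that $x$ is a bounded parabolic point with $P=\stab(x)$, so that $P$ acts properly and cocompactly on the locally compact space $M\setminus\{x\}$. Now the local topology of $M$ at $x$ is governed by the way $M\setminus\{x\}$ approaches $x$, that is, by its ends, and cocompactness lets these be read off from the coarse geometry of $P$. This is exactly where tameness is used: finite presentation of $P$ makes the end theory well behaved, one- or two-endedness bounds the number of directions in which $M\setminus\{x\}$ limits to $x$, and the absence of infinite torsion supplies the properness needed to run the argument. Carrying this through produces a connected (or, in the two-ended case, controlled) neighborhood basis at $x$, so the continua $C_n$ cannot stay separated from $x$ in the limit and the witness $C$ again degenerates.

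I expect the parabolic case to be the main obstacle. The conical case is essentially the Bestvina--Mess magnification argument, needing only connectedness of $M$ together with Proposition \ref{prop:conical vs parabolic} and the fact that global cut points are parabolic. By contrast, the parabolic case requires converting the group-theoretic tameness of $P$ into genuine topological control of $M$ near $x$---a statement about the ends of $M\setminus\{x\}$ under a cocompact action. This is the same circle of ideas quantified by Theorem \ref{theorem:ends theorem}, although one cannot invoke that theorem directly here, since it presupposes the local connectedness we are trying to establish; the self-contained analysis of the parabolic point is therefore the crux of the argument.
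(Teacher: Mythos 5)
The paper does not prove this statement at all: it is quoted as a theorem of Bowditch, cited to \cite{Bow01} (with the companion fact that every global cut point is parabolic cited to \cite{Bow99a}), so there is no internal proof to compare against, and your proposal must be judged as a blind reconstruction of Bowditch's multi-paper argument. As such it has a genuine gap in each branch. In the conical case, your dichotomy --- the magnified witnesses $g_nC_n$ yield either a disconnection of $M$ or a single separating point --- is unjustified: the Hausdorff limit of the magnified continua is merely some nondegenerate subcontinuum through $\alpha$, and nothing in convergence dynamics forces it to separate $M$; a Warsaw-circle-like failure of local connectedness produces no cut point whatsoever, and excluding such behavior is the content of the theorem, not an automatic consequence of magnification. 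Worse, even when the argument does produce a parabolic global cut point, that is not a contradiction: Bowditch boundaries with tame peripherals typically \emph{do} contain many parabolic global cut points (the paper emphasizes this, citing \cite{Bow01}), and the theorem asserts local connectedness in their presence. So the conical branch terminates in a non-contradiction, and ``the witness $C$ is destroyed'' does not follow.

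In the parabolic case, which you correctly identify as the crux, the argument is not carried out and the indicated route is circular: ends of $M\setminus\{x\}$, defined via components of complements of compacta, are well behaved only once $M\setminus\{x\}$ is known to be locally connected --- i.e., once the theorem is already in hand. You concede this for Theorem \ref{theorem:ends theorem}, but the same objection applies to any direct use of end theory at $x$; without local connectedness, components of complements of compacta need not be open and cocompactness of the $P$-action gives no control over them. Bowditch's actual proof is global rather than a two-case local analysis: first the no-global-cut-point case is handled by a Bestvina--Mess-type argument for geometrically finite convergence actions; then one proves every global cut point is parabolic \cite{Bow99a}, builds the cut-point tree and the peripheral splitting structure --- this is precisely where tameness enters, finite presentability feeding the accessibility result (Theorem \ref{theorem:Bowditch Accessibility}) and the one- or two-endedness and no-infinite-torsion hypotheses feeding the cut-point analysis --- shows the blocks are limit sets of geometrically finite subgroups with no cut points and hence locally connected, and finally assembles local connectedness of $M$ from the tree of blocks together with the null-family property of branches (the structure recorded in Theorem \ref{theorem:treelike structure} and Lemma \ref{lemma:full branches are null}). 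None of this machinery appears in your sketch, and the local magnification argument cannot substitute for it.
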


In this paper we are interested in the case where $\relbndryG$ is locally connected, so we will generally assume that $(G,\bbp)$ has tame peripherals and that $\relbndryG$ is connected.

\begin{remark}[Convergence Groups and Relatively Hyperbolic Groups]
Recall from Section \ref{subsection:Convergence Groups} that a convergence group action of a group $G$ is uniform if and only if $G$ is hyperbolic. A generalization of this result was completed by Bowditch \cite{Bow1} and Yaman \cite{Yam04}. Bowditch \cite{Bow1} shows that a relatively hyperbolic group with finitely generated peripheral subgroups acts on its Bowditch boundary as a geometrically finite convergence group (See Proposition \ref{proposition:cusp uniform is geom fin}), and Yaman \cite{Yam04} proves a strong converse. We remark that in general geometrically finite convergence group actions are not uniform (see Proposition \ref{prop:conical vs parabolic}).
\end{remark}

\begin{proposition}[Bowditch \cite{Bow1} Proposition 6.12]
\label{proposition:cusp uniform is geom fin}
Assume $G$ acts properly and isometrically on a proper $\delta$-hyperbolic space $X$, and let $\bbp$ be a collection of infinite subgroups of $G$. If the action of $(G,\bbp)$ on $X$ is cusp uniform, then the action on $\bndry X$ is geometrically finite.
\end{proposition}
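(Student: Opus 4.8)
The plan is to show that cusp uniformity of the action on $X$ gives precisely the two defining properties of a geometrically finite convergence group action on $\bndry X$: that every point of the boundary is either a conical limit point or a bounded parabolic point. I would first recall the general fact (Tukia \cite{Tuk94}) that since $G$ acts properly and isometrically on the proper $\delta$-hyperbolic space $X$, the induced action on $\bndry X$ is a convergence group action. So the content of the proposition is to classify the boundary points. I would partition $\bndry X$ into the parabolic points of $G$ (the set on which the horoballs in the cusp uniform data are based) and their complement, and treat the two pieces separately.

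For a point $p\in\bndry X$ that is a parabolic point, let $P=\Stab(p)$ be the corresponding maximal parabolic subgroup, with horoball $B_p$ centered at $p$. The cusp uniform hypothesis says $G$ acts cocompactly on $X\setminus\mathcal{B}$, where $\mathcal{B}$ is the $G$-invariant union of disjoint horoballs. The key geometric translation is that cocompactness of $G$ on $X\setminus\mathcal{B}$, together with the fact that the horoballs are permuted by $G$ with $P$ stabilizing $B_p$, forces $P$ to act cocompactly on the ``boundary horosphere'' data at $p$, which at the level of the Gromov boundary means $P$ acts cocompactly on $\bndry X\setminus\{p\}$. Combined with properness of the $P$-action (inherited from properness of the $G$-action on $X$), this is exactly the statement that $p$ is a \emph{bounded parabolic point}. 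The careful part here is the passage from cocompactness of $G$ on the truncated space $X\setminus\mathcal{B}$ to cocompactness of $P$ on $\bndry X\setminus\{p\}$; one wants to use a geodesic-tracking argument, associating to each point of $\bndry X\setminus\{p\}$ a geodesic ray to $p$ and controlling where it meets the horosphere $\partial B_p$ modulo the $P$-action.

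For a point $x\in\bndry X$ that is \emph{not} a parabolic point, I would show $x$ is a conical limit point. The idea is to take a geodesic ray $\gamma$ representing $x$ and observe that, since $x$ is not based at any horoball center, $\gamma$ can only enter each horoball in $\mathcal{B}$ for a bounded amount of time before leaving (otherwise it would be asymptotic to a horoball center, i.e.\ to a parabolic point); hence $\gamma$ returns infinitely often to the cocompact region $X\setminus\mathcal{B}$. Picking a sequence of points $\gamma(t_n)$ in $X\setminus\mathcal{B}$ with $t_n\to\infty$, cocompactness supplies group elements $g_n$ with $g_n\gamma(t_n)$ lying in a fixed compact set. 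A standard convergence-dynamics argument then shows $g_n x\to\alpha$ while $g_n y\to\beta$ for all $y\neq x$, with $\alpha\neq\beta$, which is the definition of a conical limit point. This is the classical characterization of conical limit points in terms of geodesics that repeatedly return to the thick part.

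The main obstacle I expect is the bounded-parabolicity step: translating the global cocompactness of $G$ on $X\setminus\mathcal{B}$ into cocompactness of the single subgroup $P$ on $\bndry X\setminus\{p\}$. This requires handling the interaction between the horoball at $p$ and all the neighboring horoballs, and checking that the $P$-action on the complement of $p$ in the boundary has compact quotient — essentially a horospherical cocompactness statement. Since this is Bowditch's Proposition 6.12, I expect the cleanest route is to reduce to his framework and invoke the cusp uniform structure directly, but the technical heart is this geometric comparison between the thick part of $X$ and the boundary minus a parabolic point. By contrast, the conical-limit-point case for non-parabolic points is routine given the convergence group formalism already recalled in Section \ref{subsection:Convergence Groups}.
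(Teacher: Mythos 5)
The paper does not prove this proposition at all: it is imported verbatim from Bowditch \cite{Bow1}, so there is no in-paper argument to compare against, and your proposal has to be judged against the standard proof. Your overall decomposition (horoball base points are bounded parabolic; every other boundary point is conical) is exactly that standard route, and the conical half of your sketch is essentially sound. One phrasing there is wrong but harmless: a ray $\gamma$ toward a non-parabolic point need \emph{not} meet each horoball for a uniformly bounded time --- penetration depths into distinct horoballs along $\gamma$ can grow without bound. What you actually need, and what is true, is only that $\gamma$ exits every horoball it enters (a ray eventually trapped in a horoball converges to its base point, since the limit set of a horoball is the single parabolic point it is based at), so that $\gamma(t_n)\in X\setminus\mathcal{B}$ for some $t_n\to\infty$; cocompactness on $X\setminus\mathcal{B}$ and convergence dynamics then give conicality as you say.

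The genuine gap is precisely where you flag it, and flagging it is not the same as closing it: your bounded-parabolicity step is never executed, and your stated fallback --- ``reduce to his framework and invoke the cusp uniform structure directly'' --- is circular, since Bowditch's Proposition 6.12 \emph{is} the statement being proved. Concretely, two things are missing. First, cocompactness of $P=\Stab(p)$ on the horosphere $\Fr(B_p)\cap(X\setminus\mathcal{B})$ does not follow formally from cocompactness of $G$ on $X\setminus\mathcal{B}$; one needs local finiteness of the $G$-family of horoballs so that the stabilizer of a member of a locally finite family acts cocompactly on it --- exactly the mechanism this paper uses elsewhere via Proposition \ref{proposition:lclfinaction} (compare Lemma \ref{lemma:cocompact on horoball}). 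Second, the geodesic-tracking step needs an actual argument: the assignment sending $q\in\bndry X\setminus\{p\}$ to the exit point on $\Fr(B_p)$ of a bi-infinite geodesic from $p$ to $q$ is only coarsely defined and coarsely $P$-equivariant, and one must check it is proper (compact sets of exit points pull back to compact subsets of $\bndry X\setminus\{p\}$) before horospherical cocompactness transfers to cocompactness of $P$ on $\bndry X\setminus\{p\}$. A smaller fixable point: properness of the $P$-action on $\bndry X\setminus\{p\}$ is not ``inherited from properness of the $G$-action on $X$'' as you claim; it comes from convergence dynamics --- any sequence of distinct elements of the parabolic group $P$ converges to $p$ locally uniformly on $\bndry X\setminus\{p\}$, so no compact subset of $\bndry X\setminus\{p\}$ meets infinitely many of its $P$-translates.
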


\subsection{Splittings}
\label{subsec:splittings}

A graph of groups is called {\it trivial} \cite{Serre, SW79} if there exists a vertex group equal to $G$. A {\it splitting} of a group $G$ over a given class of subgroups is a non-trivial finite graph of groups representation of $G$, where each edge group belongs to the given class. We say that $G$ {\it splits over} a subgroup $A$ if $G$ splits over the class $\{A\}$. The group $G$ is said to split {\it relative} to another class of subgroups $\bbp$ if each element of $\bbp$ is conjugate into one of the vertex groups. Assume that $G$ is hyperbolic relative to a collection $\bbp$. A {\it peripheral splitting} of $(G, \bbp)$ is a finite bipartite graph of groups representation of $G$, where $\bbp$ is the set of conjugacy classes of vertex groups of one color of the partition called peripheral vertices. Non-peripheral vertex groups will be referred to as {\it components}. This terminology stems from tame peripheral setting where there is a correspondence between the cut point tree of $\relbndryG$ and the peripheral splitting of $(G, \bbp)$. In this correspondence elements of $\bbp$ correspond to stabilizers of cut point vertices and the components correspond to stabilizers of blocks in the boundary (see Theorem \ref{theorem:treelike structure}).

A peripheral splitting $\mathcal{G}$ is a refinement of another peripheral splitting $\mathcal{G}'$ if $\mathcal{G}'$ can be obtained from $\mathcal{G}$ via a finite sequence of foldings that preserve the vertex coloring. In \cite{Bow01} Bowditch proves the following accessibility result:

\begin{theorem}
\label{theorem:Bowditch Accessibility}
Suppose that $(G,\bbp)$ is relatively hyperbolic with tame peripherals and connected boundary.
Then $(G,\bbp)$ admits a (possibly trivial) peripheral splitting which is maximal in the sense
that it is not a refinement of any other peripheral splitting.
\end{theorem}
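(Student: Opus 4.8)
The plan is to pass to the language of Bass--Serre theory and to produce a maximal element of the refinement partial order both by an explicit construction from the topology of $\relbndryG$ and by an accessibility (finiteness) argument. First I would record the standard dictionary: a peripheral splitting of $(G,\bbp)$ is the same as a minimal cocompact action, without inversions, of $G$ on a bipartite simplicial tree $T$ whose vertex set is partitioned into \emph{peripheral} vertices, with stabilizers exactly the conjugates of the elements of $\bbp$, and \emph{component} vertices; under this dictionary a refinement of splittings corresponds to an equivariant folding of the associated trees. In these terms the theorem asserts that this partially ordered set of $G$-trees has a maximal element.

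Next I would construct the candidate splitting directly from the boundary. Using the fact (stated above) that every global cut point of $\relbndryG$ is a parabolic point, the cut points are governed by the maximal parabolic subgroups. Decomposing $\relbndryG$ into its \emph{blocks}---the maximal subcontinua no two of whose points are separated by a global cut point---together with the cut points themselves produces a dendritic (tree-like) decomposition, whose nerve is a simplicial tree $T$ carrying a natural $G$-action: the cut points give one colour class of vertices and the blocks the other. I would then identify the stabilizers, checking that the stabilizer of a cut point vertex is precisely the maximal parabolic subgroup fixing it, while the stabilizer of a block vertex is the associated component subgroup whose limit set is that block. This $G$-tree yields the desired peripheral splitting, and it is maximal because each component subgroup has a limit set (a block) containing no global cut point, so there is nothing left to split peripherally.

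The main obstacle is the accessibility content, namely that $T$ has only finitely many $G$-orbits of vertices and edges, so that the quotient is a genuine finite graph of groups and the refinement process terminates. This is exactly where Theorem \ref{theorem:finitely many orbits of maximal parabolic} is indispensable: since every cut point is parabolic and there are only finitely many conjugacy classes of maximal parabolic subgroups, there are finitely many orbits of cut point vertices, and a compactness argument then bounds the number of orbits of blocks. Local connectedness of $\relbndryG$ (Theorem \ref{theorem:Bow Local conn}) is needed to keep the decomposition well behaved---guaranteeing that the blocks are themselves Peano continua and that the decomposition into blocks and cut points is upper semicontinuous, so that the quotient graph really is finite and the dendrite is simplicial rather than merely a topological real tree. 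The delicate step throughout is this passage between the purely topological decomposition of the boundary and an honest cocompact simplicial $G$-tree with the correct finiteness; it is precisely here that the hypotheses of tame peripherals and connected boundary are doing the work, and where a naive transfer of the hyperbolic-group argument would break down.
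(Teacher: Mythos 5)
First, a point of context: the paper does not prove this statement at all---it is quoted verbatim from Bowditch \cite{Bow01} (``In \cite{Bow01} Bowditch proves the following accessibility result''), so there is no in-paper proof to match your proposal against; the relevant comparison is with Bowditch's cited argument, whose broad strategy (cut points are parabolic; the cut points and blocks of $\relbndryG$ assemble into a bipartite $G$-tree; the maximal peripheral splitting is read off from this tree, as in Theorem \ref{theorem:treelike structure}) your outline does correctly reproduce.

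However, your proposal has a genuine gap exactly at the step you flag as ``delicate'' and then do not carry out: the passage from the topological cut-point decomposition to a cocompact \emph{simplicial} $G$-tree is the entire mathematical content of the accessibility theorem, and the mechanisms you invoke for it do not suffice. Local connectedness (Theorem \ref{theorem:Bow Local conn}) and upper semicontinuity of the block decomposition do not imply that the associated cut-point pretree is discrete: a Peano continuum can have a dense set of cut points (e.g.\ a dendrite), in which case the natural quotient is an $\mathbb{R}$-tree with no simplicial structure, even though the decomposition is perfectly tame topologically. In Bowditch's proof, discreteness and cocompactness are obtained from group-theoretic finiteness---finite presentability and the absence of infinite torsion subgroups packaged in the tame peripherals hypothesis---fed into an accessibility argument, together with the fact that each parabolic cut point is \emph{bounded} parabolic, so its stabilizer acts cofinitely on the adjacent blocks; your appeal to Theorem \ref{theorem:finitely many orbits of maximal parabolic} plus ``a compactness argument'' replaces this with an assertion. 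Separately, your maximality step is also incomplete: knowing that each block contains no global cut point does not by itself rule out a further peripheral refinement of a component vertex group $H$. One needs (i) that $H$ with its induced peripheral structure $\bbq$ is itself relatively hyperbolic (relatively quasiconvex) with $\bndry(H,\bbq)$ equal to the block (Theorem \ref{theorem:treelike structure}(4)), and (ii) the converse direction that a non-trivial peripheral splitting of $(H,\bbq)$ forces a global cut point in $\bndry(H,\bbq)$ (the companion to Theorem \ref{theorem:global cut point implies splitting}); neither is established in your sketch. As written, the proposal is a correct road map of Bowditch's strategy with the two hard theorems---discreteness/cocompactness of the cut-point tree, and the splitting-versus-cut-point equivalence for components---assumed rather than proved.
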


Combining Proposition 5.1 and Theorem 1.2 of \cite{Bow99b} Bowditch also shows:

\begin{theorem}
\label{theorem:global cut point implies splitting}
 If $(G,\bbp)$ is a relatively hyperbolic with tame peripherals, $\relbndryG$ is connected, and $\relbndryG$ has a global cut point, then there exists a non-trivial peripheral splitting of $(G,\bbp)$.
\end{theorem}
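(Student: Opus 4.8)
The plan is to manufacture a simplicial tree directly from the cut-point structure of the boundary and then extract the splitting via Bass--Serre theory; this is how the two cited results of Bowditch fit together. Write $M := \relbndryG$, which is a Peano continuum since it is connected and, by Theorem~\ref{theorem:Bow Local conn}, locally connected. The single dynamical input I would use is the preceding theorem: every global cut point of $M$ is a parabolic point. Thus the given cut point $p$ is fixed by a maximal parabolic $P \in \bbp$, and more generally the entire set $C$ of global cut points consists of parabolic points.

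First I would record the combinatorial structure carried by $C$. Declaring a cut point $c$ to lie \emph{between} $a$ and $b$ when $a$ and $b$ are separated in $M$ by $c$ endows $C$ with a betweenness relation; using connectedness and local connectedness of $M$ one checks that this relation satisfies the pretree axioms, and $G$ acts on $C$ preserving it because $G$ acts on $M$ by homeomorphisms. The heart of the matter --- the content of the treelike-structures machinery (Theorem~1.2 of the cited work) --- is to show that this pretree is discrete and to complete it to an honest bipartite simplicial tree $T$. The two vertex colours of $T$ are the parabolic points (the one-point limit sets of the peripheral subgroups) and the \emph{blocks} (maximal subcontinua that no cut point separates), with an edge joining a parabolic vertex to each block in whose closure it lies. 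The homeomorphic action of $G$ on $M$ then induces a colour-preserving action of $G$ on $T$ by simplicial automorphisms.

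Next I would convert this tree action into a graph-of-groups decomposition. The action is without inversions (it preserves the bipartition), so the remaining point is cocompactness: $T$ should have finitely many $G$-orbits of vertices and edges. I expect this to follow from geometric finiteness (Proposition~\ref{proposition:cusp uniform is geom fin}) together with Theorem~\ref{theorem:finitely many orbits of maximal parabolic}, since the parabolic vertices form finitely many orbits (there are finitely many conjugacy classes of maximal parabolics) and discreteness of the pretree bounds the block vertices correspondingly; this is the role of Proposition~5.1 of the cited work. Bass--Serre theory then yields a finite bipartite graph of groups: the stabiliser of a parabolic vertex is exactly the maximal parabolic fixing that point --- hence, up to conjugacy, an element of $\bbp$ --- while the stabilisers of block vertices are the component subgroups. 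This is precisely a peripheral splitting in the sense of Section~\ref{subsec:splittings}, where a non-separating parabolic point simply contributes a valence-one peripheral vertex.

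The main obstacle is establishing discreteness of the cut-point pretree and the resulting cocompactness --- that is, ruling out an accumulation of cut points that would force an $\mathbb{R}$-tree or an infinite quotient graph. This is exactly where Bowditch's detailed dynamical analysis of convergence actions on continua is needed, and is the reason the statement is quoted rather than proved by a short topological argument. Granting it, non-triviality is immediate: since $p$ is a genuine cut point, $M \setminus \{p\}$ has at least two components, so the vertex of $T$ carrying $p$ is adjacent to at least two distinct blocks. Hence $T$ is not a single vertex, and since the $G$-action on the limit set $M$ is minimal no proper block can be $G$-invariant and no vertex group equals $G$; the peripheral splitting produced is therefore non-trivial.
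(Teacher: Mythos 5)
Your proposal is correct and takes essentially the same route as the paper: the paper gives no argument of its own, simply quoting Bowditch (combining Proposition 5.1 and Theorem 1.2 of \cite{Bow99b}), and your sketch faithfully reconstructs the architecture behind that citation---global cut points are parabolic, the cut-point pretree is discrete and completes to the bipartite block/cut-point tree as in Theorem \ref{theorem:treelike structure}, cocompactness yields a finite bipartite graph of groups, and minimality of the action on the boundary gives non-triviality. Since you explicitly defer the genuinely hard steps (discreteness of the pretree and cocompactness of the tree action) to Bowditch's machinery, exactly as the paper does by quoting the result, there is no gap to flag.
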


\subsection{Local cut point structures in Peano continua}
\label{subsec:Cut Point Structures In Metric Spaces}

We refer the reader to \cite{Bow98} for a more detailed account of local cut point structures in Peano continua. Recall that a {\it Peano continuum} is a compact, connected, and locally connected metric space. Let $M$ be a Peano continuum. A {\it global cut point} of $M$ is a point $x\in M$ such that $M\setminus\{x\}$ is disconnected. A {\it cut pair} is a set of two distinct points $\{a,b\}\subset M$ which contains no global cut points, and such that $M\setminus\{a,b\}$ is disconnected. The set of components of $M\setminus\{a,b\}$ will be denoted by $\U(a,b)$, and $\N(a,b)$ will denote the cardinality of $\U(a,b)$. We leave it as an exercise to show if $x$ is a global cut point and $\{a,b\}$ is a cut pair, then $a$ and $b$ cannot lie in different components of $M\setminus\{x\}$. Two cut pairs $\{a,b\}$ and $\{c,d\}$ are said to {\it mutually separate} $M$ if $c$ and $d$ lie in different components of $M\setminus\{a,b\}$ and vice versa. A cut pair $\{a,b\}$ is called {\it inseparable} if there does not exist any other cut pair $\{c,d\}$ such that $a$ and $b$ lie in distinct components of $M\setminus\{c,d\}$. Let $G$ be a group acting on $M$ by homeomorphisms. A cut pair $\{a,b\}$ will be called {\it translate inseparable} (or {\it $G$-translate inseparable}) if there does not exist a cut pair $\{c,d\}$ in $\Orb_G\big(\{a,b\}\big)$ such that $a$ and $b$ lie in distinct components of $M\setminus\{c,d\}$. If $(G,\bbp)$ is relatively hyperbolic and $M=\relbndryG$, then a cut pair $\{a,b\}$ will be called {\it loxodromic} if it is stabilized by a loxodromic element $g\in G$.

Let $\Delta$ be a subset of $M$. A {\it cyclic order} on $\Delta$ is a quaternary relation $\sigma(a,b,c,d)$, such that the following holds. If $F$ is any finite subset of $\Delta$, then there is an embedding $i\colon F\rightarrow S^1$ so that given any four $a,b,c,d\in F$ the relation $\sigma(a,b,c,d)$ holds if and only if the pairs $\bigl\{i(a),i(c)\bigr\}$ and $\{i(b),i(d)\}$ mutually separate in $S^1$. The set $\Delta$ is called a {\it cyclically separating set} if it has a cyclic order and $i$ induces a map from the components of $M\setminus\{a,b,c,d\}$ onto the components of $S^1\setminus\bigl\{i(a),i(b),i(c),i(d)\bigr\}$, where a component $C$ of $M\setminus\{a,b,c,d\}$ with frontier $Fr(C)=\{x,y\}\subset\{a,b,c,d\}$ is mapped to the component of $S^1\setminus\big\{i(a),i(b),i(c),i(d)\big\}$ with frontier $\bigl\{i(x),i(y)\bigr\}$. Two points $a$ and $b$ in $\Delta$ are called {\it adjacent} if $\big\{i(a),i(b)\big\}$ cannot be mutually separated by $\big\{i(c),i(d)\big\}$ for any $c,d\in\Delta$. An unordered pair of adjacent points will be referred to as a {\it jump}. Notice that if two distinct jumps of $\Delta$ intersect, they must intersect in an isolated point of $\Delta$.

\medskip

A point $x\in M$ is a {\it local cut point} if $M\setminus\{x\}$ is disconnected or $M\setminus\{x\}$ is connected and has more than one end. If $M\setminus\{x\}$ is connected the {\it valence}, $val(x)$, of a local cut point is the number of ends of $M\setminus\{x\}$. A detailed discussion of ends of spaces can be found in Section \ref{subsection: Ends of Spaces}, but we remark that saying a point $x\in M$ is a local cut point is equivalent to saying that there exists a neighborhood $U$ of $x$ such that for every neighborhood $V$ of $x$ with $V\subset U$, there exist points $z,y\in V\setminus\{x\}$ such that there does not exist a connected subset of $U\setminus\{x\}$ containing $z$ and $y$. Alternatively, to check that $x$ is not a local cut point it suffices to show that given a neighborhood $U$ of $x$ there exists a neighborhood $V\ni x$ with $V\subset U$ and $V\setminus \{x\}$ connected. We wish to ``collect'' all the local cut points, so we introduce notation similar to that of Bowditch \cite{Bow98} to describe the various ``local cut point structures'' in $M$.  Let $M(n)=\bigset{x\in M}{\spc val(x)=n, \text{and}\, x\, \text{is not a global cut point}}$ and $M(n+)=\bigset{x\in M}{\spc val(x)\geq n, \text{and}\, x\, \text{is not a global cut point}}$.

 Now assume that a group $G$ acts on $M$ with a geometrically finite convergence group action. Then $G$ is relatively hyperbolic and $M$ is homeomorphic to $\relbndryG$ \cite{Bow98b} \cite{Yam04}. If $M=\relbndryG$, then $M$ consists entirely of conical limit points and parabolic points. Moreover, if $(G,\bbp)$ has tame peripherals, then global cut points in $M$ correspond to parabolic points (see Section \ref{subsection:Convergence Groups}). Because parabolic points cannot be conical limit points (Proposition \ref{prop:conical vs parabolic}), the goal is to understand local cut points which are conical limit points to ensure that the points we are considering do not separate $M$ globally. Define $\C$ to be the collection of conical limit points in $M$. We will denote by $M^*(n)$ and $M^*(n+)$ the intersections of $M(n)$ and $M(n+)$ with $\C$. We define relations on $M^*(2)$ and $M^*(3+)$. Let $x,y\in M^*(2)$. We write $x\sim y$ if and only if $x=y$ or $\N(x,y)=2$. For two elements $a,b\in M^*(3+)$ we write $a\approx b$ if $a\neq b$ and $\N(a,b)=val(a)=val(b)\geq 3$. From the definitions above we immediately obtain a partition of the set of conical limit points which are local cut points. In other words:

\begin{lemma}
\label{lemma:partition conical points}
Let $x\in M$ be a conical limit point which is a local cut point. Then $x\in M^*(2)\cup M^*(3+)$.
\end{lemma}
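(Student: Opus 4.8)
The plan is to show that a conical limit point which is a local cut point must fail to be a global cut point, and must have valence at least $2$; these two facts together place it in $M^*(2)\cup M^*(3+)$. Since $x$ is assumed conical, we already have $x\in\C$, so the only content is to verify the valence bound and the non-global-cut-point condition, after which membership in $M^*(2)\cup M^*(3+)$ is immediate from the definitions of these sets.

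The first step is to rule out $x$ being a global cut point. Here I would invoke Proposition \ref{prop:conical vs parabolic}, which asserts that a conical limit point cannot be a parabolic point. Combined with the earlier theorem stating that under tame peripherals every global cut point of $\relbndryG$ is a parabolic point, this gives the contrapositive: since $x$ is conical it is non-parabolic, hence $x$ is not a global cut point. In particular $M\setminus\{x\}$ is connected, so the first disjunct in the definition of a local cut point fails for $x$.

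The second step exploits this forced connectivity. Because $x$ is a local cut point but $M\setminus\{x\}$ is \emph{not} disconnected, the definition of local cut point leaves only the remaining alternative: $M\setminus\{x\}$ is connected and has more than one end. Thus $val(x)$ is defined and satisfies $val(x)\geq 2$. Now either $val(x)=2$, in which case $x$ lies in $M(2)$, or $val(x)\geq 3$, in which case $x$ lies in $M(3+)$; in both cases $x$ is not a global cut point. Intersecting with $\C$ and recalling $x\in\C$ yields $x\in M^*(2)\cup M^*(3+)$, as desired.

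There is no serious obstacle in this argument: it is essentially a bookkeeping step that records the consequence of the preceding structural results for conical local cut points. The only point demanding care is the logical order—one must first establish that $x$ is not a global cut point (so that $M\setminus\{x\}$ is connected) \emph{before} appealing to the ``more than one end'' clause of the local-cut-point definition, since the valence $val(x)$ is only defined when $M\setminus\{x\}$ is connected.
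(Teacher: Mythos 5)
Your argument is correct and matches the paper's reasoning exactly: the paper treats this lemma as immediate from the definitions, having just noted that under tame peripherals global cut points are parabolic, that conical limit points cannot be parabolic (Proposition \ref{prop:conical vs parabolic}), and hence that a conical local cut point has connected complement with at least two ends, placing it in $M^*(2)\cup M^*(3+)$. Your remark about the logical order---establishing non-global-cut-point status before invoking the valence clause---is the right point of care and is implicit in the paper's discussion.
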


\medskip
The following results are proved  using the same arguments as those of Bowditch in \cite{Bow98}:

\begin{lemma}
\label{lemma:*3.8 doubleTilde lemma}
 The collection of $\approx$-classes in $M^*(3+)$ is partitioned into pairs, which do not mutually separate.
\end{lemma}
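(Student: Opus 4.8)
The plan is to treat this as a statement about the topology of the Peano continuum $M=\relbndryG$, using the group action only through the fact that a conical limit point is never a global cut point (Proposition \ref{prop:conical vs parabolic} together with the fact that, under tame peripherals, every global cut point of $\relbndryG$ is parabolic, cf. \cite{Bow99a}). The whole argument parallels Bowditch \cite{Bow98}.

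First I would pin down the ``theta structure'' of a $\approx$-pair. Suppose $a\approx b$, so $\N(a,b)=val(a)=val(b)=n\ge 3$, and let $C_1,\dots,C_n$ be the components of $M\setminus\{a,b\}$. Each $C_i$ is open (since $M$ is locally connected, Theorem \ref{theorem:Bow Local conn}) with $\Fr(C_i)\subseteq\{a,b\}$. If some $\Fr(C_i)$ were a single point, that point would be a global cut point, contradicting that $a,b$ are conical; hence $\Fr(C_i)=\{a,b\}$ for every $i$. I would then run an end-counting argument: because $M$ is compact, every end of $M\setminus\{a\}$ is concentrated at the puncture $a$, so a small punctured neighborhood of $a$ (missing $b$) is parceled out among the $C_i$ and distributes the $n$ ends of $M\setminus\{a\}$ among them. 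Writing $e_i$ for the number of these ends lying in $C_i$, we have $\sum_i e_i=n$, while $a\in\Fr(C_i)$ forces $e_i\ge 1$; therefore $e_i=1$ for every $i$. Thus each $C_i$ carries exactly one end of $M\setminus\{a\}$ and, symmetrically, one end of $M\setminus\{b\}$.

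Second, I would prove uniqueness of the $\approx$-partner, which is what forces the classes to be pairs. Suppose $a\approx b$ and $a\approx c$ with $c\ne b$, and let $c\in C_j$. For each $i\ne j$ the connected set $C_i$ avoids $a$ and $c$ and has $b$ in its closure, with $b\ne a,c$; hence all the $C_i$ with $i\ne j$, together with $b$, lie in a single component $D$ of $M\setminus\{a,c\}$. But Step 1 applied to the pair $\{a,c\}$ says that each component of $M\setminus\{a,c\}$ carries exactly one end of $M\setminus\{a\}$, whereas $D\supseteq\bigcup_{i\ne j}C_i$ carries the $n-1\ge 2$ ends living in those channels. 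This contradiction gives $b=c$, so each point of $M^*(3+)$ has at most one $\approx$-partner and the nontrivial $\approx$-classes are exactly pairs.

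Finally I would show that distinct $\approx$-pairs do not mutually separate. Let $\{a,b\}$ and $\{c,d\}$ be distinct pairs; by the uniqueness just proved they share no point, so $c,d\in M\setminus\{a,b\}$. If they mutually separated, then in particular $c$ and $d$ would lie in distinct channels $C_i\ne C_j$ of $M\setminus\{a,b\}$. Since $n\ge 3$ there is a third channel $C_k$ with $k\notin\{i,j\}$; its closure $\overline{C_k}=C_k\cup\{a,b\}$ is connected, contains both $a$ and $b$, and misses $c$ and $d$. Hence $a$ and $b$ lie in the same component of $M\setminus\{c,d\}$, contradicting mutual separation. The main obstacle is the end-counting of Step 1 --- making precise that $val(a)$ equals the total number of ends localized at the puncture and that these are distributed among the complementary components with $e_i\ge 1$; this rigidity rests on local connectedness, and once it is in place the uniqueness of partners and the non-separation statement follow from the short connectivity arguments above, exactly as in \cite{Bow98}.
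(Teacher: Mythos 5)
Your core argument is correct, and in substance it is the proof the paper intends: the paper supplies no argument of its own for this lemma, deferring to Bowditch \cite{Bow98} (the label even mirrors his Lemma 3.8), and Bowditch's argument runs along exactly your lines --- each channel $C_i$ of $M\setminus\{a,b\}$ has frontier $\{a,b\}$ because points of $M^*(3+)$ are by definition not global cut points, the $val(a)$ ends of $M\setminus\{a\}$ distribute among the channels with at least one apiece and hence exactly one apiece, uniqueness of the partner follows by re-counting those ends against the channels of $M\setminus\{a,c\}$, and non-mutual separation comes from the third-channel connectivity trick (which in fact shows an $\approx$-pair is inseparable by any cut pair, a strengthening the paper invokes later). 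Your ``main obstacle'' is indeed fillable by standard Freudenthal-end facts for generalized Peano continua: $M\setminus\{a\}$ is exhausted by complements of small balls about $a$, so once the ball misses $b$ every end eventually lives in exactly one $C_i$, and $a\in\Fr(C_i)$ forces at least one end per channel. One small point to flag: the pigeonhole $\sum_i e_i=n$, $e_i\ge 1$ $\Rightarrow$ $e_i=1$ tacitly assumes $val(a)$ is finite, which the bare definition of $M^*(3+)$ does not impose.

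The substantive caveat concerns what the statement asserts. What you prove is that every $\approx$-class contains \emph{at most} two points (your own phrasing, ``the nontrivial $\approx$-classes are exactly pairs,'' quietly allows singletons). If ``partitioned into pairs'' is read as also claiming that every point of $M^*(3+)$ actually has a partner, that half is genuinely non-topological and cannot be recovered from your announced use of the group action (conical $\Rightarrow$ not a global cut point): in the finite graph $K_4$ every vertex is a valence-$3$ local cut point of a Peano continuum without global cut points, yet $\N(a,b)=2$ for every pair of vertices, so all classes would be singletons. In the paper's development the existence of partners for points of $M^*(3+)$ is established only later, in Theorem \ref{theorem:collecting conical}(2), using the convergence dynamics through Lemma \ref{lemma:control theorem} --- machinery that is not yet available where this lemma is stated, which is further evidence that the intended content here is Bowditch's topological Lemma 3.8: nontrivial classes are pairs, and distinct pairs do not mutually separate. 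Read that way, your proof is complete and follows the same route as the cited argument; read as asserting the full partition of $M^*(3+)$, it has a gap at the existence step.
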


\begin{lemma}
\label{lemma:*3.1}
The relation $\sim$ is an equivalence relation on $M^*(2)$.
\end{lemma}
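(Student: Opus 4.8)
The plan is to dispatch reflexivity and symmetry immediately from the definition and then put all the work into transitivity. Reflexivity holds because $x=x$ is one of the two defining alternatives, and symmetry holds because $\mathcal{U}(x,y)=\mathcal{U}(y,x)$, so $\mathcal{N}(x,y)=\mathcal{N}(y,x)$. For transitivity, suppose $x\sim y$ and $y\sim z$ with $x,y,z$ pairwise distinct (if two of them coincide the conclusion is immediate). Write the two components of $M\setminus\{x,y\}$ as $A_1,A_2$ with $z\in A_1$, and the two components of $M\setminus\{y,z\}$ as $B_1,B_2$ with $x\in B_1$. The goal is to prove $\mathcal{N}(x,z)=2$.

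First I would record a frontier lemma: since $x,y,z\in M^*(2)$ are not global cut points, every component $C$ of $M$ minus one of these cut pairs has frontier equal to the whole pair, because a component whose frontier is a single point would exhibit that point as a global cut point. In particular $\operatorname{Fr}(A_i)=\{x,y\}$ and $\operatorname{Fr}(B_i)=\{y,z\}$. Using connectedness together with these frontiers, I would show $A_2\subseteq B_1$ and $B_2\subseteq A_1$: for instance $A_2$ is connected and misses $\{y,z\}$, so it lies in $B_1$ or $B_2$, and it cannot lie in $B_2$ because $x\in\operatorname{Fr}(A_2)\cap B_1$ while $\overline{B_2}$ misses $x$. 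This yields the decomposition $M\setminus\{x,y,z\}=A_2\sqcup(A_1\cap B_1)\sqcup B_2$, and hence $M\setminus\{x,z\}=\bigl(A_2\cup\{y\}\cup B_2\bigr)\sqcup\bigl(A_1\cap B_1\bigr)$.

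The heart of the argument is to show this is an honest separation into exactly two pieces. The set $A_2\cup\{y\}\cup B_2$ is connected, since its two connected parts share the limit point $y$, and $A_1\cap B_1$ is nonempty: otherwise $A_1=B_2\cup\{z\}$, forcing $x\in\operatorname{Fr}(A_1)\subseteq B_2\cup\{y,z\}$, a contradiction. The delicate point is that the two pieces are genuinely separated, i.e.\ $y\notin\overline{A_1\cap B_1}$. For this I would use the local structure at $y$: because $y$ has valence $2$ and $M$ is locally connected, $y$ has a neighborhood basis of connected open sets $W$ (missing $x,z$) with $W\setminus\{y\}$ having exactly two components $W_1,W_2$. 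Comparing with the two cut pairs through $y$ shows one of $W_1,W_2$ lies in $A_1$ and the other in $A_2$, and likewise one lies in $B_1$ and the other in $B_2$; since $A_2\subseteq B_1$ and $B_2\subseteq A_1$, the $A_1$-piece coincides with the $B_2$-piece, so $(W\setminus\{y\})\cap A_1\cap B_1=\emptyset$ and thus $y\notin\overline{A_1\cap B_1}$. Together with the closure computation this gives $\mathcal{N}(x,z)\ge 2$. The same local picture applied at $x$ (using $\operatorname{val}(x)=2$) gives the reverse bound: every component of $M\setminus\{x,z\}$ has $x$ in its frontier, hence meets a small punctured neighborhood $W\setminus\{x\}=W_1\sqcup W_2$ of $x$, and since each $W_i$ is connected it lies in a single component, forcing at most two components. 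Therefore $\mathcal{N}(x,z)=2$ and $x\sim z$.

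I expect the main obstacle to be the local-structure input at a valence-$2$ point, namely the existence of a neighborhood basis of connected open sets whose puncture has exactly two components, and the verification that each component of a cut-pair complement meets such a punctured neighborhood in a single one of its two pieces. This is precisely where the two-ends hypothesis is used, and where the argument parallels Bowditch's reasoning in the hyperbolic case, so I would isolate it as a short preliminary lemma about local cut points in Peano continua and then feed it into both the lower and upper bounds above.
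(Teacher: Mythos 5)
Your proposal is correct, and it is essentially the argument the paper intends: the paper gives no proof of this lemma, deferring to Bowditch's arguments in \cite{Bow98}, and your component bookkeeping ($A_2\subseteq B_1$, $B_2\subseteq A_1$, the decomposition of $M\setminus\{x,z\}$, and the frontier lemma via the no-global-cut-point hypothesis) together with the valence-$2$ input at $y$ (to get $y\notin\overline{A_1\cap B_1}$) and at $x$ (to cap $\N(x,z)$ at $2$) is precisely that standard ends-at-a-point argument. The preliminary lemma you flag --- that a valence-$2$ point of a Peano continuum has a neighborhood basis of connected open sets $W$ with $W\setminus\{y\}$ having exactly two components, each accumulating at $y$ --- is true and provable by the inverse-limit definition of $\Ends(M\setminus\{y\})$ (for connected open $W$ every component of $W\setminus\{y\}$ has $y$ in its closure, the bonding maps on components are surjective, so the component counts are at most $2$ and eventually equal $2$), so isolating it as you suggest closes the only nontrivial gap.
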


We say that a cut pair $\{c,d\}$ in $M$ {\it separates} a subset $C\subset M$ if $C$ is contained in at least two distinct components of $M\setminus\{c,d\}$.

\begin{lemma}
\label{lemma: separate implies cyclic}
Let $a,b,c,d\in M^*(2)$. If $a\sim b$ and $\{c,d\}$ separates $\{a,b\}$, then $c\sim d\sim a\sim b$, and the pairs $\{a,b\}$ and $\{c,d\}$ mutually separate.

\end{lemma}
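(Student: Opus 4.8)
The plan is to route everything through the two complementary components of $M\setminus\{a,b\}$ and to pin down \emph{exact} component counts using the valence-$2$ hypothesis, finishing with transitivity of $\sim$.

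First I would record the easy structural facts. Since $a,b,c,d\in M^*(2)\subseteq\C$ are conical limit points, none of them is a global cut point (a global cut point is parabolic, and by Proposition \ref{prop:conical vs parabolic} a conical point is not parabolic); hence $M\setminus\{x\}$ is connected for each $x\in\{a,b,c,d\}$, and both $\{a,b\}$ and $\{c,d\}$ are genuine cut pairs. As $\{c,d\}$ separates $\{a,b\}$ we must have $a\neq b$ and $c,d\notin\{a,b\}$. Writing $M\setminus\{a,b\}=U\sqcup V$ (exactly two components, since $a\sim b$ gives $\N(a,b)=2$), a standard argument shows $\overline U=U\cup\{a,b\}$ and $\overline V=V\cup\{a,b\}$: the frontier of each component lies in $\{a,b\}$, and if, say, $a\notin\overline U$ then $U$ would be clopen and proper in the connected space $M\setminus\{b\}$. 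In particular $\overline U\cap\overline V=\{a,b\}$.

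Next I would obtain the mutual separation almost for free. The points $c,d$ lie in $U\cup V$; if both lay in $U$, then the connected set $\overline V\supseteq\{a,b\}$ would sit inside $M\setminus\{c,d\}$, placing $a$ and $b$ in a common component and contradicting the hypothesis. Hence, after relabelling, $c\in U$ and $d\in V$, so $c$ and $d$ lie in different components of $M\setminus\{a,b\}$; together with the hypothesis that $a,b$ lie in different components of $M\setminus\{c,d\}$, this is exactly the statement that $\{a,b\}$ and $\{c,d\}$ mutually separate. The heart of the proof is then an exact count of components, for which I would use the local structure of a valence-$2$ point: since each of $a,b,c,d$ has valence $2$ and is not a cut point, it has arbitrarily small connected neighborhoods whose complement-at-the-point has exactly two components (``two prongs''). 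Consider $\overline U\setminus\{c\}$. Because $M\setminus\{c\}=(\overline U\setminus\{c\})\cup\overline V$ is connected and meets $\overline V$ only along $\{a,b\}$, every component of $U\setminus\{c\}$ must accumulate at $a$ or at $b$; and the two prongs at $a$, exactly one of which points into $U$ (the other into $V$, as $a\in\overline U\cap\overline V$), show that exactly one component accumulates at $a$, and likewise exactly one at $b$. Thus $\overline U\setminus\{c\}$ has at most the two components $P_a\ni a$, $P_b\ni b$, and the separation hypothesis forces $P_a\neq P_b$, so there are exactly two, with $P_a=\{a\}\cup K_a$ and $P_b=\{b\}\cup K_b$ for connected $K_a,K_b\subseteq U$. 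The same analysis applied to $\overline V\setminus\{d\}$ gives two components $Q_a\ni a$ and $Q_b\ni b$. Gluing along $\{a,b\}$ (which the two pieces share and nowhere else) shows $M\setminus\{c,d\}$ has exactly the two components $P_a\cup Q_a$ and $P_b\cup Q_b$; hence $\N(c,d)=2$ and $c\sim d$. A parallel computation for $\{a,c\}$ — delete $a$ from the ``flap'' decomposition $M\setminus\{c\}=\overline V\cup P_a\cup P_b$, using that $\overline V\setminus\{a\}=V\cup\{b\}$ is connected while $P_a\setminus\{a\}=K_a$ is connected and attached to the rest only through the deleted point $a$ — yields $\N(a,c)=2$, so $a\sim c$. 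Finally, since $\sim$ is an equivalence relation on $M^*(2)$ by Lemma \ref{lemma:*3.1}, from $a\sim b$, $a\sim c$, and $c\sim d$ I conclude $c\sim d\sim a\sim b$.

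I expect the main obstacle to be the exact component count, in particular ruling out ``extra'' complementary flaps. The upper bound on the number of components is precisely where valence $2$ must be used, and the argument is delicate because a subcontinuum such as $\overline U$ need not be locally connected at $a$ or $b$; one therefore has to reason with the components of the open set $U\setminus\{c\}$ and their accumulation at $\{a,b\}$, keeping careful track of closures so that distinct flaps meet only in the relevant cut points. Establishing the local two-prong description of a non-cut valence-$2$ point from the definition of valence via ends of $M\setminus\{x\}$ is the technical linchpin on which the whole count rests.
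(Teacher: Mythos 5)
The paper never writes out a proof of this lemma: it is one of the statements deferred to Bowditch's arguments in \cite{Bow98}, which proceed by exactly the kind of end-and-valence counting you propose, so your proposal should be judged on its own correctness. Its architecture is sound, and every step you actually carry out checks: the closure identities $\overline{U}=U\cup\{a,b\}$ and $\overline{V}=V\cup\{a,b\}$; the observation that $c$ and $d$ must fall into different components of $M\setminus\{a,b\}$ (else $\overline{V}$ is a connected subset of $M\setminus\{c,d\}$ containing both $a$ and $b$), which yields mutual separation; the fact that every component of $U\setminus\{c\}$ accumulates at $a$ or $b$ (else it would be clopen and proper in the connected set $M\setminus\{c\}$); the prong-containment argument showing that at most one component of $U\setminus\{c\}$ accumulates at each of $a$ and $b$ (any component accumulating at $a$ must contain the $U$-prong at $a$, so there is only one); the gluing along $\{a,b\}$ giving $\N(c,d)=2$; the flap computation giving $\N(a,c)=2$; and the appeal to transitivity via Lemma \ref{lemma:*3.1}. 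Two small remarks: your component counts never use $val(c)=val(d)=2$, only that $c,d$ are not global cut points, so you in fact prove slightly more than asked (this matches Bowditch's original formulation, where membership of the separating pair in $M(2)$ is derived rather than assumed); and your detour through parabolicity to see that $a,b,c,d$ are not global cut points is unnecessary, since the exclusion of global cut points is built into the definition of $M(n)$.

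The one genuine debt is the claim you yourself flag as the linchpin: that a non-cut point $x$ with $val(x)=2$ has arbitrarily small connected open neighborhoods $N$ with $N\setminus\{x\}$ having exactly two components. This is true but not immediate from the definition of valence as $\big|\Ends(M\setminus\{x\})\big|$, and your proposal asserts it without proof; the danger is precisely the one you anticipate, namely that $N\setminus\{x\}$ could a priori have infinitely many components while $M\setminus\{x\}$ has only two ends, if some components contained no end. To discharge it: choose radii $r_i\downarrow 0$ with $\overline{B(x,r_{i+1})}\subseteq B(x,r_i)$ and let $P_i$ be the component of $x$ in $B(x,r_i)$; then $\overline{P_{i+1}}\subseteq P_i$, the sets $C_i=M\setminus P_i$ form a compact exhaustion of $Y=M\setminus\{x\}$, and the components of $Y\setminus C_i=P_i\setminus\{x\}$ compute $\Ends(Y)$. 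Every component $Q$ of $P_i\setminus\{x\}$ accumulates at $x$ (otherwise $Q$ is clopen and proper in the connected set $P_i$, which contains $x\notin Q$), hence $Q$ meets $P_{i+1}\setminus\{x\}$ and so contains a component of it; thus the bonding maps of the inverse system are surjective, and a recursive choice of a thread through $Q$ shows every component of $P_i\setminus\{x\}$ contains at least one end of $Y$. Since distinct components carry disjoint sets of ends, $val(x)=2$ caps the number of components at two, with exactly two once $i$ is large enough that the two ends are separated. With this lemma inserted, your proof is complete and constitutes a correct, self-contained substitute for the citation to \cite{Bow98}.
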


\medskip

An argument similar to that of Bowditch \cite{Bow98} shows that there are no singleton $\sim$-classes in $M^*(2)$; consequently, a $\sim$-class in $M^*(2)$ consists of either a cut pair or a cyclically separating collection of cut pairs. The closure of a $\sim$-class $\nu$ containing at least three elements will be called a {\it necklace}. Notice that if $\nu$ is infinite, then $\overline{\nu}$ may contain parabolic points. Lastly remark that, because cut pairs cannot be separated by global cut points neither can $\sim$-classes or their closures.

\subsection{Ends of generalized continua}
\label{subsection: Ends of Spaces}

In this section we review ends of spaces. Roughly speaking the number of ends of a connected space $X$ counts the number of components at infinity in $X$. A more detailed discussion about ends of spaces may be found in Section 3 of \cite{Gui13} and  Section I.9 of \cite{BQ01}.

A {\it continuum} is a compact, connected, locally connected Hausdorff space. A {\it generalized continuum} is a connected, locally compact, locally connected, $\sigma$-compact, Hausdorff space. A {\it generalized Peano continuum} is a metrizable generalized continuum. A nested sequence $C_1\subseteq C_2 \subseteq C_3\subseteq\cdots$ of compact sets in a space $X$ is called an {\it exhaustion of X} if $X=\mathop{\bigcup}_{i=1}^{\infty} C_i $ and $C_i\subset\Int(C_{i+1})$ for every $i$. Note that $\sigma$-compactness implies that a generalized continuum can always be covered by a sequence of compact sets, and by local compactness we may always assume that $C_i\subset\Int(C_{i+1})$.  Also note that, for generalized Peano continua the components of the complement of a compact set are path components of the complement. The context of this paper makes it worth noting that a connected open subset of a Peano continuum is a generalized Peano continuum. 

Let $X$ be a generalized continuum and let $\{C_i\}_{i=1}^{\infty}$ be an exhaustion of $X$. Define $\mathcal{U}(C_i)$ to be the set of components of $X\setminus C_i $. Because the sequence $\{C_i\}_{i=1}^{\infty}$ is nested the sets $\U(C_i)$ form an inverse sequence:\[ \U(C_1)\leftarrow\U(C_2)\leftarrow\U(C_3)\leftarrow\cdots \]
The set $\Ends(X)$ is defined to be $\varprojlim\bigset{\U(C_i)}{i\geq 1}$. The cardinality of $\Ends(X)$ is the {\it number of ends} of the space $X$. The set $\Ends(X)$ is independent of choice of $\{C_i\}$ (see Remark I.9.2 (a) of \cite{BQ01}). Let $G$ be a finitely generated discrete group acting properly and cocompactly on a generalized Peano continuum $X$. We define $\Ends(G)$ to be $\Ends(X)$. We show in Theorem \ref{theorem:ends theorem} that $\Ends(G)$ is independent of the choice of $X$ and agrees with the more traditional notion of ends of a Cayley graph for any finite generating set.

\medskip
The {\it Freudenthal compactification} of $X$ is $X\cup \Ends(X)$ with the topology generated by the basis consisting of all open subsets of $X$ and all sets $\overline{E}_i$ where $E_i\in\U(C_i)$ for some $i\geq 1$ and
\[\overline{E}_i=E_i\cup\bigset{(F_1,F_2,F_3,...)\in \Ends(X)}{\spc F_i=E_i}.\]
It is well known that the Freudenthal compactification is compact and metrizable. The space $\Ends(X)$ is given the subspace topology.

\medskip
Recall that a map between two spaces $f\colon X\rightarrow Y$ is called {\it proper} if for every compact subset $C$ of $Y$ we have $f^{-1}(C)$ is compact. The following well known result can be found as Proposition I.9.11 of \cite{BQ01}.

\begin{proposition}
\label{proposition:Craig's prop}
 Let $f\colon X\rightarrow Y$ be a proper map between generalized Peano continua, then $f$ can be uniquely extended to a continuous map $\hat{f}$ from $X\cup\Ends(X)$ to $Y\cup\Ends(Y)$.
\end{proposition}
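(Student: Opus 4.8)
The plan is to push ends forward using properness, verify that the resulting extension is continuous, and then invoke density together with the Hausdorff property to get uniqueness.

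Fix exhaustions $\{C_i\}$ of $X$ and $\{D_j\}$ of $Y$. Since $f$ is proper, each $f^{-1}(D_j)$ is compact, so it is contained in some $C_{i(j)}$; I would choose the indices $i(j)$ to be nondecreasing in $j$. For $x\in X\setminus C_{i(j)}$ we then have $f(x)\notin D_j$, so $f$ carries $X\setminus C_{i(j)}$ into $Y\setminus D_j$. As each component $E$ of $X\setminus C_{i(j)}$ is connected, its image $f(E)$ is a connected subset of $Y\setminus D_j$ and hence lies in a unique component of $Y\setminus D_j$; this defines a map $\phi_j\colon\U(C_{i(j)})\to\U(D_j)$. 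Given an end $\epsilon=(E_i)\in\varprojlim\U(C_i)$, I would set $\hat f(\epsilon)=(\phi_j(E_{i(j)}))_j$. To see this lies in $\varprojlim\U(D_j)$ one checks compatibility with the bonding maps: since $i(j{+}1)\ge i(j)$ gives $E_{i(j+1)}\subseteq E_{i(j)}$, the images satisfy $f(E_{i(j+1)})\subseteq f(E_{i(j)})$, so they lie in the same component of $Y\setminus D_j$, which is exactly the required commutation. Together with $\hat f|_X=f$ this defines a function $\hat f\colon X\cup\Ends(X)\to Y\cup\Ends(Y)$.

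Next I would verify continuity. At a point $x\in X$ this is immediate, because $X$ is open in its Freudenthal compactification, $f(x)$ has a neighborhood basis of open subsets of $Y$, and the subspace topology on $Y$ agrees with its original topology; thus continuity of $\hat f$ at $x$ reduces to continuity of $f$. The substantive case is continuity at an end $\epsilon=(E_i)$ with image $\eta=(F_j)$, where $F_j=\phi_j(E_{i(j)})$. Given a basic neighborhood $\overline{F}_j$ of $\eta$, I claim $\hat f(\overline{E}_{i(j)})\subseteq\overline{F}_j$. Indeed, for $x\in E_{i(j)}$ we have $f(x)\in f(E_{i(j)})\subseteq F_j\subseteq\overline{F}_j$; and for an end $\epsilon'=(E'_k)$ with $E'_{i(j)}=E_{i(j)}$, the $j$-th coordinate of $\hat f(\epsilon')$ is $\phi_j(E'_{i(j)})=\phi_j(E_{i(j)})=F_j$, so $\hat f(\epsilon')\in\overline{F}_j$. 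Since $\overline{E}_{i(j)}$ is a basic neighborhood of $\epsilon$, this establishes continuity there.

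Finally, uniqueness follows from a standard density argument: $X$ is dense in $X\cup\Ends(X)$ (every basic neighborhood $\overline{E}_i$ of an end contains the nonempty set $E_i\subseteq X$), and $Y\cup\Ends(Y)$ is Hausdorff, being compact metrizable; two continuous maps into a Hausdorff space that agree on a dense set coincide. The main obstacle is the bookkeeping that makes the pushforward simultaneously well defined and continuous at the ends: one must arrange the reindexing $j\mapsto i(j)$ so that the maps $\phi_j$ are compatible with all bonding maps at once, and then match the basic neighborhood $\overline{E}_{i(j)}$ of a point of $\Ends(X)$ with the prescribed $\overline{F}_j$. Once properness is used to produce this reindexing, the rest is formal; note also that independence of the construction from the chosen exhaustions is automatic from uniqueness together with the exhaustion-independence of $\Ends(\cdot)$ recorded above.
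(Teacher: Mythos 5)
Your proof is correct. Note that the paper does not actually prove this proposition --- it cites it as Proposition I.9.11 of Baues--Quintero \cite{BQ01} --- so there is no in-text argument to compare against; what you have written is a complete, self-contained version of the standard construction, and all the essential points are in place. Specifically: properness gives the reindexing $j\mapsto i(j)$ with $f^{-1}(D_j)\subseteq C_{i(j)}$, so $f$ maps $X\setminus C_{i(j)}$ into $Y\setminus D_j$ and connectedness of components yields the maps $\phi_j\colon\U(C_{i(j)})\rightarrow\U(D_j)$; your compatibility check (both $\phi_j(E_{i(j)})$ and the component of $Y\setminus D_j$ containing $\phi_{j+1}(E_{i(j+1)})$ contain the nonempty connected set $f(E_{i(j+1)})$, hence coincide) is exactly what is needed for $\hat f(\epsilon)$ to be a genuine thread in $\varprojlim\U(D_j)$. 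Your continuity verification correctly exploits that every neighborhood of an end $\eta$ contains a basic set $\overline{F}_j$ (since $\eta\notin Y$, the basic sets through $\eta$ are of this form), and that $\overline{E}_{i(j)}$ is a basic neighborhood of $\epsilon$ mapping into $\overline{F}_j$; continuity at points of $X$ is indeed immediate because components of $Y\setminus D_j$ are open ($Y$ being locally connected), so the subspace topology on $Y$ in the Freudenthal compactification agrees with the original one. Uniqueness via density of $X$ (each $E_i$ is nonempty) and the Hausdorff property of the compactification, which the paper records as compact metrizable, is the right argument, and your closing remark that exhaustion-independence of the extension follows from uniqueness is a nice touch. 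The only stylistic observation is that one can avoid the monotone reindexing by defining, for each $j$, the $j$-th coordinate of $\hat f(\epsilon)$ as the component of $Y\setminus D_j$ containing $f(E_k)$ for all sufficiently large $k$; but your bookkeeping accomplishes the same thing and is fully rigorous.
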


The restriction of $\hat{f}$ to $\Ends(X)$ will be denoted by $f^*$, and we say that $f^*$ is the {\it the ends map induced} by $f$.

\medskip
A useful and more geometric way to describe the ends of a generalized Peano continuum $X$ is by proper rays. By {\it proper ray} we mean any proper map $\alpha\colon [0,\infty)\rightarrow X$. Two rays $\alpha$ and $\beta$ are {\it ladder equivalent} if there is a proper map $h$ of the {\it infinite ladder} (or simply {\it ladder})
\[L_{[0,\infty)}=\big([0,\infty)\times \{0,1\}\big)\cup \big(\bbn\times [0,1]\big)\]
such that $\alpha$ and $\beta$ are the {\it sides}, i.e. $\alpha=h|_{[0,\infty)\times \{0\}}$ and $\beta=h|_{[0,\infty)\times \{1\}}$. We will write $\alpha\simeq\beta$ to denote that $\alpha$ is ladder equivalent to $\beta$. The image under $h$ of $n\times [0,1]$ is called a {\it rung}. Let $\mathcal{L}(X)$ be the collection of ladder classes of proper rays in $X$.

Assume $\alpha$ is a proper ray in $X$. By Proposition \ref{proposition:Craig's prop} there is a continuous extension $\hat{\alpha}\colon [0,\infty)\cup\{\infty\}\rightarrow X\cup\Ends(X)$ of $\alpha$, so that $\alpha(\infty)$ is an element of $\Ends(X)$. By Proposition I.9.20 of \cite{BQ01} we have:

\begin{proposition}
\label{prop:ladder prop}
Let be $X$ a generalized Peano continuum. The map $\varphi\colon\mathcal{L}(X)\rightarrow \Ends(X)$ given by setting $\varphi\big([\alpha]_{\simeq}\big)$ equal to $E=\hat{\alpha}(\infty)$ defines a $1-1$ correspondence between $\Ends(X)$ and $\mathcal{L}(X)$.
\end{proposition}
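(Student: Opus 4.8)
The plan is to fix an exhaustion $\{C_i\}$ of $X$ by compact sets and to describe the end $\hat\alpha(\infty)$ attached to a proper ray $\alpha$ very concretely. Since $\alpha$ is proper, $\alpha^{-1}(C_i)$ is compact, hence bounded, so some tail $\alpha([t_i,\infty))$ lies in $X\setminus C_i$ and, being connected, inside a single component $F_i\in\U(C_i)$. The resulting nested sequence $(F_1,F_2,\dots)$ is exactly the point of $\varprojlim\U(C_i)=\Ends(X)$ picked out by the continuous extension $\hat\alpha$ of Proposition \ref{proposition:Craig's prop}, because $\alpha(t)$ eventually enters each basic neighborhood $\overline{F}_i$. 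Throughout I would lean on the one structural feature recorded in Section \ref{subsection: Ends of Spaces}: a generalized Peano continuum is locally path connected, so every component $F_i$ of $X\setminus C_i$ is open and path connected. This is what converts the combinatorial inverse-limit data into actual paths, and it is the property on which the whole argument rests.

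For well-definedness I would take a ladder equivalence $h\colon L_{[0,\infty)}\to X$ with sides $\alpha,\beta$ and fix $i$. Properness of $h$ makes $h^{-1}(C_i)$ compact, hence contained in the part of the ladder with first coordinate at most some integer $N$. The subladder on $[N,\infty)$ is connected and is mapped by $h$ into $X\setminus C_i$, so its image lies in a single component $F_i$; since this image contains the tails of both $\alpha$ and $\beta$, the two rays determine the same $F_i$ for every $i$, that is, $\hat\alpha(\infty)=\hat\beta(\infty)$.

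Injectivity reverses this. Suppose $\hat\alpha(\infty)=\hat\beta(\infty)=(F_1,F_2,\dots)$. Choose a strictly increasing sequence $N_1<N_2<\cdots$ so that $\alpha([N_i,\infty))$ and $\beta([N_i,\infty))$ both lie in $F_i$. For an integer $n\ge N_1$ let $i(n)$ be the largest index with $N_{i(n)}\le n$; then $\alpha(n)$ and $\beta(n)$ both lie in the path-connected set $F_{i(n)}$, so I can choose a rung joining them inside $F_{i(n)}\subseteq X\setminus C_{i(n)}$. For the finitely many $n<N_1$ I use any path in the connected, locally path connected (hence path connected) space $X$. Pasting these rungs to the two sides $\alpha,\beta$ along the integer points gives a continuous map $h$ of the ladder. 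It is proper: any compact $C$ lies in some $C_M$, the two sides meet $C$ in bounded sets, and every rung with $i(n)\ge M$ avoids $C_M\supseteq C$, so $h^{-1}(C)$ is a finite union of compact sets. Thus $\alpha\simeq\beta$.

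Finally, for surjectivity I would realize a prescribed end $(F_1,F_2,\dots)$ by a ray. Picking $x_i\in F_i$ and using $x_{i+1}\in F_{i+1}\subseteq F_i$ together with path connectedness of $F_i$, I join $x_i$ to $x_{i+1}$ by a path in $F_i$ and concatenate these over $[0,\infty)$ to form a ray $\alpha$. The tail $\alpha([i-1,\infty))$ sits inside $F_i$, so $\alpha$ is proper (its preimage of any $C_M$ is bounded) and $\hat\alpha(\infty)=(F_1,F_2,\dots)$. The only genuinely delicate point in all of this is bookkeeping the properness of the ladder map in the injectivity step, namely making the rungs escape every compact set while still attaching exactly at the integer parameters $\alpha(n),\beta(n)$; this is precisely where local path connectedness of the generalized Peano continuum is indispensable, and everything else is routine.
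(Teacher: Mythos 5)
Your proof is correct, but note that the paper offers no argument of its own here: Proposition \ref{prop:ladder prop} is quoted directly as Proposition I.9.20 of \cite{BQ01}, so what you have produced is a self-contained rendering of the standard fact behind that citation rather than a rival to an internal proof. Your three steps are all sound: well-definedness via properness of the ladder map (the subladder beyond $h^{-1}(C_i)$ is connected, hence lands in a single component $F_i$ containing both tails); injectivity by building rungs inside the common components $F_{i(n)}$; and surjectivity by concatenating paths from $x_i$ to $x_{i+1}$ inside $F_i$. You also correctly isolate the two facts everything rests on, both of which the paper records just before the proposition: components of $X\setminus C_i$ are open and path connected (local path connectedness of generalized Peano continua), and the extension $\hat{\alpha}(\infty)$ of Proposition \ref{proposition:Craig's prop} agrees with the nested sequence of components containing tails of $\alpha$, which follows from uniqueness of limits in the metrizable, hence Hausdorff, Freudenthal compactification. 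Two points deserve one explicit sentence each in a final write-up, though neither is a gap: continuity of the pasted map $h$ should be justified by the pasting lemma applied to the locally finite closed cover of $L_{[0,\infty)}$ by the two sides and the rungs, with rung endpoints matching $\alpha(n)$ and $\beta(n)$; and in the properness check one should say that $h^{-1}(C)$ is a closed subset of the union of the compact sets $\alpha^{-1}(C)\times\{0\}$, $\beta^{-1}(C)\times\{1\}$, and the finitely many rungs $\{n\}\times[0,1]$ with $i(n)<M$, hence compact. What your approach buys over the paper's citation is transparency about exactly where local path connectedness is indispensable --- constructing rungs and realizing ends by rays --- at the cost of a page of bookkeeping the paper chose to outsource.
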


\begin{lemma}
\label{lemma: ladder equivalence}
Let $\hat{f}\colon X\cup\Ends(X)\rightarrow Y\cup\Ends(Y)$ be the continuous extension of a proper map $f$ between two generalized Peano continua $X$ and $Y$, and let $f^*$ denote the restriction of $f$ to $\Ends(X)$. Assume that $\varphi_1\colon\mathcal{L}(X)\rightarrow\Ends(X)$ and $\varphi_2\colon\mathcal{L}(Y)\rightarrow\Ends(Y)$ are the bijections as given in Proposition \ref{prop:ladder prop}. Let the map $g\colon\mathcal{L}(X)\rightarrow\mathcal{L}(Y)$ be given by  $g([\alpha])=[f\alpha]$ for every $[\alpha]\in\mathcal{L}(X)$. Then the following diagram commutes:

\[
  \begin{tikzcd}
    \mathcal{L}(X) \arrow{r}{g} \arrow[swap]{d}{\varphi_1} & \mathcal{L}(Y) \arrow{d}{\varphi_2} \\
   \Ends(X)\arrow{r}{f^*} & \Ends(Y)
  \end{tikzcd}
\]
\end{lemma}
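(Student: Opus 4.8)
The plan is to reduce the diagram chase to a single pointwise identity and then to exploit the uniqueness clause of Proposition \ref{proposition:Craig's prop}. The square commutes precisely when, for every ladder class $[\alpha]\in\mathcal{L}(X)$, one has
\[ f^*\big(\varphi_1([\alpha])\big)=\varphi_2\big(g([\alpha])\big). \]
Unwinding the definitions, the left-hand side is $\hat{f}\big(\hat{\alpha}(\infty)\big)$ and the right-hand side is $\widehat{(f\alpha)}(\infty)$, so it suffices to prove
\[ \hat{f}\big(\hat{\alpha}(\infty)\big)=\widehat{(f\alpha)}(\infty). \]
Before doing so I would record that $g$ is well defined. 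Since $\alpha$ and $f$ are both proper and a composition of proper maps is proper (for compact $C\subseteq Y$ we have $(f\alpha)^{-1}(C)=\alpha^{-1}\big(f^{-1}(C)\big)$, which is compact), the composite $f\alpha$ is again a proper ray in $Y$; and if $\alpha\simeq\beta$ via a proper ladder map $h$, then $fh$ is a proper ladder map realizing $f\alpha\simeq f\beta$. Hence $[f\alpha]$ depends only on $[\alpha]$.

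The key observation is that the continuous extension furnished by Proposition \ref{proposition:Craig's prop} is unique. I would form the composite
\[ \hat{f}\circ\hat{\alpha}\colon [0,\infty)\cup\{\infty\}\longrightarrow Y\cup\Ends(Y), \]
which makes sense because $\hat{\alpha}(\infty)\in\Ends(X)\subseteq X\cup\Ends(X)$ lies in the domain of $\hat{f}$. This composite is continuous, being a composition of the continuous maps $\hat{\alpha}$ and $\hat{f}$. Restricted to $[0,\infty)$ it equals $f\circ\alpha=f\alpha$, since $\hat{\alpha}$ restricts to $\alpha$ and $\hat{f}$ restricts to $f$. Thus $\hat{f}\circ\hat{\alpha}$ is a continuous extension of the proper map $f\alpha$ to the Freudenthal compactification $[0,\infty)\cup\{\infty\}$ of the ray $[0,\infty)$; here I note that $[0,\infty)$ is itself a generalized Peano continuum, so Proposition \ref{proposition:Craig's prop} applies to $f\alpha$ and produces $\widehat{(f\alpha)}$.

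By the uniqueness assertion in Proposition \ref{proposition:Craig's prop}, any two continuous extensions of $f\alpha$ agree, whence $\hat{f}\circ\hat{\alpha}=\widehat{(f\alpha)}$. Evaluating both sides at $\infty$ yields exactly $\hat{f}\big(\hat{\alpha}(\infty)\big)=\widehat{(f\alpha)}(\infty)$, which is the desired identity, and the square commutes. The argument is entirely formal once the uniqueness of continuous extensions is invoked; the only point demanding care is verifying that the composite $\hat{f}\circ\hat{\alpha}$ genuinely restricts to $f\alpha$ on $[0,\infty)$ and remains continuous across the point at infinity, so that it qualifies as an extension to which the uniqueness clause of Proposition \ref{proposition:Craig's prop} applies. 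I do not expect any serious obstacle beyond this bookkeeping.
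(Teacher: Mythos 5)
Your proof is correct and follows essentially the same route as the paper: both identify $\hat{f}\circ\hat{\alpha}$ as a continuous extension of $f\alpha$ to $[0,\infty]$ and conclude $\hat{f}\hat{\alpha}(\infty)=\widehat{f\alpha}(\infty)$ by uniqueness of such extensions (you cite the uniqueness clause of Proposition \ref{proposition:Craig's prop}, the paper cites density of $[0,\infty)$ in $[0,\infty]$ together with the Hausdorff property of the target, which is the same fact). Your added verification that $g$ is well defined on ladder classes is a small, harmless bonus the paper leaves implicit.
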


\begin{proof}
Assume the hypotheses and let $\alpha\colon[0,\infty)\rightarrow X$ be a proper ray.  By Proposition \ref{proposition:Craig's prop} there is a continuous extension $\hat{\alpha}\colon [0,\infty]\rightarrow X\cup\Ends(X)$ of $\alpha$, and there is a continuous extension $\widehat{f\alpha}\colon [0,\infty]\rightarrow Y\cup\Ends(Y)$ of $f\alpha$. The composition $\hat{f}\hat{\alpha}$ is also a continuous extension of $\alpha$ to $[0,\infty]$ with range space $Y$. The subspace $[0,\infty)$ is dense in $[0,\infty]$, and extensions of continuous maps into Hausdorff spaces from dense subspaces to their closures are unique. So, we have $\hat{f}\hat{\alpha}(\infty)$ must equal $\widehat{f\alpha}(\infty)$. Thus $\varphi_2g\big([\alpha]\big)=\varphi_2(f\alpha)=\widehat{f\alpha}(\infty)=\hat{f}\hat{\alpha}(\infty)=f^*\big(\hat{\alpha}(\infty)\big)=f^*\varphi_1\big([\alpha]\big)$.
\end{proof}

\subsection{Limit sets, joins and relative quasiconvexity}
In this section let $X$ be a $\delta$-hyperbolic space, let $(G,\bbp)$ be a group acting on $X$ is a cusp uniform action, and let $H$ be any subgroup of $(G,\bbp)$. For a sequence $(h_n)\subset H$ we write $h_n\rightarrow \xi\in\bndry X$ if $h_n x\rightarrow\xi$ for some $x\in X$. Note that if $h_n x\rightarrow\xi$ for some $x$, then $h_n x'\rightarrow\xi$ for any $x'\in X$. The {\it limit set} $\Lambda(H)$ of $H$ is the subset of $\bndry X$ consisting of all limits such limits. The set $\Lambda(H)$ is a closed and $H$-invariant.

Given a subset $\Omega$ of $\bndry X$ containing at least two points, we will denote by $\Jn(\Omega)$ the union of all geodesic lines joining points of $\Omega$. If $\Omega$ is closed, then it follows from a standard diagonal argument that $\Jn(\Omega)$ is closed. The space $\Jn(\Omega)$ is quasi-isometric to a geodesic Gromov hyperbolic space $\gJn(\Omega)$ 
(See \cite{Gro87} Section 7.5.A). In fact, $\gJn(\Omega)$ is finite neighborhood of $\gJn(\Omega)$ in the space $X$ endowed with the length metric. An infinite subgroup $H$ of $(G,\bbp)$ is {\it relatively quasiconvex} if $H$ is parabolic, or $(H,\bbq)$ has a cusp uniform action on $\gJn\big(\Lambda(H)\big)$ where $\bbq=\bigset{Q}{Q=H\cap P\spc\ {\it with}\spc  Q\spc {\it infinite} \spc {\it and}\spc P\in\bbp}$.

The following may be found as Proposition 7.1 of \cite{Hru10}.

\begin{proposition}
\label{proposition:Hruska qc1vqc2}
A subgroup $H$ of $(G,\bbp)$ is relatively quasiconvex if and only if the induced convergence action of $H$ on $\Lambda(H)$ is geometrically finite.
\end{proposition}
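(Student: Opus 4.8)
The plan is to prove the two implications separately, using Bowditch's Proposition \ref{proposition:cusp uniform is geom fin} for the forward direction and Yaman's converse \cite{Yam04} for the reverse, after first pinning down the geometry of the join space. Throughout, the key preliminary observation is that $\gJn(\Lambda(H))$ is a proper geodesic $\delta$-hyperbolic space whose Gromov boundary is canonically $\Lambda(H)$: since $\Lambda(H)$ is closed the set $\Jn(\Lambda(H))$ is closed, a standard $\delta$-hyperbolic argument identifies $\bndry\Jn(\Lambda(H))$ with $\Lambda(H)$, and passing to the quasi-isometric model $\gJn(\Lambda(H))$ preserves the boundary. Moreover, because $\Lambda(H)$ is $H$-invariant, $\Jn(\Lambda(H))$ is $H$-invariant, so $H$ acts on $\gJn(\Lambda(H))$, and this action is proper: it is the restriction of the proper isometric $G$-action on $X$ to an invariant subspace, transported across the quasi-isometry.

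For the forward implication, suppose $H$ is relatively quasiconvex. If $H$ is parabolic then $\Lambda(H)$ is the single fixed point $p$ of $H$; by Proposition \ref{prop:conical vs parabolic} the point $p$ is not conical, while $H$ stabilizes $p$ and acts (vacuously) properly and cocompactly on $\Lambda(H)\setminus\{p\}=\emptyset$, so $p$ is a bounded parabolic point and the action is geometrically finite. Otherwise $(H,\bbq)$ acts on $\gJn(\Lambda(H))$ as a cusp uniform action, so Proposition \ref{proposition:cusp uniform is geom fin} shows that the induced action of $H$ on $\bndry\gJn(\Lambda(H))$ is geometrically finite; under the identification $\bndry\gJn(\Lambda(H))=\Lambda(H)$ this is exactly the assertion.

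For the reverse implication, assume the convergence action of $H$ on $\Lambda(H)$ is geometrically finite. If $\Lambda(H)$ is a single point then $H$ is parabolic and there is nothing more to prove, so assume $\Lambda(H)$ has at least two points. Each parabolic point of the $H$-action is fixed by an infinite parabolic subgroup of $H$, which stabilizes a parabolic point of $G$ and hence has the form $H\cap P$ for some $P\in\bbp$; these are precisely the infinite intersections recorded in the peripheral collection $\bbq$. We thus have $H$ acting properly on the proper hyperbolic space $\gJn(\Lambda(H))$ with $\bndry\gJn(\Lambda(H))=\Lambda(H)$, with the induced boundary action geometrically finite and parabolic structure $\bbq$. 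Yaman's theorem \cite{Yam04}, the converse to Proposition \ref{proposition:cusp uniform is geom fin}, then supplies the required $H$-invariant system of disjoint horoballs based at the parabolic points whose complement has $H$-cocompact quotient in $\gJn(\Lambda(H))$, so that $(H,\bbq)$ acts cusp uniformly; that is, $H$ is relatively quasiconvex.

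The routine points, namely the identification of $\bndry\gJn(\Lambda(H))$ with $\Lambda(H)$ and the transfer of properness across the quasi-isometry, require only standard $\delta$-hyperbolic geometry. The substantive step, and the one I expect to be the main obstacle, is the reverse direction: converting the combinatorial datum of geometric finiteness (every boundary point conical or bounded parabolic) into the metric datum of a horoball system with cocompact complement, while verifying that the maximal parabolic stabilizers of the $H$-action are exactly the groups $H\cap P$ in $\bbq$ and not some larger parabolics. This is where the weight of Yaman's theorem is needed, and care must be taken that the peripheral structure produced by that theorem coincides with $\bbq$.
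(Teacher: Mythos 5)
The paper itself offers no proof of this proposition: it is quoted directly from Proposition 7.1 of \cite{Hru10}, so your sketch can only be measured against Hruska's argument, which it does follow in outline (Bowditch's Proposition \ref{proposition:cusp uniform is geom fin} one way, Yaman's converse the other). The forward direction as you give it is essentially complete, modulo the standard $H$-equivariant identification $\bndry\gJn\big(\Lambda(H)\big)=\Lambda(H)$, which is Gromov 7.5.A and which this paper reproves in a special case as Lemma \ref{lemma:actually the limit set}.

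The reverse direction, however, has a genuine gap exactly where you predicted trouble. Yaman's theorem takes the abstract geometrically finite convergence action of $H$ on $\Lambda(H)$ as input and \emph{constructs a new} proper hyperbolic space on which $(H,\bbq)$ acts cusp uniformly; it does not produce a horoball system inside $\gJn\big(\Lambda(H)\big)$. Since the definition of relative quasiconvexity used in this paper demands cusp uniformity of the \emph{given} action on $\gJn\big(\Lambda(H)\big)$, your sentence claiming that Yaman ``supplies the required $H$-invariant system of disjoint horoballs \ldots in $\gJn(\nu)$-style joins'' asserts more than the theorem gives. What remains after Yaman --- and this is the real content --- is to transfer cusp uniformity to the join: one must exhibit horoballs in $\gJn\big(\Lambda(H)\big)$ (e.g.\ induced from the ambient $G$-equivariant horoballs in $X$), prove that $H$ acts cocompactly on the truncated join, and show that horoballs based at parabolic points outside $\Lambda(H)$ meet the join in uniformly bounded sets, with the resulting peripheral structure matching $\bbq$. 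That this takes genuine geometric work is visible in the present paper itself: in the special case of necklace stabilizers it devotes Lemmas \ref{lemma:cocompact on truncation}, \ref{lemma:cocompact on horoball}, \ref{lemma:horoball based in nu}, \ref{lemma:unifly bndd} and Corollaries \ref{lemma:cocompact retricted truncation}, \ref{corollary:cusp uniform} to precisely this verification. Two smaller omissions: Yaman's theorem requires a perfect compactum, so the elementary case $\big|\Lambda(H)\big|=2$ (a two-ended non-parabolic $H$, whose two-point limit set consists of conical points and whose action is therefore geometrically finite) falls outside it and must be handled directly via the quasi-line $\gJn\big(\Lambda(H)\big)$ with $\bbq=\emptyset$; and the hypotheses of \cite{Yam04} on parabolic stabilizers need to be checked or discharged for the groups $H\cap P$, since nothing in the proposition's statement guarantees them.
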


We will implicitly be using the following proposition through out Section \ref{section:Reduction} and Section \ref{section:Local Cut Points in relbndry}. Proposition \ref{proposition:Hruska Thm 9.1} is distillate from proof of Theorem 9.1 of \cite{Hru10}.

\begin{proposition}
\label{proposition:Hruska Thm 9.1}
Let $H$ be a relatively quasiconvex subgroup of $G$, and let $x\in\Lambda(H)$ then the following hold:
\begin{enumerate}
\item $x$ is a conical limit point of the induced action of $H$ on $\Lambda(H)$ if and only if it is a conical limit point of the action of $G$ on $\relbndryG$.
\item $x$ is a bounded parabolic point of the induced action of $H$ on $\Lambda(H)$ if and only if it is a bounded parabolic point of the action of $G$ on $\relbndryG$.
\end{enumerate}
\end{proposition}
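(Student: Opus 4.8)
The plan is to prove the two ``forward'' implications directly from the convergence dynamics and then obtain the two ``backward'' implications formally, using that both actions are geometrically finite. Throughout I would use the dichotomy that a point of $\Lambda(H)$ (resp.\ of $\relbndryG$) is either conical or bounded parabolic: for the $G$--action this is Proposition \ref{proposition:cusp uniform is geom fin}, and for the induced $H$--action on $\Lambda(H)$ it is exactly the geometric finiteness supplied by relative quasiconvexity via Proposition \ref{proposition:Hruska qc1vqc2}. I would also record the elementary observation that an infinite--order $g\in H$ is loxodromic for the $G$--action if and only if it is loxodromic for the $H$--action, since both fixed points of such a $g$ lie in $\Lambda(\langle g\rangle)\subseteq\Lambda(H)$. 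In particular $H$ has no element that is loxodromic for $G$ but not for $H$.

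First I would prove the forward direction of (1): if $x$ is conical for $H$ acting on $\Lambda(H)$, then $x$ is conical for $G$ acting on $\relbndryG$. Choose $h_n\in H$ and distinct $\alpha,\beta\in\Lambda(H)$ with $h_nx\to\alpha$ and $h_ny\to\beta$ for every $y\in\Lambda(H)\setminus\{x\}$. Applying the convergence property of the $G$--action to $(h_n)$ and passing to a subsequence yields points $A,B\in\relbndryG$ with $h_nz\to A$ locally uniformly on $\relbndryG\setminus\{B\}$. Choosing $y_0\in\Lambda(H)\setminus\{x\}$ with $y_0\neq B$ (possible once $\Lambda(H)$ has at least three points; the elementary cases are checked directly) forces $A=\beta$, and since $h_nx\to\alpha\neq\beta=A$ the point $x$ cannot lie off $B$, so $x=B$. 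Hence $h_nz\to\beta$ for all $z\in\relbndryG\setminus\{x\}$ while $h_nx\to\alpha$, which is precisely the statement that $x$ is conical for the $G$--action.

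Next I would prove the forward direction of (2): if $x$ is bounded parabolic for $H$, then $x$ is bounded parabolic for $G$. Let $P=\Stab_H(x)$, an infinite parabolic group for the $H$--action whose unique fixed point in $\Lambda(H)$ is $x$. By the loxodromic correspondence above, $P$ contains no element loxodromic for $G$. To see that $x$ is parabolic for $G$ it remains to check that $P$ fixes no second point of $\relbndryG$: for any escaping sequence in $P$ the attracting and repelling points of the convergence dynamics are limits of $H$--translates, hence lie in $\Lambda(H)$, so a second global fixed point would give a fixed point of $P$ in $\Lambda(H)\setminus\{x\}$, contradicting uniqueness. Thus $P$ is parabolic in $G$ and $x$ is a parabolic point of the $G$--action; being geometrically finite, the $G$--action then makes $x$ a bounded parabolic point, since a parabolic point cannot be conical by Proposition \ref{prop:conical vs parabolic}.

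Finally I would deduce the backward directions from these two implications and the dichotomies. If $x\in\Lambda(H)$ is conical for $G$, then it is conical or bounded parabolic for $H$; were it bounded parabolic for $H$, the forward direction of (2) would make it bounded parabolic, hence parabolic, for $G$, contradicting Proposition \ref{prop:conical vs parabolic}, so $x$ is conical for $H$, giving the backward direction of (1). Symmetrically, if $x$ is bounded parabolic (hence not conical) for $G$, then it is conical or bounded parabolic for $H$; were it conical for $H$, the forward direction of (1) would make it conical for $G$, again a contradiction, so $x$ is bounded parabolic for $H$, giving the backward direction of (2). I expect the main obstacle to be the two dynamical steps in the forward directions, namely matching the attracting/repelling data $(A,B)$ produced by the abstract convergence property to the prescribed endpoints, and confining the fixed points of $P$ to $\Lambda(H)$, whereas the backward directions are then purely formal consequences of geometric finiteness.
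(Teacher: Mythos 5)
Your proposal is correct, but it takes a genuinely different route from the paper, which in fact offers no proof of this proposition at all: the paper simply states that it is ``distillate from proof of Theorem 9.1 of \cite{Hru10},'' and Hruska's argument there is geometric, working inside the cusp-uniform space with the join of the limit set and the truncated horoball structure to match conical and bounded parabolic points by tracking geodesics. You instead argue entirely at the level of the boundary convergence dynamics, and your architecture is sound: the forward direction of (1) correctly pins down the attracting/repelling pair of $(h_n)$ as $(\beta,x)$ once a point $y_0\in\Lambda(H)\setminus\{x\}$ avoiding the repelling point is available; the forward direction of (2) correctly confines any global fixed point of $P=\Stab_H(x)$ to the attracting/repelling pair of an escaping sequence in $P$, hence to $\Lambda(H)$, after which geometric finiteness of the $G$-action (Proposition \ref{proposition:cusp uniform is geom fin}) upgrades ``parabolic'' to ``bounded parabolic''; and the backward directions then follow formally from the two dichotomies (geometric finiteness of $G$ on $\relbndryG$, and of $H$ on $\Lambda(H)$ via Proposition \ref{proposition:Hruska qc1vqc2}) together with the exclusivity in Proposition \ref{prop:conical vs parabolic}. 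What your route buys is a self-contained ``soft'' proof using only the axioms of geometrically finite convergence actions, with no reference to the ambient hyperbolic space; what Hruska's route buys is the geometric information (behavior of geodesics relative to horoballs, identification of the induced peripheral structure $\bbq$) from which the cited theorem extracts more than this equivalence. Two small debts should be made explicit rather than waved at: the degenerate cases $|\Lambda(H)|\le 2$, where conicality of loxodromic fixed points and the vacuously bounded parabolic case each need a one-line check, and the standard convergence-group fact that an infinite subgroup containing no loxodromic element fixes a unique point of the boundary --- this is what simultaneously licenses the uniqueness of the fixed point of $P$ in $\Lambda(H)$ and the elliptic/parabolic/loxodromic classification underlying your loxodromic correspondence between the $H$-action and the $G$-action.
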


\medskip
\section{Reduction}
\label{section:Reduction}
Let $(G,\bbp)$ be a relatively hyperbolic group with tame peripherals. The results in this section can be considered the first step in the proof of Theorem \ref{theorem:Splitting Theorem}. In particular, we show that the proof of Theorem \ref{theorem:Splitting Theorem} can be reduced to the case where the Bowditch boundary $\relbndryG$ has no global cut points.
\subsection{Blocks and branches}
\label{subsection: Blocks and Branches}
In this subsection we look at cut point decompositions of $\relbndryG$. For a more in depth overview see \cite{Bow99c} and \cite{Swe99}.

Let $M$ be a Peano continuum, and let $\Pi$ be the set of global cut points of $M$. We define a relation $R$ on $M$ by $xRy$ if $x$ and $y$ cannot be separated by an element of $\Pi$. In other words, $xRy$ means there does not exist a $z\in\Pi$ such that $x$ and $y$ lie in different components of $M\setminus\{z\}$. Assume $x$ is not a global cut point, then the {\it block} containing $x$ is the collection of points $y\in M$ such that $xRy$, and will be denoted $[x]$. We make the exception that any singleton set satisfying these conditions will not be considered a block. If two blocks $[u]$ and $[v]$ intersect, then they intersect in an element of $P$ or $[u]=[v]$ (see \cite{Swe99}).

If $M$ is the boundary of a relatively hyperbolic group with tame peripherals, then $M$ is a Peano continuum and the relation $R$ naturally associates to $M$ a simplicial bipartite tree $T$ \cite{Bow01}. The vertices of $T$ correspond to elements of $\Pi$ and the set of blocks $B$. Additionally, two vertices $b\in B$ and $p\in \Pi$ are adjacent if $p\in b$.

Now, let $T$ be the Bass-Serre tree for the maximal peripheral splitting $\mathcal{G}$ of $G$ (see Theorem \ref{theorem:Bowditch Accessibility}), and assume that $\R$ and $\sP$ are the collections of component and peripheral vertices respectively. Let $v\in\sP$. A subtree $S$ of $T$ is a {\it branch rooted at} $v$ if it is the closure of a component of $T\setminus\{p\}$. Bowditch \cite{Bow01} has shown the following:

\begin{theorem}
\label{theorem:treelike structure}
Let $(G,\bbp)$ be relatively hyperbolic with tame peripherals and connected Bowditch boundary. Assume that $T$, $\R$, and $\sP$ are as above. There exists an injective map $\beta\colon\sP\cup\bndry T\rightarrow\relbndryG$ and for every $v\in\R$ there exists a unique set $B(v)\subset\relbndryG$ satisfying the following:
\begin{enumerate}
\item $B(v)$ is a proper subcontinuum of $\relbndryG$ for every $v\in\R$, which contains a point not in the image of $\beta$. Additionally, if $u,v\in\R$ are distinct and $B(u)\cap B(v)\neq\emptyset$, then $B(u)\cap B(v)=\bigl\{\beta(p)\bigr\}$ for some $p\in\sP$ adjacent to both $u$ and $v$.
\item If $x\in\sP$ then $\beta(x)$ is a parabolic point.
\item If $(x_n)\subset \sP$ is a sequence of points converging to $i\in\bndry T$, then the sequence $\beta(x_n)$ converges to a point $\iota=\beta(i)$ in $\relbndryG$. Such a point will be referred to as an {\it ideal} point.

\item If $v$ is a vertex in $\R$ then $B(v)$ is block which cannot be separated by a cut point. If $H=\Stab_G(R)$, then the action of $H$ on $B(v)$ is geometrically finite with maximal parabolic subgroups
\[\bbq=\bigset{Q}{Q=stab_G(v)\cap P\spc\ {\it with}\spc  Q\spc {\it infinite} \spc {\it and}\spc P\in\bbp}.\]
Consequently, $(H,\bbq)$ is relatively hyperbolic with $\blockbndry=B(v)$. Additionally, $\blockbndry$ is locally connected (See \cite{Bow99b}).

\item Given a subtree $S$ in $T$ and let $\sP(S)$ and $\R(S)$ be $\sP\cap S$ and $\R\cap S$, respectively. Then the set $\Psi^0(S)=\beta\big(\sP(S)\big)\cup\bigcup_{v\in\R(S)}B(v)$ is connected and its closure is the set $\Psi(S)=\beta\big(\sP (S)\cup\bndry S\big)\cup\bigcup_{v\in\R(S)}B(v)$. If $S$ is a branch in $T$ rooted at $v\in\sP$, then $\Psi(S)$ is called a {\it branch of} $\relbndryG$ {\it rooted at} $\beta(v)$.
\item $\Psi(T)=\relbndryG$
\item If $v$ is a vertex in $\R$, then $B(v)$ does not contain any {\it ideal} points.
\item Every ideal point $\iota$ has a neighborhood base consisting of branches, and any branch containing $\iota$ is a neighborhood of $\iota$.
\item Let $\beta^*\colon(\sP\cup\bndry T)\cup\R\rightarrow\relbndryG$ be the multi-valued map defined by $\beta^*(v)=\beta(v)$ for every $v\in\sP\cup\bndry T$ and $\beta^*(v)=B(v)$ for any $v\in\R$. Then $\beta^*$ is $G$-equivariant.
\end{enumerate}
\end{theorem}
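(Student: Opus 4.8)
The plan is to assemble three ingredients: the block decomposition of a Peano continuum, Bowditch's identification of the resulting cut-point tree with the Bass--Serre tree of the maximal peripheral splitting, and the fact that blocks are boundaries of relatively hyperbolic subgroups. First I would recall from Swenson \cite{Swe99} that for the Peano continuum $M=\relbndryG$ the relation $R$ organizes the blocks and the global cut points $\Pi$ into a simplicial bipartite tree $T'$, whose vertices are the blocks together with the cut points and in which each block is joined to the cut points it contains; that this is genuinely a tree is exactly the cut-point pretree structure. The central step is then to invoke Bowditch \cite{Bow01}, who shows that under the tame-peripheral hypothesis $T'$ is $G$-equivariantly isomorphic to the Bass--Serre tree $T$ of the maximal peripheral splitting of Theorem \ref{theorem:Bowditch Accessibility}; under this isomorphism the peripheral vertices $\sP$ correspond to the global cut points (which are parabolic by the theorem preceding \ref{theorem:Bow Local conn}) and the component vertices $\R$ correspond to the blocks.

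With this identification in hand, I would define $\beta$ on $\sP$ by sending each peripheral vertex to its associated parabolic cut point and, for $v\in\R$, set $B(v)$ to be the corresponding block. Injectivity of $\beta$ on $\sP$ together with properties (2) and (9) are then immediate from the $G$-equivariance of the correspondence. Property (1) — that each $B(v)$ is a proper subcontinuum containing a point outside the image of $\beta$, and that two distinct blocks meet in at most a single cut point $\beta(p)$ — is the elementary structure of the block decomposition recorded in \cite{Swe99}. Property (4), which promotes each block to a boundary, is the deepest input: here I would cite Bowditch \cite{Bow99b} to the effect that $H=\Stab_G(v)$ acts geometrically finitely on $B(v)$ with the stated maximal parabolic subgroups $\bbq$, so that $(H,\bbq)$ is relatively hyperbolic with $\blockbndry=B(v)$; local connectedness of $\blockbndry$ then follows by applying Theorem \ref{theorem:Bow Local conn} to $(H,\bbq)$.

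The remaining properties concern behaviour at the ends of $T$, and this is where the topology of $M$ enters directly. To prove (3) I would take a sequence $(x_n)\subset\sP$ converging to an end $i\in\bndry T$ and consider the nested, decreasing sequence of branches $\Psi(S_n)$ cut off as one travels along the ray in $T$ toward $i$. The crucial point is that their diameters tend to $0$: since $M$ is a Peano continuum, for each $\varepsilon>0$ only finitely many of the pieces cut off by cut points can have diameter exceeding $\varepsilon$, so the nested branches form a null sequence and the $\beta(x_n)$ converge to a single point $\iota=\beta(i)$. This same null-sequence property yields (8), because the branches containing $\iota$ shrink to $\iota$ and so form a neighbourhood base, and it yields (7), since an ideal point is the limit of parabolic points escaping to infinity in $T$ while a block $B(v)$ sits at a single, finite-distance vertex, so the shrinking branches eventually miss $B(v)$. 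Finally (5) and (6) follow from the connectedness of subtrees: $\Psi^0(S)$ is connected because adjacent blocks and cut points overlap in a shared point $\beta(p)$, passing to the closure adjoins precisely the ideal points indexed by $\bndry S$, and $\Psi(T)=M$ because every point of $M$ lies in a block, is a cut point, or is such an ideal limit.

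I expect the main obstacle to be the null-sequence argument underlying (3), (7) and (8). While the algebraic correspondence and the block-to-boundary promotion are quoted from Bowditch, controlling the geometry at the ends of $T$ — showing that the nested branches genuinely contract to points rather than merely nesting — requires careful use of local connectedness (Theorem \ref{theorem:Bow Local conn}), and it is exactly this step that makes $\beta$ well defined on $\bndry T$ and accounts for the continuity asserted in (3) and (8).
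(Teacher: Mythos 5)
Your proposal matches the paper's treatment: the paper offers no proof of this theorem, attributing it wholesale to Bowditch \cite{Bow01} (with \cite{Bow99b} for the block-to-boundary statement in (4) and \cite{Swe99} for the block decomposition), and these are precisely the ingredients you assemble, with your null-sequence sketch for (3), (7), (8) corresponding to Bowditch's Section 8 argument that branches form a null family (the same fact the paper later cites in Lemma \ref{lemma:full branches are null}). One small correction to fold in: under the correspondence between the cut-point tree and the Bass--Serre tree $T$, peripheral vertices in $\sP$ correspond to parabolic points, not all of which are global cut points (a valence-one peripheral vertex yields a non-separating parabolic point), but this does not affect the statement, since (2) only asserts that $\beta(x)$ is parabolic.
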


\medskip

\begin{corollary}
\label{label:ideal point}
A local cut point in $\relbndryG$ must be in a block, i.e. ideal points are not local cut points.
\end{corollary}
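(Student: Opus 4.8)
The plan is to verify the corollary by showing directly that an ideal point $\iota$ fails the local cut point condition, and then to note that every non-ideal point of $\relbndryG$ lies in a block. Recall from Section \ref{subsec:Cut Point Structures In Metric Spaces} the convenient criterion: to show $\iota$ is \emph{not} a local cut point it suffices to prove that every neighborhood $U$ of $\iota$ contains a neighborhood $V\ni\iota$ with $V\subseteq U$ and $V\setminus\{\iota\}$ connected. Part (8) of Theorem \ref{theorem:treelike structure} is tailored to this: $\iota$ has a neighborhood base consisting of branches, and any branch containing $\iota$ is a neighborhood of $\iota$. So given $U$, I would select a branch $\Psi(S)\subseteq U$ with $\iota\in\Psi(S)$ and set $V=\Psi(S)$; the entire problem then reduces to showing that $\Psi(S)\setminus\{\iota\}$ is connected.

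To handle the connectedness, the key point is that $\iota$ lies in the frontier-at-infinity of the branch rather than in its interior portion $\Psi^0(S)$. Writing $\iota=\beta(i)$ with $i\in\bndry S\subseteq\bndry T$, injectivity of $\beta$ (Theorem \ref{theorem:treelike structure}) gives $\iota\notin\beta(\sP(S))$, since $i\in\bndry T$ while $\sP(S)\subseteq\sP$; and part (7) gives $\iota\notin B(v)$ for every $v\in\R(S)$, because blocks contain no ideal points. Hence $\iota\notin\Psi^0(S)$, so that $\Psi^0(S)\subseteq\Psi(S)\setminus\{\iota\}\subseteq\Psi(S)$. By part (5), $\Psi^0(S)$ is connected and $\Psi(S)=\overline{\Psi^0(S)}$. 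Invoking the elementary fact that any set $C$ with $A\subseteq C\subseteq\overline{A}$ is connected whenever $A$ is connected, I conclude that $\Psi(S)\setminus\{\iota\}$ is connected. This produces the required $V$ and shows that $\iota$ is not a local cut point.

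For the ``must be in a block'' phrasing, I would record the dichotomy $\relbndryG=\Psi(T)=\beta(\sP\cup\bndry T)\cup\bigcup_{v\in\R}B(v)$ from part (6): the ideal points $\beta(\bndry T)$ meet no block by part (7), whereas each parabolic point $\beta(p)$ with $p\in\sP$ is not ideal (injectivity of $\beta$) and lies in the block $B(v)$ for any component vertex $v\in\R$ adjacent to $p$. Thus every point of $\relbndryG$ is either an ideal point or a block point, and since ideal points are not local cut points, every local cut point lies in a block.

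The only genuine content here is the connectedness of $\Psi(S)\setminus\{\iota\}$, and the crux of that is recognizing that $\iota$ is not an interior point of the branch, i.e. $\iota\notin\Psi^0(S)$; this rests squarely on part (7) of Theorem \ref{theorem:treelike structure}. Everything else is bookkeeping with the treelike structure together with the standard ``between a connected set and its closure'' observation, so I expect no serious obstacle once Theorem \ref{theorem:treelike structure} is in hand.
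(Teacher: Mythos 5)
Your proposal is correct and follows essentially the same route as the paper: use Theorem \ref{theorem:treelike structure}(8) to get a branch neighborhood base at $\iota$, then show $\Psi(S)\setminus\{\iota\}$ is connected via the sandwich $\Psi^0(S)\subseteq\Psi(S)\setminus\{\iota\}\subseteq\Psi(S)=\overline{\Psi^0(S)}$. If anything, you are slightly more careful than the paper, which asserts the inclusion $\Psi^0(S)\subseteq\Psi(S)\setminus\{\iota\}$ without spelling out that $\iota\notin\Psi^0(S)$ --- a point you correctly ground in part (7) and the injectivity of $\beta$.
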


\begin{proof} Let $\iota$ be an ideal point in $\relbndryG$. Then $\iota$ is contained in some branch, $\Psi(B)$. We first show that $\Psi(B)\setminus\{i\}$ connected. We have that $\Psi^0(B)\subset\Psi(B)\setminus\{\iota\}\subset\Psi(B)$, the set $\Psi^0(B)$ is connected, and we have that $\Psi(B)$ is the closure of $\Psi^0(B)$. So, $\Psi(B)\setminus\{\iota\}$ must be connected. Thus $\relbndryG\setminus\{\iota\}$ is connected.

Now if $U$ is any neighborhood of $\iota$, we have from Theorem \ref{theorem:treelike structure}(8) that there is branch $B\subset U$ containing $\iota$. By the argument in the preceding paragraph $B\setminus\{\iota\}$ is connected and $\iota$ cannot be a local cut point (see Section \ref{subsec:Cut Point Structures In Metric Spaces}).
\end{proof}

\medskip
The following theorem was communicated to the author by Chris Hruska and relies on Theorem \ref{theorem:treelike structure}(4) and known results about the action of the $G$ on $\relbndryG$. In particular, Bowditch has shown \cite{Bow1} that the action of $G$ on $\relbndryG$ is {\it minimal}, i.e. $\relbndryG$ does not properly contain a closed $G$-invariant subset. Because it will be of use in Section \ref{section:classification}, it is worth noting that the action of $G$ on $\relbndryG$ is minimal if and only if $\Orb_G(m)$ is dense for every $m\in\relbndryG$.

\begin{theorem}
\label{theorem :non-trivial splitting}
Assume $(G,\bbp)$ is relatively hyperbolic with tame peripherals, $\relbndryG$ is connected, and $\relbndryG$ contains a global cut point. Then $(G,\bbp)$ splits non-trivially over each edge group in the maximal peripheral splitting of $(G,\bbp)$ that corresponds to an edge connecting a component vertex to a peripheral cut point vertex.
\end{theorem}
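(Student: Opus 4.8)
The plan is to extract a peripheral splitting directly from the treelike structure of Theorem~\ref{theorem:treelike structure} and then verify that each edge in that splitting which joins a component vertex to a peripheral cut point vertex corresponds to a genuinely nontrivial edge in the sense of Bass–Serre theory (no vertex group equals $G$). The hypothesis that $\relbndryG$ has a global cut point is exactly what guarantees, via Theorem~\ref{theorem:global cut point implies splitting}, that the maximal peripheral splitting $\mathcal{G}$ is nontrivial, so the Bass–Serre tree $T$ has more than one vertex and hence has at least one edge of the required ``component-to-peripheral-cut-point'' type (recall the splitting is bipartite by definition, so every edge joins $\R$ to $\sP$).

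$\,$First I would fix such an edge $e$ of $T$, joining a component vertex $v\in\R$ to a peripheral cut point vertex $p\in\sP$, with edge group $E=\Stab_G(e)$. The content of the theorem is that collapsing to this single edge (equivalently, the one-edge splitting determined by the $G$-orbit of $e$) is nontrivial. To see this, I would argue by contradiction using the identification in Theorem~\ref{theorem:treelike structure}(4): the vertex group $\Stab_G(v)=H$ has limit set equal to the block $B(v)$, which by part~(1) is a \emph{proper} subcontinuum of $\relbndryG$ containing a point not in the image of $\beta$. In particular $\Lambda(H)=B(v)\subsetneq\relbndryG$. If the edge splitting were trivial, then one of its vertex groups would be all of $G$; but the component vertex group $H$ has limit set a proper subset of the boundary, and since the $G$-action on $\relbndryG$ is minimal, $\Lambda(G)=\relbndryG$. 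Thus $H\neq G$. Likewise the peripheral vertex group, having limit set contained in $\overline{B(v)}$ (a proper subcontinuum of $\relbndryG$ — again using minimality and the fact that parabolic points $\beta(p)$ are global cut points that do not disconnect blocks off), cannot be all of $G$ either. Hence neither vertex group equals $G$, so the edge splitting is nontrivial over $E$.

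$\,$The main obstacle I expect is the bookkeeping needed to confirm that \emph{each} such edge (not merely one) yields a nontrivial splitting, and in particular that the edge group $E$ is the stabilizer of a parabolic point $\beta(p)$ — i.e.\ that these edges split $G$ over peripheral subgroups as claimed. For this I would lean on the adjacency description preceding Theorem~\ref{theorem:treelike structure}: a block vertex $b$ and a cut point vertex $\pi$ are adjacent precisely when $\beta(\pi)\in B(b)$, so the edge group $E=\Stab_G(v)\cap\Stab_G(p)$ fixes the parabolic point $\beta(p)\in B(v)$ and is therefore contained in the maximal parabolic $P=\Stab_G(\beta(p))\in\bbp$. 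Combined with part~(4)'s description of $\bbq$, this shows $E$ is one of the peripheral intersection subgroups $Q=H\cap P$, confirming the edge is of the asserted type. The argument that the splitting at $e$ is nontrivial then applies uniformly to every orbit of component-to-peripheral edges, since the key input—$\Lambda(\Stab_G(v))=B(v)\subsetneq\relbndryG$ together with minimality—holds for all $v\in\R$ simultaneously.
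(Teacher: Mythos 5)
There is a genuine gap in the central step. You argue that the one-edge splitting determined by the orbit of $e$ is nontrivial because the two \emph{endpoint stabilizers} $\Stab_G(v)$ and $\Stab_G(p)$ are proper subgroups of $G$. But when you collapse the Bass--Serre tree $T$ of the maximal peripheral splitting onto the orbit of a single edge $e$, the vertex groups of the resulting one-edge splitting are not $\Stab_G(v)$ and $\Stab_G(p)$; they are the stabilizers of the connected components of $T$ minus the open edges in $G\cdot e$, which are in general much larger subgroups (generated by many of the original vertex groups). Properness of the two endpoint stabilizers does not rule out that one of these component stabilizers is all of $G$. A toy example: a segment of groups $A *_{C_1} (B *_{C_2} D)$ with $A = C_1 \subsetneq B$ has all vertex stabilizers proper, yet collapsing onto the orbit of the $C_1$-edge yields a trivial splitting, since the other vertex group is $B *_{C_2} D = G$. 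So your contradiction argument, as written, does not establish nontriviality, and the same flaw propagates to your claim that the argument ``applies uniformly to every orbit'' of component-to-peripheral edges. (A smaller slip: the limit set of the peripheral vertex group $\Stab_G(p)$ is the single parabolic point $\beta(p)$, not a subcontinuum containing $B(v)$-type data; that group is proper for the trivial reason that its limit set is a point, but again this is not what triviality of the collapsed splitting is about.)

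The paper's proof supplies exactly the missing ingredient. It argues the contrapositive: if $G$ admits \emph{no} nontrivial splitting over the edge group $G_e$, then (Lemma 12.8 of \cite{HR1}) there is a $G$-invariant subtree $B \subset T$ containing no edge of $G\cdot e$. Since the cut point vertex $p$ is adjacent to at least two component vertices, some component vertex $u$ lies outside $B$, so by Theorem \ref{theorem:treelike structure}(1) the block $B(u)$ is not contained in $\Psi(B)$, whence $\Psi(B)$ is a closed, $G$-invariant (by Theorem \ref{theorem:treelike structure}(9)) \emph{proper} subset of $\relbndryG$ --- contradicting minimality of the $G$-action on the boundary. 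Note that minimality is used on the image $\Psi(B)$ of an invariant subtree, not merely to conclude $\Lambda(\Stab_G(v)) \subsetneq \relbndryG$ as in your sketch. Your final paragraph identifying the edge group with a subgroup of the maximal parabolic $P = \Stab_G(\beta(p))$ is fine and matches Corollary \ref{corollary:theorem :non-trivial splitting}, but it does not repair the nontriviality step.
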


\begin{proof}Assume that $T$ is the Bass-Serre tree for the maximal peripheral splitting of $G$. Assume there exists an edge $e$ in $T$ connecting a component vertex $c$ to a peripheral cut point vertex $p$, also assume that $G$ does not split over the edge group $G_e$ non-trivially. Then there is a $G$-invariant subtree $B$ in $T$ which does not contain $e$ (see \cite{HR1} Lemma 12.8). As a cut point vertex, $p$ is adjacent to at least two component vertices. Because $e\not\subset B$, there is at least one component vertex $u$ which is not in $B$. Then by Theorem \ref{theorem:treelike structure}(1) there is a block $B(u)$ which is not entirely contained in $\Psi(B)$. Thus $\Psi(B)\neq\Psi(T)$. By Theorem \ref{theorem:treelike structure}(9) the map $\beta^*$ is $G$-equivariant, so $\Psi(B)$ is a closed $G$-invariant proper subspace $\relbndryG$. This implies that the action of $G$ on $\relbndryG$ is not minimal, a contradiction.
\end{proof}

\medskip
As a corollary we have:

\begin{corollary}
\label{corollary:theorem :non-trivial splitting}
Assume $(G,\bbp)$ is relatively hyperbolic with tame peripherals, $\relbndryG$ is connected, and let $T=\R\sqcup\sP$ be the Bass-Serre tree for the maximal peripheral splitting of $(G,\bbp)$. Suppose that $p\in\sP$ is a cut point vertex of $T$, and set $P=\Stab_G(p)$. If $H=\Stab_G(v)$ for some $v\in\R$ which is adjacent to $p$, then $G$ splits non-trivially relative to $\bbp$ over an infinite subgroup $G_e$ of $P\cap H$.
\end{corollary}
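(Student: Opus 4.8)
The plan is to deduce this from Theorem~\ref{theorem :non-trivial splitting} by pinning down the edge group and then arranging the splitting to be relative to $\bbp$. First I would note that because $p$ is a cut point vertex, the parabolic point $\beta(p)$ supplied by Theorem~\ref{theorem:treelike structure} is a global cut point of $\relbndryG$; in particular $\relbndryG$ contains a global cut point, so Theorem~\ref{theorem :non-trivial splitting} applies. Since $v\in\R$ is a component vertex adjacent to the peripheral cut point vertex $p$, the edge $e$ joining them is precisely of the type treated there, and therefore $G$ splits non-trivially over $G_e=\Stab_G(e)$.

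The next step is to identify $G_e$ and show it is an infinite subgroup of $P\cap H$. As $T$ is bipartite with $G$-invariant partition $\R\sqcup\sP$ and $v\in\R$, $p\in\sP$, no element of $G$ can invert $e$; hence $G_e=\Stab_G(v)\cap\Stab_G(p)=H\cap P$, which is in particular a subgroup of $P\cap H$. For infiniteness I would invoke the adjacency of $v$ and $p$: by Theorem~\ref{theorem:treelike structure}(1) the point $\beta(p)$ lies in the block $B(v)$, and by part~(4) the action of $H=\Stab_G(v)$ on $B(v)$ is geometrically finite with maximal parabolic subgroups given by the infinite intersections $\Stab_G(v)\cap P'$ with $P'\in\bbp$. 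Since $P=\Stab_G(p)\in\bbp$, the stabilizer of $\beta(p)$ in $H$ is $H\cap P$, one of these maximal parabolics, and hence infinite.

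It remains to make the splitting relative to $\bbp$. The splitting over $G_e$ produced by Theorem~\ref{theorem :non-trivial splitting} arises from the $G$-equivariant collapse of $T$ contracting every edge outside the orbit $G\cdot e$. Every peripheral subgroup is conjugate to a peripheral vertex group and so fixes a vertex of $\sP$ in $T$; because the collapse is equivariant, this vertex maps to a fixed vertex of the collapsed tree, so each peripheral subgroup remains elliptic and is conjugate into a vertex group of the resulting one-edge splitting. This yields the desired splitting of $G$ relative to $\bbp$ over $G_e\leq P\cap H$. I expect the main obstacle to be this final step---checking that ellipticity of the peripherals survives the collapse so that the splitting is genuinely relative to $\bbp$---together with the verification that $G_e$ equals $H\cap P$ and is infinite, both of which rest on the $G$-invariance of the bipartite structure and on part~(4) of Theorem~\ref{theorem:treelike structure}.
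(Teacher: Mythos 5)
Your proposal reaches the right conclusion but takes a genuinely different route on the key point, and one step as written is circular. The paper's proof is shorter: it notes that $\beta(p)$ is a parabolic point (Theorem~\ref{theorem:treelike structure}(2)) and a global cut point of $\relbndryG$ (Theorem~\ref{theorem:treelike structure}(5)), applies Theorem~\ref{theorem :non-trivial splitting} to get the splitting over $G_e$, and then obtains infiniteness of $G_e$ at one stroke from Proposition~10.1 of \cite{Bow1}: since $\relbndryG$ is connected, $G$ admits no non-trivial splitting over a \emph{finite} subgroup relative to $\bbp$, so the edge group must be infinite. You instead identify $G_e=H\cap P$ via the $G$-invariant bipartite structure (correct, and slightly more information than the statement requires) and try to get infiniteness from Theorem~\ref{theorem:treelike structure}(4). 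Your handling of relativity to $\bbp$ --- peripherals are elliptic in $T$ and remain elliptic under the equivariant collapse --- is a legitimate way to make explicit what the paper leaves implicit in the definition of a peripheral splitting.

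The circular step is this: you assert that $H\cap P$ is ``one of these maximal parabolics, and hence infinite,'' but the collection $\bbq$ in Theorem~\ref{theorem:treelike structure}(4) is \emph{defined} as the set of \emph{infinite} intersections $\Stab_G(v)\cap P'$, so membership in $\bbq$ presupposes exactly the infiniteness you are trying to prove. The gap is patchable with tools in the paper: since $v$ and $p$ are adjacent, $\beta(p)\in B(v)$, and geometric finiteness of the action of $H$ on $B(v)$ forces $\beta(p)$ to be either a conical limit point or a bounded parabolic point of that action. It cannot be conical, for by Proposition~\ref{proposition:Hruska Thm 9.1}(1) it would then be a conical limit point of the $G$-action on $\relbndryG$, contradicting Proposition~\ref{prop:conical vs parabolic} since $\beta(p)$ is parabolic for $G$. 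Hence $\beta(p)$ is a bounded parabolic point for $H$, its stabilizer $H\cap P$ is a parabolic subgroup, and parabolic subgroups are infinite by definition. With that substitution your argument goes through; the paper's appeal to Bowditch's Proposition~10.1 simply avoids this analysis entirely.
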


\begin{proof}
Theorem \ref{theorem:treelike structure}(2) gives that $\beta(p)$ is a parabolic point of $\relbndryG$. By hypothesis $p$ separates $T$ into at least two components, Theorem \ref{theorem:treelike structure}(5) gives that $\beta(p)$ is a cut point of $\relbndryG$. The result follows from Theorem \ref{theorem :non-trivial splitting} and Proposition 10.1 of \cite{Bow1}, which says that $\relbndryG$ is connected if and only if $G$ does not split non-trivially over any finite subgroup relative to $\bbp$.
\end{proof}

\subsection{Decompositions and reduction}
A {\it decomposition} $\D$ of a topological space $X$ is a partition of $X$. Associated to $\D$ is the {\it decomposition} {\it space} whose underlying point set is $\D$, but denoted $X/\D$. The topology of $X/\D$ is given by the {\it decomposition map} $\pi\colon X\rightarrow X/\D$, with $x\mapsto D$, and where $D\in\D$ is the unique element of the decomposition containing $x$. A set $U$ in $X/\D$ is deemed open if and only if $\pi^{-1}(U)$ is open in $X$. A subset $A$ of $X$ is called {\it saturated} (or $\D$-saturated) if $\pi^{-1}\big(\pi(A)\big)=A$. The {\it saturation} $\Sat(A)$ of $A$ is the union of $A$ with all $D\in\D$ that intersect $A$. The decomposition $\D$ is said to be {\it upper semi-continuous} if every $D\in\D$ is closed and compact, and for every open set $U$ containing $D$ there exists and open set $V\subset U$ such that $D\subset V$ and $\Sat(V)$ is contained in $U$. In Proposition I.3.1 of \cite{Daverman} Daverman shows that the decomposition map of an upper semi-continuous decomposition is proper. It is then an easy corollary that in an upper semi-continuous decomposition the saturation of a compact set is compact. An upper semi-continuous decomposition $\D$ is called {\it monotone} if the elements of $\D$ are connected. The following is a key characteristic of monotone decompositions and may be found as Proposition I.4.1 of \cite{Daverman}:

\begin{proposition}
\label{prop: daverman monotoneconnected}
Let $\D$ be a decomposition of a space $X$. Then $\D$ is monotone if and only if $\pi^{-1}(A)$ is connected for every connected subset $A$ of $X\setminus\D$.
\end{proposition}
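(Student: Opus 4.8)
The plan is to prove the two implications separately, treating $\D$ as an upper semi-continuous decomposition (which is required for the term ``monotone'' to apply) and reading $A$ as a connected subset of the decomposition space $X/\D$. The reverse implication is immediate: every singleton $\{d\}\subseteq X/\D$ is connected, and $\pi^{-1}\big(\{d\}\big)$ is precisely the decomposition element $D\in\D$ corresponding to $d$; so the hypothesis that $\pi^{-1}(A)$ is connected for every connected $A$ forces each $D\in\D$ to be connected, which together with upper semi-continuity is exactly monotonicity. The substance of the proposition is therefore the forward implication.

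For the forward implication I would argue by contradiction: assume $\D$ is monotone, let $A\subseteq X/\D$ be connected, and suppose $\pi^{-1}(A)=P\sqcup Q$ is a separation into disjoint nonempty sets that are both relatively open, hence also both relatively closed, in $\pi^{-1}(A)$. The first key observation is that, because each element of $\D$ is connected, every fiber $\pi^{-1}\big(\{d\}\big)$ with $d\in A$ is a connected subset of $\pi^{-1}(A)$ and therefore lies entirely in $P$ or entirely in $Q$. Hence $P$ and $Q$ are saturated, so $\pi(P)$ and $\pi(Q)$ are disjoint, nonempty, and cover $A$.

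It then remains to see that $\pi(P)$ and $\pi(Q)$ are closed in $A$, which would contradict connectedness of $A$. Here I would use two facts. First, upper semi-continuity implies that $\pi$ is a closed map: if $C\subseteq X$ is closed and $x\notin\Sat(C)$, then the element $D_x$ through $x$ misses $C$, so applying the usc condition to the open set $X\setminus C\supseteq D_x$ produces an open $V$ with $D_x\subseteq V$ and $\Sat(V)\subseteq X\setminus C$, and one checks $V\cap\Sat(C)=\emptyset$; thus $\Sat(C)=\pi^{-1}\big(\pi(C)\big)$ is closed, so $\pi(C)$ is closed. Second, the restriction of a closed continuous surjection over the preimage of any subspace is again closed: for $C$ closed in $\pi^{-1}(A)$, writing $C=D\cap\pi^{-1}(A)$ with $D$ closed in $X$ gives $\pi(C)=\pi(D)\cap A$, which is closed in $A$ because $\pi(D)$ is closed in $X/\D$. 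Applying this to the relatively closed saturated sets $P$ and $Q$ shows that $\pi(P)$ and $\pi(Q)$ are closed in $A$, producing the desired separation and the contradiction.

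The main obstacle is precisely the descent of the separation from $\pi^{-1}(A)$ to $A$: a priori a quotient map need not carry clopen subsets to clopen subsets, and its restriction over a subspace $A$ that is neither open nor closed need not even be a quotient map. The point is that monotonicity supplies exactly the two ingredients that repair this — connectedness of the fibers makes the separating pieces saturated, and upper semi-continuity makes $\pi$, and hence its restriction over $A$, a closed map — so the saturated relatively closed pieces push forward to closed pieces of $A$.
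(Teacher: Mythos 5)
Your proof is correct. The paper offers no argument for this proposition at all — it simply cites Proposition I.4.1 of \cite{Daverman} — and your proof is essentially the standard one from that source: connectedness of the fibers forces the two pieces of a putative separation of $\pi^{-1}(A)$ to be saturated, while upper semi-continuity makes $\pi$ a closed map (the fact recorded as the paper's Lemma~\ref{lemma:sat of closed}), hence closed over $\pi^{-1}(A)$, so the separation descends to a separation of $A$. You were also right to read the statement's ``$X\setminus\D$'' as a typo for the decomposition space $X/\D$, and to take upper semi-continuity as a standing hypothesis, since ``monotone'' is only defined for upper semi-continuous decompositions.
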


A collection of subsets $\mathcal{S}$ of a metric space is called a {\it null family} if for every $\epsilon>0$ there are only finitely $S\in\mathcal{S}$ with $diam(S)>\epsilon$. The following proposition can be found as Proposition I.2.3 in \cite{Daverman}.

 \begin{proposition}
 \label{proposition:null fam uppersemi}
 Let $\mathcal{S}$ be a null family of closed disjoint subsets of a compact metric space $X$. Then the associated decomposition of $X$ is upper semi-continuous.
 \end{proposition}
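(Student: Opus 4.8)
The plan is to verify directly the two defining conditions of upper semi-continuity for the decomposition $\D$ whose non-degenerate members are exactly the sets of $\mathcal{S}$ and whose remaining members are the singletons $\{x\}$ with $x\notin\bigcup\mathcal{S}$. The first condition is immediate: singletons are closed in the metric space $X$, each $S\in\mathcal{S}$ is closed by hypothesis, and since $X$ is compact every closed subset is compact; hence each $D\in\D$ is closed and compact. All the work lies in the second condition.

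For the semicontinuity condition, I would fix $D\in\D$ and an open set $U\supseteq D$. If $U=X$ take $V=X$. Otherwise, since $D$ is compact and $X\setminus U$ is closed and disjoint from $D$, the number $\delta:=d(D,X\setminus U)$ is strictly positive, and every point within distance $<\delta$ of $D$ lies in $U$. The candidate neighborhood will be $V=\set{x\in X}{d(x,D)<r}$ for a radius $r\in(0,\delta/3]$ chosen small enough to defeat two distinct threats from the (possibly infinitely many) members of $\mathcal{S}$ that meet $V$. The ``small'' members are those with $\diam(S)\le\delta/3$: if such an $S$ meets $\set{x}{d(x,D)<\delta/3}$, then every point of $S$ lies within $\delta/3+\delta/3<\delta$ of $D$, hence $S\subseteq U$ automatically. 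The ``big'' members are those with $\diam(S)>\delta/3$; by the null family hypothesis there are only finitely many of these. Among the big members, any one not contained in $U$ must be disjoint from $D$ (a big set equal to $D$ would already lie in $U$), so it has strictly positive distance to $D$; I let $\rho$ be the minimum of these finitely many positive distances (with $\rho=\delta$ if there are none) and set $r=\min(\delta/3,\rho)$.

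With this choice, $V=\set{x\in X}{d(x,D)<r}$ satisfies $D\subseteq V$ and $V\subseteq U$ (as $r<\delta$). Moreover every $S\in\mathcal{S}$ meeting $V$ lies in $U$: small sets by the diameter estimate above, and big sets because those not contained in $U$ are at distance $\ge\rho\ge r$ from $D$ and therefore miss $V$ entirely. Since the only other members of $\D$ meeting $V$ are singletons already inside $V\subseteq U$, we conclude $\Sat(V)\subseteq U$, which is exactly the required condition.

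The main obstacle is that $\D$ has infinitely many non-degenerate members, so one cannot control them all uniformly by a single distance argument. The null family condition is precisely the tool that resolves this: it splits $\mathcal{S}$ into the finitely many ``big'' obstructions, handled by positive distance to $D$, and the ``small'' ones, handled by the diameter bound. The crux of the argument is organizing this dichotomy so that a single radius $r$ simultaneously keeps the bad big sets out of $V$ and forces the small sets that enter $V$ to remain inside $U$.
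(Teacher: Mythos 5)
Your proof is correct. The paper gives no argument of its own for this proposition---it simply cites Proposition I.2.3 of Daverman's book---and your argument is essentially the standard one found there: the null-family hypothesis isolates the finitely many members of diameter greater than $\delta/3$, which are kept out of $V$ by a positive distance to $D$, while any small member meeting $V$ is forced into $U$ by the triangle-inequality diameter estimate, so $\Sat(V)\subseteq U$ as required.
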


\begin{lemma}
\label{lemma:sat of closed}
If $\D$ is an upper semi-continuous decomposition of a space $X$, then the saturation of a closed set is closed.
\end{lemma}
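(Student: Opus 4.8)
The plan is to prove the complement $X \setminus \Sat(C)$ is open for an arbitrary closed set $C \subseteq X$, working one point at a time and feeding the upper semi-continuity condition the right open set. The key preliminary observation is the reformulation $\Sat(C) = \pi^{-1}\big(\pi(C)\big)$: since $\D$ is a partition, each $x \in X$ lies in a unique element $D_x \in \D$, and by the description of $\Sat(C)$ as the union of $C$ with all decomposition elements meeting $C$, one has $x \in \Sat(C)$ if and only if $D_x \cap C \neq \emptyset$. Equivalently, $x \notin \Sat(C)$ if and only if $D_x \subseteq X \setminus C$.

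Given this, I would fix $x \in X \setminus \Sat(C)$, so that $D_x$ is a decomposition element contained in the open set $U := X \setminus C$. Applying upper semi-continuity to the pair $(D_x, U)$ produces an open set $V$ with $D_x \subseteq V \subseteq U$ and $\Sat(V) \subseteq U = X \setminus C$. The goal is then to show $V \subseteq X \setminus \Sat(C)$, which exhibits $V$ as an open neighborhood of $x$ disjoint from $\Sat(C)$, and hence proves the complement open.

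To verify $V \subseteq X \setminus \Sat(C)$, I would take any $y \in V$: since $y \in D_y \cap V$, the element $D_y$ meets $V$, so by definition of saturation $D_y \subseteq \Sat(V) \subseteq X \setminus C$. Thus $D_y \cap C = \emptyset$, and by the reformulation above $y \notin \Sat(C)$. As $y \in V$ was arbitrary, $V \subseteq X \setminus \Sat(C)$, and letting $x$ range over $X \setminus \Sat(C)$ covers the complement with open sets, proving $\Sat(C)$ closed.

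I do not expect a genuine obstacle here; the argument is a direct unwinding of the definition of an upper semi-continuous decomposition. The only step requiring any care is the reformulation "$x \notin \Sat(C) \iff D_x \subseteq X \setminus C$," which uses that $\D$ is a partition so that $D_x$ is well defined, together with the explicit description of $\Sat$ as the union of all decomposition elements intersecting $C$. (I note this also immediately yields that $\pi$ is a closed map, an alternative packaging of the same fact, but the pointwise argument above is the most self-contained route.)
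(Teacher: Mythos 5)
Your argument is correct. Note, however, that the paper does not actually prove this lemma in-line: it simply cites Proposition I.1.1 of Daverman, which characterizes upper semi-continuous decompositions by the closedness of the decomposition map $\pi$. Your pointwise argument is exactly the standard proof underlying that citation, since $\Sat(C)=\pi^{-1}\bigl(\pi(C)\bigr)$ means closedness of saturations of closed sets is literally the statement that $\pi$ is a closed map (a repackaging you yourself observe at the end). Each step checks out: the reformulation $x\notin\Sat(C)\iff D_x\subseteq X\setminus C$ uses only that $\D$ is a partition; applying the upper semi-continuity condition to $D_x$ and $U=X\setminus C$ gives the open $V$ with $\Sat(V)\subseteq U$; and for $y\in V$ the element $D_y$ meets $V$, hence lies in $\Sat(V)\subseteq X\setminus C$, so $y\notin\Sat(C)$. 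What your route buys is self-containedness --- the reader need not consult Daverman, and the proof uses nothing beyond the definition as stated in this paper (in particular it never uses compactness of the elements of $\D$, only the neighborhood condition) --- while the paper's citation buys brevity and situates the lemma within the standard decomposition-theory toolkit.
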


Lemma \ref{lemma:sat of closed} follows from Proposition I.1.1 of \cite{Daverman}.

\begin{lemma}
\label{lemma:fatigued quotient}
If $\D$ is an upper semi-continuous 
decomposition of a generalized Peano continuum $X$, then $X/\D$ is a generalized Peano continuum.
\end{lemma}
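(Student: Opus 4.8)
The plan is to verify each defining property of a generalized Peano continuum for $X/\D$ by exploiting the decomposition map $\pi\colon X\rightarrow X/\D$. The key structural observation is that $\pi$ is a \emph{closed} map: since $\pi^{-1}\big(\pi(C)\big)=\Sat(C)$ and the saturation of a closed set is closed (Lemma \ref{lemma:sat of closed}), the image $\pi(C)$ of any closed $C$ is closed. Together with the fact that each fiber of $\pi$ is a compact element of $\D$, this makes $\pi$ a perfect (closed, surjective, compact-fibered) continuous map; recall also that $\pi$ is proper by Daverman's Proposition I.3.1. Given this, connectedness and $\sigma$-compactness of $X/\D$ are immediate, being preserved by continuous surjections. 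Local connectedness is inherited because $\pi$ is a quotient map out of a locally connected space: if $U\subset X/\D$ is open and $C$ is a component of $U$, then for each $x\in\pi^{-1}(C)$ the component of the open set $\pi^{-1}(U)$ containing $x$ is open (local connectedness of $X$) and maps into $C$, so $\pi^{-1}(C)$ is a union of open sets, hence open, and $C$ is open.

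The two properties that use upper semi-continuity directly are the Hausdorff property and local compactness. For the Hausdorff property: given disjoint $D_1,D_2\in\D$, use normality of the metrizable space $X$ to find disjoint open sets $W_1\supset D_1$, $W_2\supset D_2$, and then apply upper semi-continuity to get open $V_i$ with $D_i\subset V_i$ and $\Sat(V_i)\subset W_i$. The sets $O_i=(X/\D)\setminus\pi(X\setminus V_i)$ are open (here closedness of $\pi$ is essential), contain $\pi(D_i)$, and satisfy $\pi^{-1}(O_i)\subset V_i$; since $V_1\cap V_2=\emptyset$ and fibers are nonempty, $O_1\cap O_2=\emptyset$. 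For local compactness: given $D\in\D$, choose an open $U\supset D$ with $\overline{U}$ compact, take $V$ from upper semi-continuity with $D\subset V$ and $\Sat(V)\subset U$, and note that $O=(X/\D)\setminus\pi(X\setminus V)$ is an open neighborhood of $\pi(D)$ with $O\subset\pi(V)\subset\pi(\overline{U})$; as $\pi(\overline{U})$ is compact and $X/\D$ is Hausdorff, $\overline{O}$ is a compact neighborhood of $\pi(D)$.

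The main obstacle is \textbf{metrizability}, since local compactness and $\sigma$-compactness alone do not force second countability (witness the one-point compactification of an uncountable discrete space). Here I would invoke that $\pi$ is perfect: a perfect surjection does not increase weight, so from the second countability of $X$ (a $\sigma$-compact metric space is separable, hence second countable) it follows that $X/\D$ is second countable. Combined with the already-established facts that $X/\D$ is locally compact and Hausdorff, hence regular, the Urysohn metrization theorem yields metrizability. Equivalently, one may cite the Hanai--Morita--Stone theorem, by which the closed continuous image of a metric space is metrizable provided every fiber has compact frontier---automatic here, as the fibers are compact. Assembling the six properties establishes that $X/\D$ is a generalized Peano continuum.
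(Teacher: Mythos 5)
Your proof is correct, and while it shares the paper's overall skeleton (verify the five or six defining properties one at a time, with the closedness of the decomposition map $\pi$ --- obtained from Lemma \ref{lemma:sat of closed} --- doing the structural work), it is a genuinely more self-contained route. The paper's proof is essentially a chain of citations: local connectedness via Willard's theorem that quotients of locally connected spaces are locally connected, local compactness via Willard's exercise on closed maps with compact point-preimages, and --- most importantly --- metrizability via Daverman's Proposition I.2.2, which states outright that the decomposition space of an upper semi-continuous decomposition of a metric space is metrizable. You instead prove the Hausdorff property and local compactness directly from the upper semi-continuity condition (your ``shrink $V$ inside $W$ and pass to $O=(X/\D)\setminus\pi(X\setminus V)$'' argument is the standard one and is carried out correctly, including the needed ordering of Hausdorff before local compactness so that $\pi(\overline{U})$ is closed), and you obtain metrizability by general topology: second countability of $X$ (from $\sigma$-compact plus metric), preservation of weight under the perfect map $\pi$, and Urysohn metrization --- or, alternatively, the Hanai--Morita--Stone theorem, whose compact-frontier hypothesis is automatic for compact fibers. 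What your approach buys is independence from Daverman's decomposition-theory machinery and an explicit verification of the Hausdorff axiom, which the paper never checks separately (it is swept up in the metrizability citation); what the paper's approach buys is brevity and reliance on results already in its reference list. One small note: your direct Hausdorff argument quietly uses normality of $X$, which is fine here since $X$ is metrizable, but worth flagging as the place where metrizability of the source space enters beyond the final metrization step.
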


\begin{proof}
Let $Y=X/\D$. We want that $Y$ is connected, locally connected, locally compact, $\sigma$-compact, and metrizable. Clearly, $Y$ is connected. By Theorem 27.12 of \cite{Wil1} the quotient of a locally connected space is locally connected.

To prove local compactness note that Lemma \ref{lemma:sat of closed} implies the quotient map $f\colon X\rightarrow Y$ is closed. The image of a locally compact space under a closed map is locally compact provided the preimage of each point is compact (see \cite{Wil1} Exercise 18C.2). Thus $Y$ is locally compact, because the elements of $\D$ are compact.

We still require that $Y$ is $\sigma$-compact and metrizable. The continuous image of a $\sigma$-compact space is $\sigma$-compact, so $Y$ is $\sigma$-compact. Proposition I.2.2 of \cite{Daverman} gives that the image of a  metric space under an upper semi-continuous decomposition is metrizable. Thus $Y$ is a generalized Peano continuum.
\end{proof}

\begin{lemma}
\label{lemma:Ends of quotient}
 Let $X$ be a generalized continuum. Assume that $\D$ is a monotone upper semi-continuous decomposition, and let $f\colon X\rightarrow X/\D$ be the decomposition map. If $Q=X/\D$, then $f$ induces a homeomorphism between $\Ends(X)$ and $\Ends(Q)$.
\end{lemma}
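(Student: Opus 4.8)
The plan is to exhibit $f$ as a level isomorphism between the inverse systems that define $\Ends(X)$ and $\Ends(Q)$, and then observe that such an isomorphism induces a homeomorphism on inverse limits. First I would record the structural facts about $f$: by Daverman's Proposition I.3.1 the decomposition map $f$ is proper, by Lemma \ref{lemma:sat of closed} it is closed, and it is of course continuous and surjective with compact point-preimages (the elements of $\D$). The argument establishing Lemma \ref{lemma:fatigued quotient} uses metrizability only in its final step; its remaining content shows from these four properties that $Q=X/\D$ is connected, locally connected, locally compact, and $\sigma$-compact, hence a generalized continuum. In particular $\Ends(Q)$ is defined and $Q$ admits an exhaustion $\{K_i\}$ by compact sets.

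Next I would transport this to a compatible exhaustion of $X$ by setting $C_i=f^{-1}(K_i)$. Properness makes each $C_i$ compact, and continuity gives $C_i=f^{-1}(K_i)\subset f^{-1}(\Int K_{i+1})\subseteq\Int C_{i+1}$ together with $\bigcup_i C_i=f^{-1}(Q)=X$, so $\{C_i\}$ is an exhaustion of $X$. Because $C_i=f^{-1}(K_i)$ we have exactly $X\setminus C_i=f^{-1}(Q\setminus K_i)$, so $f$ carries $X\setminus C_i$ onto $Q\setminus K_i$ and $f^{-1}$ carries $Q\setminus K_i$ back onto $X\setminus C_i$.

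The heart of the argument is to show that $f$ induces a bijection $f_i\colon\U(C_i)\to\U(K_i)$ between the components of $X\setminus C_i$ and those of $Q\setminus K_i$. Given a component $E$ of $Q\setminus K_i$, monotonicity of $\D$ together with Proposition \ref{prop: daverman monotoneconnected} shows $f^{-1}(E)$ is connected, hence lies in a single component $F$ of $X\setminus C_i$; then $f(F)$ is a connected subset of $Q\setminus K_i$ containing $f(f^{-1}(E))=E$, so $f(F)=E$ by maximality of $E$, which forces $F\subseteq f^{-1}(E)$ and thus $F=f^{-1}(E)$. This makes $E\mapsto f^{-1}(E)$ and $F\mapsto f(F)$ mutually inverse bijections (injectivity of $F\mapsto f(F)$ also follows, since two components mapping onto the same $E$ would both sit inside the connected set $f^{-1}(E)$). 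A short check then shows these bijections commute with the bonding maps: for $C_i\subset C_{i+1}$ and $K_i\subset K_{i+1}$ each bonding map sends a complementary component to the larger one containing it, and $f(F')\subseteq f(F)$ whenever $F'\subseteq F$, so the square of bonding maps and $f_i,f_{i+1}$ commutes. Thus $\{f_i\}$ is a level isomorphism of the inverse systems.

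Finally, since each $\U(C_i)$ and $\U(K_i)$ carries the discrete topology, every bijection $f_i$ is a homeomorphism, and a level isomorphism of inverse systems induces a homeomorphism of inverse limits. Hence $f$ induces a homeomorphism $\Ends(X)=\varprojlim\U(C_i)\to\varprojlim\U(K_i)=\Ends(Q)$, which is independent of the chosen exhaustions. I expect the main obstacle to be precisely the component correspondence in the third paragraph: proving that $f^{-1}(E)$ is an \emph{entire} component of $X\setminus C_i$, rather than merely contained in one, is exactly where monotonicity of $\D$ is indispensable, and it is also what rules out distinct complementary components of $X$ collapsing onto a single complementary component of $Q$.
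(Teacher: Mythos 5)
Your proof is correct, and it reaches the conclusion by a genuinely different packaging than the paper, though both run on the same engine: properness of the decomposition map (Daverman, Proposition I.3.1) and the monotone component correspondence supplied by Proposition \ref{prop: daverman monotoneconnected} applied to a \emph{saturated} exhaustion. The paper's proof goes through the Freudenthal machinery: it invokes Proposition \ref{proposition:Craig's prop} to extend $f$ continuously to $\hat{f}\colon X\cup\Ends(X)\rightarrow Q\cup\Ends(Q)$, starts from an arbitrary exhaustion $(C_i)$ of $X$ which it saturates (using Lemma \ref{lemma:sat of closed} and compactness of saturations of compacta), pushes it forward to the exhaustion $\bigl(f(C_i)\bigr)$ of $Q$, and then checks that the restriction $f^*$ is a bijection, with the homeomorphism conclusion resting implicitly on the fact that a continuous bijection of compact Hausdorff spaces is a homeomorphism. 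You instead pull an exhaustion of $Q$ \emph{back}, which automatically produces saturated compact sets $C_i=f^{-1}(K_i)$, prove the exact component correspondence $E\mapsto f^{-1}(E)$, and conclude via a level isomorphism of the defining inverse systems. Your route buys three things: (i) it makes fully explicit a point the paper elides --- the paper asserts that $\bigl(f^{-1}(A_i)\bigr)$ is an end without verifying that each $f^{-1}(A_i)$ is an \emph{entire} component of $X\setminus C_i$, and asserts $f(E_j)\neq f(F_j)$ without noting that images of distinct saturated components are disjoint; your maximality argument is precisely the missing verification; (ii) it avoids metrizability of $Q$ entirely --- Proposition \ref{proposition:Craig's prop} is stated for generalized \emph{Peano} continua, while the lemma hypothesizes only a generalized continuum, so your inverse-limit argument actually matches the stated generality better than the paper's own proof; (iii) the homeomorphism falls out directly from the inverse-limit topology (the basis $\overline{E}_i\cap\Ends(X)$ is exactly the cylinder basis), with no compactness appeal. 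What the paper's route buys is that the extension $\hat{f}$ is constructed anyway and reused elsewhere (e.g.\ in Lemma \ref{lemma: ladder equivalence}), and it identifies the homeomorphism as the canonical ends map $f^*$ with no bonding-map bookkeeping. Two small points to tighten in yours: when claiming $Q$ is a generalized continuum, say explicitly that $Q$ is Hausdorff (a closed map with compact fibers from a locally compact Hausdorff space has Hausdorff image), and if you want ``$f$ induces'' in the same sense as the paper, observe that your level maps are $F\mapsto f(F)$, so the induced limit map agrees with $f^*$.
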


\begin{proof}
By Proposition I.3.1 of \cite{Daverman} the decomposition map of an upper semi-continuous decomposition is proper. So, by Proposition \ref{proposition:Craig's prop} we have that $f$ can be continuously extended to a map $\hat{f}\colon X\cup\Ends(X)\rightarrow Q\cup\Ends(Q)$. We only need that the restriction $f^*\colon\Ends(X)\rightarrow\Ends(Q)$ is a bijection.

Let $(C_1,C_2,C_3,..)$ be an exhaustion of $X$. The elements of $\D$ are compact and we have that the saturation of a compact set is compact. So, we may assume that each $C_i$ is saturated. We first show that $f^*$ is surjective. The sequence $\big(f(C_i)\big)_{i=1}^{\infty}=\big(f(C_1),f(C_2),f(C_3),...\big)$ is an exhaustion of $Q$ and $\Ends(Q)$ is independent of choice of exhaustion, so let $(A_1,A_2,A_3,...)\in\Ends(Q)$ be defined using the exhaustion $\big(f(C_i)\big)$. For each $i$  the pre-image $f^{-1}(A_i)$ is contained $f^{-1}\big(Q\setminus f(C_i)\big)= X\setminus C_i$, and by monotonicity $f^{-1}(A_i)$ is connected by Proposition \ref{prop: daverman monotoneconnected}. Since, $f^{-1}(A_1)\supset f^{-1}(A_2)\supset f^{-1}(A_3)\supset\cdots$, we have that $\big(f^{-1}(A_i)\big)_{i=1}^{\infty}$ is an end of $\Ends(X)$.

Now, let $(E_i)_{i=1}^{\infty}$ and $(F_i)_{i=1}^{\infty}$ be distinct elements of $\Ends(X)$. Then there exists an $i\in\bbn$ such that $E_j\neq F_j$ for all $j\geq i$. Because the $C_i$ are saturated, monotonicity implies $f(E_j)\neq f(F_j)$ for all $j\geq i$. Thus $\big(f(E_n)\big)_{n=1}^{\infty}$ and $\big(f(F_n)\big)_{n=1}^{\infty}$ are distinct.
\end{proof}

\begin{corollary}
\label{corollary:image of local cut point}
Assume that $\D$ is a monotone upper semi-continuous decomposition, let $f\colon X\rightarrow X/\D$ be the decomposition, and let $x$ be a point of $X$ such that $\{x\}\in\D$. If $x$ is local cut point which is not a global cut point, then $f(x)$ is a local cut point and $val(x)=val\big(f(x)\big)$.
\end{corollary}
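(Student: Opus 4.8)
The plan is to deduce the statement directly from Lemma \ref{lemma:Ends of quotient} by passing to the complement of $x$. The key observation is that, since $\{x\}\in\D$, we have $f^{-1}\big(f(x)\big)=\{x\}$, so $X\setminus\{x\}$ is a saturated open subset of $X$ with $f^{-1}\big((X/\D)\setminus\{f(x)\}\big)=X\setminus\{x\}$. Because $x$ is assumed not to be a global cut point, $X\setminus\{x\}$ is connected; being an open connected subset of the generalized Peano continuum $X$, it is itself a generalized (Peano) continuum, so the ends theory applies to it.

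First I would set $X'=X\setminus\{x\}$ and $\D'=\D\setminus\big\{\{x\}\big\}$, the decomposition of $X'$ whose members are exactly the elements of $\D$ other than $\{x\}$. I would then check that $\D'$ is again monotone and upper semi-continuous on $X'$: each element remains connected, closed, and compact in $X'$, and the semicontinuity condition transfers from $\D$ since any open $U\subseteq X'$ containing some $D\in\D'$ is open in $X$, while the witnessing $V$ supplied by upper semi-continuity of $\D$ satisfies $\Sat(V)\subseteq U\subseteq X'$, so $V$ cannot meet $\{x\}$ and hence the $\D'$-saturation of $V$ agrees with its $\D$-saturation. Since $X'$ is saturated and open, the restriction $f'=f|_{X'}$ is the decomposition map of $\D'$ onto $(X/\D)\setminus\{f(x)\}=f(X')$ (the restriction of a quotient map to a saturated open set is again a quotient map). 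In particular $f(X')$ is the continuous image of the connected set $X'$, hence connected, so $f(x)$ is not a global cut point of $X/\D$ and $val\big(f(x)\big)$ is genuinely the number of ends of $(X/\D)\setminus\{f(x)\}$.

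With this setup, applying Lemma \ref{lemma:Ends of quotient} to $X'$ equipped with $\D'$ produces a homeomorphism $\Ends(X')\cong\Ends\big((X/\D)\setminus\{f(x)\}\big)$, and in particular an equality of cardinalities, giving $val(x)=val\big(f(x)\big)$. Finally, since $x$ is a local cut point that is not a global cut point, $X\setminus\{x\}$ is connected with more than one end, so $val(x)\geq 2$; therefore $val\big(f(x)\big)\geq 2$, which shows $f(x)$ is a local cut point. The main obstacle I anticipate is purely the bookkeeping of the middle paragraph: verifying that $\D'$ inherits monotonicity and upper semi-continuity and that its decomposition space is precisely $(X/\D)\setminus\{f(x)\}$ carrying the subspace topology. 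Once that is established, the conclusion is an immediate consequence of Lemma \ref{lemma:Ends of quotient}.
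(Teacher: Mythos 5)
Your proposal is correct and matches the paper's intended argument: the paper states this result as an immediate consequence of Lemma \ref{lemma:Ends of quotient}, and your restriction to the saturated open set $X\setminus\{x\}$ with the induced decomposition $\D'$ is precisely the natural filling-in of that deduction. The bookkeeping in your middle paragraph (heredity of monotonicity and upper semi-continuity, and the identification of $X'/\D'$ with $(X/\D)\setminus\{f(x)\}$ via the restriction of the quotient map to a saturated open set) is sound.
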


\medskip
Returning to the setting of Bowditch boundaries we will use the notation introduced in Section \ref{subsection: Blocks and Branches}. Let $r$ be an element of $\R$. Any branch not containing  the block $B(r)$, but rooted at a point in the block $B(r)$ is said to be {\it attached to} $B(r)$. The union of all branches attached to $B(r)$ with common root will be called a {\it full branch attached to} $B(r)$.

\begin{lemma}
\label{lemma:full branches are null}
Let $R=B(v)$ for some $v\in \R$. The collection of full branches attached to $R$ forms a null family of disjoint connected closed sets.
\end{lemma}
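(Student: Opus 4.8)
The plan is to first reinterpret a full branch attached to $R=B(v)$ in terms of the Bass--Serre tree $T$, and then establish the three required properties (connected and closed, pairwise disjoint, null), with the null condition being the main work. For a root $\beta(p)$ with $p\in\sP$ adjacent to $v$, let $U_p$ be the component of $T\setminus\{p\}$ containing $v$ and set $W_p=T\setminus U_p$. Since every component of $T\setminus\{p\}$ other than $U_p$ is attached to $p$, the set $W_p$ is a subtree of $T$, and the full branch attached to $R$ at $\beta(p)$ is exactly $\Psi(W_p)$: it is the union over the components $S_i\neq U_p$ of the branches $\Psi(S_i)$, each of which contains $\beta(p)$ since $p\in S_i$. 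By Theorem \ref{theorem:treelike structure}(5), $\Psi(W_p)$ is the closure of the connected set $\Psi^0(W_p)$, hence a closed connected subset of $\relbndryG$.

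Next I would prove disjointness. For distinct roots $p\neq p'$ (both adjacent to $v$), the unique $T$-geodesic between any vertex of $W_p$ and any vertex of $W_{p'}$ passes through $v$; in particular $W_p\cap W_{p'}=\emptyset$ and $\bndry W_p\cap\bndry W_{p'}=\emptyset$. Combined with injectivity of $\beta$, part (1) of Theorem \ref{theorem:treelike structure} (two distinct blocks meet in at most a single parabolic point carried by a common neighbor), and parts (4) and (7) (blocks contain no ideal points and carry only the parabolic points of their neighbors), a short case analysis on the three kinds of points making up $\Psi(W_p)$ and $\Psi(W_{p'})$ shows $\Psi(W_p)\cap\Psi(W_{p'})=\emptyset$. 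The same bookkeeping gives $R\cap\Psi(W_p)=\{\beta(p)\}$, so each full branch meets $R$ only at its root.

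For the null condition I would argue by contradiction. Suppose there are $\epsilon>0$ and infinitely many distinct full branches $F_n=\Psi(W_{p_n})$ with $\diam(F_n)>\epsilon$, and write $r_n=\beta(p_n)$. Since $R$ is compact, after passing to a subsequence $r_n\to z\in R$; because the $p_n$ are distinct and $\beta$ is injective, $z\neq r_n$ for all $n$, and by the previous paragraph $z\notin F_n$ for all $n$. For each $n$ choose $c_n\in F_n$ with $d(c_n,r_n)>\epsilon/2$, which exists since $\sup_{x\in F_n}d(x,r_n)\geq\tfrac12\diam F_n$, and let $\alpha_n$ be an arc from $r_n$ to $c_n$ in the Peano continuum $\relbndryG$ (Theorem \ref{theorem:Bow Local conn}). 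Deleting the endpoint $r_n$ leaves a connected set meeting $F_n\setminus\{r_n\}$, which is a union of components of $\relbndryG\setminus\{r_n\}$; hence $\alpha_n\subseteq F_n$, and the $\alpha_n$ are pairwise disjoint arcs with $\diam\alpha_n>\epsilon/2$.

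To finish, pass to a further subsequence so that the arcs converge in the Hausdorff metric to a subcontinuum $A\ni z$ with $\diam A\geq\epsilon/2$; then there is $w\in A$ with $d(z,w)=\epsilon/4$ and points $w_n\in\alpha_n$ with $w_n\to w$, so $d(w_n,r_n)\to d(w,z)=\epsilon/4$. Now invoke that a Peano continuum is uniformly locally connected: for $\eta<\epsilon/8$ there is $\delta>0$ so that any two points within $\delta$ lie in a connected set of diameter $<\eta$. For large $n\neq m$ we have $d(w_n,w_m)<\delta$, so $w_n$ and $w_m$ lie in a connected set $K$ with $\diam K<\epsilon/8$. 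Since $w_m\notin F_n$ and $\relbndryG\setminus\{r_n\}=(F_n\setminus\{r_n\})\sqcup(\relbndryG\setminus F_n)$ is a separation into disjoint open sets, the connected set $K$ must contain the cut point $r_n$; hence $d(w_n,r_n)\leq\diam K<\epsilon/8$, contradicting $d(w_n,r_n)\to\epsilon/4$. Thus the family is null. I expect the main obstacle to be this last argument, specifically extracting the ``deep'' points $w_n$ from the Hausdorff limit and justifying that each small connecting continuum $K$ is forced through the cut point $r_n$; the disjointness bookkeeping is routine but must be carried out carefully.
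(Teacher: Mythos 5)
Your proof is correct, but the null-family step takes a genuinely different route from the paper. The first two parts coincide: like the paper, you identify the full branch rooted at $\beta(p)$ with $\Psi(W_p)$ for the subtree $W_p$ of $T$ consisting of all branches at $p$ not containing $v$, and you get closedness, connectedness, and pairwise disjointness from Theorem \ref{theorem:treelike structure}(5) together with injectivity of $\beta$ and uniqueness of blocks --- the paper is exactly as terse on this bookkeeping as you are. For nullity, however, the paper simply cites Bowditch (Section 8 of \cite{Bow01}) for the fact that the branches attached to a component form a null family, and then bootstraps: two points of a full branch lie in branches $B_1,B_2$ sharing the common root, so their distance is at most $\diam(B_1)+\diam(B_2)$, whence infinitely many full branches of diameter $>\epsilon$ would force infinitely many branches of diameter $>\epsilon/2$. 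You instead give a self-contained continuum-theoretic argument --- arcs trapped in $F_n$ by the clopen separation $\relbndryG\setminus\{r_n\}=(F_n\setminus\{r_n\})\sqcup(\relbndryG\setminus F_n)$, a Hausdorff limit continuum through $z$, and uniform local connectedness forcing the small connecting continua through the cut points $r_n$ --- which in effect re-proves the relevant case of Bowditch's null-family statement rather than quoting it. Your steps check out: the separation is legitimate because $F_n\setminus\{r_n\}$ is a union of components of the open set $\relbndryG\setminus\{r_n\}$ in a locally connected space (this is where tame peripherals enter, via Theorem \ref{theorem:Bow Local conn}), and the analytic details (existence of $c_n$, $\diam A\geq\epsilon/2$, the intermediate-value choice of $w$, $d(w_n,r_n)\to\epsilon/4$) are all sound. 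Two minor remarks: your claim that $z\neq r_n$ for \emph{all} $n$ does not follow from injectivity of $\beta$ alone, since the limit could equal one term of the sequence, but discarding at most one index fixes this --- and in fact $z\notin F_n$ is never used downstream; and the trade-off is clear: the paper's proof is two lines given the citation to \cite{Bow01}, while your longer argument is independent of it and would equally well prove the cited statement about branches themselves.
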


\begin{proof}
Let $F$ be a full branch attached to $R$ with root $\beta(p)$ for some $p\in\sP$. Then $F$ is the subcontinuum associated by $\Psi$ to the subtree $S$ of $T$ consisting of all branches in $T$ rooted at $p$ which do not contain the vertex  $v$. By Theorem \ref{theorem:treelike structure} $F$ is connected and closed.

Let $F'$ be another full branch attached to $R$, and assume that $F'$ is rooted at $\beta(q)$ for some $q\neq p$. If $S'$ is the subtree of $T$ of all branches in $T$ rooted $q$ and not containing $v$, then $\Psi(S')=F'$. As $S\cap S'=\emptyset$ we have that $F\cap F'=\emptyset$, because the map $\beta$ is injective and that blocks associated to component vertices are unique.

Now, let $\epsilon>0$. Bowditch has shown in section 8 of \cite{Bow01} that the set of all branches attached to a component of $\relbndryG$ forms a null family, so there are only finitely many branches of diameter $\epsilon/2$. Let $x_1$ and $x_2$ be two points in a full branch $D$, and let $B_1\subset D$ and $B_2\subset D$ be branches containing $x_1$ and $x_2$, respectively. The distance between $x_1$ and $x_2$ is at most $diam(B_1)+diam(B_2)$. If there were infinitely many full branches of diameter greater than epsilon, then there would be infinitely may branches of diameter greater than $\epsilon/2$, a contradiction.
\end{proof}

It follows from Lemma \ref{lemma:full branches are null} that:

\begin{lemma}
\label{lemma: branches uppersemi}
 Let $R=B(v)$ for some $v\in \R$, and define $f\colon\relbndryG\rightarrow R$ to be the quotient map obtained by identifying all full branches attached to $R$ with their roots. Then $f$ is an upper semi-continuous monotone retraction onto $R$.
\end{lemma}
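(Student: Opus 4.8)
The plan is to exhibit $f$ as the decomposition map of the decomposition $\D$ of $\relbndryG$ whose non-degenerate members are exactly the full branches attached to $R$, every remaining point forming a singleton. By Theorem \ref{theorem:treelike structure} (parts (1), (5), (6)) one has $\relbndryG = R\cup\bigcup_i F_i$, where the $F_i$ are the full branches attached to $R$, each $F_i$ is rooted at a parabolic point $\beta(p_i)\in R$ with $p_i\in\sP$ adjacent to $v$, the $F_i$ are pairwise disjoint, and $F_i\cap R=\{\beta(p_i)\}$. Hence $\D$ is a genuine partition of $\relbndryG$, and the map $f\colon\relbndryG\to R$ that sends each $F_i$ to its root $\beta(p_i)$ and fixes $R$ pointwise is well defined, surjective, and satisfies $f|_R=\mathrm{id}_R$. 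So the only substantive claims are that $\D$ is upper semi-continuous and monotone and that $f$ is continuous as a map into $R$ with its subspace topology.

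First I would establish upper semi-continuity and monotonicity, both of which are essentially citations. Since $\relbndryG$ is a Peano continuum it is a compact metric space, and by Lemma \ref{lemma:full branches are null} the full branches form a null family of disjoint closed sets; Proposition \ref{proposition:null fam uppersemi} then gives immediately that the associated decomposition $\D$ is upper semi-continuous. Monotonicity is equally direct: each full branch is connected by Theorem \ref{theorem:treelike structure}(5) and each singleton is trivially connected, so every member of $\D$ is connected, which together with upper semi-continuity is precisely the definition of a monotone decomposition.

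It remains to check that $f$ is continuous, and this is the one step needing an argument rather than a citation, hence the crux: one must confirm that the abstract decomposition quotient $\relbndryG/\D$ agrees with $R$ carrying its subspace topology. I would verify continuity on closed sets. For $C\subseteq R$ closed, the description of $\D$ gives $f^{-1}(C)=\Sat(C)=C\cup\bigcup\{F_i : \beta(p_i)\in C\}$, the equality holding because a full branch meets $R$ only at its root, so $F_i$ intersects $C$ exactly when $\beta(p_i)\in C$. Because $R$ is a subcontinuum of $\relbndryG$ it is closed, so $C$ is closed in $\relbndryG$; Lemma \ref{lemma:sat of closed} then shows $\Sat(C)$ is closed, whence $f^{-1}(C)$ is closed and $f$ is continuous. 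Being continuous and restricting to the identity on $R$, the map $f$ is a retraction, and combined with the previous paragraph it is an upper semi-continuous monotone retraction onto $R$, as claimed. The main obstacle is thus the identification $f^{-1}(C)=\Sat(C)$ together with the closedness of saturations of closed sets, which is what pins the quotient topology to the subspace topology on $R$.
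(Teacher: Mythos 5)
Your proof is correct and follows essentially the same route as the paper, which derives the lemma directly from Lemma \ref{lemma:full branches are null} via Proposition \ref{proposition:null fam uppersemi} (null family $\Rightarrow$ upper semi-continuous) with monotonicity coming from connectedness of the full branches. The only addition is your explicit verification, via $f^{-1}(C)=\Sat(C)$ and Lemma \ref{lemma:sat of closed}, that $f$ is continuous onto $R$ with its subspace topology — a detail the paper leaves implicit, and which you handle correctly.
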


\begin{lemma}
\label{lemma:image of conical limit local cut point}
Let $x$ be a point contained in a block $R$. The point $x$ is a local cut point of $\relbndryG$ and a conical limit point of the action of $G$ on $\relbndryG$ if and only if $f(x)$ is a local cut point of $R$ and a conical limit point of the action of $\Stab_G(R)$ on $R$.
\end{lemma}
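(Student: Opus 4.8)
The plan is to separate the conical-limit-point condition from the local-cut-point condition, using two structural features: that $f$ is a \emph{retraction} onto $R$ (Lemma~\ref{lemma: branches uppersemi}), so $f(x)=x$ for every $x\in R$, and that the decomposition $\D$ realized by $f$ is monotone and upper semi-continuous. I would first dispose of the conical condition. Write $H=\Stab_G(R)$. By Theorem~\ref{theorem:treelike structure}(4) the action of $H$ on $R=B(v)$ is geometrically finite and $(H,\bbq)$ is relatively hyperbolic with $\blockbndry=R$; in particular $R=\Lambda(H)$. Proposition~\ref{proposition:Hruska qc1vqc2} then shows $H$ is relatively quasiconvex, so Proposition~\ref{proposition:Hruska Thm 9.1}(1) applies: a point $x\in R=\Lambda(H)$ is a conical limit point of the $H$-action on $R$ if and only if it is a conical limit point of the $G$-action on $\relbndryG$. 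Since $f(x)=x$, this is precisely the equivalence of the two conical conditions appearing in the statement.

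For the local-cut-point condition I would first observe that, under the hypotheses of either direction, $x$ is a conical limit point of the $G$-action on $\relbndryG$ (given directly in the forward direction, and supplied by the previous paragraph in the reverse direction). Hence $x$ is not parabolic by Proposition~\ref{prop:conical vs parabolic}, and since every global cut point of $\relbndryG$ is parabolic, $x$ is not a global cut point of $\relbndryG$. In particular $x$ is not the root of any full branch, so $\{x\}\in\D$ and $x$ lies in no nontrivial element of the decomposition. This lets me pass to complements: $\relbndryG\setminus\{x\}$ is connected (as $x$ is not a global cut point) and is a connected open subset of the Peano continuum $\relbndryG$, hence a generalized Peano continuum; likewise $R\setminus\{x\}$ is connected, because the block $R$ has no global cut points (Theorem~\ref{theorem:treelike structure}(4)), and is a generalized Peano continuum.

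Next I would restrict $f$ to obtain a map $f'\colon\relbndryG\setminus\{x\}\to R\setminus\{x\}$. Because $\{x\}\in\D$ and $x$ is not a root, removing $x$ leaves every full branch untouched, so $f'$ is exactly the decomposition map of the induced decomposition $\D'=\{\,D\setminus\{x\}:D\in\D\,\}$, which is again monotone and upper semi-continuous. Applying Lemma~\ref{lemma:Ends of quotient} to $f'$ shows that $\Ends(\relbndryG\setminus\{x\})$ is homeomorphic to $\Ends(R\setminus\{x\})$; in particular the two complements have the same number of ends. Since neither $x$ is a global cut point of $\relbndryG$ nor of $R$, the point $x$ is a local cut point of $\relbndryG$ (respectively of $R$) exactly when the corresponding complement has more than one end, and the valence of $x$ computed in $\relbndryG$ equals the valence of $f(x)$ computed in $R$ (alternatively, one half of this follows from Corollary~\ref{corollary:image of local cut point}). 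This gives the equivalence of the two local-cut-point conditions in both directions simultaneously, and combined with the conical equivalence completes the proof.

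I expect the main obstacle to be the bookkeeping that justifies passing to the complement $\relbndryG\setminus\{x\}$: one must check that $\D'$ remains monotone and upper semi-continuous on the \emph{non-compact} space $\relbndryG\setminus\{x\}$, so that Lemma~\ref{lemma:Ends of quotient} genuinely applies, and that both complements are connected generalized Peano continua. The key enabling fact is that $x$, being conical, is never the root of a full branch, so the nontrivial decomposition elements are unaffected when $x$ is deleted; upper semi-continuity of $\D'$ then follows from that of $\D$ by restriction, since any open set of $\relbndryG\setminus\{x\}$ avoiding $x$ is already open in $\relbndryG$ and its $\D'$-saturation is contained in its $\D$-saturation.
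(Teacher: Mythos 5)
Your proposal is correct and takes essentially the same route as the paper's proof: the conical condition is handled through Proposition \ref{proposition:Hruska Thm 9.1} (with relative quasiconvexity of $\Stab_G(R)$ coming from Theorem \ref{theorem:treelike structure}(4) and Proposition \ref{proposition:Hruska qc1vqc2}), and the local-cut-point condition is handled through Lemma \ref{lemma:Ends of quotient} applied to the punctured space, which is exactly the content of Corollary \ref{corollary:image of local cut point} used in the paper's forward direction and of the ends observation in its reverse direction. You merely spell out details the paper leaves implicit (conical $\Rightarrow$ non-parabolic $\Rightarrow$ not a root, hence $\{x\}\in\D$, and the check that the induced decomposition on $\relbndryG\setminus\{x\}$ remains monotone and upper semi-continuous), which is a clarification rather than a different method.
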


\begin{proof} Notice that $x$ is not contained in a full branch attached to $R$, so $\{x\}$ is an element of the decomposition which is not a global cut point. Lemma \ref{lemma: branches uppersemi} gives that $f$ is an upper semi-continuous decomposition. Then Corollary \ref{corollary:image of local cut point} implies that $f(x)$ is a local cut point of $R$. By Proposition \ref{proposition:Hruska Thm 9.1} $f(x)$ is conical limit point of the action of $\Stab_G(R)$ on $R$.
The reverse direction follows from Lemma \ref{lemma: branches uppersemi}, Proposition \ref{proposition:Hruska Thm 9.1}, and the observation that if $f(x)$ is a local cut point, then $\big|\Ends(R\setminus\{f(x)\})\big|\geq 2$ and Lemma \ref{lemma:Ends of quotient} implies $\big|\Ends(\relbndryG\setminus\{x\})\big|\geq 2$. \end{proof}

\begin{lemma}
\label{lemma:cutpair to cut pair}
Let $\{x,y\}\subset R$ and assume that $x$ and $y$ are not parabolic points. Then $\{x,y\}$ is a cut pair in $\relbndryG$ if and only if $\bigl\{f(x),f(y)\bigr\}$ is a cut pair in $R$. Moreover, $f$ induces a bijection between components of $\relbndryG\setminus\{x,y\}$ and components of $R\setminus\bigl\{f(x),f(y)\bigr\}$.
\end{lemma}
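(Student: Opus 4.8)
The plan is to exploit that $f$ is a monotone upper semi-continuous retraction (Lemma \ref{lemma: branches uppersemi}) and that the hypothesis ``$x,y$ non-parabolic'' forces $\{x\}$ and $\{y\}$ to be singleton elements of the decomposition, so that deleting $\{x,y\}$ is compatible with $f$. First I would record the normalization: by Theorem \ref{theorem:treelike structure}(2) the roots of full branches are parabolic points, so a non-parabolic point of $R$ is never a root of a full branch; hence $f(x)=x$, $f(y)=y$, and moreover the fibers are singletons, $f^{-1}(x)=\{x\}$ and $f^{-1}(y)=\{y\}$. Consequently $f^{-1}\big(\{x,y\}\big)=\{x,y\}$, the complement $\relbndryG\setminus\{x,y\}$ is saturated, and $f$ restricts to a continuous surjection $f'\colon\relbndryG\setminus\{x,y\}\to R\setminus\{x,y\}$. (This already explains why the statement phrases things in terms of $f(x),f(y)$: these simply equal $x,y$, but now viewed inside $R=\relbndryG/\D$.)

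Next I would set up the correspondence of components using monotonicity. For a connected $C\subseteq R\setminus\{x,y\}$, Proposition \ref{prop: daverman monotoneconnected} applied to the monotone decomposition of Lemma \ref{lemma: branches uppersemi} gives that $f^{-1}(C)$ is connected, and by the previous paragraph $f^{-1}(C)\subseteq\relbndryG\setminus\{x,y\}$. I would then define $\Phi$ sending a component $E$ of $\relbndryG\setminus\{x,y\}$ to the component of $R\setminus\{x,y\}$ containing the connected set $f(E)$ (which avoids $x,y$, since $f^{-1}(x)=\{x\}$ and $f^{-1}(y)=\{y\}$ give $x,y\notin f(E)$), and $\Psi$ sending a component $C$ of $R\setminus\{x,y\}$ to the component of $\relbndryG\setminus\{x,y\}$ containing $f^{-1}(C)$. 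Using surjectivity of $f$ together with maximality of components one checks $\Phi$ and $\Psi$ are mutually inverse: if $C=\Phi(E)$ then $E\subseteq f^{-1}(C)$ with $f^{-1}(C)$ connected forces $f^{-1}(C)=E$, whence $\Psi\big(\Phi(E)\big)=E$; and if $E'=\Psi(C)$ then $f(E')\supseteq f\big(f^{-1}(C)\big)=C$ with $f(E')$ connected forces $f(E')=C$, whence $\Phi\big(\Psi(C)\big)=C$. This yields the asserted bijection between components of $\relbndryG\setminus\{x,y\}$ and of $R\setminus\{x,y\}$.

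The bijection immediately gives that $R\setminus\{x,y\}$ is disconnected if and only if $\relbndryG\setminus\{x,y\}$ is disconnected. To upgrade disconnection to the cut-pair statement I would observe that neither $x$ nor $y$ is a global cut point of either space: in $\relbndryG$ every global cut point is parabolic, while $R$, being a block, cannot be separated by a cut point at all by Theorem \ref{theorem:treelike structure}(4). Thus the ``no global cut point'' clause in the definition of a cut pair is automatic in both $\relbndryG$ and $R$, and $\{x,y\}$ is a cut pair in $\relbndryG$ exactly when $\{f(x),f(y)\}=\{x,y\}$ is a cut pair in $R$, with components matched by $f$.

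I expect the one step requiring genuine care to be the very first — verifying $f^{-1}\big(\{x,y\}\big)=\{x,y\}$ — since this is precisely where non-parabolicity enters and is what makes the removal of $\{x,y\}$ saturated and hence compatible with the monotone structure. Were $x$ a root, its fiber would be an entire full branch, deleting $x$ from $R$ would correspond to deleting a whole branch from $\relbndryG$, and the clean component correspondence would break down. Granting that first step, the remaining bijection bookkeeping is routine given Proposition \ref{prop: daverman monotoneconnected} and the monotone retraction of Lemma \ref{lemma: branches uppersemi}.
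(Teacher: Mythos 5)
Your proposal is correct and takes essentially the same route as the paper's own proof: both hinge on the observation that non-parabolicity makes $\{x\}$ and $\{y\}$ singleton elements of the monotone upper semi-continuous decomposition of Lemma \ref{lemma: branches uppersemi}, and then apply Proposition \ref{prop: daverman monotoneconnected} to match saturated connected preimages with components of the two complements. Your extra bookkeeping (the explicit mutual-inverse check for $\Phi$ and $\Psi$, and the verification that neither point is a global cut point of $\relbndryG$ or of $R$, via parabolicity of global cut points and Theorem \ref{theorem:treelike structure}(4)) only fills in details the paper leaves implicit.
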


\begin{proof}
This result follows from Proposition \ref{prop: daverman monotoneconnected}. We first show that there is a bijection between components of $\relbndryG\setminus\{x,y\}$ and components of $R\setminus\bigl\{f(x),f(y)\bigr\}$.  Assume $\bigl\{f(x),f(y)\bigr\}$ is a cut pair in $R$. Because $\{x\},\{y\}\in\D$ the preimage of each component of $R\setminus\bigl\{f(x),f(y)\bigr\}$ is a saturated connected set in $\relbndryG\setminus\{x,y\}$. Because $\D$ is monotone the connected components of $\relbndryG\setminus\{x,y\}$ are saturated, and therefore not identified under $f$. Thus we have a bijection between components of $\relbndryG\setminus\{x,y\}$ and components of $R\setminus\bigl\{f(x),f(y)\bigr\}$.

Now, assume $\{x,y\}$ is a cut pair. Then $\relbndryG\setminus\{x,y\}$ has at least two components and the above implies that $R\setminus\bigl\{f(x),f(y\bigr)\}$ has at least two components. If $\bigl\{f(x),f(y)\bigr\}$ is a cut pair, then again the result follows from the above.
\end{proof}

By Proposition \ref{proposition:Hruska Thm 9.1} a conical limit point of the action of $\Stab_G(R)$ on $R$ is a conical limit point of the action of $G$ on $\relbndryG$, so as a corollary of Lemma \ref{lemma:cutpair to cut pair} we obtain:

\begin{corollary}
\label{corollary:sim in R equals sim in boundary}
Let $x,y\in R$ be conical limit points of the action of $\Stab_G(R)$ on $R$. Then $x\sim y$ in $\relbndryG$ if and only if $f(x)\sim f(y)$ in $R$.
\end{corollary}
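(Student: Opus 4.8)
The plan is to reduce the statement to the component-counting bijection furnished by Lemma \ref{lemma:cutpair to cut pair}. The relation $\sim$ is defined purely in terms of the quantity $\N$: for distinct points one has $x\sim y$ exactly when $\N(x,y)=2$, where $\N(x,y)$ is the number of components of the complement of the pair. Hence the corollary reduces to showing that $f$ preserves this component count for the pairs in question.

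First I would check that Lemma \ref{lemma:cutpair to cut pair} applies. Since $x$ and $y$ are conical limit points of the action of $\Stab_G(R)$ on $R$, Proposition \ref{prop:conical vs parabolic} shows they are not parabolic points, so the hypothesis of Lemma \ref{lemma:cutpair to cut pair} is met; moreover Proposition \ref{proposition:Hruska Thm 9.1} shows they are also conical limit points of the action of $G$ on $\relbndryG$, so that both sides of the asserted equivalence are meaningful. The case $x=y$ is immediate, as then $f(x)=f(y)$ and each relation holds by the first clause in the definition of $\sim$. Assuming $x\neq y$, I note that $f$ is the identity on $R$, so $f(x)\neq f(y)$ as well. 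Lemma \ref{lemma:cutpair to cut pair} then supplies a bijection between the components of $\relbndryG\setminus\{x,y\}$ and those of $R\setminus\{f(x),f(y)\}$, giving $\N(x,y)=\N\big(f(x),f(y)\big)$. In particular $\N(x,y)=2$ if and only if $\N\big(f(x),f(y)\big)=2$, which is precisely the assertion that $x\sim y$ in $\relbndryG$ if and only if $f(x)\sim f(y)$ in $R$.

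There is no serious obstacle here; the content is entirely carried by Lemma \ref{lemma:cutpair to cut pair}. The only subtlety worth flagging is that $\sim$ records the exact value $\N=2$, not mere disconnectedness, so I must use the bijective correspondence of components rather than only the weaker statement that $\{x,y\}$ is a cut pair if and only if $\{f(x),f(y)\}$ is. With the bijection in hand this is pure bookkeeping, which is why the result is stated as a corollary.
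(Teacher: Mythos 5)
Your proposal is correct and follows the paper's own derivation essentially verbatim: the paper obtains the corollary exactly from Proposition \ref{proposition:Hruska Thm 9.1} (conical limit points of the $\Stab_G(R)$-action on $R$ are conical limit points of the $G$-action on $\relbndryG$) together with the component bijection of Lemma \ref{lemma:cutpair to cut pair}, which is the reduction you carry out. The only point left implicit in your write-up (and equally in the paper) is that $\sim$ is defined only on the set $M^*(2)$, so strictly one also needs that $x\in M^*(2)$ for $\relbndryG$ if and only if $f(x)$ lies in the corresponding set for $R$; this is supplied by Lemma \ref{lemma:image of conical limit local cut point} together with the valence preservation $val(x)=val\bigl(f(x)\bigr)$ of Corollary \ref{corollary:image of local cut point}.
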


\begin{lemma}
\label{lemma:HvG-translate inseparable}
Assume $R=B(v)$ for some $v\in\R\subset\relbndryG$ and let $H=\Stab_G(R)$. A cut pair $\{a,b\}$ in $R$ is $H$-translate inseparable if and only if it is $G$-translate inseparable.
\end{lemma}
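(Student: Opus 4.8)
The plan is to prove both implications by contraposition, transferring separation statements across the monotone retraction $f\colon\relbndryG\to R$ of Lemma~\ref{lemma: branches uppersemi} by means of the cut-pair correspondence of Lemma~\ref{lemma:cutpair to cut pair}. Two structural facts drive everything: since $H=\Stab_G(R)$ is the setwise stabilizer, we have $\Orb_H(\{a,b\})\subseteq\Orb_G(\{a,b\})$ and every $H$-translate of $\{a,b\}$ again lies in $R$; and since parabolicity is $G$-invariant, the relevant translates of the points $a,b$ are non-parabolic exactly when $a,b$ are, so that the non-parabolic hypothesis of Lemma~\ref{lemma:cutpair to cut pair} is met for the (conical) points of interest. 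Recall finally that $f$ fixes $R$ pointwise, so $f(a)=a$ and $f(b)=b$.

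For the direction ``$G$-translate inseparable $\Rightarrow$ $H$-translate inseparable'' I would argue the contrapositive. If $\{a,b\}$ is not $H$-translate inseparable, there is an $h\in H$ for which $\{ha,hb\}$ is a cut pair in $R$ separating $a$ from $b$ in $R$. As $h\in G$, the pair $\{ha,hb\}$ lies in $\Orb_G(\{a,b\})$, and since $ha,hb\in R$, Lemma~\ref{lemma:cutpair to cut pair} shows it is also a cut pair of $\relbndryG$ whose components correspond bijectively (via $f$) to those of $R\setminus\{ha,hb\}$. As $f(a)=a$ and $f(b)=b$ lie in distinct components downstairs, they lie in distinct components of $\relbndryG\setminus\{ha,hb\}$, so $\{a,b\}$ is not $G$-translate inseparable.

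The substantive direction is ``$H$-translate inseparable $\Rightarrow$ $G$-translate inseparable,'' again by contraposition. Suppose some $g\in G$ gives a cut pair $\{ga,gb\}$ of $\relbndryG$ separating $a$ from $b$. The key step is to show $g\in H$, i.e.\ that the separating translate already lives in $R$. Since $a,b\in R=B(v)$ and $G$ permutes the blocks through the $G$-equivariant map $\beta^*$ of Theorem~\ref{theorem:treelike structure}(9), we have $ga,gb\in gR=B(gv)$, which is again a block. If $gR\neq R$, then by Theorem~\ref{theorem:treelike structure}(1) the blocks $R$ and $gR$ meet in at most a single parabolic point $\beta(p)$, so $R$ contains at most one of $ga,gb$. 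Because $R=B(v)$ has no cut point by Theorem~\ref{theorem:treelike structure}(4), the set $R\setminus\{ga,gb\}$ -- equal to $R$ or to $R\setminus\{\beta(p)\}$ -- is connected, hence lies in a single component of $\relbndryG\setminus\{ga,gb\}$; as $a,b$ belong to this set, they are not separated, a contradiction. Therefore $gR=R$, i.e.\ $g\in H$, so $\{ga,gb\}\in\Orb_H(\{a,b\})$, and Lemma~\ref{lemma:cutpair to cut pair} transports the separation of $a,b$ from $\relbndryG$ back to $R$. Thus $\{a,b\}$ is not $H$-translate inseparable.

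The main obstacle is precisely the step forcing $g\in H$: one must exclude the possibility that a cut pair confined to a \emph{different} block of $\relbndryG$ separates two points of $R$. This is exactly where the geometry of the block decomposition enters -- that distinct blocks overlap in at most one parabolic point and that a single block contains no cut points (Theorem~\ref{theorem:treelike structure}(1),(4)). Once $g\in H$ is secured, both directions reduce to routine transfers through $f$ via Lemma~\ref{lemma:cutpair to cut pair}, the only mild caveat being the non-parabolicity of the points involved, which holds throughout the applications to $\sim$-classes and necklaces.
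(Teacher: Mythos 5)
Your proof is correct, and while its overall skeleton matches the paper's (transfer separation between $R$ and $\relbndryG$ through the monotone retraction $f$ via Lemma \ref{lemma:cutpair to cut pair}, and reduce the hard direction to forcing $g\in H$), the pivotal step is argued by genuinely different means. The paper establishes $g\in H$ from the defining maximality of blocks: it first observes that, since cut pairs cannot be separated by global cut points, $\{ga,gb\}$ lies entirely in $R$ or entirely in $\relbndryG\setminus R$, dismisses the outside case, and in the inside case shows $g$ cannot move any point of $R$ out of $R$ --- if $gx\notin R$ while $ga\in R$, some global cut point would separate $gx$ from $ga$, and pulling back by the homeomorphism $g^{-1}$ would produce a cut-point separation inside the block $R$, a contradiction. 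You instead exploit the tree structure of Theorem \ref{theorem:treelike structure}: by $G$-equivariance of $\beta^*$ (part (9)) the translate $gR=B(gv)$ is again a block, distinct blocks meet in at most one parabolic point (part (1)), and blocks contain no cut points (part (4)), so if $gR\neq R$ then $R\setminus\{ga,gb\}$ equals $R$ or $R$ minus a single point, hence is connected and cannot separate $a$ from $b$. Your route buys a uniform, explicit treatment of the outside case and of the edge case where one point of the translate is a shared parabolic root --- cases the paper's dichotomy handles only implicitly via the fact that cut pairs contain no global cut points --- while the paper's route is more elementary in that it invokes only the definition of a block and not the equivariance of $\beta^*$. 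One shared caveat, which you rightly flag: Lemma \ref{lemma:cutpair to cut pair} carries a non-parabolicity hypothesis that the statement of Lemma \ref{lemma:HvG-translate inseparable} omits; the paper's own proof uses that lemma just as implicitly, and in the actual application (Corollary \ref{corollary:cut pair in block}) the pair is assumed to contain no parabolic points, so this is not a defect of your argument relative to the paper's.
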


\begin{proof}
Notice that by Lemma \ref{lemma:cutpair to cut pair} we have that $\{a,b\}$ is a cut pair of $\relbndryG$. If $h\in H$, then by Lemma \ref{lemma:cutpair to cut pair} a translate $\{ha,hb\}$ separates $\{a,b\}$ in $R$ if and only if $\{ha,hb\}$ separates $\{a,b\}$ in $\relbndryG$. So, if $\{a,b\}$ is $G$-translate inseparable, then $\{a,b\}$ is $H$-translate inseparable.

Now assume that $\{a,b\}$ is $H$-translate inseparable and let $g\in G$. Because cut pairs cannot be separated by cut points, the pair $\{ga,gb\}$ must be in $R$ or $\relbndryG\setminus R$. If $\{ga,gb\}$ is in $\relbndryG\setminus R$ then $\{ga,gb\}$ cannot separate $\{a,b\}$. If $\{ga,gb\}$ is in $R$ then $g$ must be in $H$. That $g\in H$ follows from the definition of a block as a maximal set of points of $\relbndryG$ which cannot be separated by cut points and the fact that $g$ is a homeomorphism. For if $g$ sent a point $x\in R\setminus\{a,b\}$ to a point  of $\relbndryG\setminus R$, then by the definition of $R$ there must exist a cut point of $\relbndryG$ which separates either the pair $\{gx,ga\}$ or the pair $\{gx,gb\}$. Thus $gR$ contains points which can be separated by a cut point, but $g$ is a homeomorphism. So, $R$ contains points which can be separated by a cut point, a contradiction. Thus $\{a,b\}$ is $H$-translate inseparable, then $\{a,b\}$ is $G$-translate inseparable.
\end{proof}

\begin{corollary}
\label{corollary:cut pair in block}
Let $R=B(v)$ for some $v\in\R$. If $\{a,b\}$ is a cut pair of $R$ which is $\Stab_G(R)$-translate inseparable and does not contain any parabolic points, then $\{a,b\}$ is $G$-translate inseparable cut pair of $\relbndryG$ which does not contain any parabolic points. Additionally, if $R$ contains a necklace $\nu$ and $i\colon R\hookrightarrow\relbndryG$ is the inclusion map, then $i(\nu)$ is a necklace in $\relbndryG$.
\end{corollary}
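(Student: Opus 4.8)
The plan is to deduce Corollary~\ref{corollary:cut pair in block} as a direct consequence of the structural lemmas established just above it, principally Lemma~\ref{lemma:cutpair to cut pair}, Lemma~\ref{lemma:HvG-translate inseparable}, and Corollary~\ref{corollary:sim in R equals sim in boundary}, together with the fact from Theorem~\ref{theorem:treelike structure}(4) that the block $R=B(v)$ is the limit set of the relatively quasiconvex subgroup $H=\Stab_G(R)$. The corollary has two assertions, and I would treat them in turn.

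For the first assertion, I would begin with a cut pair $\{a,b\}$ of $R$ that is $\Stab_G(R)$-translate inseparable and contains no parabolic points. Lemma~\ref{lemma:cutpair to cut pair} immediately upgrades $\{a,b\}$ to a cut pair of $\relbndryG$, since $a$ and $b$ are non-parabolic and hence $f(a)=a$, $f(b)=b$ under the retraction of Lemma~\ref{lemma: branches uppersemi}. Lemma~\ref{lemma:HvG-translate inseparable} then promotes $H$-translate inseparability to $G$-translate inseparability verbatim. The only remaining point to record is that $\{a,b\}$ contains no parabolic points \emph{as a cut pair of $\relbndryG$}; this follows because a point of $R$ that is non-parabolic for the $H$-action is, by Proposition~\ref{proposition:Hruska Thm 9.1}(2), non-parabolic for the $G$-action, so neither $a$ nor $b$ is a parabolic point of $\relbndryG$.

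For the second assertion, I would recall from Section~\ref{subsec:Cut Point Structures In Metric Spaces} that a necklace is the closure of a $\sim$-class $\nu$ in $M^*(2)$ containing at least three conical limit points. Given a necklace $\nu$ in $R$, each of its defining points is a conical limit point of the $H$-action, hence a conical limit point of the $G$-action by Proposition~\ref{proposition:Hruska Thm 9.1}(1), and is a local cut point of $R$ of valence $2$. Corollary~\ref{corollary:sim in R equals sim in boundary} guarantees that the $\sim$-relation among these points is preserved under inclusion $i\colon R\hookrightarrow\relbndryG$, so the image $i(\nu)$ is again a single $\sim$-class in $\relbndryG$; since the inclusion is injective, $i(\nu)$ still contains at least three points. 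One should also confirm that the valences match, i.e.\ that each point of $i(\nu)$ lies in $M^*(2)$ of $\relbndryG$ and not in $M^*(3+)$; this is exactly $\N(x,y)=2$ being preserved, which is the content of the bijection of components in Lemma~\ref{lemma:cutpair to cut pair} applied to $\sim$-related pairs. Finally, taking closures, $\overline{i(\nu)}=i(\overline{\nu})$ since $R$ is closed in $\relbndryG$, so $i(\nu)$ is a necklace in $\relbndryG$.

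\emph{The main obstacle} I anticipate is the bookkeeping around \emph{which} conical/local-cut-point data is genuinely transported by $i$ versus the retraction $f$. The subtlety is that $f$ and $i$ go in opposite directions, and the necklace is defined via a relation ($\sim$) on valence-$2$ conical points; one must be careful that passing from $R$ to $\relbndryG$ does not inadvertently increase the valence of a point (which would push it out of $M^*(2)$) or merge distinct $\sim$-classes. Both dangers are ruled out precisely by the component-bijection in Lemma~\ref{lemma:cutpair to cut pair} and by Corollary~\ref{corollary:sim in R equals sim in boundary}, so the argument is essentially an assembly of the preceding results; I expect no new estimates are needed, only a careful verification that the hypotheses of each cited lemma are met for every point under consideration.
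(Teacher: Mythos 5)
Your treatment of the first assertion is correct and is essentially the paper's own argument: the paper simply cites Lemma~\ref{lemma:HvG-translate inseparable} (whose proof already begins with your appeal to Lemma~\ref{lemma:cutpair to cut pair}), and the transfer of non-parabolicity via Proposition~\ref{proposition:Hruska Thm 9.1}(2) is the same observation, just made explicit.

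For the second assertion there is one genuine gap. Writing $\nu=\overline{C}$ with $C$ a $\sim$-class in $R$, you claim that $i(C)$ ``is again a single $\sim$-class in $\relbndryG$,'' citing Corollary~\ref{corollary:sim in R equals sim in boundary}. But that corollary only governs pairs of points both lying in $R$: it shows $\sim$-relations \emph{within} $R$ are preserved, i.e.\ that $i(C)$ is \emph{contained in} a $\sim$-class of $\relbndryG$. It does not rule out a point $y\in\relbndryG\setminus R$ with $y\sim x$ in $\relbndryG$ for some $x\in C$; if such a $y$ existed, the full $\sim$-class would strictly contain $i(C)$, and $\overline{i(C)}$ would be a proper closed subset of a necklace rather than a necklace (a necklace is by definition the closure of an \emph{entire} $\sim$-class). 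Your ``main obstacle'' paragraph anticipates valence inflation and the merging of distinct classes, but the component bijection of Lemma~\ref{lemma:cutpair to cut pair} does not address this third danger, namely the class acquiring new members outside $R$. The paper closes it with the block property: if $y\sim x$ then $\{x,y\}$ is a cut pair, and cut pairs cannot be separated by global cut points, so $y$ lies in the block $R$ by the maximality defining blocks; then $y$ is a conical limit point of the $\Stab_G(R)$-action by Proposition~\ref{proposition:Hruska Thm 9.1}(1), and Corollary~\ref{corollary:sim in R equals sim in boundary} forces $y\in C$. With that one additional step your argument coincides with the paper's: closedness of $R$ gives $i(\nu)=i(\overline{C})=\overline{i(C)}$ exactly as you conclude.
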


\begin{proof}
The first result follows from Lemma \ref{lemma:HvG-translate inseparable}.

Let $\nu$ be a necklace in $R$. Then $\nu$ is the closure of an $\sim$-class which is contained in $R$. Call this class $C$. Because cut pairs in $\relbndryG$ cannot be separated by global cut points and the elements of $C$ are conical limit points, Corollary \ref{corollary:sim in R equals sim in boundary} implies that $i(C)$ is a $\sim$-class in $\relbndryG$. $R$ is closed, so the inclusion map is closed. Thus we have $i(\nu)=i(\overline{C})=\overline{i(C)}$, which is a necklace in $\relbndryG$.
\end{proof}

\section{Local cut points in $\relbndryG$}
\label{section:Local Cut Points in relbndry}

The goal of this section is to describe the ways local cut points occur in $\relbndryG$ (see Theorem \ref{theorem:Collection thm}). In the hyperbolic setting Bowditch \cite{Bow98} showed that a local cut point must be contained in a translate inseparable loxodromic cut pair or a necklace. We wish to adapt Bowditch's result to the relatively hyperbolic setting (see Theorem \ref{theorem:collecting conical}). As first observed by Guralnik \cite{Gur}, a careful examination of \cite{Bow98} reveals that much of Bowditch's argument in \cite{Bow98} could directly translate to the Bowditch boundary $\relbndryG$ if one only considers local cut points which are conical limit points and assumes that $\relbndryG$ has no global cut points. However, there are two key steps (Lemma 5.25 and Lemma 5.17 of \cite{Bow98}) in the proof where Bowditch depends heavily hyperbolicity. Namely, in Section 5 of \cite{Bow98} Bowditch requires that $G$ act as a uniform convergence group on its boundary, i.e that the action on the triple space is proper and cocompact. As mentioned in Section \ref{section:Preliminaries} in the relatively hyperbolic setting the action of $G$ on $\relbndryG$ is not uniform in general.  Lemma \ref{lemma:control theorem} and Proposition \ref{prop:Key prop} generalize the critical steps (Lemma 5.2 and Lemma 5.17 of \cite{Bow98}) of Bowditch's argument to the relatively hyperbolic setting and allow us to use Bowditch's results concerning local cut points to prove the main result of this section (Theorem \ref{theorem:Collection thm}). We remark that Lemma \ref{lemma:control theorem} is also proved in \cite{Gur}, but for completeness we include an alternate more self-contained proof, which uses different techniques.

\medskip

\subsection{A key lemma}
\label{subsec:first key lemma}
In this section we prove the following technical lemma:

\medskip
\begin{lemma}
\label{lemma:control theorem}
Let $(G,\bbp)$ be a relatively hyperbolic group. There exist finite collections $(U_i)_{i=1}^p$ and $(V_i)_{i=1}^p$ of open connected sets of $\relbndryG $ with $\overline{U}_i\cap\overline{V}_i=\emptyset$, and such that if $K\subseteq \relbndryG $ is closed and $x\in \relbndryG\backslash K$ is a conical limit point then there exists $g\in G$ and $i\in\{1,...,p\}$ such that $gx\in U_i$ and $gK\subseteq V_i$.
\end{lemma}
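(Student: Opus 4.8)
The plan is to exploit the dynamics of the convergence group action on the compact metrizable space $\relbndryG$. The key feature of a conical limit point $x$ is that there is a sequence $(g_n)\subset G$ and distinct points $\alpha,\beta\in\relbndryG$ with $g_n x\to\alpha$ while $g_n y\to\beta$ for every $y\neq x$; in particular, since $K$ is a compact set not containing $x$, the convergence $g_n y\to\beta$ is uniform on $K$. So for $n$ large the single element $g_n$ pushes $x$ near $\alpha$ and all of $K$ near $\beta$, and because $\alpha\neq\beta$ we can separate the images into disjoint open sets. The whole difficulty is to make the target open sets $U_i,V_i$ come from a \emph{finite} list that does not depend on $x$ or $K$.

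First I would fix, for the compact metric space $\relbndryG$, a finite open cover by connected open sets of small diameter (possible since $\relbndryG$ is a Peano continuum, hence locally connected, by Theorem \ref{theorem:Bow Local conn}); refining, choose a Lebesgue number $\lambda>0$ for this cover. Now pick finitely many pairs of points $(\alpha_i,\beta_i)$, $i=1,\dots,p$, together with connected open neighborhoods $U_i\ni\alpha_i$ and $V_i\ni\beta_i$ of diameter less than $\lambda/3$ with $\overline{U}_i\cap\overline{V}_i=\emptyset$, arranged so that every pair of points at distance at least some threshold $\delta>0$ is ``captured'' by some $(U_i,V_i)$ — that is, whenever $d(\alpha,\beta)\geq\delta$ there is an index $i$ with $\alpha\in U_i$ and $\beta\in V_i$. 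This is a compactness argument: the set of pairs $(\alpha,\beta)\in\relbndryG\times\relbndryG$ with $d(\alpha,\beta)\geq\delta$ is compact, cover it by product neighborhoods $U\times V$ with $\overline U\cap\overline V=\emptyset$ and each factor connected of small diameter, and extract a finite subcover.

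Next, given $K$ closed and $x\in\relbndryG\setminus K$ conical, take the sequence $(g_n)$ and points $\alpha,\beta$ as above. The points $\alpha,\beta$ are distinct, so $d(\alpha,\beta)\geq\delta'$ for some $\delta'>0$; after passing to the compactness scale I would want $\delta\le \delta'$ but this cannot be arranged uniformly in advance since $\delta'$ depends on $x$. \emph{This is the main obstacle}: the separation distance between the attracting and repelling points varies with $x$, so a single finite list keyed to one threshold $\delta$ need not capture every conical point. The way around it is to first apply some $g\in G$ bringing $\alpha$ and $\beta$ far apart in a \emph{controlled} way — concretely, one uses that the action is a convergence action to find, for any prescribed pair of disjoint target neighborhoods, a group element carrying $(\alpha,\beta)$ into them, and then re-indexes so the finitely many ``standard'' neighborhood pairs $(U_i,V_i)$ suffice. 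Equivalently, I would choose the finite family $(U_i,V_i)$ first, and then for the given $x$ use conicality to produce $g_n$ with $g_nx\to\alpha\in U_{i}$ and $g_nK$ eventually inside $V_i$ for a \emph{fixed} pair $(\alpha,\beta)$ drawn from a pre-selected finite set of attracting/repelling configurations; the convergence dynamics guarantee such a group element exists because conical points are precisely those that can be pushed to any chosen attractor with complementary repulsion.

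Finally, with the index $i$ and the element $g=g_n$ fixed for $n$ large, conclude: $gx\in U_i$ because $g_nx\to\alpha_i\in U_i$ and $U_i$ is open, while $gK\subseteq V_i$ because $g_ny\to\beta_i$ uniformly on the compact set $K$ and $V_i$ is an open neighborhood of $\beta_i$, so for $n$ large the whole image $g_nK$ lies in $V_i$. The disjointness $\overline{U}_i\cap\overline{V}_i=\emptyset$ is built into the construction, and connectedness of $U_i,V_i$ is arranged by using the local connectedness of $\relbndryG$. The essential input beyond elementary compactness is thus the conical-limit dynamics, which is exactly what lets a \emph{single} group element simultaneously concentrate $x$ and spread $K$ into the prearranged disjoint open sets.
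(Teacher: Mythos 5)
Your overall architecture matches the paper's: a finite list of neighborhood pairs with disjoint closures, conical dynamics pushing $x$ to $\alpha$ and $K$ uniformly to $\beta$, then composing with an element that carries $(\alpha,\beta)$ into a standard pair. You also correctly isolate the main obstacle, namely that the separation $d(\alpha,\beta)$ depends on $x$. But your resolution of that obstacle is a genuine gap. You assert that ``one uses that the action is a convergence action to find, for any prescribed pair of disjoint target neighborhoods, a group element carrying $(\alpha,\beta)$ into them,'' and that ``conical points are precisely those that can be pushed to any chosen attractor with complementary repulsion.'' Neither follows from the convergence property alone: conicality supplies a sequence $(g_n)$ with $g_nx\to\alpha$ and $g_ny\to\beta$ for \emph{some} pair $(\alpha,\beta)$ --- you do not get to prescribe the targets. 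What is actually needed is that $G$ acts cocompactly on the space of distinct pairs $\Theta_2\relbndryG$: a compact set whose translates cover the pair space can be covered by finitely many products $U_i\times V_i$ with $\overline{U}_i\cap\overline{V}_i=\emptyset$, so \emph{every} distinct pair $(\alpha,\beta)$ is carried by some $g'\in G$ into some $U_i\times V_i$, and then $g'g_n$ for $n$ large does the job. This cocompactness is a theorem of Tukia and Gerasimov and is essentially equivalent to geometric finiteness of the action; general convergence actions do not satisfy it, so it cannot be obtained by waving at ``convergence dynamics.''

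The paper supplies exactly this missing ingredient geometrically: in the cusp-uniform action on the $\delta$-hyperbolic space $X$, a bi-infinite geodesic cannot lie entirely inside a horoball, hence meets a $G$-translate of a fixed compact set $C$; the set $E(C)$ of endpoint pairs of lines through $C$ is compact in $\Theta_2\relbndryG$ (a diagonal argument on sequences of lines meeting $C$), and since any two boundary points are joined by a line, the translates of $E(C)$ cover the pair space. With that lemma in hand, the rest of your sketch --- the uniform convergence of $g_nK$ toward $\beta$ on the compact set $K$, the finite product cover with disjoint closures, and the final composition --- is sound and is how the paper concludes; your earlier detour through a distance threshold $\delta$ and a Lebesgue number is unnecessary once pair-space cocompactness is available, and as you yourself noticed, it cannot be made to work without it.
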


\medskip

 We postpone the proof of Lemma \ref{lemma:control theorem}, as it will require a few lemmas.  Let $X$ be the proper $\delta$-hyperbolic space on which $G$ acts as given by the definition of relatively hyperbolic (see Section \ref{section:Preliminaries}). We know from Theorem \ref{theorem:finitely many orbits of maximal parabolic} that there finitely many orbits of horoballs in $\mathcal{B}$. Let $B_1,B_2,...,B_n$ be representatives from each orbit and $p_1,p_2,...,p_n$ the associated parabolic points for each representative horoball. In \cite{Bow1} it is shown that $C_i=\Fr(B_i)/Stab_G({p_i})$ is compact for every $i\in\{1,2,...,n\}$ and from the definition of relatively hyperbolic we know that $G$ acts cocompactly on $(X\setminus\mathcal{B})$. Let $A$ be the fundamental domain of the action of $G$ on $(X\setminus\mathcal{B})$, and define
 \[C=A\cup C_1 \cup C_2\cup \cdots\cup C_n.\]
Then $C$ is a compact subset of $X$ and $\Orb_G(C)\supseteq X\setminus\mathcal{B}$.

\medskip

Let $\Theta_2\relbndryG$ the space of distinct pairs in $\relbndryG$ and define $E(C)\subseteq\pairs$ to be the collection of pairs $(x,y)$ such that $x=c(\infty)$ and $y=c(-\infty)$ for some line $c\colon\bbr\rightarrow X$ with $\im(c)\cap C\neq\emptyset$.

\begin{lemma}
\label{lemma:pairs compact}
The set $E(C)$ is compact in $\Theta_2\relbndryG$.
\end{lemma}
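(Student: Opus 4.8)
Since $\relbndryG=\bndry X$ is compact and metrizable, so is the product $\bndry X\times\bndry X$, and hence the open subspace $\pairs$ is metrizable. It therefore suffices to prove that $E(C)$ is \emph{sequentially} compact. The plan is to realize a given sequence in $E(C)$ by geodesic lines through $C$, extract a limiting line by a properness/Arzel\`a--Ascoli argument, and then check that the endpoint pairs of these lines converge to the endpoint pair of the limit line, which again lies in $E(C)$.

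First I would take a sequence $(x_k,y_k)\in E(C)$ and, for each $k$, a line $c_k\colon\bbr\to X$ with $x_k=c_k(\infty)$, $y_k=c_k(-\infty)$ and $\im(c_k)\cap C\neq\emptyset$. Reparametrizing the domain by a translation $t\mapsto t+s_k$ (which preserves both endpoints and their labeling) I may assume $c_k(0)\in C$. Fix a basepoint $o$ and choose $R$ with $C\subseteq \overline{B}(o,R)$; then each $c_k$ is $1$-Lipschitz and $c_k([-T,T])\subseteq\overline{B}(o,R+T)$, a compact set because $X$ is proper. Applying Arzel\`a--Ascoli on each interval $[-T,T]$ and diagonalizing over $T\in\bbn$, I pass to a subsequence along which $c_k\to c$ uniformly on compact subsets of $\bbr$. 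Passing to the limit in the isometry condition $d\big(c_k(s),c_k(t)\big)=|s-t|$ shows $c$ is again a geodesic line (in particular it does not degenerate to a ray or a point), and since $C$ is closed we have $c(0)=\lim c_k(0)\in C$, so $c$ is a line through $C$.

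The remaining step, which I expect to be the main obstacle, is to show that uniform-on-compacts convergence $c_k\to c$ forces convergence of the endpoints $x_k\to c(\infty)$ and $y_k\to c(-\infty)$ in $\bndry X$. This is the standard but slightly technical identification of convergence of geodesic lines with convergence of endpoint pairs in a proper hyperbolic space; I would supply it via a Gromov-product estimate based at $o$, using that for large $T$ the point $c_k(T)$ lies uniformly close to $c(T)$ and that all these geodesics pass through the bounded region $\overline{B}(o,R)$, so that $\delta$-hyperbolicity bounds the relevant products from below. Because $c$ is a genuine line, $c(\infty)\neq c(-\infty)$, so the limit pair $\big(c(\infty),c(-\infty)\big)$ lies in $\pairs$ and, as $c$ meets $C$, in $E(C)$. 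This exhibits a subsequence of $(x_k,y_k)$ converging to a point of $E(C)$, proving sequential compactness and hence compactness of $E(C)$ in $\pairs$.
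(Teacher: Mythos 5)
Your proposal is correct and follows essentially the same route as the paper, which sketches exactly this argument: realize the pairs by lines meeting $C$, extract a limit line via sequential compactness and a diagonal argument, and note the limit line again meets $C$ (the paper explicitly leaves the details as an exercise). Your write-up in fact supplies more detail than the paper, including the key observations that the limit of unit-speed geodesics is a genuine line (so its endpoint pair is a distinct pair) and that endpoint convergence follows from a Gromov-product estimate.
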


\medskip

The proof of Lemma \ref{lemma:pairs compact} follows from the fact that for any pair $(x,y)\in\Theta_2\relbndryG$ we may find a line whose ends are $x$ and $y$ (see \cite{BH1} Chapter III ). Then sequential compactness and a standard diagonal argument show that a sequence of lines each meeting $C$ converges to a line meeting $C$. We leave the details as an exercise.

\medskip
The action of $G$ is by isometries, the translates of $C$ cover $X\setminus\B$, and a line $\ell$ cannot be completely contained in a horoball. So, there must be a $g\in G$ and a line $\ell'$ such that $gC\cap \ell\neq\emptyset$, $\ell'\cap C\neq\emptyset$, and $g\ell'=\ell$.   Consequently, we obtain:
\medskip

\begin{corollary}[Tukia, Gerasimov]
\label{corollary: cocompact on pair}
$G$ acts cocompactly on $\Theta_2\relbndryG$.
\end{corollary}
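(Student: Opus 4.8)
The plan is to exhibit a single compact subset of $\Theta_2\relbndryG$ whose $G$-orbit is all of $\Theta_2\relbndryG$; the natural candidate is the set $E(C)$, which is already known to be compact by Lemma \ref{lemma:pairs compact}. Thus the only thing that remains is to verify that $\Orb_G\big(E(C)\big)=\Theta_2\relbndryG$, and cocompactness follows immediately.

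First I would take an arbitrary pair $(x,y)\in\Theta_2\relbndryG$. Since $x\neq y$, a standard fact about proper $\delta$-hyperbolic spaces (see \cite{BH1} Chapter III) produces a bi-infinite geodesic line $\ell\colon\bbr\rightarrow X$ with $\ell(\infty)=x$ and $\ell(-\infty)=y$. The crucial geometric observation is that $\im(\ell)$ cannot be contained in $\mathcal{B}$: the image of a line is connected, the horoballs comprising $\mathcal{B}$ are pairwise disjoint open sets, so a connected subset of $\mathcal{B}$ would have to lie in a single horoball, and no geodesic line can be contained in one horoball. Indeed, a line contained in the horoball based at a parabolic point $p$ would force \emph{both} of its distinct endpoints to equal $p$, which is impossible. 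Hence $\im(\ell)\cap(X\setminus\mathcal{B})\neq\emptyset$.

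Next I would use that $\Orb_G(C)\supseteq X\setminus\mathcal{B}$ to locate a point $z\in\im(\ell)\cap(X\setminus\mathcal{B})$ together with an element $g\in G$ satisfying $g^{-1}z\in C$. Because $G$ acts by isometries, $g^{-1}\ell$ is again a geodesic line, and it passes through $g^{-1}z\in C$, so $g^{-1}\ell$ witnesses that its endpoint pair $(g^{-1}x,g^{-1}y)$ lies in $E(C)$. Therefore $(x,y)=g\cdot(g^{-1}x,g^{-1}y)\in\Orb_G\big(E(C)\big)$, which yields $\Orb_G\big(E(C)\big)=\Theta_2\relbndryG$. Combined with the compactness of $E(C)$, this shows that the action of $G$ on $\Theta_2\relbndryG$ is cocompact.

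I expect the only delicate point to be the claim that a geodesic line cannot be swallowed by the open horoball system; once that is pinned down, via the disjointness of the horoballs together with the elementary fact that a line with two distinct endpoints must escape any single horoball, the remainder is precisely the translation argument already sketched in the paragraph preceding the statement.
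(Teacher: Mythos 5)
Your proposal is correct and follows essentially the same route as the paper: both arguments translate an arbitrary bi-infinite geodesic joining a given pair into the compact set $C$ using $\Orb_G(C)\supseteq X\setminus\mathcal{B}$, and conclude that $E(C)$, compact by Lemma \ref{lemma:pairs compact}, meets every $G$-orbit in $\Theta_2\relbndryG$. The only difference is that you supply the details behind the paper's bare assertion that a line cannot be contained in the horoball system (disjointness of the open horoballs plus the fact that a line inside a single horoball would have both endpoints equal to its basepoint), a correct but minor elaboration.
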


Corollary \ref{corollary: cocompact on pair} was first observed by Gerasimov \cite{Ger09} following results of Tukia \cite{Tukia98}.

\begin{lemma}
\label{lemma:neighborhood pair}
There exist finite collections $(U_i)_{i=1}^p$ and $(V_i)_{i=1}^p$ such that $\overline{U}_i\cap\overline{V}_i=\emptyset$ for every $i\in\{1,...,p\}$, and such that if $x,y\in\relbndryG$ then there exists $g\in G$ and $i\in\{1,...,p\}$ with $gx\in U_i$ and $gy\in V_i$.
\end{lemma}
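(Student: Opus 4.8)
The plan is to use the cocompactness of the $G$-action on the pair space $\Theta_2\relbndryG$ (Corollary \ref{corollary: cocompact on pair}) together with a routine open-cover argument. Note first that the conclusion forces $x\neq y$, since otherwise $gx=gy$ could not lie in two sets with disjoint closures; so I regard the hypothesis as a statement about points $(x,y)\in\Theta_2\relbndryG$. Recall that $\relbndryG$ is compact and metrizable, and fix a metric $d$. The space $\Theta_2\relbndryG$ of distinct pairs is the complement of the diagonal in $\relbndryG\times\relbndryG$, hence an open subspace, and any product $U\times V$ of open sets with $\overline{U}\cap\overline{V}=\emptyset$ satisfies $U\cap V=\emptyset$ and therefore lies entirely inside $\Theta_2\relbndryG$.

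First I would build an open cover of $\Theta_2\relbndryG$ by such rectangles. Given $(x,y)\in\Theta_2\relbndryG$, set $r=d(x,y)/3>0$ and take $U=B(x,r)$, $V=B(y,r)$. The triangle inequality gives $\overline{U}\cap\overline{V}=\emptyset$, and $U\times V$ is then an open neighborhood of $(x,y)$ contained in $\Theta_2\relbndryG$. Letting $(x,y)$ range over $\Theta_2\relbndryG$, these rectangles form an open cover $\mathcal{W}$ of $\Theta_2\relbndryG$, each member of which is a product of two open balls with disjoint closures.

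Next I would invoke cocompactness. By Corollary \ref{corollary: cocompact on pair} there is a compact set $D\subseteq\Theta_2\relbndryG$ with $\Orb_G(D)=\Theta_2\relbndryG$. Since $\mathcal{W}$ covers the compact set $D$, finitely many of its members $U_1\times V_1,\dots,U_p\times V_p$ already cover $D$, and by construction $\overline{U}_i\cap\overline{V}_i=\emptyset$ for each $i$. Now, given any $(x,y)\in\Theta_2\relbndryG$, choose $g\in G$ with $g\cdot(x,y)=(gx,gy)\in D$. Then $(gx,gy)$ lies in some $U_i\times V_i$, that is $gx\in U_i$ and $gy\in V_i$, which is exactly the desired conclusion.

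The argument is essentially soft, so I do not expect a serious obstacle; the only points that demand care are checking that rectangles with disjoint-closure factors genuinely sit inside $\Theta_2\relbndryG$ and form an open cover, and applying the translation in the correct direction, namely translating the given pair into the fixed compact set $D$ rather than the reverse. If one also wanted the $U_i,V_i$ to be connected (as in Lemma \ref{lemma:control theorem}), I would replace the balls by connected open neighborhoods, which exist because $\relbndryG$ is locally connected (Theorem \ref{theorem:Bow Local conn}).
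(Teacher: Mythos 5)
Your proposal is correct and follows essentially the same route as the paper: cover a compact fundamental domain for the cocompact action on $\Theta_2\relbndryG$ (Corollary \ref{corollary: cocompact on pair}) by product neighborhoods $B\bigl(x,r\bigr)\times B\bigl(y,r\bigr)$ with $r$ a fixed fraction of $d(x,y)$, extract a finite subcover, and translate an arbitrary pair into the fundamental domain. Your explicit remark that the statement implicitly concerns distinct pairs (since $gx=gy$ cannot lie in sets with disjoint closures) is a worthwhile clarification the paper leaves tacit, as is your note on upgrading to connected $U_i,V_i$ via local connectedness for use in Lemma \ref{lemma:control theorem}.
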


\begin{proof} Let $d$ be a metric on $\relbndryG$, and let $K$ be a compact set whose $G$ translates cover $\Theta_2\relbndryG$. Clearly the intersection of $K$ with the diagonal of $\relbndryG\times\relbndryG$ is empty. For every $(x,y)\in K$ define $r(x,y)=\frac{1}{4}d(x,y)$ and define $U_x=B\big(x,r(x,y)\big)$ and $V_y=B\big(y,r(x,y)\big)$. Then $\bigcup_{(x,y)\in C}(U_x\times U_y)$ covers $K$. By compactness there exist finitely many $(x_i,y_i)\in K$ such that $U_{x_i}\times V_{x_i}$ cover $K$. Notice that by construction $\overline{U}_{x_i}\cap \overline{V}_{y_i}=\emptyset$. Thus by the cocompactness of the action we are done.\end{proof}

\begin{proof}[Proof of Lemma \ref{lemma:control theorem}] Let $x$ be a conical limit point. By the definition of conical limit point there exists $(g_n)\in G$ and distinct points $\alpha,\beta\in\relbndryG$ such that $g_nx\rightarrow \alpha$ and $g_ny\rightarrow\beta$ for every $y\in\relbndryG\setminus\{x\}$; moreover, by passing to a subsequence we may assume that the members of the sequence $(g_n)$ are distinct.

Because $G$ acts on $\relbndryG$ as a convergence group, we have that every sequence $(g_n)$ of distinct group elements has a subsequence $(g_i)$ such that if $K\subset\relbndryG\setminus \{x\}$ then for any neighborhood $V\ni\beta$ there exists $g_{i_0}\in(g_i)$ such that $g_{i_0}\in V$.

Let $(U'_i)_{i=1}^p$ and $(V'_i)_{i=1}^p$ be the neighborhoods found in Lemma \ref{lemma:neighborhood pair}. As $(\alpha,\beta)\in\Theta_2\relbndryG$ there exists $g\in G$ and $i\in\{1,...,p\}$ such that $g\alpha\in U'_i$ and $g\beta\in V'_i$. Set $U_i=g^{-1}U'_i$ and $V_i=g^{-1}V'_i$. Then for large enough $n$ we have $g_nx\in U_i$ and $g_nK\subseteq V_i$.\end{proof}

\subsection{The stabilizer of a necklace is relatively quasiconvex}
\label{subsec:rel qc}

The goal of this section is to prove the following proposition:

\begin{proposition}
\label{prop:Key prop}
Suppose $(G,\bbp)$ is relatively hyperbolic with tame peripherals, and set $M=\relbndryG$. Assume $M$ is connected, has no global cut points, not homeomorphic to $S^1$, and $\nu$ is a necklace in $M$. Then $\Stab_G(\nu)$ is relatively quasiconvex, acts minimally on $\nu$, and any jump in $\nu$ is a loxodromic cut pair and translate inseparable.
\end{proposition}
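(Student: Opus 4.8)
The plan is to verify relative quasiconvexity through the dynamical criterion of Proposition \ref{proposition:Hruska qc1vqc2}: writing $H=\Stab_G(\nu)$, it suffices to prove that $\Lambda(H)=\nu$ and that the induced action of $H$ on $\nu$ is geometrically finite. Since $\nu$ is closed and $H$-invariant we immediately get $\Lambda(H)\subseteq\nu$. The reverse inclusion, minimality, and geometric finiteness will all follow once we establish the central fact: \emph{every point of $\nu$ that is a conical limit point for the $G$-action on $M$ is again a conical limit point for the $H$-action on $\nu$}. Granting this, the $\sim$-class $C$ underlying $\nu$ consists of points that are conical for $H$ and is dense in $\nu$, so $\nu\subseteq\overline{\Lambda(H)}=\Lambda(H)$; moreover a geometrically finite convergence action is minimal on its limit set, which gives minimality of $H\curvearrowright\nu$ for free.

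The central fact is precisely where hyperbolicity enters Bowditch's Lemma~5.17, and is what Lemma \ref{lemma:control theorem} is designed to replace. Fix a conical point $x\in\nu$ and the nested closed sets $K_n=\nu\setminus B(x,1/n)$. Applying Lemma \ref{lemma:control theorem} and passing to a subsequence so that a single index $i$ is used, we obtain $g_n\in G$ with $g_nx\in U_i$ and $g_nK_n\subseteq V_i$, where $\overline{U}_i\cap\overline{V}_i=\emptyset$. Thus each translated necklace $g_n\nu$ sits in the same fixed configuration: it meets $U_i$ in the image of a shrinking neighborhood of $x$ while the remainder is compressed into $V_i$. The technical heart --- and the step I expect to be the main obstacle --- is a finiteness statement: only finitely many distinct $G$-translates of $\nu$ can occupy this configuration determined by the finite data $(U_i,V_i)$. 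This is the relatively hyperbolic analogue of the local finiteness Bowditch extracted from cocompactness on the space of triples; here it must instead be wrung from Lemma \ref{lemma:control theorem} together with properness of the action and the cocompactness on pairs of Corollary \ref{corollary: cocompact on pair}. Once it is in hand, a subsequence satisfies $g_n\nu=g_1\nu$, so $h_n:=g_1^{-1}g_n\in H$, and the $h_n$ exhibit the north--south dynamics witnessing $x$ as a conical point for $H$.

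It remains to treat the parabolic points of $\nu$, namely the points of $\overline{\nu}\setminus C$. Such a point $p$ is a bounded parabolic point of $M$ with maximal parabolic stabilizer $P\in\bbp$, and I would show that $\Stab_H(p)=P\cap H$ acts cocompactly on $\nu\setminus\{p\}$. Geometrically $p$ is a cusp of $\nu$ at which the jumps accumulate as a null family (by reasoning parallel to the proof of Lemma \ref{lemma:full branches are null}), and $\Stab_H(p)$ permutes these jumps with finitely many orbits; cocompactness of $P$ on $M\setminus\{p\}$ then restricts to cocompactness of $P\cap H$ on the closed invariant set $\nu\setminus\{p\}$. By Proposition \ref{prop:conical vs parabolic} no conical point is parabolic, so combined with the previous paragraph every point of $\nu=\Lambda(H)$ is either conical or bounded parabolic for $H$; the action is geometrically finite and Proposition \ref{proposition:Hruska qc1vqc2} yields relative quasiconvexity.

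Finally I turn to the jumps. Let $\{a,b\}$ be a jump of $\nu$. That it is a cut pair is read off the cyclic order: under the embedding $i\colon\nu\hookrightarrow S^1$ the adjacency of $a$ and $b$ forces the two complementary arcs into distinct components of $S^1\setminus\{i(a),i(b)\}$, and the cyclically separating property transports this to distinct components of $M\setminus\{a,b\}$. Since $H$ acts minimally on the (infinite) set $\nu$ it is infinite, and the jumps fall into finitely many $H$-orbits by the same finiteness mechanism as above; hence the setwise stabilizer $L=\Stab_H(\{a,b\})$ is infinite. Both $a$ and $b$ lie in $C$ and are therefore conical, so by Proposition \ref{prop:conical vs parabolic} neither is parabolic; an infinite subgroup of a convergence group stabilizing the two-point set $\{a,b\}$ with non-parabolic entries must contain a loxodromic element with fixed-point set $\{a,b\}$, so the jump is loxodromic. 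Translate inseparability then follows from adjacency together with the partition of cut pairs underlying Lemma \ref{lemma:*3.8 doubleTilde lemma}: a separating translate $\{ga,gb\}$ would have to lie in $\nu$ and mutually separate $\{a,b\}$, contradicting that $a$ and $b$ are adjacent.
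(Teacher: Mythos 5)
Your overall strategy---establish $\Lambda(H)=\nu$ and geometric finiteness of the induced action of $H=\Stab_G(\nu)$ directly from boundary dynamics, then invoke Proposition \ref{proposition:Hruska qc1vqc2}---is genuinely different from the paper's, which works inside the hyperbolic space $X$: it proves $\Lambda\bigl(\Jn(\nu)\bigr)=\nu$, shows the joins of necklaces form a locally finite family, and then via Proposition \ref{proposition:lclfinaction} and a sequence of horoball lemmas deduces that $(H,\bbq)$ acts cusp uniformly on $\gJn(\nu)$, so that geometric finiteness comes from Proposition \ref{proposition:cusp uniform is geom fin}. Your route is plausible in outline, but it has gaps at exactly the points where the paper spends its effort. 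First, the ``finiteness statement'' you yourself flag as the main obstacle is left entirely unproved, and your central claim (conical for $G$ implies conical for $H$) rests on it. It is closable: each translate $g_n\nu$ in your configuration contributes a pair of class points lying in the compact subset $\overline{U}_i\times\overline{V}_i$ of $\Theta_2\relbndryG$, and Bowditch's Lemma 3.16 (the same input the paper uses to prove Lemma \ref{lemma:necklaces are locally finite}) says a compact set of pairs meets only finitely many $\sim$-classes; but as written this step is an acknowledged conjecture, not a proof.

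Second, the parabolic step fails as stated: cocompactness of $P$ on $M\setminus\{p\}$ does \emph{not} ``restrict'' to cocompactness of $P\cap H$ on the closed invariant subset $\nu\setminus\{p\}$, since the elements of $P$ that carry points of $\nu\setminus\{p\}$ into a compact fundamental set may move $\nu$ onto other necklaces through $p$; one needs local finiteness of the family $\set{q\nu}{q\in P}$ in $M\setminus\{p\}$ together with something like Proposition \ref{proposition:lclfinaction}. You also never show that $P\cap H$ is infinite, without which $p$ is not even a parabolic point of the $H$-action; this is precisely what the paper's Lemma \ref{lemma:horoball based in nu} establishes, using genuine horoball geometry and Gromov products. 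Third, in the jump argument, ``finitely many $H$-orbits of jumps, hence $L=\Stab_H(\{a,b\})$ is infinite'' is a non sequitur: an infinite group can act on an infinite set with finitely many orbits and trivial stabilizers. The paper instead shows $\Stab_H(J)$ acts cocompactly on the unbounded set $\Jn(J)\cap Y$, forcing it to be infinite (indeed virtually cyclic, hence loxodromic), and this requires first ruling out parabolic endpoints of jumps---done in the paper by the isolated-point argument: a parabolic endpoint would participate in two jumps, hence be isolated in $\nu$, which parabolic points of a necklace cannot be. You replace that argument with the unjustified assertion that $a,b\in C$. Your inseparability sketch is essentially right, though the case $\N(a,b)\geq 3$ needs the paper's separate argument that such pairs are inseparable, and the case $a\sim b$ should cite Lemma \ref{lemma: separate implies cyclic} (which is what forces a separating translate into $\nu$) rather than Lemma \ref{lemma:*3.8 doubleTilde lemma}.
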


The proof of Proposition \ref{prop:Key prop} is divided into the following series of lemmas and corollaries.

A collection of subspaces $\A$ of a metric space $Z$ is called {\it locally finite} if only finitely many members of $\A$ intersect any compact set $K\subset Z$. To prove Proposition \ref{prop:Key prop} we will use the following lemma twice. The following may be found as Proposition 7.2 of \cite{HR1}.

\begin{proposition}
\label{proposition:lclfinaction}
Let $G$ be a group acting properly and cocompactly on a metric space $Z$, and let $\A$ be a locally finite collection of closed  subspaces of $Z$. Then $\Stab(A)$ acts cocompactly on $A$ for every $A\in\A$, and the elements of $\mathcal{A}$ lie in finitely many $G$-orbits.
\end{proposition}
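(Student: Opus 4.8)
The plan is to let cocompactness produce a single compact set $K\subseteq Z$ with $\Orb_G(K)=Z$, and then to extract everything from the observation that local finiteness forces the family $\mathcal{A}_K:=\bigset{B\in\A}{B\cap K\neq\emptyset}$ to be finite. Throughout I would take $\A$ to be $G$-invariant, which is implicit in the statement: otherwise the assertion that its members fall into finitely many $G$-orbits is not meaningful.

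First I would dispatch the orbit-finiteness claim. Each nonempty $A\in\A$ meets some translate $gK$ (since $\Orb_G(K)=Z$), so that $g^{-1}A\in\mathcal{A}_K$. As $\A$ is $G$-invariant and $\mathcal{A}_K$ is finite, every $A\in\A$ lies in the $G$-orbit of one of the finitely many members of $\mathcal{A}_K$; hence $\A$ splits into finitely many $G$-orbits.

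For cocompactness of $\Stab(A)$ on a fixed $A\in\A$, I would study the set $S=\bigset{g\in G}{A\cap gK\neq\emptyset}$. The assignment $g\mapsto g^{-1}A$ maps $S$ into the finite set $\mathcal{A}_K$, and two elements $g,h\in S$ have the same image exactly when $hg^{-1}\in\Stab(A)$; thus $S$ is a union of finitely many right cosets $\Stab(A)g_1,\dots,\Stab(A)g_r$. I then set $D=\bigcup_{i=1}^r\big(A\cap g_iK\big)$, a finite union of sets each compact (being closed in the compact set $g_iK$, as $A$ is closed), so $D$ is a compact subset of $A$. Given any $a\in A$, cocompactness yields $g$ with $a\in gK$, whence $g\in S$; writing $g=sg_i$ with $s\in\Stab(A)$ gives $s^{-1}a\in A\cap g_iK\subseteq D$, so $a\in sD$. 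Therefore $\Stab(A)\cdot D=A$ and the action is cocompact.

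The argument is essentially bookkeeping, and the only steps requiring care are the finiteness of $\mathcal{A}_K$ (which is precisely local finiteness applied to the compact set $K$) and the coset decomposition of $S$; I expect the latter to be the main point to get right, since one must track the direction of the cosets correctly. It is worth noting that properness of the action is not actually used here: only cocompactness and local finiteness of $\A$ enter the argument.
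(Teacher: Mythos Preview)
Your argument is correct, including the observation that $G$-invariance of $\A$ is needed (without it both conclusions can fail) and that properness of the action is not actually used. The paper itself does not prove this proposition: it simply quotes it as Proposition~7.2 of \cite{HR1}, so there is no in-paper argument to compare against. Your approach---pulling everything back to the finite set $\A_K$ via $g\mapsto g^{-1}A$ and reading off the coset decomposition of $S$---is the standard one and is essentially what appears in the cited source.
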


Until otherwise stated we will assume the following hypotheses in this section. Let $(G,\bbp)$, $M$, and $\nu$ be as in the statement of Proposition \ref{prop:Key prop}. Let $X$ be a $\delta$-hyperbolic space on which $G$ acts with a cusp uniform action (see Section \ref{subsec:relhypgrps}). Let $\B$ be a $G$-equivariant family of open horoballs based at the parabolic points of $\bndry X$ as given by the definition of relatively hyperbolic, and let $K$ be a compact set such that $X\setminus\mathcal{B}\subset\Orb_G(K)$.

The following lemma was observed by Gromov in Section 7.5.A of \cite{Gro87}. The following proof is based on an argument due to Dahmani (see Proposition 1.8 of \cite{Dahm03}).

\begin{lemma}
\label{lemma:actually the limit set}
The limit set $\Lambda(\Jn(\nu))$ is equal to $\nu$.
\end{lemma}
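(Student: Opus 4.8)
The plan is to prove the two inclusions $\nu \subseteq \Lambda(\Jn(\nu))$ and $\Lambda(\Jn(\nu)) \subseteq \nu$ separately. The first is immediate: since $\nu$ is a necklace it contains at least three points, so given any $\xi \in \nu$ I may choose a distinct $\eta \in \nu$ and take the bi-infinite geodesic joining them, which lies in $\Jn(\nu)$ by definition. Points travelling along this line toward $\xi$ converge to $\xi$ in $X \cup \bndry X$, so $\xi \in \Lambda(\Jn(\nu))$, giving $\nu \subseteq \Lambda(\Jn(\nu))$.

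For the reverse inclusion I would argue directly with $\delta$-hyperbolic geometry. Fix a basepoint $o \in X$. Let $\zeta \in \Lambda(\Jn(\nu))$ and choose $z_n \in \Jn(\nu)$ with $z_n \to \zeta$; each $z_n$ lies on a geodesic line $\ell_n$ with endpoints $a_n, b_n \in \nu$. Since $\relbndryG$ is compact and $\nu$ is closed, I would pass to a subsequence so that $a_n \to a \in \nu$ and $b_n \to b \in \nu$. The key geometric input is the thinness of the (possibly ideal) triangle with vertices $o$, $a_n$, $b_n$: the line $\ell_n$ lies in a uniformly bounded neighborhood of the two rays $[o,a_n) \cup [o,b_n)$. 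Hence each $z_n$ is within a uniform constant of a point $w_n$ lying on one of these rays, and after passing to a further subsequence I may assume $w_n \in [o,a_n)$ for all $n$.

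It then remains to identify $\zeta$. Because $z_n \to \zeta \in \bndry X$ we have $d(o,z_n) \to \infty$, and since $w_n$ stays within a bounded distance of $z_n$ also $d(o,w_n) \to \infty$ and $w_n \to \zeta$. I would now estimate Gromov products at $o$: as $w_n$ lies on the geodesic $[o,a_n)$ one has $(w_n \mid a_n)_o = d(o,w_n) \to \infty$, while $a_n \to a$ gives $(a_n \mid a)_o \to \infty$; the hyperbolic inequality $(w_n \mid a)_o \ge \min\{(w_n \mid a_n)_o,\, (a_n \mid a)_o\} - \delta$ then forces $w_n \to a$, whence $\zeta = a \in \nu$. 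This establishes $\Lambda(\Jn(\nu)) \subseteq \nu$.

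The step I expect to be the main obstacle is the degenerate configuration $a = b$, where both endpoints of the approximating lines limit onto a single point of $\nu$ and one cannot simply pass to a limiting bi-infinite geodesic. The virtue of the thin-triangle argument above is precisely that it treats this case uniformly with the generic case $a \ne b$: in both situations $z_n$ is shepherded along the ray $[o,a_n)$ (or $[o,b_n)$), so its limit is pinned to an endpoint $a$ or $b$, both of which lie in the closed set $\nu$. Care is needed only to invoke the correct thinness constant for ideal triangles and to confirm $d(o,z_n) \to \infty$, both of which are standard in a proper $\delta$-hyperbolic space.
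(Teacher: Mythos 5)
Your proof is correct, and at its core it runs on the same engine as the paper's: extract limit endpoints of the lines carrying the sequence, apply thin-triangle geometry, and close with the hyperbolic inequality $(\cdot\mid\cdot)_o \ge \min\{\cdot,\cdot\}-\delta'$ together with the fact that a divergent Gromov product forces convergence to the boundary point. The organization of the geometric step differs, though, in a way worth noting. The paper splits each line $c_i$ at its nearest point to the basepoint and estimates $d\bigl(z,[x_i,b_i]\bigr)\ge d(z,x_i)-2\delta$, whereas you run the thinness estimate on the ideal triangle with vertices $o$, $a_n$, $b_n$, shepherding $z_n$ onto one of the basepoint rays $[o,a_n)$ or $[o,b_n)$. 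More substantively, the paper passes to a limit of the endpoint pairs inside $\Theta_2\nu$, asserting that $\Theta_2\nu$ is compact in $\Theta_2\relbndryG$ --- a claim that is delicate, since pairs of distinct points of $\nu$ can degenerate toward the diagonal and exit every compact subset of the space of distinct pairs. Your version sidesteps this entirely by converging $a_n$ and $b_n$ separately in the compact space $\relbndryG$ (using only that $\nu$ is closed) and, as you correctly flag, your thin-triangle argument treats the degenerate configuration $a=b$ uniformly with the generic one: the limit $\zeta$ is pinned to an endpoint lying in $\nu$ either way. So your route is marginally more robust on exactly the point where the paper's proof is thinnest; the only bookkeeping you owe is the standard thinness constant for ideal triangles and the observation (which you make) that $d(o,z_n)\to\infty$, both routine in a proper $\delta$-hyperbolic space.
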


Before we prove Lemma \ref{lemma:actually the limit set}, we recall some basic definition from the theory of hyperbolic groups (see for example \cite{Gro87} or \cite{GH90}). Let $z$ be a base point of $X$. The {\it Gromov product} of $x,y\in X$ with respect to $z$ is defined to be \[(x|y)_z=\frac{1}{2}\bigl\{d(x,z)+d(y,z)-d(x,y)\bigr\}\]. The Gromov product is extended to $X\cup\bndry X$ by setting \[(x|y)_z=\sup\liminf\limits_{i,j\rightarrow 0}(x_i|x_j)\] where the supremum is taken over all sequences $(x_i)\rightarrow x$ and $(y_j)\rightarrow y$. The Gromov product measures the distance from the point $z$ to the geodesic between $x$ and $y$ up to finite error (see for example \cite{BH1} Definition III.H.1.19 and Exercise III.H.3.18(3)), in other words we have:

\begin{lemma}
\label{lemma:inner product v distance}
There is a constant $\rho$ so that $\big|(x|y)_z-d(z,[x,y])\big|<\rho$ for all $z\in X$ and all $x,y\in X\cup\bndry X$.
\end{lemma}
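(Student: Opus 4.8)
The plan is to prove the two inequalities $(x|y)_z \le d(z,[x,y])$ and $d(z,[x,y]) \le (x|y)_z + C\delta$ separately, first for $x,y\in X$ and then extend to ideal endpoints by a limiting argument. The upper bound on the Gromov product requires no hyperbolicity: if $p$ is a point of $[x,y]$ nearest $z$, then since $p$ lies on a geodesic from $x$ to $y$ we have $d(x,p)+d(p,y)=d(x,y)$, and two applications of the triangle inequality give $d(x,z)+d(y,z)\le d(x,y)+2d(z,p)$, that is, $(x|y)_z\le d(z,p)=d(z,[x,y])$.

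For the reverse inequality I would use $\delta$-thinness of geodesic triangles. Consider the geodesic triangle on $x,y,z$ together with its three internal points; in particular let $i_y\in[z,x]$ and $i_x\in[z,y]$ be the internal points, each of which satisfies $d(z,i_y)=d(z,i_x)=(x|y)_z$ by the standard identities relating Gromov products to distances measured along the sides. A standard consequence of the Rips thin-triangle condition is that the three internal points are pairwise within $4\delta$; in particular the internal point $i_z\in[x,y]$ satisfies $d(i_z,i_y)\le 4\delta$, whence $d(z,[x,y])\le d(z,i_z)\le d(z,i_y)+4\delta=(x|y)_z+4\delta$. Combining the two bounds yields $\big|(x|y)_z-d(z,[x,y])\big|\le 4\delta$ for all $x,y,z\in X$.

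To pass to the boundary, given $x,y\in X\cup\bndry X$ I would choose sequences $x_i\to x$ and $y_j\to y$ in $X$ realizing the extended Gromov product, so that $\liminf_{i,j}(x_i|y_j)_z$ agrees with $(x|y)_z$ up to an additive error of $2\delta$ (this slack is built into the definition of the product at infinity). Since $X$ is proper and geodesic, the geodesics $[x_i,y_j]$ subconverge, uniformly on compact sets, to a geodesic (ray or line) joining $x$ and $y$, and any two such limit geodesics lie within bounded Hausdorff distance; hence $d(z,[x,y])$ is well-defined up to a uniform error and $d(z,[x_i,y_j])\to d(z,[x,y])$ up to that error. Applying the interior estimate to $x_i,y_j$ and taking limits then gives $\big|(x|y)_z-d(z,[x,y])\big|<\rho$ for a constant $\rho$ depending only on $\delta$ (one may take $\rho=8\delta$, or any value strictly larger than the additive constants accumulated above, to obtain the strict inequality claimed).

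The interior estimate is routine once the thin-triangle inequality is invoked; the step requiring the most care is the boundary extension, where one must control both the $2\delta$-indeterminacy in the definition of the Gromov product at infinity and the non-uniqueness and limiting behaviour of geodesics with ideal endpoints. Both are standard features of proper $\delta$-hyperbolic spaces, and since every error introduced is bounded in terms of $\delta$ alone, they combine to give a single constant $\rho$ valid for all $z\in X$ and all $x,y\in X\cup\bndry X$.
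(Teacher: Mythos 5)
Your proof is correct. The paper itself gives no argument for this lemma: it is stated as a standard fact with a citation to \cite{BH1} (Definition III.H.1.19 and Exercise III.H.3.18(3)), and what you have written is essentially the intended solution of that exercise --- the triangle-inequality upper bound $(x|y)_z\le d(z,[x,y])$, the lower bound via the pairwise $4\delta$-closeness of the internal points of a geodesic triangle, and the passage to ideal endpoints using the $2\delta$-indeterminacy of the product at infinity together with Arzel\`a--Ascoli subconvergence of geodesics and the bounded Hausdorff distance between geodesics sharing ideal endpoints. Two small remarks: the exact constants ($4\delta$, $8\delta$) depend on which formulation of $\delta$-hyperbolicity one adopts (Rips-thin versus the four-point condition), which is harmless since only the existence of some $\rho$ is claimed; and the statement tacitly assumes $x\neq y$ when both are ideal, since otherwise $[x,y]$ does not exist and $(x|y)_z=\infty$ --- your argument, like the lemma, is read with that convention.
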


Also, by \cite{BH1} Remark III.H.3.17(6) we have:

\begin{lemma}
\label{lemma:conv seq equal GP to infty}
If $(a_i)$ is a sequence of points in $X\cap\bndry X$ and $a\in\bndry X$, then $(a_i)\rightarrow a$ if and only if $(a_i|a)\rightarrow\infty$ .
\end{lemma}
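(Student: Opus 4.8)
The plan is to fix a basepoint $z\in X$ and reduce the statement to the geometry of geodesic triangles, using Lemma~\ref{lemma:inner product v distance} to replace the Gromov product $(a_i|a)_z$ by the distance $d(z,[a_i,a])$ up to the uniform error $\rho$. Thus $(a_i|a)_z\to\infty$ if and only if $d(z,[a_i,a])\to\infty$, where $[a_i,a]$ denotes a geodesic joining $a_i$ to $a$ (a ray when $a_i\in X$ and a bi-infinite geodesic when $a_i\in\bndry X$); such geodesics exist because $X$ is proper. Since $\bndry X$ is the space of geodesic rays issuing from $z$, convergence $a_i\to a$ in $X\cup\bndry X$ means that the rays $r_i=[z,a_i]$ (segments when $a_i\in X$) converge to $r=[z,a]$ uniformly on compact subsets of $[0,\infty)$. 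So the whole lemma becomes the assertion that $r_i\to r$ uniformly on compacta if and only if $d(z,[a_i,a])\to\infty$.

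For the direction $(a_i|a)_z\to\infty\Rightarrow a_i\to a$, I would run the thin-triangle argument on the (possibly ideal) triangle with vertices $z,a_i,a$ and sides $r_i$, $r$, $[a_i,a]$. Fix $R>0$. For large $i$ the side $[a_i,a]$ lies entirely outside the ball $B(z,R+\delta)$, because $d(z,[a_i,a])\to\infty$. By $\delta$-thinness every point of $r_i$ within distance $R$ of $z$ lies in the $\delta$-neighborhood of $r\cup[a_i,a]$, and since it cannot be $\delta$-close to $[a_i,a]$ it must be $\delta$-close to $r$; the symmetric statement holds with $r_i$ and $r$ interchanged. A standard synchronization of geodesics that $\delta$-fellow-travel then shows $d\big(r_i(t),r(t)\big)$ is uniformly small for $t$ in any fixed compact interval once $i$ is large, i.e. $r_i\to r$ uniformly on compacta.

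For the reverse direction I would argue contrapositively: if $(a_i|a)_z\not\to\infty$, pass to a subsequence with $(a_i|a)_z\le D$, hence $d(z,[a_i,a])\le D+\rho$. The crux is the divergence property of geodesic rays in a hyperbolic space: two rays $r_i,r$ from $z$ whose endpoints have Gromov product at most $D$ can fellow-travel only up to time comparable to $D$ and must separate linearly thereafter, so there is a fixed parameter $t_0$ (depending only on $D$ and $\delta$) and a constant $c>0$ with $d\big(r_i(t_0),r(t_0)\big)\ge c$ for every $i$ in the subsequence. This contradicts uniform convergence of $r_i$ to $r$ on $[0,t_0]$, so no such subsequence exists and $(a_i|a)_z\to\infty$.

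I expect the reverse implication to be the main obstacle, since it rests on the quantitative divergence/stability estimate for geodesic rays rather than on the soft fellow-traveling used in the forward direction; making the lower bound $d\big(r_i(t_0),r(t_0)\big)\ge c$ uniform in $i$ is where the $\delta$-hyperbolic bookkeeping is genuinely needed. Everything else follows from properness of $X$, the ray description of $\bndry X$, and Lemma~\ref{lemma:inner product v distance}. Alternatively, one may simply observe that the asserted equivalence is the defining property of the Gromov-product topology on $X\cup\bndry X$, so the lemma is immediate from the construction in \cite{BH1} (Remark III.H.3.17(6)); the geodesic argument above makes this self-contained in the proper geodesic setting at hand.
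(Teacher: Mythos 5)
Your argument is correct, but note that the paper does not actually prove this lemma at all: it is quoted directly from \cite{BH1}, Remark III.H.3.17(6), as a known characterization of convergence in the topology on $X\cup\bndry X$ --- which is precisely the ``alternative'' you mention in your closing sentence. So where the paper spends one line on a citation, you reconstruct the standard equivalence from scratch: Lemma \ref{lemma:inner product v distance} to trade $(a_i|a)_z$ for $d(z,[a_i,a])$, a thin-triangle argument for the direction $(a_i|a)_z\to\infty\Rightarrow a_i\to a$, and the divergence of geodesic rays for the converse. Your reconstruction is sound, and it buys something the citation does not: it makes explicit where properness of $X$ enters (existence of the geodesics $[a_i,a]$ and of limiting rays via Arzel\`a--Ascoli) and it keeps the argument self-contained in the proper geodesic setting the paper works in, whereas the citation matches the paper's treatment of this as background material and avoids re-deriving the topology of $X\cup\bndry X$. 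Two points of bookkeeping if you write your version out in full: geodesic rays from $z$ to a fixed boundary point are unique only up to a Hausdorff distance comparable to $\delta$, so ``the rays $r_i$ converge to $r$ uniformly on compacta'' must be read up to that ambiguity (or via the every-subsequence-has-a-convergent-subsequence formulation), and ideal triangles in a $\delta$-hyperbolic space are $\delta'$-thin only for $\delta'$ a fixed multiple of $\delta$ rather than $\delta$ itself; neither affects the validity of the argument, and your identification of the converse (the quantitative divergence estimate, uniform in $i$) as the only genuinely delicate step is accurate.
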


\begin{proof}[Proof of Lemma \ref{lemma:actually the limit set}]
Clearly $\nu\subset\Lambda(\Jn(\nu))$. Let $(x_n)$ be a sequence of points from distinct lines in $\Jn(\nu)$, and converging to a point $x$ in $\bndry X$. For each $i$ let $c_i$ be the line containing $x_i$, and set $a_i=c_i(\infty)$ and $b_i=c_i(-\infty)$. The pairs $\{a_i,b_i\}$ form a sequence in $\Theta_2\nu$. As $\Theta_2\nu$ is a compact subset of $\Theta_2\relbndryG$, by passing to a subsequence if necessary we may assume $\big(\{a_i,b_i\}\big)$ converges to some pair $\{a,b\}\in\Theta_2\nu$. We claim that $x=a$ or $x=b$.

Let $z$ be a base point for $X$. For each $i$ let $y_i$ denote the point of $c_i$ nearest $z$. The points $y_i$ divide the lines $c_i$ into two sides $A_i$ and $B_i$ with $A_i\cap B_i=\{y_i\}$, $a_i\in A_i$, and $b_i\in B_i$. The infinitely many members of the sequence $(x_i)$ must be in one of the collections $\{A_i\}$ or $\{B_i\}$. We may assume without loss of generality that infinitely many members of the sequence $(x_i)$ are in $\{B_i\}$. Let $t_i\in [x_i,b_i]\subset B_i$. Then we have that $d(z,t_i)\geq d(z,x_i)-2\delta$ taking the minimum over all $t_i$ we have that $d\big(z,[x_i,b_i]\big)\geq d(z,x_i)-2\delta$. Thus by Lemma \ref{lemma:inner product v distance} $(x_i|b_i)_z\geq d(z,x_i)-2\delta-\rho$.

Now, as $(b_i)\rightarrow b$ we have that $(b_i|b)\rightarrow\infty$. By $\delta$-hyperbolicity of $X$ we have $(x|b)\geq \min\big\{(x_i|b_i)_z,(b_i|b)_z\big\}-\delta'$ where $\delta'$ is some multiple of $\delta$. Thus $(x|b)_z\rightarrow\infty$, which implies that $(x_i)\rightarrow b$.
\end{proof}

\begin{lemma}
\label{lemma:necklaces are locally finite}
Let $\mathcal{J}=\bigset{\Jn(\nu)}{\nu \text{ is a necklace in } \bndry X}$. $\mathcal{J}$ is locally finite in $X$.
\end{lemma}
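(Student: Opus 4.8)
The plan is to show that $\mathcal{J}$ is locally finite by combining the previously established cocompactness of the $G$-action on the space of distinct pairs (Corollary \ref{corollary: cocompact on pair}) with a diameter-control argument for the geodesic joins. First I would observe that each $\Jn(\nu)$ is a closed subset of $X$, since $\nu$ is a necklace (hence closed) and the join of a closed set is closed by the standard diagonal argument recalled in the preliminaries. The main point is to control how many distinct joins can meet a given compact set $K \subset X$.

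The key step is to translate membership of $\Jn(\nu) \cap K \neq \emptyset$ into information about pairs in $\Theta_2\relbndryG$. If some line $c$ joining two points $a,b \in \nu$ passes within bounded distance of a point of $K$, then by Lemma \ref{lemma:inner product v distance} the Gromov product $(a|b)_z$ is bounded (in terms of the diameter of $K$ and the basepoint $z$), which means the pair $(a,b)$ lies in a compact subset $E$ of $\Theta_2\relbndryG$ determined by $K$; this is exactly the mechanism behind $E(C)$ in Lemma \ref{lemma:pairs compact}. Thus every necklace $\nu$ whose join meets $K$ must contribute a jump or cut pair lying in this fixed compact set $E \subseteq \Theta_2\relbndryG$. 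Next I would use the fact that distinct necklaces give rise to disjoint (or essentially disjoint) joins: two distinct $\sim$-classes cannot mutually separate and their cut pairs are genuinely different, so distinct necklaces yield distinct pairs in $E$.

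The final step is to rule out infinitely many such necklaces. Suppose infinitely many distinct necklaces $\nu_n$ had joins meeting $K$; then we would obtain infinitely many distinct pairs $\{a_n,b_n\} \in E$, and by compactness of $E$ (Lemma \ref{lemma:pairs compact}-style) we could extract a convergent subsequence $\{a_n,b_n\} \to \{a,b\} \in \Theta_2\relbndryG$. I would then argue that this limiting pair, together with the cocompactness of the action on $\Theta_2\relbndryG$ (Corollary \ref{corollary: cocompact on pair}) and the properness of the $G$-action on $X$, forces the tails of the $\nu_n$ to coincide or to violate the inseparability/cyclic-order structure of necklaces, giving a contradiction. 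Concretely, the stabilizer structure and Proposition \ref{proposition:lclfinaction} should let me conclude that only finitely many $G$-orbits of necklaces exist near $K$, and properness bounds the number in each orbit meeting $K$.

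The hard part will be the accumulation argument: I need to ensure that a convergent sequence of distinct necklace-pairs cannot persist, i.e.\ that two necklaces whose cut pairs are very close must in fact be the same necklace or be separated in a way incompatible with both meeting a single compact set. This requires carefully using that necklaces are closures of $\sim$-classes whose cut pairs satisfy the mutual-separation and cyclic-order constraints from Section \ref{subsec:Cut Point Structures In Metric Spaces}, together with the no-global-cut-point hypothesis so that the pairs genuinely separate $M$. I expect the cleanest route is to phrase local finiteness directly in terms of the compact pair-set $E(K)$ and invoke Corollary \ref{corollary: cocompact on pair} to pass to finitely many orbits, then let properness of the $G$-action on $X$ finish the count.
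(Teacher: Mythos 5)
Your setup matches the paper's proof up to the halfway point: both reduce the question to the compact pair-set $E(C)\subset\Theta_2\relbndryG$ of Lemma \ref{lemma:pairs compact}, noting that any join meeting a compact $C\subset X$ contains a line whose endpoint pair lies in $E(C)$ and in the corresponding necklace. But the finishing step is where your proposal has a genuine gap. The paper closes the argument by citing Lemma 3.16 of \cite{Bow98}, which states precisely that a compact subset of $\Theta_2\relbndryG$ can intersect only finitely many $\sim$-classes; since distinct necklaces are closures of distinct $\sim$-classes, this immediately bounds the number of joins meeting $C$. You instead sketch an accumulation argument --- extract a convergent subsequence of pairs $\{a_n,b_n\}\to\{a,b\}$ and argue that the limit is incompatible with the cyclic-order and inseparability structure of necklaces --- but you never actually carry this out, and you say so yourself (``the hard part will be the accumulation argument''). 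That hard part \emph{is} the content of the lemma; without it, or a citation to Bowditch's finiteness result, the proof is incomplete.

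Moreover, the route you propose for filling the gap is circular: Proposition \ref{proposition:lclfinaction} takes local finiteness of the collection as a \emph{hypothesis} and concludes finitely many $G$-orbits and cocompact stabilizers, so it cannot be used to establish local finiteness of $\mathcal{J}$ --- that is exactly the order in which the paper applies it, in Lemma \ref{lemma:cocompact on truncation}, only after Lemma \ref{lemma:necklaces are locally finite} is in hand. Cocompactness of $G$ on $\Theta_2\relbndryG$ (Corollary \ref{corollary: cocompact on pair}) likewise does not by itself prevent infinitely many distinct $\sim$-classes from meeting a fixed compact set of pairs; ruling that out requires the separation properties of cut pairs that Bowditch's Lemma 3.16 exploits. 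A smaller inaccuracy: distinct necklaces need not have disjoint joins (their closures can share parabolic points, and the joins can certainly intersect in $X$); what your argument actually needs is only that distinct necklaces come from distinct equivalence classes, which is precisely why the finitely-many-$\sim$-classes statement is the right tool.
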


\begin{proof}
Let $C$ be a compact subset in $X$. Lemma \ref{lemma:pairs compact} implies that end points of the set of all lines intersecting $C$ gives us a compact subset $E(C)$ of $\Theta_2\relbndryG$. If $J=\Jn(\nu)\in\mathcal{J}$ and $J\cap C$ is non-empty, then there exists a line $\ell$  contained in $J$ such that $\bigl\{\ell(\infty),\ell(-\infty)\bigr\}\subset\nu$ and $\bigl\{\ell(\infty),\ell(-\infty)\bigr\}\in E(C)$. Lemma 3.16 of \cite{Bow98} Bowditch shows that a compact set of $\Theta_2\relbndryG$ can only intersect finitely many $\sim$-classes, thus only finitely many members of $\mathcal{J}$ can intersect $C$.
\end{proof}

For the remainder of this section define $H=\Stab_G(\Jn(\nu))$ for some fixed necklace $\nu$.

\begin{lemma}
\label{lemma:cocompact on truncation}
The subgroup $H$ acts properly and cocompactly on $\Jn(\nu)\cap(X\setminus\B)$.
\end{lemma}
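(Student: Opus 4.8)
The plan is to establish the proper and cocompact action of $H = \Stab_G(\Jn(\nu))$ on $\Jn(\nu) \cap (X \setminus \B)$ in two separate pieces, handling properness and cocompactness with the tools assembled in the preceding lemmas. For properness, the key observation is that $G$ already acts properly on $X$ (this is part of the definition of relatively hyperbolic), so $H$ acts properly on all of $X$, and in particular the restriction to the $H$-invariant closed subspace $\Jn(\nu) \cap (X \setminus \B)$ remains proper. Here I would note that $\Jn(\nu)$ is $H$-invariant by the very definition $H = \Stab_G(\Jn(\nu))$, and the horoball family $\B$ is $G$-invariant (hence $H$-invariant), so the intersection $\Jn(\nu) \cap (X \setminus \B)$ is genuinely an $H$-space. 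Properness is therefore essentially automatic from properness of the ambient $G$-action.

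\textbf{The substantial content is cocompactness,} and the plan is to obtain it by applying Proposition \ref{proposition:lclfinaction} to a well-chosen collection of subspaces. Recall that $G$ acts properly and cocompactly on $X \setminus \B$ by the definition of the cusp uniform action (the compact set $K$ satisfies $X \setminus \B \subset \Orb_G(K)$). I would take the locally finite collection to be $\mathcal{J} = \bigset{\Jn(\mu)}{\mu \text{ is a necklace}}$ from Lemma \ref{lemma:necklaces are locally finite}, but intersected with $X \setminus \B$; that is, I would consider $\A = \bigset{\Jn(\mu) \cap (X \setminus \B)}{\mu \text{ a necklace}}$ as a collection of closed subspaces of the proper, cocompact $G$-space $X \setminus \B$. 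Lemma \ref{lemma:necklaces are locally finite} gives that $\mathcal{J}$ is locally finite in $X$, and intersecting with the closed set $X \setminus \B$ can only decrease the number of members meeting any compact set, so $\A$ is locally finite in $X \setminus \B$. Proposition \ref{proposition:lclfinaction} then applies directly: the stabilizer of each member acts cocompactly on that member. Since $\Stab_G\big(\Jn(\nu) \cap (X \setminus \B)\big) = \Stab_G(\Jn(\nu)) = H$ (the horoball set being $G$-invariant means stabilizing $\Jn(\nu)$ is the same as stabilizing its truncation), we conclude that $H$ acts cocompactly on $\Jn(\nu) \cap (X \setminus \B)$, which is exactly the claim.

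\textbf{The main obstacle} I anticipate is the bookkeeping needed to verify that the hypotheses of Proposition \ref{proposition:lclfinaction} are met by the truncated collection rather than the original one. Specifically, Proposition \ref{proposition:lclfinaction} requires a proper \emph{and} cocompact $G$-action on the ambient space and a locally finite collection of \emph{closed} subspaces of that space, so I must be careful to work inside $X \setminus \B$ (where the action is cocompact) rather than inside all of $X$ (where it is not). I would need to confirm two technical points: first, that $\Jn(\nu) \cap (X \setminus \B)$ is closed in $X \setminus \B$, which follows because $\Jn(\nu)$ is closed in $X$ whenever $\nu$ is closed (stated in the Joins subsection via a diagonal argument), and $\nu$ is closed as the closure of a $\sim$-class; and second, that local finiteness transfers from $\mathcal{J}$ in $X$ to $\A$ in $X \setminus \B$, which is immediate since any compact subset of $X \setminus \B$ is compact in $X$ and meets only finitely many members of $\mathcal{J}$. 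With these verifications in hand, the identification of the stabilizer and the appeal to Proposition \ref{proposition:lclfinaction} complete the argument.
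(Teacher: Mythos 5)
Your proof is correct and follows essentially the same route as the paper: local finiteness of the (truncated) necklace joins via Lemma \ref{lemma:necklaces are locally finite}, cocompactness of $G$ on $X\setminus\B$ from the cusp uniform action, and an application of Proposition \ref{proposition:lclfinaction}. The only cosmetic differences are that you apply the proposition to the full $G$-invariant family of truncated necklace joins where the paper uses the orbit $\Orb_G\big(\Jn(\nu)\cap(X\setminus\B)\big)$ (both work, since each is locally finite), and that you explicitly record properness, which the paper leaves implicit as it is inherited from the proper $G$-action on $X$.
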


\begin{proof}
Lemma \ref{lemma:necklaces are locally finite} implies that  $\orb_G\big(\Jn(\nu)\big)$ is locally finite in $X$. Thus, only finitely many members of $\orb_G\big(\Jn(\nu)\big)$ intersect any compact set $C\subset \Jn(\nu)\cap(X\setminus\B)$, which implies that only finitely many members of the collection $\mathcal{A}=\Orb_G\big(\Jn(\nu)\cap(X\setminus\B)\big)$ intersect $C$. Setting $Z=(X\setminus\B)$ relative hyperbolicity of $G$ implies that $G$ acts cocompactly on $Z$. Applying Proposition \ref{proposition:lclfinaction} we get that $H$ acts cocompactly on $\Jn(\nu)\cap(X\setminus\B)$.
\end{proof}

\begin{lemma}
\label{lemma:cocompact on horoball}
Let $B\in\B$ be a horoball intersecting $\gJn(\nu)$. Then $S=\Stab_H\big(B)$ acts cocompactly on the horosphere $\Fr(B)\cap\gJn(\nu)$.
\end{lemma}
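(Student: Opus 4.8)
The plan is to reduce the claim to a proper cocompact action at the level of the whole space $X\setminus\B$ and then transfer cocompactness to the horosphere using the same local-finiteness machinery that proved Lemma \ref{lemma:cocompact on truncation}. First I would recall that $\gJn(\nu)$ is the geodesic hyperbolic space quasi-isometric to $\Jn(\nu)$ sitting inside $X$ with the length metric, and that by definition of relative hyperbolicity $G$ acts cocompactly on $Z=X\setminus\B$ while each horoball $B\in\B$ has frontier horosphere $\Fr(B)$ on which $\Stab_G(B)$ acts cocompactly (this is the content of the compactness of $C_i=\Fr(B_i)/\Stab_G(p_i)$ recalled in Section \ref{subsec:first key lemma}). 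The subgroup $S=\Stab_H(B)=H\cap\Stab_G(B)$ is exactly the stabilizer, inside the necklace-join stabilizer $H$, of this particular horoball, so I want to show $S$ acts cocompactly on the intersection $\Fr(B)\cap\gJn(\nu)$.

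The key step is to apply Proposition \ref{proposition:lclfinaction} with the ambient space $Z$ taken to be the horosphere $\Fr(B)$ (on which $\Stab_G(B)$ acts properly and cocompactly), and with the locally finite family $\A$ taken to be the orbit under $\Stab_G(B)$ of the set $\Fr(B)\cap\gJn(\nu)$. To invoke that proposition I must verify two things: that $\Stab_G(B)$ acts properly and cocompactly on $\Fr(B)$, which is precisely the relative-hyperbolicity hypothesis, and that the collection $\A=\Orb_{\Stab_G(B)}\bigl(\Fr(B)\cap\gJn(\nu)\bigr)$ is a locally finite family of closed subspaces of $\Fr(B)$. The closedness is immediate since $\gJn(\nu)$ is closed (the join of a closed set is closed by the diagonal argument in Section 2) and intersecting with the closed horosphere preserves this. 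For local finiteness I would again lean on Lemma \ref{lemma:necklaces are locally finite}: the family $\mathcal{J}$ of all necklace-joins is locally finite in $X$, hence its trace on the horosphere is locally finite in $\Fr(B)$, and restricting from all necklaces to the single $\Stab_G(B)$-orbit of $\Fr(B)\cap\gJn(\nu)$ only shrinks the family. Proposition \ref{proposition:lclfinaction} then yields that the stabilizer in $\Stab_G(B)$ of the member $\Fr(B)\cap\gJn(\nu)$ acts cocompactly on it, and that stabilizer is exactly $S$.

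The main obstacle I anticipate is bookkeeping between $\Jn(\nu)$ and $\gJn(\nu)$, and between $X$ with its given metric and $\Fr(B)$ with the induced (horospherical) metric. Because $\gJn(\nu)$ lies only in a finite neighborhood of $\Jn(\nu)$, I must ensure that the local-finiteness conclusion of Lemma \ref{lemma:necklaces are locally finite}, stated for the $\Jn$-joins in $X$, passes to the $\gJn$-version intersected with the horosphere; this is routine since a finite neighborhood of a locally finite family is still locally finite in a proper metric space. A second, more technical point is confirming that the stabilizer of the single orbit representative $\Fr(B)\cap\gJn(\nu)$ under $\Stab_G(B)$ coincides with $S=\Stab_H(B)$: an element $g\in\Stab_G(B)$ preserves $\Fr(B)\cap\gJn(\nu)$ precisely when it preserves $\gJn(\nu)$, i.e.\ when $g\in H$, so $\Stab_{\Stab_G(B)}\bigl(\Fr(B)\cap\gJn(\nu)\bigr)=\Stab_G(B)\cap H=S$. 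Once these identifications are in place, the proof is a direct appeal to Proposition \ref{proposition:lclfinaction}.
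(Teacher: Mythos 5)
Your setup is a genuinely different (dual) application of Proposition \ref{proposition:lclfinaction} from the paper's: you take the ambient space to be the horosphere $\Fr(B)$ with $\Stab_G(B)$ acting properly and cocompactly, and the locally finite family to be the $\Stab_G(B)$-orbit of the trace $A=\Fr(B)\cap\gJn(\nu)$, whereas the paper takes the ambient space to be $\gJn(\nu)\cap(X\setminus\B)$, on which $H$ acts properly and cocompactly by Lemma \ref{lemma:cocompact on truncation}, and the family to be the horospherical traces of the locally finite horoball system $\B$. The dual route is viable, but it has a genuine gap at exactly the point you flag as ``more technical'': the claim that $g\in\Stab_G(B)$ preserves $A$ \emph{precisely when} it preserves $\gJn(\nu)$ is only trivially true in one direction. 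If $gA=A$, then $g\gJn(\nu)=\gJn(g\nu)$ is the join of the (possibly different) necklace $g\nu$, and all you learn is that $\gJn(g\nu)$ and $\gJn(\nu)$ have the same trace on the single horosphere $\Fr(B)$; nothing you cite shows that a necklace join is determined by that trace (via Lemma \ref{lemma:actually the limit set} the limit set of $A$ only gives points of $\nu\cap g\nu$, which is far from $g\nu=\nu$). Consequently Proposition \ref{proposition:lclfinaction} hands you cocompactness of the full setwise stabilizer $\Stab_{\Stab_G(B)}(A)$, which contains $S$ but may a priori be strictly larger, and cocompactness does not descend to arbitrary subgroups --- so the lemma's conclusion for $S$ does not follow as written.

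The gap is repairable using your own local-finiteness input. Fix $x\in A$ (nonempty, since a line of $\Jn(\nu)$ cannot stay in the horoball and $\gJn(\nu)$ is connected). By Lemma \ref{lemma:necklaces are locally finite} (together with your finite-neighborhood remark passing from $\Jn$ to $\gJn$), only finitely many necklace joins contain $x$, so the set of necklaces whose joins have trace exactly $A$ on $\Fr(B)$ is finite. The group $\Stab_{\Stab_G(B)}(A)$ permutes this finite set, and the kernel of the permutation action fixes $\nu$, hence lies in $\Stab_G\bigl(\Jn(\nu)\bigr)\cap\Stab_G(B)=S$; thus $S$ has finite index in $\Stab_{\Stab_G(B)}(A)$, and cocompactness passes to a finite-index subgroup, completing your argument. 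It is worth noting that the paper's choice of setup sidesteps this issue entirely: in its family every element $h\in H$ automatically preserves $\gJn(\nu)$, and because the horoballs of the equivariant system are disjoint, a nonempty trace picks out its horoball, so the identification $\Stab_H(A)=\Stab_H(B)=S$ requires no further work. What your route buys in exchange is that it uses only the cocompactness of the parabolic stabilizer on its horosphere (built into the definition of relative hyperbolicity) rather than Lemma \ref{lemma:cocompact on truncation}.
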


\begin{proof}
The family $\B$ of $G$-equivariant horoballs based at the parabolic points is locally finite. As $H$ acts cocompactly on $\Jn(\nu)\cap(X\setminus\B)$, we have that $H$ acts cocompactly on $\gJn(\nu)\cap(X\setminus\B)$. By Proposition \ref{proposition:lclfinaction} the group $S=\Stab_H(B)$ acts cocompactly on $\Fr(B)\cap\gJn(\nu)$.
\end{proof}

\begin{lemma}
\label{lemma:horoball based in nu}
Let $p\in\nu$ be a parabolic point of the action of $G$ on $X$. A horoball $B_p\in\B$ based at $p$ has unbounded intersection with $\gJn(\nu)$, the stabilizer $\Stab_H(B_p)$ is an infinite subgroup of $H$, and $p\in\Lambda(H)$.
\end{lemma}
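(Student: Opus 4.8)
The plan is to prove the three assertions in order, obtaining the second from the first and the third from the second; the analytic heart is an unboundedness statement needed for the middle claim.

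First I would dispose of the claim that $B_p\cap\gJn(\nu)$ is unbounded. Since $\nu$ is a necklace it contains at least three points, so I may pick $q\in\nu\setminus\{p\}$. The bi-infinite geodesic $[p,q]$ joins two points of $\nu$, hence lies in $\Jn(\nu)\subseteq\gJn(\nu)$. Its end asymptotic to $p$ is eventually contained in the open horoball $B_p$ and leaves every bounded set (a geodesic ray converging to the center of a horoball penetrates arbitrarily deep), so $B_p\cap\gJn(\nu)$ is unbounded. In particular $B_p$ meets $\gJn(\nu)$, so Lemma \ref{lemma:cocompact on horoball} applies.

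The core of the argument is showing $S=\Stab_H(B_p)$ is infinite. By Lemma \ref{lemma:cocompact on horoball}, $S$ acts cocompactly on $\Fr(B_p)\cap\gJn(\nu)$; since this set is closed in the proper space $X$, and a finite group acting cocompactly would force it to be compact (its image of a compact fundamental set would be a finite union of compacta covering it), it suffices to prove that $\Fr(B_p)\cap\gJn(\nu)$ is \emph{unbounded}. Here is where the parabolicity of $p$ enters. By Proposition \ref{prop:conical vs parabolic} the point $p$ is not conical, so $p$ does not belong to the $\sim$-class $\nu_0$ whose closure is $\nu$; thus $p$ is a non-isolated point of $\nu$ and there is a sequence $b_i\in\nu_0\subseteq\nu$ with $b_i\to p$ and $b_i\neq p$. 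Each line $[p,b_i]\subseteq\Jn(\nu)$ has endpoints $p$ and $b_i\neq p$, hence it enters $B_p$ along its $p$-end while its $b_i$-end lies outside $B_p$; therefore it crosses the horosphere in some point $w_i\in\Fr(B_p)\cap\gJn(\nu)$. Fixing a basepoint $z_0$, Lemma \ref{lemma:inner product v distance} gives $d\big(z_0,[p,b_i]\big)=(p\mid b_i)_{z_0}\pm\rho$, while Lemma \ref{lemma:conv seq equal GP to infty} gives $(p\mid b_i)_{z_0}\to\infty$ because $b_i\to p$. Since $w_i$ lies on $[p,b_i]$ we conclude $d(z_0,w_i)\geq d\big(z_0,[p,b_i]\big)\to\infty$, so $\Fr(B_p)\cap\gJn(\nu)$ is unbounded and $S$ is infinite.

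Finally, $p\in\Lambda(H)$ follows from the second claim. Since $S=\Stab_H(B_p)$ preserves the horoball $B_p$ it fixes its center, so $S\subseteq\Stab_G(p)=P$, the maximal parabolic subgroup based at $p$. As $P$ is parabolic it contains no loxodromic element and its limit set is the single point $\{p\}$; the infinite subgroup $S$ therefore has nonempty limit set contained in $\{p\}$, whence $\Lambda(S)=\{p\}$ and in particular $p\in\Lambda(S)\subseteq\Lambda(H)$. (Alternatively one can argue directly: taking distinct $h_n\in S$ and applying the convergence property of the action on $\relbndryG$, the attracting--repelling pair $(\alpha,\beta)$ must satisfy $\alpha=p$ or $\beta=p$ because every $h_n$ fixes $p$; translating back to the dynamics in $X$ yields $h_n z_0\to p$ or $h_n^{-1}z_0\to p$, and in either case $p\in\Lambda(H)$.) The step I expect to be the main obstacle is the unboundedness of $\Fr(B_p)\cap\gJn(\nu)$: unbounded intersection with the open horoball $B_p$ does \emph{not} by itself give unbounded intersection with the horosphere, since a single ray into $p$ contributes only one crossing point. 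Producing crossing points that escape to infinity is exactly what requires the sequence $b_i\to p$ together with the Gromov-product estimate, and this is precisely where the hypothesis that $p$ is a (non-isolated, parabolic) point of $\nu$ is used.
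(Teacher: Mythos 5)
Your proposal is correct and follows essentially the same route as the paper: both arguments hinge on $p$ being an accumulation point of the $\sim$-class (since a parabolic point cannot be conical), on geodesics $[p,b_i]\subseteq\Jn(\nu)$ crossing the horosphere at points escaping to infinity via the Gromov-product estimates of Lemmas \ref{lemma:inner product v distance} and \ref{lemma:conv seq equal GP to infty}, and on the cocompact action of $\Stab_H(B_p)$ on $\Fr(B_p)\cap\gJn(\nu)$ from Lemma \ref{lemma:cocompact on horoball}. The only cosmetic difference is at the end: the paper extracts elements $h_n\in\Stab_H(B_p)$ with $h_n\to p$ directly by covering the escaping crossing points with translates of a fundamental domain (yielding infinitude and $p\in\Lambda(H)$ simultaneously), whereas you first deduce infinitude from unboundedness plus cocompactness and then obtain $p\in\Lambda(H)$ via $\Lambda(S)=\{p\}$ — the same ingredients in a slightly different order.
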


\begin{proof}
Assume $p\in\nu$ is a bounded parabolic point, and let $B_p$ be the open horoball in $\B$ based at $p$. Because a necklace is the closure of a $\sim$-class, $p$ must be an accumulation point. So, we may find a sequence of  conical limit points $(x_n)$ which converge to $p$. As $\gJn(\nu)$ is a visibility space we find a sequence of geodesic lines $\big([p,x_n]\big)$ connecting $p$ to the elements of the sequence $(x_n)$. Define $y_n=[p,x_n]\cap \Fr(B_p)$. Then $(y_n)$ must be unbounded, 
because the sequence of end points $\big(\{p,x_n\}\big)$ converges to the pair $\{p,p\}$. Thus by Lemma \ref{lemma:conv seq equal GP to infty} the Gromov product $(p|x_i)_z\rightarrow\infty$ as $i\rightarrow\infty$.

By Lemma \ref{lemma:cocompact on horoball} $S=\Stab_H(B_p)$ acts cocompactly on $\Fr(B_p)\cap\gJn(\nu)$. Let $A$ be the fundamental domain for this action. By covering the sequence $(y_n)$ with the $S$-translates of $A$ we may find a sequence of group elements $(h_n')$ in $H$ so that $h_n\rightarrow p$.
\end{proof}

\begin{lemma}
\label{lemma:unifly bndd}
$\B^*$ be the collection of all horoballs for the action of $G$ which are based at parabolic points outside of $\nu$. The elements of $\B^*$ have uniformly bounded intersections with $\Jn(\nu)$.
\end{lemma}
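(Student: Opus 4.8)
The plan is to argue by contradiction, combining the cocompactness of the $G$-action on $X\setminus\B$ with two local finiteness inputs. Suppose the conclusion fails: there are horoballs $B_n\in\B$ based at points $p_n\notin\nu$ with $\diam\big(B_n\cap\Jn(\nu)\big)\to\infty$. First I would reduce to a single deeply penetrating geodesic line. Pick $x_n,y_n\in B_n\cap\Jn(\nu)$ with $d(x_n,y_n)\to\infty$, lying on lines $(\alpha_n,\beta_n),(\gamma_n,\eta_n)\subseteq\Jn(\nu)$, so that all four endpoints lie in $\nu$. A standard horoball estimate shows the deepest point of $[x_n,y_n]$ lies at depth at least $\tfrac12 d(x_n,y_n)-C_0$ in $B_n$; applying $\delta$-thinness to the triangles $\{x_n,y_n,\alpha_n\}$ and then $\{\alpha_n,y_n,\gamma_n\}$ places this point within $O(\delta)$ of one of the lines $(\alpha_n,\beta_n)$, $(\gamma_n,\eta_n)$, $(\alpha_n,\gamma_n)$, each of which is a line of $\Jn(\nu)$. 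Hence some line $L_n=(a_n,b_n)\subseteq\Jn(\nu)$ with $a_n,b_n\in\nu$ penetrates $B_n$ to depth $d_n\to\infty$.

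Next I would normalize. Since $a_n,b_n\neq p_n$, the line $L_n$ crosses $\Fr(B_n)$; let $u_n\in\Fr(B_n)\subseteq X\setminus\B$ be an entry point. Using cocompactness of the action on $X\setminus\B$, choose $g_n\in G$ with $g_nu_n$ in a fixed compact set, and pass to a subsequence so that $g_nu_n\to u$. Because $\B$ is locally finite, only finitely many horoballs meet a bounded neighborhood of $u$, so a further subsequence makes $g_nB_n=B$ a single horoball with base point $q=g_np_n$ and $u\in\Fr(B)$. The translated lines $g_nL_n$ still penetrate $B$ to depth $d_n\to\infty$ and enter near $u$. By Arzel\`a--Ascoli the inward segments from $g_nu_n$ to the deepest points converge to a ray from $u$ of infinite depth in $\overline B$, which must therefore be the ray $[u,q)$; consequently one endpoint, say $g_nb_n$, satisfies $(g_nb_n\mid q)_u\to\infty$, i.e. $g_nb_n\to q$.

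Finally I would bring in the second local finiteness input. Each $g_nL_n\subseteq\Jn(g_n\nu)$ passes near $u$, and each $g_n\nu$ is again a necklace (homeomorphisms preserve the cut-pair structure defining $\sim$), so Lemma \ref{lemma:necklaces are locally finite} forces all but finitely many of the $\Jn(g_n\nu)$ to agree; by Lemma \ref{lemma:actually the limit set} this gives $g_n\nu=\nu^*$ for a fixed necklace $\nu^*$ along a subsequence. Then $g_nb_n\in\nu^*$ and $\nu^*$ is closed, whence $q\in\nu^*$; but $p_n\notin\nu$ gives $q=g_np_n\notin g_n\nu=\nu^*$, a contradiction. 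The hard part is precisely the uniformity over the infinite family $\B^*$: boundedness of $L\cap B$ for a single fixed line $L$ and a single fixed horoball $B$ is soft, but handling all horoballs simultaneously is what requires moving into one fixed horoball and using that deep penetration drives an endpoint of the line onto the base point $q$, which then cannot lie on the (closed, now fixed) necklace.
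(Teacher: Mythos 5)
Your argument is correct, but it takes a genuinely different route from the paper's. The paper's proof is a short orbit-counting argument built on the $H$-cocompactness lemmas already established for $H=\Stab_G\bigl(\Jn(\nu)\bigr)$: since $H$ acts cocompactly on $\Jn(\nu)\cap(X\setminus\B)$ (Lemma \ref{lemma:cocompact on truncation}) and $\B$ is locally finite, there are only finitely many $H$-orbits of horoballs based outside $\nu$ meeting $\Jn(\nu)$; if one such $B_p$ had unbounded intersection, Lemma \ref{lemma:cocompact on horoball} would make $\Stab_H(B_p)$ act cocompactly on the unbounded set $\Fr(B_p)\cap\gJn(\nu)$, forcing $p\in\Lambda(H)$, and since $\Lambda(H)$ is the minimal closed $H$-invariant subset of the boundary while $\nu$ is closed and $H$-invariant, this gives $p\in\nu$, a contradiction; uniformity over $\B^*$ is then automatic because $\diam\bigl(hB\cap\Jn(\nu)\bigr)$ is constant along each of the finitely many orbits. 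You instead bypass $H$ entirely: you translate a deeply penetrating line of the join into one fixed horoball using cocompactness of $G$ on $X\setminus\B$, freeze the horoball by local finiteness of $\B$, freeze the necklace by Lemma \ref{lemma:necklaces are locally finite} together with equivariance of limit sets (so $\Lambda\bigl(\Jn(g_n\nu)\bigr)=g_n\nu$ by Lemma \ref{lemma:actually the limit set}), and obtain the contradiction $q\in\nu^*$ versus $q=g_np_n\notin g_n\nu=\nu^*$. Your use of $G$-invariance of the necklace structure is legitimate, since conical limit points and the relation $\sim$ are preserved by homeomorphisms in $G$, so Lemma \ref{lemma:necklaces are locally finite} (stated for all necklaces) does apply to the translates. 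What your route buys is independence from Lemmas \ref{lemma:cocompact on truncation} and \ref{lemma:cocompact on horoball} — only ambient cocompactness on the truncated space and the two local-finiteness facts are needed — and it makes explicit where the uniformity over the infinite family comes from; the price is heavier coarse-geometry bookkeeping: the penetration estimate $\tfrac12 d(x_n,y_n)-C_0$ (valid because both points lie in the horoball, so depths are nonnegative), the thin-triangle transfer onto a line of $\Jn(\nu)$ (where $(\alpha_n,\gamma_n)\subseteq\Jn(\nu)$ since the join contains all lines between points of $\nu$), and the Arzel\`a--Ascoli/Gromov-product step showing $g_nb_n\to q$, where you should pass to a subsequence fixing which endpoint is on the deep side and argue through the deep points $z_n\to q$ rather than the full rays. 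All of these steps are standard, so the proposal stands as a valid, more self-contained alternative to the paper's proof.
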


\begin{proof}
The family of $G$-equivariant horoballs is locally finite, and $H$ acts properly and cocompactly on $\Jn(\nu)\cap(X\setminus\B)$. So, there are only finitely many $H$-orbits of horoballs in $\Jn(\nu)\cap(X\setminus\B)$. In particular, there are only finitely many $H$-orbits of horoballs based outside of $\nu$ which intersect $\Jn(\nu)$.

By Lemma \ref{lemma:cocompact on horoball} if $B_p$ is a horoball based a parabolic point $p\not\in\nu$ which intersects $\Jn(\nu)$, then $S=\Stab_H(B_p)$ acts cocompactly on $\Fr(B_p)\cap\gJn(\nu)$. This implies that if the intersection of $B_p$ with $\Jn(\nu)$ is infinite, then $p\in\Lambda(H)$. The limit set $\Lambda(H)$ is the minimal closed $H$-invariant subset of $\bndry X$. Thus $p\in\Lambda(H)$, implies $p\in\nu$, a contradiction. Therefore any horoball based outside of $\nu$ must have bounded intersection with $\Jn(\nu)$.
\end{proof}

Let $\B'\subset\B$ be the collection of horoballs based at parabolic points of $\nu$.

\begin{corollary}
\label{lemma:cocompact retricted truncation}
Let $H=\Stab_G(\nu)$. Then $H$ acts properly and cocompactly on $Y=\Jn(\nu)\cap(X\setminus\B')$.
\end{corollary}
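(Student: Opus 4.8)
The plan is to transfer the proper, cocompact action from the fully truncated space to $Y$ by showing that $Y$ is a bounded neighborhood of it. Write $Y_0=\Jn(\nu)\cap(X\setminus\B)$. Since $\B'\subseteq\B$ we have $X\setminus\B\subseteq X\setminus\B'$, so $Y_0\subseteq Y$, and by Lemma \ref{lemma:cocompact on truncation} the group $H$ acts properly and cocompactly on $Y_0$. Note also that $H=\Stab_G(\nu)$ preserves $\nu$, hence preserves $\Jn(\nu)$ and, since the family $\B$ is $G$-equivariant, the subfamily $\B'$ of horoballs based at parabolic points of $\nu$; thus $Y$ is $H$-invariant.

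The key step is to prove that $Y\subseteq N_{D}(Y_0)$ for some constant $D$. The points of $Y\setminus Y_0$ are exactly the points of $\Jn(\nu)$ lying in horoballs of $\B\setminus\B'$, that is, horoballs based at parabolic points outside $\nu$. By Lemma \ref{lemma:unifly bndd} each such horoball $B$ meets $\Jn(\nu)$ in a set of diameter at most some uniform $D$. Given $y\in B\cap\Jn(\nu)$, the geodesic line of $\Jn(\nu)$ through $y$ cannot be contained in $B$ (a bi-infinite geodesic has two distinct endpoints, while $B$ is based at a single boundary point), so it must cross $\Fr(B)$ at some point $w$. Because the open horoballs are pairwise disjoint, $\Fr(B)$ meets no open horoball, so $w\in\Jn(\nu)\cap(X\setminus\B)=Y_0$; and since $w$ lies in the closure of $B\cap\Jn(\nu)$ we get $d(y,w)\leq D$. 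Hence every point of $Y$ lies within $D$ of $Y_0$.

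Finally I would transfer the two properties. Properness of the $H$-action on $Y$ is immediate, since $G$, and hence $H$, already acts properly on the ambient space $X$ and $Y\subseteq X$. For cocompactness, observe that $Y=\Jn(\nu)\cap(X\setminus\B')$ is closed in $X$ (the join of a closed set is closed and $X\setminus\B'$ is closed), so, $X$ being proper, every closed bounded subset of $Y$ is compact. Choose a compact set $C_0\subseteq Y_0$ with $H\cdot C_0=Y_0$, and set $C=\overline{N_{D}(C_0)}\cap Y$; this $C$ is compact and contained in $Y$. For $y\in Y$ the containment $Y\subseteq N_{D}(Y_0)=N_{D}(H\cdot C_0)=H\cdot N_{D}(C_0)$ gives $y=hz$ with $z\in N_{D}(C_0)$, and $z=h^{-1}y\in Y$ by $H$-invariance, so $z\in C$ and $y\in H\cdot C$. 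Thus $H\cdot C=Y$ and the action is cocompact.

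The main obstacle is the neighborhood estimate $Y\subseteq N_{D}(Y_0)$; everything else is a routine transfer of properties along a coarsely dense $H$-invariant inclusion. That estimate is precisely where Lemma \ref{lemma:unifly bndd} does the real work, together with the elementary geometric fact that a geodesic line meeting $\Jn(\nu)$ escapes any horoball through its frontier, which lies in the truncated space $Y_0$.
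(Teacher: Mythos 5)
Your proof is correct and follows essentially the same route as the paper: both transfer cocompactness from $\Jn(\nu)\cap(X\setminus\B)$ (Lemma \ref{lemma:cocompact on truncation}) to $Y$ by enlarging a compact set, with Lemma \ref{lemma:unifly bndd} supplying the uniform bound on how far points of $Y$ can stray into horoballs of $\B\setminus\B'$. Your frontier-crossing argument establishing $Y\subseteq N_D\bigl(\Jn(\nu)\cap(X\setminus\B)\bigr)$ is a slightly more explicit justification of the coarse density that the paper's proof uses implicitly when it ``increases $C$ to a larger compact set $C'$,'' but it is the same idea.
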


\begin{proof}
By Lemma \ref{lemma:cocompact on truncation} gives us that $H$ acts cocompactly on $\Jn(\nu)\cap(X\setminus\B)$. Let $C$ be a fundamental domain for the action of $H$ on $\Jn(\nu)\cap(X\setminus\B)$. Since there only finitely many orbits of horoballs meeting $C$ there are only finitely many  orbits of horoballs from $B\setminus\B'$ which meet $C$, Lemma \ref{lemma:unifly bndd} implies that we may increase $C$ to a larger compact set $C'$ so that $H$ acts cocompactly on $Y$.
\end{proof}

Let $\bbp'\subset\bbp$ be $\bigset{P\in\bbp}{P=\Stab_G(B')\spc \text{for some}\spc B'\in\B'}$.

\begin{corollary}
\label{corollary:cusp uniform}
Let $\bbq=\bigset{H\cap P}{ P\in\bbp' }$. The action of $(H,\bbq)$ on $\gJn(\nu)$ is cusp uniform.
\end{corollary}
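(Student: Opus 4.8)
The plan is to verify the two defining conditions of a cusp uniform action directly for $H$ acting on $\gJn(\nu)$, since the geometric ingredients have all been prepared in the preceding lemmas and only need to be assembled correctly. Recall that $\gJn(\nu)$ is proper, geodesic, and Gromov hyperbolic by construction, being a finite neighborhood of $\Jn(\nu)$ in $X$ with the length metric; moreover Lemma \ref{lemma:actually the limit set} identifies its boundary as $\partial\gJn(\nu)=\Lambda(\Jn(\nu))=\nu$. For the horoball system I would take $\mathcal{H}=\bigset{B\cap\gJn(\nu)}{B\in\B'}$. Each member is open and the members are pairwise disjoint, both inherited from $\B$, and the collection is $H$-invariant because $\B$ is $G$-invariant and $H=\Stab_G(\nu)$ permutes exactly those horoballs of $\B$ based at parabolic points of $\nu$. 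By Lemma \ref{lemma:horoball based in nu} each $B\cap\gJn(\nu)$ is an unbounded horoball in $\gJn(\nu)$ based at the corresponding $p\in\nu$, so $\mathcal{H}$ is an $H$-invariant system of disjoint open horoballs based at parabolic points.

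Next I would establish cocompactness of the complement action, which is condition (2). This is essentially Corollary \ref{lemma:cocompact retricted truncation}: $H$ acts properly and cocompactly on $Y=\Jn(\nu)\cap(X\setminus\B')$. Since $\bigcup\mathcal{H}=\gJn(\nu)\cap\B'$, the complement $\gJn(\nu)\setminus\bigcup\mathcal{H}$ equals $\gJn(\nu)\cap(X\setminus\B')$, and because $\gJn(\nu)$ lies within finite Hausdorff distance of $\Jn(\nu)$ while $H$ acts by isometries, a cocompact fundamental domain for $Y$ enlarges to one for $\gJn(\nu)\setminus\bigcup\mathcal{H}$.

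It then remains to confirm condition (1), that $\bbq=\bigset{H\cap P}{P\in\bbp'}$ is precisely the set of maximal parabolic subgroups of the $H$-action. For $P\in\bbp'$ with $P=\Stab_G(B_p)$ and $p\in\nu$, Lemma \ref{lemma:horoball based in nu} gives that $\Stab_H(B_p)=H\cap P$ is infinite and fixes $p$; as a subgroup of the parabolic group $P$ it contains no loxodromics, and since $\Stab_H(p)=H\cap\Stab_G(p)=H\cap P$ it is the full stabilizer, hence a maximal parabolic subgroup of $H$. Conversely, once the cocompactness of condition (2) is in hand, the standard characterization of cusp uniform (equivalently geometrically finite) actions forces every point of $\nu$ other than the base points of $\mathcal{H}$ to be a conical limit point of $H$, so no further parabolic points arise; horoballs based outside $\nu$ contribute only finite intersections with $\Jn(\nu)$ by Lemma \ref{lemma:unifly bndd} and hence finite $H\cap P$, so they are correctly excluded. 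This matches $\bbq$ exactly.

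The hard part will be the bookkeeping at the horoball frontiers rather than any single conceptual step: one must check that the set-theoretic restriction $B\cap\gJn(\nu)$ really functions as a horoball for the intrinsic geometry of $\gJn(\nu)$ (this is where the unboundedness in Lemma \ref{lemma:horoball based in nu} and the quasiconvexity of $\gJn(\nu)$ are used) and that cocompactness survives both the passage from $\Jn(\nu)$ to its finite neighborhood $\gJn(\nu)$ and the replacement of $X\setminus\B'$ by the horoball complement. I would take care to derive condition (1) from the cocompactness in condition (2) via the geometric-finiteness characterization rather than from Proposition \ref{proposition:Hruska Thm 9.1}, since invoking the latter here would be circular: it presupposes the relative quasiconvexity of $H$, which is exactly what this cusp uniform statement is being used to establish in Proposition \ref{prop:Key prop}.
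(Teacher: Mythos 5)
Your proof is correct and takes essentially the same route as the paper: the paper's (much terser) argument likewise assembles Lemma \ref{lemma:horoball based in nu} (each $Q\in\bbq$ is infinite), the $H$-equivariance of $\B'$, and Corollary \ref{lemma:cocompact retricted truncation} (cocompactness on $Y=\Jn(\nu)\cap(X\setminus\B')$) to conclude cusp uniformity. Your additional verification that $\bbq$ exhausts the maximal parabolics of the $H$-action --- and your observation that invoking Proposition \ref{proposition:Hruska Thm 9.1} here would be circular --- is a careful working-out of a step the paper leaves implicit, not a different method.
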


\begin{proof}
By definition each group in $\bbq$ stabilizes a horoball in $\B'$, and Lemma \ref{lemma:horoball based in nu} gives that every $Q\in\bbq$ is infinite. The collection $\B'$ is $H$-equivariant and Corollary \ref{lemma:cocompact retricted truncation} gives us that the action is cocompact on $Y=\Jn(\nu)\cap(X\setminus\B')$. Therefore the action of $(H,\bbq)$ on $\gJn(\nu)$ is cusp uniform.
\end{proof}

\begin{corollary}
$H$ acts geometrically finitely on $\nu$.
\end{corollary}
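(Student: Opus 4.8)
The plan is to obtain geometric finiteness directly from the cusp-uniform action constructed above, via Bowditch's criterion, and then identify the ambient boundary with $\nu$. By Corollary \ref{corollary:cusp uniform}, the pair $(H,\bbq)$ acts cusp uniformly on $\gJn(\nu)$. Since $\gJn(\nu)$ is by construction a proper geodesic Gromov-hyperbolic space (it is quasi-isometric to the subset $\Jn(\nu)$ of the proper space $X$), Proposition \ref{proposition:cusp uniform is geom fin} applies verbatim: the induced action of $H$ on the Gromov boundary $\bndry\gJn(\nu)$ is geometrically finite.

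It then remains to identify $\bndry\gJn(\nu)$ with $\nu$ as an $H$-space. Because $\gJn(\nu)$ is quasi-isometric to $\Jn(\nu)$, their Gromov boundaries are $H$-equivariantly homeomorphic. Moreover, $\Jn(\nu)$ is a quasi-convex subset of $X$ (it is a union of geodesic lines whose endpoints lie in the closed set $\nu$), so the inclusion into $X$ identifies $\bndry\Jn(\nu)$ with the limit set $\Lambda(\Jn(\nu))\subset\bndry X$. By Lemma \ref{lemma:actually the limit set} this limit set is precisely $\nu$. Composing these identifications, the geometrically finite action of $H$ on $\bndry\gJn(\nu)$ is the action of $H$ on $\nu$, which is exactly the assertion.

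The step requiring the most care is the boundary identification: one must check that the quasi-isometry $\gJn(\nu)\to\Jn(\nu)$ extends to an $H$-equivariant homeomorphism of boundaries and that this extension intertwines the two convergence actions, so that ``geometrically finite on $\bndry\gJn(\nu)$'' genuinely transports to ``geometrically finite on $\nu$''. Both facts are standard consequences of the quasi-convexity of $\Jn(\nu)$ and the naturality of the Gromov boundary under quasi-isometries, so the corollary carries no new hyperbolic-geometry content beyond what is already packaged in the preceding lemmas; it is essentially a bookkeeping consequence of Corollary \ref{corollary:cusp uniform}, Proposition \ref{proposition:cusp uniform is geom fin}, and Lemma \ref{lemma:actually the limit set}.
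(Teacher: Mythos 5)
Your proposal is correct and follows the paper's own proof essentially verbatim: the paper likewise combines Corollary \ref{corollary:cusp uniform} (cusp uniformity of the $(H,\bbq)$-action on $\gJn(\nu)$), Proposition \ref{proposition:cusp uniform is geom fin} (cusp uniform implies geometrically finite boundary action), and Lemma \ref{lemma:actually the limit set} (identifying $\bndry\gJn(\nu)$ with $\nu$). Your added care about transporting the convergence action across the quasi-isometry $\gJn(\nu)\to\Jn(\nu)$ is a reasonable elaboration of a step the paper leaves implicit, but it is the same argument.
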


\begin{proof}
Corollary \ref{corollary:cusp uniform} shows that the action on $Y$ is cusp uniform, and Lemma \ref{lemma:actually the limit set} shows that $\nu=\bndry \gJn(\nu)$. Then Proposition \ref{proposition:cusp uniform is geom fin} the action on $\bndry \gJn(\nu)$ is geometrically finite.
\end{proof}

Bowditch \cite{Bow1} has shown that a geometrically finite action of a group on  metric space is always minimal. So we have:

\begin{corollary}
$H$ acts minimally on $\nu$
\end{corollary}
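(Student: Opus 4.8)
The plan is to deduce minimality directly from the geometric finiteness just established in the preceding corollary, so this is essentially a one-line appeal to a theorem of Bowditch. First I would pin down that the space $\nu$ on which $H$ acts is precisely the limit set $\Lambda(H)$ of the convergence action. By Lemma \ref{lemma:actually the limit set} we have $\Lambda(\Jn(\nu)) = \nu$, and since $H = \Stab_G(\Jn(\nu))$ acts cocompactly on $\Jn(\nu) \cap (X \setminus \B)$ by Lemma \ref{lemma:cocompact on truncation}, the limit set of the subgroup $H$ coincides with the limit set of the quasiconvex subspace $\Jn(\nu)$; that is, $\Lambda(H) = \nu$. (This identification is already used implicitly in the proof of Lemma \ref{lemma:unifly bndd}, where $\Lambda(H)$ is invoked as the minimal closed $H$-invariant subset of $\bndry X$.)

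With $\nu = \Lambda(H)$ in hand, I would then invoke Bowditch's result \cite{Bow1} that a geometrically finite convergence action on a compact metric space is minimal on its limit set, i.e.\ the limit set contains no proper nonempty closed invariant subset. The previous corollary gives exactly that $H$ acts geometrically finitely on $\nu$, so applying Bowditch's theorem to this action yields that $H$ acts minimally on $\nu = \Lambda(H)$.

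Since the substantive work (cusp uniformity and hence geometric finiteness of the $H$-action) has already been carried out in Corollaries \ref{corollary:cusp uniform} and the one immediately preceding, there is no genuine obstacle here. The only point requiring a moment's care is the identification $\nu = \Lambda(H)$ rather than $\nu$ being merely \emph{some} closed $H$-invariant set; but this is precisely what Lemma \ref{lemma:actually the limit set} together with the cocompactness of the $H$-action supplies, so the corollary follows at once.
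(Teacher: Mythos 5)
Your proposal is correct and takes essentially the same route as the paper, which likewise deduces the corollary in one line from the preceding corollary (geometric finiteness of the $H$-action on $\nu$) together with Bowditch's theorem that geometrically finite actions are minimal. Your additional care in identifying $\nu=\Lambda(H)$ merely makes explicit what the paper leaves implicit (and already uses in the proof of Lemma \ref{lemma:unifly bndd}).
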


To complete the proof of Proposition \ref{prop:Key prop} it remains to show:

\begin{lemma}
The stabilizer of a jump in $\nu$ is a loxodromic and translate inseparable.
\end{lemma}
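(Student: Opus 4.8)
The plan is to treat the two assertions separately, obtaining translate inseparability from the combinatorics of the cyclic order and loxodromicity from the geometry of the cusp-uniform action of $H=\Stab_G(\nu)$ on $\gJn(\nu)$. First I would record two preliminary observations about a jump $\{a,b\}$. Since the $\sim$-class whose closure is $\nu$ lies in $M^*(2)$, and a parabolic point of $\overline{\nu}$ is an accumulation point of this class (as in the proof of Lemma \ref{lemma:horoball based in nu}), such a parabolic point is flanked in the cyclic order by $\sim$-class points on both sides and hence cannot be adjacent to any point of $\nu$; thus both $a$ and $b$ are conical limit points lying in the $\sim$-class itself, so in particular $a\sim b$. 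By the definition of $\sim$ this gives $\N(a,b)=2$, so $\{a,b\}$ is already a cut pair of $M$ with exactly two complementary components, and by Proposition \ref{proposition:Hruska Thm 9.1} the points $a,b$ are conical limit points for the action of $H$ on $\nu=\Lambda(\Jn(\nu))$.

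Translate inseparability is then immediate. Suppose some $\{c,d\}\in\Orb_G(\{a,b\})$ separated $\{a,b\}$. Writing $\{c,d\}=\{ga,gb\}$ and using that $g$ is a homeomorphism, we have $c,d\in M^*(2)$, so Lemma \ref{lemma: separate implies cyclic} applies and yields $c\sim d\sim a\sim b$ together with the conclusion that $\{a,b\}$ and $\{c,d\}$ mutually separate. Since $\sim$ is an equivalence relation (Lemma \ref{lemma:*3.1}), $\{c,d\}$ is then a pair of points of the $\sim$-class $\nu$ mutually separating $\{a,b\}$, contradicting the assumption that $\{a,b\}$ is a jump, i.e. an adjacent pair of $\nu$. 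Hence no $G$-translate of $\{a,b\}$ separates it.

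For loxodromicity I would pass to the $\delta$-hyperbolic space $X$ and let $\ell$ be a bi-infinite geodesic with endpoints $a$ and $b$; since $a,b\in\nu$ we have $\ell\subset\Jn(\nu)$. Because $a$ and $b$ are not parabolic, $\ell$ penetrates each horoball of $\B'$ (those based at parabolic points of $\nu$) only to bounded depth, and by Lemma \ref{lemma:unifly bndd} it has uniformly bounded intersection with every horoball based outside $\nu$; thus $\ell$ lies in a bounded neighborhood of $Y=\Jn(\nu)\cap(X\setminus\B')$, on which $H$ acts properly and cocompactly by Corollary \ref{lemma:cocompact retricted truncation}. The crux is to show that $L=\Stab_H(\{a,b\})$, which preserves $\ell$ up to bounded Hausdorff distance, acts cocompactly on $\ell$. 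For this I would verify that the family $\Orb_H(\ell)$ of jump-geodesics is locally finite in $X$ and then invoke Proposition \ref{proposition:lclfinaction}. Via Lemma \ref{lemma:pairs compact} and Lemma \ref{lemma:inner product v distance}, a geodesic meeting a fixed compact set has endpoints forming a pair in a fixed off-diagonal compact subset of $\Theta_2\nu$, so local finiteness reduces to the claim that only finitely many jumps of $\nu$ have endpoints in such a set; combined with Lemma \ref{lemma:necklaces are locally finite} this would give the assertion. Granting cocompactness, $L$ acts properly (inherited from $H\curvearrowright Y$) and cocompactly on the line $\ell\cong\bbr$, hence is virtually infinite cyclic; a generator of a finite-index infinite cyclic subgroup fixes both endpoints $a,b$ of $\ell$ and has infinite order, so it is a loxodromic element of $G$ stabilizing $\{a,b\}$. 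Therefore $\{a,b\}$ is a loxodromic cut pair and $L$ is a loxodromic subgroup.

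The main obstacle is precisely the local-finiteness step for jump-geodesics: unlike in Bowditch's hyperbolic setting, where uniformity of the convergence action on triples is available, here I must deduce finiteness of jumps in an off-diagonal compact region of $\Theta_2\nu$ from the null-family behaviour of the complementary gaps of $\nu$ (gaps whose endpoint-separation is bounded below are finite in number) together with properness of $H\curvearrowright Y$. Every other step is a formal consequence of the cyclic-order combinatorics and of the cusp-uniform action established in Corollaries \ref{lemma:cocompact retricted truncation} and \ref{corollary:cusp uniform}.
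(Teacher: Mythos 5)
Your overall architecture matches the paper's (exclude parabolic points from jumps, use the cyclic-order lemmas for translate inseparability, and get loxodromicity from a cocompact stabilizer action on a jump-geodesic via Proposition \ref{proposition:lclfinaction}), but there are two genuine gaps. First, your exclusion of parabolic endpoints does not work as stated: the fact that a parabolic point $p\in\overline{\nu}$ is an accumulation point of the $\sim$-class does not imply it is ``flanked on both sides'' in the cyclic order --- accumulation may a priori occur from one side only, leaving $p$ free to be an endpoint of a jump across the other side. This step genuinely needs the group action: in the paper's proof, $\Stab_H(p)$ is infinite (Lemma \ref{lemma:horoball based in nu}) and, being parabolic, cannot fix the other endpoint $y$, so a nontrivial $h\in\Stab_H(p)$ produces a second jump $\{p,h(y)\}$; since two distinct jumps can only meet in an isolated point of $\nu$, and parabolic points of a necklace are never isolated, one gets a contradiction. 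Without this, your assertion that both jump endpoints lie in the $\sim$-class itself (rather than merely in its closure) --- and hence your appeal to Lemma \ref{lemma: separate implies cyclic} with $a\sim b$ --- is unsupported; note the paper is careful here and separately treats the case $\N(x,y)\geq 3$ using inseparability of $M(3+)$ pairs.

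Second, the step you yourself flag as ``the main obstacle'' is precisely the content that must be proved, and your sketch does not supply it. The claim that only finitely many jumps of $\nu$ have endpoint pairs in a fixed off-diagonal compact subset of $\Theta_2\nu$ does not follow from Lemma \ref{lemma:necklaces are locally finite}, which bounds the number of \emph{necklaces} (i.e.\ $\sim$-classes) meeting a compact set, not the number of jumps within a single necklace, and your ``null-family of gaps'' assertion is essentially a restatement of the needed finiteness rather than a proof of it. The paper closes this gap by running Bowditch's Lemma 5.19 argument, which uses only cocompactness of $G$ on $\Theta_2\relbndryG$ (Corollary \ref{corollary: cocompact on pair}) and the identity $\nu=\Lambda\bigl(\Stab_G(\nu)\bigr)$ (Lemma \ref{lemma:actually the limit set}) to conclude that $\Jump(\nu)/H$ is finite; local finiteness of $\bigset{\Jn(J)\cap Y}{J\in\Jump(\nu)}$ then follows from properness of the $H$-action on $Y$, and Proposition \ref{proposition:lclfinaction} applies. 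A smaller caveat in the same part: uniformly bounded penetration of your geodesic $\ell$ into the horoballs of $\B'$ is not automatic from the endpoints being non-parabolic; what you actually need (and what the paper uses) is only that $\ell$ has bi-infinite intersection with $Y=\Jn(\nu)\cap(X\setminus\B')$, after which the cocompact action of the jump stabilizer on $\Jn(J)\cap Y$ is extended to all of $\ell\cong\bbr$ by the hull argument in the paper's proof.
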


\begin{proof}
We first show that he stabilizer of a jump is loxodromic. Let $\Jump(\nu)$ be the set of jumps in $\nu$ and let $K'\subset Y$ be a compact set whose $H$-translates cover $Y$. In the hyperbolic setting Bowditch showed that $\Jump(\nu)/H$ is finite (see Lemma 5.19 of \cite{Bow98}). Bowditch's argument only used the fact that $G$ acts cocompactly on the space of unordered pairs in the boundary of $G$ and the fact that $\nu=\Lambda(H)$. Thus using Corollary \ref{corollary: cocompact on pair} and Lemma \ref{lemma:actually the limit set}, and the argument of Lemma 5.19 of \cite{Bow98} we may conclude that $\Jump(\nu)/H$ is finite. Thus $\bigset{\Jn(J)\cap Y}{J\in\Jump(\nu)}$ is locally finite in $Y$, and we may apply Proposition \ref{proposition:lclfinaction} to show that $\Stab_H(J)$ acts cocompactly on $\Jn(J)\cap Y$ for any jump $J\in\Jump(\nu)$.  Notice that if we knew that parabolic points did not participate in jumps, then we would know that for every jump $J$ there is a line in $\Jn(J)$ having bi-infinite intersection with $Y$. Then because action of $\Stab_H(J)$ on $\Jn(J)\cap Y$ is cocompact and by isometries we may extend the action of $\Stab_H(J)$ to $\bbr$, which implies that $\Stab_H(J)$ is loxodromic.

Let $S=\Stab_H(J)$. To see that we may extend the action of $S$ to $\bbr$, first let $\ell$ be line in $\Jn(J)$ and notice that if $C$ is a fundamental domain for the action of $S$ on $\ell$ then the convex hull $\Hull(C)$ of $C$ is a connected subset of $\ell$. Because the action of $S$ on $\ell$ is cocompact there is a bound on the diameter of components of $\ell\setminus\big\{\Orb_S(C)\big\}$. Additionally, there are at most two components of $\ell\setminus\big\{\Orb_S(C)\big\}$ which are adjacent to $\Hull(C)$. Let $\mathcal{V}$ be the collection of components of $\ell\setminus\big\{\Orb_S(C)\big\}$ whose intersection with $\Hull(C)$ is non-empty and define $C'$ to be the closure of $\Hull(C)\cap \mathcal{V}$. Then $C'$ is compact and $\ell\subset\Orb_S(C')$. As $\ell$ is isometric to $\bbr$ we are done.

We now show that parabolic points cannot participate in jumps. Let $J=\{x,y\}$ be a jump of $\nu$ and assume that $x$ was parabolic. Then $\Stab_H(x)$ cannot fix $y$. Let $h\in\Stab_H(x)$ be non-trivial. Then $h$ is a homeomorphism of $\bndry X$, so $\bigl\{x,h(y)\bigr\}$ is also a jump. Thus $x$ is a point participating in two jumps and must be isolated (see Section \ref{subsec:Cut Point Structures In Metric Spaces}), but by definition the parabolic points in a necklace cannot be isolated. Therefore $x$ could not have been parabolic.

Let $J=\{x,y\}$ be a jump in $\nu$. If $\N(x,y)\geq 3$, then $\{x,y\}$ is inseparable and thus $G$-translate inseparable. So, assume that $x\sim y$. To see that $\{x,y\}$ cannot be separated by a pair $\{a,b\}$ with $val(a)=val(b)=\N(a,b)=2$, notice that if such an $\{a,b\}$ Lemma \ref{lemma: separate implies cyclic} would imply that $x\sim y\sim a\sim b$ contradicting the fact that $\{x,y\}$ is a jump.

Assume $\{a,b\}$ is a cut pair with $\N(a,b)\geq 3$. Arguments of Bowditch \cite{Bow98} show that such a pair is an inseparable $M(3+)$ class (see specifically \cite{Bow98} Lemma 3.8 and Proposition 5.13). As $\{a,b\}$ is inseparable $\{a,b\}$ must lie in a single component of $M\setminus\{x,y\}$. Because $\{x,y\}$ is a cut pair there exists at least one component $U$ of $M\setminus\{x,y\}$ which does not contain $\{a,b\}$. Then $\overline{U}$ is a connected set contained in $M\setminus\{a,b\}$ such that $\{x,y\}\subset \overline{U}$. Thus $\{a,b\}$ cannot separate $\{x,y\}$.

As every loxodromic cut pair has two or more components in its complement, we have shown that $\{x,y\}$ is translate inseparable.
\end{proof}

\medskip
\subsection{Collecting local cut points}
\label{subsec:collecting local}

Now that we have proved Lemma \ref{lemma:control theorem} and Proposition \ref{prop:Key prop} we may plug into the arguments of Bowditch \cite{Bow98} in the case when $\relbndryG$ has no global cut points to describe the ways local cut points occur in $\relbndryG$.

\begin{theorem}
\label{theorem:collecting conical}
Let $(G,\bbp)$ be relatively hyperbolic and set $M=\relbndryG$. If $M$ is connected, has no global cut points, and is not homeomorphic to $S^1$, then we have the following:
\begin{enumerate}
\item A point $m\in M^*(2)$ is either in a necklace or a translate inseparable loxodromic cut pair.
\item $M^*(3+)$ consists of equivalence classes of translate inseparable loxodromic cut pairs.
\item A necklace $\nu$ in $M$ is homeomorphic to a Cantor set. Moreover, if $\nu$ is a Cantor set the jumps are loxodromic cut pairs, which are translate inseparable
\end{enumerate}
\end{theorem}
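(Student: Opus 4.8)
The plan is to exploit the hypotheses exactly as the surrounding discussion suggests: since $M$ has no global cut points and every point under consideration is a conical limit point, Proposition \ref{prop:conical vs parabolic} guarantees that none of these points is parabolic, hence none is a global cut point. This is precisely the situation in which the arguments of \cite{Bow98} go through, with the two places where Bowditch invokes hyperbolicity (his Lemmas 5.2 and 5.17) now supplied by Lemma \ref{lemma:control theorem} and Proposition \ref{prop:Key prop}. By Lemma \ref{lemma:partition conical points} any conical local cut point lies in $M^*(2)\cup M^*(3+)$, so it suffices to analyze these two sets separately.

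For part (1), I would fix $m\in M^*(2)$ and pass to its $\sim$-class, which by the remarks preceding the theorem is either a single cut pair or a cyclically separating collection whose closure is a necklace; in the latter case $m$ lies in a necklace and we are done. In the former case the task is to show that the cut pair is translate inseparable and loxodromic. Translate inseparability follows from Lemma \ref{lemma: separate implies cyclic}: a pair separating the cut pair would force all four points into one $\sim$-class, contradicting that the class is a single pair. For loxodromicity I would run Bowditch's dynamical argument: Lemma \ref{lemma:control theorem} lets us translate $m$ into a fixed open set while pushing the closed complement into a disjoint set, and cocompactness on pairs (Corollary \ref{corollary: cocompact on pair}) then produces a group element with north--south dynamics fixing the pair, hence a loxodromic stabilizer.

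For part (2), I would invoke Lemma \ref{lemma:*3.8 doubleTilde lemma}, which partitions the $\approx$-classes of $M^*(3+)$ into pairs that do not mutually separate; following \cite{Bow98} (Lemma 3.8 and Proposition 5.13) each such class is an inseparable cut pair whose complement has at least three components. Inseparability yields $G$-translate inseparability immediately, and the same control-lemma-plus-cocompactness argument as in part (1) upgrades the stabilizer to a loxodromic one. For part (3), the statement about jumps is exactly the conclusion of Proposition \ref{prop:Key prop}, so only the topological identification of $\nu$ as a Cantor set remains. Here I would verify the three defining properties: $\nu$ is compact (closed in the compact space $M$) and metrizable; it is perfect, since the minimal action of $\Stab_G(\nu)$ (Proposition \ref{prop:Key prop}) on the infinite set $\nu$ admits no isolated points; and it is totally disconnected, because the cyclic separation structure of the underlying $\sim$-class lets any two distinct points of $\nu$ be separated within $\nu$ by a cut pair of the class. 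Brouwer's characterization of the Cantor set then finishes the argument.

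The hard part will be the loxodromicity arguments in parts (1) and (2), which is precisely the content Bowditch extracted from hyperbolicity. Everything else is either bookkeeping with the $\sim$- and $\approx$-relations or a direct appeal to Proposition \ref{prop:Key prop}; the essential new input that makes the dynamical argument run in the relatively hyperbolic setting is the combination of Lemma \ref{lemma:control theorem} with cocompactness of the action on $\Theta_2\relbndryG$, so the main care is in checking that the north--south element produced this way genuinely stabilizes the cut pair rather than merely a nearby configuration.
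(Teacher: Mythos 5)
Your proposal is correct and takes essentially the same route as the paper: the paper's proof of Theorem \ref{theorem:collecting conical} consists precisely of running Bowditch's Section 5 arguments with Lemma \ref{lemma:control theorem} substituted for his Lemma 5.2 and Proposition \ref{prop:Key prop} substituted for his Lemma 5.17, which is exactly the plan you describe. Your additional details (the $\sim$ and $\approx$ bookkeeping via Lemmas \ref{lemma:partition conical points}, \ref{lemma:*3.8 doubleTilde lemma}, and \ref{lemma: separate implies cyclic}, the control-lemma-plus-cocompactness dynamics, and the Brouwer-characterization check that $\nu$ is a Cantor set) are elaborations of steps the paper explicitly defers to \cite{Bow98}, not a different method.
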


As first observed by  Guralnik \cite{Gur}, Lemma \ref{lemma:control theorem} allows us to apply Bowditch's arguments verbatim when considering only conical limit points to obtain Theorem \ref{theorem:collecting conical}(1) and (2). Theorem \ref{theorem:collecting conical}(3) also follows from the arguments of Bowditch by substituting Proposition \ref{prop:Key prop} for Bowditch's Lemma 5.17. We refer the reader to arguments of Section 5 of \cite{Bow98} for details.

As an immediate corollary we have:

\begin{corollary}
\label{corollary: non-cirlce necklace then lox cut pair}
Assume $\relbndryG$ is connected with no global cut points and not homeomorphic to $S^1$. If $\relbndryG$ has a non-parabolic local cut point, then $\relbndryG$ contains a $G$-translate inseparable loxodromic cut pair.
\end{corollary}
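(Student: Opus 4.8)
The plan is to deduce this corollary directly from Theorem \ref{theorem:collecting conical} by a short case analysis on the nature of the given non-parabolic local cut point. Let $p$ be a non-parabolic local cut point of $\relbndryG$. First I would observe that since $p$ is non-parabolic and $\relbndryG$ consists entirely of conical limit points and parabolic points (as $G$ acts on $\relbndryG$ as a geometrically finite convergence group), the point $p$ must be a conical limit point. Moreover, because $\relbndryG$ has no global cut points, $p$ is not a global cut point, so by Lemma \ref{lemma:partition conical points} we have $p\in M^*(2)\cup M^*(3+)$.

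Next I would split into the two cases according to this partition. If $p\in M^*(3+)$, then Theorem \ref{theorem:collecting conical}(2) tells us immediately that $p$ lies in an equivalence class of translate inseparable loxodromic cut pairs, and in particular $p$ belongs to such a cut pair, so we are done. If instead $p\in M^*(2)$, then Theorem \ref{theorem:collecting conical}(1) gives that $p$ is either contained in a translate inseparable loxodromic cut pair---in which case we are again done immediately---or $p$ lies in a necklace $\nu$.

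The only remaining work is the necklace case, and here I would invoke Theorem \ref{theorem:collecting conical}(3): a necklace $\nu$ in $M$ is homeomorphic to a Cantor set, and its jumps are loxodromic cut pairs which are translate inseparable. The point I want to extract is that every necklace contains at least one jump. This should follow from the fact that a necklace is the closure of a $\sim$-class that is a Cantor set (hence totally disconnected), so by the structure of the cyclic order there exist adjacent points, i.e.\ a jump exists; any such jump is by Theorem \ref{theorem:collecting conical}(3) a translate inseparable loxodromic cut pair. Thus in all cases $\relbndryG$ contains a $G$-translate inseparable loxodromic cut pair.

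I expect the only genuinely delicate point to be confirming that a necklace must possess a jump rather than being jump-free; everything else is a mechanical dispatch through the three parts of Theorem \ref{theorem:collecting conical}. Since a necklace is a Cantor set, it is nowhere dense and totally disconnected, and the existence of adjacent pairs in the cyclic order follows from these topological properties together with the definition of a jump in Section \ref{subsec:Cut Point Structures In Metric Spaces}; this is the step I would take most care to justify, though it is short. The rest of the argument is an immediate consequence of the preceding theorem and requires no further input.
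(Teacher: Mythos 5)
Your proposal is correct and takes essentially the same route as the paper, which states this corollary as an immediate consequence of Theorem \ref{theorem:collecting conical}: one passes from a non-parabolic local cut point to a conical limit point, applies Lemma \ref{lemma:partition conical points} to land in $M^*(2)\cup M^*(3+)$, and dispatches the cases through parts (1)--(3) of that theorem exactly as you do. The one point you rightly flag---that a Cantor-set necklace actually possesses a jump---is likewise left implicit in the paper (it is contained in Bowditch's arguments in Section 5 of \cite{Bow98}), and your sketch has the right conclusion, though the operative facts are compactness and total disconnectedness together with the compatibility of the cyclic order with the topology (a jump-free cyclic order would be dense, and order-completeness from compactness would then force $\nu$ to be connected, i.e.\ a circle), rather than nowhere density of $\nu$ in $M$.
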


\medskip

We remark that Theorem \ref{theorem:collecting conical}(3) is related to the work of Groff (see Proposition 7.2 and the definition of relatively-QH in \cite{Grof13}). Also note that cut pairs are not separated by global cut points. Thus a necklace $\nu$ will be contained in some block of the form $\blockbndry$. This means we may now invoke the results of Section \ref{section:Reduction} to remove the hypothesis that $\relbndryG$ has global cut points and show:
\medskip

\begin{theorem}
\label{theorem:Collection thm}
Let $(G,\bbp)$ be a relatively hyperbolic group with tame peripherals and assume $\relbndryG$ is connected. If $p\in\relbndryG$ is a local cut point, then one of the following holds:
\begin{enumerate}
\item $p$ is parabolic point
\item $p$ is contained in a $G$-translate inseparable loxodromic cut pair
\item $p$ is in a necklace
\end{enumerate}
\end{theorem}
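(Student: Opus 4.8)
The plan is to split on the dynamical type of $p$ and then reduce the essential (conical) case to a single block, where the machinery assembled in Section~\ref{section:Local Cut Points in relbndry} applies. First I would invoke geometric finiteness of the $G$-action on $\relbndryG$ (Proposition~\ref{proposition:cusp uniform is geom fin}): every point is either a bounded parabolic point or a conical limit point. If $p$ is a bounded parabolic point it is parabolic and we are in case~(1), so the real content is the conical case. Since global cut points of $\relbndryG$ are parabolic while a conical limit point cannot be parabolic (Proposition~\ref{prop:conical vs parabolic}), a conical $p$ is automatically not a global cut point, and by Corollary~\ref{label:ideal point} it is not an ideal point either. Hence $p$ lies in a block $R=B(v)$ for some $v\in\R$, and by Theorem~\ref{theorem:treelike structure}(4) the pair $(H,\bbq)$ with $H=\Stab_G(R)$ is relatively hyperbolic with connected boundary $R=\blockbndry$ having no global cut point.

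Next I would transport the problem into $R$ using the retraction $f\colon\relbndryG\to R$ of Lemma~\ref{lemma: branches uppersemi}. Because $f$ is a retraction it fixes $R$ pointwise, so $f(p)=p$, and Lemma~\ref{lemma:image of conical limit local cut point} shows that $p$ is simultaneously a local cut point of $R$ and a conical limit point of the $H$-action on $R$. Assuming for the moment that $R\not\cong S^1$, Theorem~\ref{theorem:collecting conical} applies to $(H,\bbq)$: by Lemma~\ref{lemma:partition conical points} we have $p\in R^*(2)\cup R^*(3+)$, and parts~(1)--(2) of that theorem place $p$ in either a necklace of $R$ or an $H$-translate inseparable loxodromic cut pair of $R$. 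Finally I would lift these structures back to $\relbndryG$ via Corollary~\ref{corollary:cut pair in block}: an $H$-translate inseparable loxodromic cut pair of $R$ becomes a $G$-translate inseparable loxodromic cut pair of $\relbndryG$ (case~(2), noting that a loxodromic element of $H\le G$ is still loxodromic in $G$ and its fixed points are conical, hence non-parabolic), and a necklace of $R$ becomes a necklace of $\relbndryG$ (case~(3)). This settles the conical case whenever the ambient block is not a circle.

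The remaining and most delicate point is precisely the excluded hypothesis of Theorem~\ref{theorem:collecting conical}, namely when $R\cong S^1$ (which genuinely occurs, e.g. for a cusped Fuchsian block). Here every pair of distinct points of $S^1$ separates it into exactly two arcs, so every conical limit point of $R$ lies in $R^*(2)$ and any two of them are $\sim$-related; thus the conical limit points form a single $\sim$-class $C$ with infinitely many members, and $p\in C$. Its closure is a necklace $\nu$ of $R$ containing $p$, and Corollary~\ref{corollary:cut pair in block} (whose proof uses only that $\nu$ is the closure of a $\sim$-class of conical points, not that it is a Cantor set) shows $\nu$ is a necklace of $\relbndryG$; hence $p$ is in a necklace and we land in case~(3). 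I expect this circle case to be the main obstacle: one must observe that, although Theorem~\ref{theorem:collecting conical}(3) only guarantees Cantor-set necklaces away from $S^1$, the \emph{defining} property of a necklace --- the closure of a $\sim$-class with at least three elements --- is satisfied by the whole circle, so the trichotomy still holds through the necklace alternative.
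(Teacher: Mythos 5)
Your proposal is correct and follows essentially the same route as the paper's proof: the parabolic/conical dichotomy, reduction to a block $R=B(v)$ via Corollary~\ref{label:ideal point} and Lemma~\ref{lemma:image of conical limit local cut point}, an application of Theorem~\ref{theorem:collecting conical} when $R\not\cong S^1$, and lifting via Corollary~\ref{corollary:cut pair in block}. Your only addition is that you justify the circle case in detail --- verifying that $S^1$ is itself (the closure of) a single $\sim$-class of conical points and hence a necklace --- where the paper simply asserts that a circle block is a necklace and cites Corollary~\ref{corollary:cut pair in block}.
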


\begin{proof} Let $p$ be a local cut point. By Lemma \ref{label:ideal point} we have $p$ must be either a parabolic point or a conical limit point contained in a block. If $p$ is parabolic we are done, so assume that $p$ is a conical limit point. By Theorem \ref{theorem:treelike structure} the block is stabilized by a subgroup $H$, and $H$ is hyperbolic relative to $\bbq$. Theorem \ref{theorem:treelike structure} also implies that $\blockbndry$ is connected and has no global cut points, if $\blockbndry$ is not a circle, we may apply Theorem \ref{theorem:collecting conical} to $\blockbndry$. Thus $\blockbndry$ contains a necklace or an inseparable loxodromic cut pair which contains $p$. Corollary \ref{corollary:cut pair in block} implies that inseparable loxodromic cut pairs and necklaces in $\blockbndry$ correspond to inseparable loxodromic cut pairs and necklaces in $\relbndryG$.
If $\blockbndry$ is a circle, then $\blockbndry$ is a necklace containing $p$, and we are done by Corollary \ref{corollary:cut pair in block}.\end{proof}


\section{Splitting theorem}
\label{section:splitting thm}

Throughout this section we will assume that $(G,\bbp)$ is relatively hyperbolic with tame peripherals and that $\relbndryG$ is connected. Having developed the appropriate tools in Sections \ref{section:Reduction} and \ref{section:Local Cut Points in relbndry}, we now wish to prove Theorem \ref{theorem:Splitting Theorem}. We start with a few lemmas.

\begin{lemma}
\label{lemma:component a circle}
Assume that $\relbndryG$ is not homeomorphic to a circle. If $H$ is the stabilizer of a block and $\compbndry$ is homeomorphic to a circle, then there exists a non-trivial peripheral splitting of $G$ over a $2$-ended subgroup.
\end{lemma}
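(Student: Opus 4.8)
The plan is to exploit the fact that the circle block must carry a parabolic cut point, to show that the stabilizer of that cut point is two-ended, and then to invoke Corollary \ref{corollary:theorem :non-trivial splitting} to produce the splitting. Write $R=B(v)$ for the circle block, so $H=\Stab_G(R)$ and $\compbndry=R\cong S^1$. Since a block is a proper subcontinuum of $\relbndryG$ (Theorem \ref{theorem:treelike structure}(1)) and $\relbndryG$ is not a circle, we have $R\subsetneq\relbndryG$. As $R$ is a maximal subcontinuum that cannot be separated by global cut points, $\relbndryG$ cannot consist of this single block, so $\relbndryG$ must contain a global cut point. By Theorems \ref{theorem:Bowditch Accessibility} and \ref{theorem:global cut point implies splitting} the maximal peripheral splitting of $(G,\bbp)$ is then non-trivial; let $T=\R\sqcup\sP$ be its Bass--Serre tree, with $v\in\R$ the component vertex corresponding to $R$.

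First I would locate a peripheral cut point on $R$. Because $T$ is connected and $R\neq\relbndryG$, the vertex $v$ is joined to the rest of $T$, so the geodesic in $T$ leaving $R$ begins with an edge $vp$ whose peripheral endpoint $p\in\sP$ has degree at least two; thus $p$ is a cut point vertex of $T$ adjacent to $v$. By Theorem \ref{theorem:treelike structure}(1)--(2) the point $\beta(p)$ lies on $R$ and is a parabolic point, and $P:=\Stab_G(p)$ satisfies $Q:=H\cap P\in\bbq$ with $Q=\Stab_H(\beta(p))$.

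The key step is to show $Q$ is two-ended. By Theorem \ref{theorem:treelike structure}(4) the pair $(H,\bbq)$ acts on $R=\blockbndry$ as a geometrically finite convergence group, so every point of $R$ is either a conical limit point or a bounded parabolic point. Since $\beta(p)$ is parabolic it cannot be conical (Proposition \ref{prop:conical vs parabolic}), hence it is a bounded parabolic point and $Q$ acts properly and cocompactly on $R\setminus\{\beta(p)\}$. But $R\cong S^1$, so $R\setminus\{\beta(p)\}$ is homeomorphic to $\mathbb{R}$, and being an open connected subset of the Peano continuum $R$ it is a generalized Peano continuum. A proper cocompact action on such a space forces $Q$ to be finitely generated, and Theorem \ref{theorem:ends theorem} then gives $\Ends(Q)\cong\Ends\big(R\setminus\{\beta(p)\}\big)\cong\Ends(\mathbb{R})$, a two-point set; equivalently $Q$ is two-ended. (Alternatively one may cite the classical fact that a group acting properly and cocompactly on the line is two-ended.)

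Finally, since $p$ is a cut point vertex adjacent to $v$, Corollary \ref{corollary:theorem :non-trivial splitting} yields a non-trivial splitting of $G$ relative to $\bbp$ over an infinite subgroup $G_e\le P\cap H=Q$. As $Q$ is two-ended, its infinite subgroup $G_e$ has finite index in $Q$ and is therefore itself two-ended, and this edge sits inside the maximal peripheral splitting. This is the desired non-trivial peripheral splitting of $G$ over a two-ended subgroup. I expect the main obstacle to be the verification that the circle block genuinely carries a bounded parabolic point and the clean passage from the cocompact action on the punctured circle to two-endedness of $Q$: the former rests on $R$ being a proper subcontinuum attached along a cut point, and the latter on the ends machinery of Theorem \ref{theorem:ends theorem}.
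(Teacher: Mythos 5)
Your proof is correct, but it takes a genuinely different route at the key step. The paper gets two-endedness of the cut point stabilizer structurally, via Tukia's theorem on convergence groups acting on the circle (\cite{T88}, Theorem 6B): since $H$ acts on $\compbndry\cong S^1$ as a geometrically finite convergence group (Theorem \ref{theorem:treelike structure}(4)), $H$ is virtually a surface group whose peripheral subgroups are the boundary subgroups of the surface, hence two-ended; the paper then observes, as you do, that since $\relbndryG$ is not a circle there is a global cut point of $\relbndryG$ lying on the block, and invokes Corollary \ref{corollary:theorem :non-trivial splitting}. You avoid Tukia's theorem entirely and instead use the bounded parabolic dynamics directly: the stabilizer $Q=H\cap P$ of the parabolic cut point acts properly and cocompactly on the punctured circle $R\setminus\{\beta(p)\}\cong\bbr$, so $Q$ is two-ended, either by the classical fact about proper cocompact actions on the line or via Theorem \ref{theorem:ends theorem}. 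Both arguments then funnel through the same Corollary \ref{corollary:theorem :non-trivial splitting}, and your final observation that an infinite subgroup of a two-ended group is two-ended closes the gap between ``infinite subgroup $G_e$ of $P\cap H$'' and the stated conclusion (a point the paper leaves implicit). What each buys: your route is more elementary and self-contained given the paper's own ends machinery, while the paper's route imports a deep external theorem but yields more structure ($H$ virtually Fuchsian), which is what connects this lemma to the relatively-QH phenomena mentioned elsewhere. Two spots in your write-up deserve a line of tightening, though neither is a real gap: (i) to see that $\beta(p)$ is a bounded parabolic point of the $H$-action on $R$ (equivalently, that $Q$ is infinite and belongs to $\bbq$), say explicitly that if $\beta(p)$ were a conical limit point of $H\curvearrowright R$ it would be conical for $G\curvearrowright\relbndryG$ by Proposition \ref{proposition:Hruska Thm 9.1}, contradicting Proposition \ref{prop:conical vs parabolic} since $\beta(p)$ is parabolic for $G$; as written, ``parabolic hence not conical'' blurs which action is meant; (ii) the existence of a cut point vertex $p\in\sP$ adjacent to $v$ is asserted via ``the geodesic leaving $R$,'' but the clean justification is that if every peripheral neighbor of $v$ were a leaf of $T$, then $\Psi(T)=B(v)=R$, contradicting $\Psi(T)=\relbndryG\neq R$ -- though to be fair, the paper asserts the analogous fact with no more justification than you give.
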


\begin{proof} If $\compbndry$ is a circle, then Theorem \ref{theorem:treelike structure}(4) and a result of Tukia (see \cite{T88} Theorem 6B) imply that $H$ is virtually a surface group, and the peripheral subgroups are the boundary subgroups of that surface. Because $\relbndryG$ is not a circle, there is a global cut point $p$ of $\relbndryG$ contained in $\blockbndry$ such that $\stab_H(p)$ is a $2$-ended subgroup. As the boundary is connected, Corollary \ref{corollary:theorem :non-trivial splitting} implies that $G$ must split over an infinite subgroup of $\stab_H(p)$.\end{proof}

\begin{lemma}
\label{lemma: quotient of loxs}
Let $\{a,b\}$ be a translate inseparable cut pair in $\relbndryG$ and $Q$ the quotient space obtained by identifying $ga$ to $gb$ for every $g\in G$. Then $Q$ contains a cut point for each pair in $\Orb_G\big(\{a,b\}\big)$.
\end{lemma}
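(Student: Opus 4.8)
The plan is to realize $Q$ as a $G$-invariant decomposition space and reduce, by equivariance, to showing that the single point $[a]=\pi(a)=\pi(b)$ is a cut point of $Q$. Let $\pi\colon\relbndryG\to Q$ be the quotient map associated to the decomposition $\D$ whose non-singleton members are exactly the translates $\{ga,gb\}$, $g\in G$. Provided distinct translates of $\{a,b\}$ are disjoint (which I verify separately, see the last paragraph), $\D$ is a genuine $G$-invariant partition; hence $G$ acts on $Q$ by homeomorphisms and $\pi$ is $G$-equivariant, so $g\cdot[a]=[ga]$. Consequently, once $[a]$ is shown to be a cut point, each $[ga]$ is a cut point as the image of $[a]$ under the homeomorphism $g$, and these images account for every pair in $\Orb_G\big(\{a,b\}\big)$.

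The key step is the following consequence of translate inseparability: every translate $\{ga,gb\}$ that is disjoint from $\{a,b\}$ lies inside a single connected component of $\relbndryG\setminus\{a,b\}$. I would argue by contradiction. If $ga$ and $gb$ lay in two different components of $\relbndryG\setminus\{a,b\}$, then applying the homeomorphism $g^{-1}$ shows that $a$ and $b$ lie in two different components of $\relbndryG\setminus\{g^{-1}a,g^{-1}b\}$; that is, the translate $\{g^{-1}a,g^{-1}b\}\in\Orb_G\big(\{a,b\}\big)$ separates $a$ from $b$, contradicting the translate inseparability of $\{a,b\}$.

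With this in hand the cut point follows from elementary quotient topology. Let $U_1,U_2,\dots$ denote the components of $\relbndryG\setminus\{a,b\}$; since $\{a,b\}$ is a cut pair there are at least two of them, and each is open (components of the complement of a closed set in a locally connected space are open). By the key step no member of $\D$ meets two distinct $U_i$, so each $U_i$ is $\D$-saturated; therefore $\pi^{-1}\big(\pi(U_i)\big)=U_i$ is open, which means each $\pi(U_i)$ is open in $Q$. The sets $\pi(U_i)$ are pairwise disjoint (again by saturation), and their union is exactly $Q\setminus\{[a]\}$ because $\pi^{-1}([a])=\{a,b\}$. Thus $Q\setminus\{[a]\}$ is a disjoint union of at least two non-empty open sets, hence disconnected, and $[a]$ is a cut point of $Q$.

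The main obstacle is the key step, which is precisely where translate inseparability is used; the remainder is formal once one knows the components of $\relbndryG\setminus\{a,b\}$ are saturated. I would also take care at the outset to record that distinct translates of $\{a,b\}$ are disjoint, so that $\D$ is a partition and $\pi^{-1}([a])=\{a,b\}$. For the loxodromic translate inseparable cut pairs produced by Theorem \ref{theorem:collecting conical} this is forced by the valence and $\sim$-class structure of Section \ref{subsec:Cut Point Structures In Metric Spaces}: a point lying in two distinct such cut pairs would be identified simultaneously with two partners of equal valence, violating that a non-necklace $\sim$-class (respectively a $\approx$-class, by Lemma \ref{lemma:*3.1} and Lemma \ref{lemma:*3.8 doubleTilde lemma}) is a single pair.
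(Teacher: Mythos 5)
Your proof is correct and takes essentially the same route as the paper: translate inseparability forces every orbit pair to lie in a single component of the complement of any pair $\{c,d\}\in\Orb_G\big(\{a,b\}\big)$, so those components are saturated and map to the disjoint pieces of $Q\setminus\big\{q(c)=q(d)\big\}$, with your equivariant reduction to the single pair $\{a,b\}$ being only a cosmetic repackaging of the paper's direct treatment of each orbit pair. Your extra care in checking that distinct translates of $\{a,b\}$ are pairwise disjoint (so that the decomposition is a genuine partition into pairs) addresses a point the paper's proof silently assumes; in the intended application it also follows immediately from convergence-group dynamics, since the fixed pairs of two loxodromic elements are either equal or disjoint.
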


\begin{proof} Let $M=\relbndryG$ and assume $\{a,b\}$ is a translate inseparable cut pair. As the action of $G$ is by homeomorphisms $\{ga,gb\}$ is inseparable for every $g\in G$.

Define $q\colon\relbndryG\rightarrow Q$ to be the quotient map described in the statement of the lemma. Let $\C$ be the collection of components of $\relbndryG\setminus\{c,d\}$ for some pair $\{c,d\}$ in $\Orb_G\big(\{a,b\}\big)$. Because every pair in $\Orb_G(\{a,b\})$ is inseparable, there does not exist a pair $\{x,y\}\in\Orb_G(\{a,b\})$ which meets two elements of $\C$. Thus if $C_1$ and $C_2$ are distinct components of $\relbndry\setminus\{c,d\}$, we have that $q(C_1)$ and $q(C_2)$ are disjoint connected components of $Q\setminus\big\{q(c)=q(d)\big\}$.
\end{proof}

By Corollary 1.7 of \cite{Osin04} we have:

\begin{lemma}
\label{lemma:Osin lemma}
Let $(G,\bbp)$ be relatively hyperbolic. Assume that $g$ is a loxodromic element contained in a maximal $2$-ended subgroup $H$. By adding $H$ and all of its conjugates to $\bbp$, we may extend $\bbp$ to a new peripheral structure $\bbp'$ so that $(G,\bbp')$ is relatively hyperbolic.
\end{lemma}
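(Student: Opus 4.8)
The plan is to recognize this lemma as a direct application of Osin's Corollary 1.7 in \cite{Osin04}, so the real work lies in checking that $H$ meets the hypotheses of that corollary. First I would record that, since $g$ is loxodromic for the action of $G$ on $\relbndryG$, it has infinite order and fixes exactly two points of $\relbndryG$, and in particular is not conjugate into any peripheral subgroup $P\in\bbp$ (a loxodromic element is not parabolic, cf.\ Proposition \ref{prop:conical vs parabolic}); in Osin's terminology $g$ is a hyperbolic element. The maximal $2$-ended subgroup $H$ containing $g$ is then forced to be the maximal elementary subgroup $E(g)$, namely the stabilizer of the unordered pair of fixed points $\{g^{+},g^{-}\}\subset\relbndryG$ of $g$.

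Second, I would verify the structural properties of $H$ that the corollary requires. The subgroup $E(g)$ is almost malnormal in $G$: two distinct conjugates intersect in a finite subgroup, because a pair of loxodromic elements sharing a common fixed-point pair already lies in a common $2$-ended subgroup, forcing the conjugating element to normalize $H$. Together with the fact that $H$ is itself $2$-ended (hence infinite and finitely generated), this is precisely the situation in which Corollary 1.7 of \cite{Osin04} permits one to adjoin the conjugacy class of $H$ to the peripheral structure while retaining relative hyperbolicity.

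Third, I would assemble the enlarged structure. Set $\bbp'=\bbp\cup\Orb_G(H)$, where $\Orb_G(H)$ is the conjugacy class of $H$; this collection is closed under conjugation by construction. Each member of $\Orb_G(H)$ is infinite, and since $g$ is loxodromic rather than parabolic, $H$ is not conjugate into any $P\in\bbp$, so no redundancy is introduced and the members of $\bbp'$ form a valid collection of maximal parabolic subgroups for the new structure. Osin's Corollary 1.7 then yields that $(G,\bbp')$ is relatively hyperbolic, as claimed.

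The main obstacle here is not a genuine mathematical difficulty but one of translation between conventions: the present paper phrases relative hyperbolicity through the Bowditch boundary and convergence-group actions, whereas \cite{Osin04} works with relative isoperimetric functions and hyperbolically embedded subgroups. The hardest step is therefore to confirm that the maximal $2$-ended subgroup $H$ in our sense coincides with Osin's maximal elementary subgroup $E(g)$ and inherits the almost malnormality and finiteness conditions needed to invoke the corollary verbatim; once that identification is made, the conclusion is immediate.
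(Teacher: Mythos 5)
Your proposal is correct and matches the paper exactly: the paper proves this lemma by simply citing Corollary 1.7 of \cite{Osin04}, with no further argument. Your additional verification that the maximal $2$-ended subgroup $H$ containing $g$ coincides with Osin's maximal elementary subgroup $E(g)$ (the stabilizer of the fixed pair $\{g^{+},g^{-}\}$) is the correct translation between the two frameworks and only makes explicit what the paper leaves implicit.
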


Let $(G,\bbp)$ and $(G,\bbp')$ be as in Lemma \ref{lemma:Osin lemma}. We say that $(G,\bbp')$ is the {\it loxodromic extension of $(G,\bbp)$ by $g$}.

\begin{lemma}
\label{lemma:Q homeo to extended bndry}
Assume $\relbndryG$ contains an inseparable loxodromic cut pair $\{a,b\}$ stabilized by a loxodromic element $g$, and let $Q$ the quotient space obtained by identifying $g'a$ to $g'b$ for every $g'\in G$. If $(G,\bbp')$ is the loxodromic extension of $(G,\bbp)$ by $g$, then $Q$ is equivariantly homeomorphic to $\partial(G,\bbp')$.
\end{lemma}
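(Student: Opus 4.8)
The plan is to realize $Q$ as a geometrically finite convergence boundary for the enlarged peripheral structure $\bbp'$ and then appeal to the uniqueness of Bowditch boundaries. Write $M=\relbndryG$, let $H=\Stab_G(\{a,b\})$ be the maximal two-ended subgroup containing $g$ supplied by Lemma \ref{lemma:Osin lemma}, and let $q\colon M\to Q$ be the quotient map collapsing each pair in $\Orb_G(\{a,b\})$. First I would check that $Q$ is a compact, perfect, metrizable space on which $G$ acts by homeomorphisms. The decomposition $\D$ of $M$ whose non-singleton elements are the translate pairs is $G$-invariant, so the $G$-action descends; and exactly as in Lemma \ref{lemma:necklaces are locally finite} the geodesic lines $\Orb_G\big(\Jn(\{a,b\})\big)$ form a locally finite family in the hyperbolic model $X$, which forces the pairs $\Orb_G(\{a,b\})$ to be a null family in $M$. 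Proposition \ref{proposition:null fam uppersemi} then makes $\D$ upper semi-continuous, so $q$ is a proper closed surjection and $Q$ is compact and metrizable; since $a$ is a conical limit point it is non-isolated, so $Q$ is perfect.

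Next I would pin down the dynamics. Because distinct translate pairs are inseparable, Lemma \ref{lemma: quotient of loxs} shows that distinct pairs map to distinct points of $Q$; consequently $\Stab_G\big(q(a)\big)=H$, and in general the stabilizer of each collapse point $q(g'a)$ is the two-ended conjugate $g'Hg'^{-1}$. The convergence property transfers through $q$ with no difficulty: given distinct $g_n$ with $g_nx\to\alpha$ and $g_n^{-1}x\to\beta$ locally uniformly on $M$, any compact $C\subseteq Q\setminus\{q(\beta)\}$ has compact preimage $q^{-1}(C)\subseteq M\setminus\{\beta\}$ by properness of $q$, on which $g_n\to\alpha$ uniformly, whence $g_n\to q(\alpha)$ uniformly on $C$. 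Thus $G$ acts on $Q$ as a convergence group. Each collapse point $q(g'a)$ is now fixed by the two-ended group $g'Hg'^{-1}$, hence is a parabolic point of the $Q$-action, while the original bounded parabolic points of $\bbp$ are untouched by $q$. So the maximal parabolic subgroups of $G\curvearrowright Q$ are precisely the members of $\bbp'$.

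To conclude I would invoke the converse of Bowditch's theorem as recorded in Section \ref{section:Preliminaries}: a geometrically finite convergence action on a perfect compact metrizable space is equivariantly homeomorphic to the Bowditch boundary for its maximal parabolic structure (Yaman \cite{Yam04}, with independence of the boundary from the cusp-uniform model \cite{Bow1}). Applied to $G\curvearrowright Q$ with parabolic structure $\bbp'$, this produces the desired equivariant homeomorphism $Q\cong\partial(G,\bbp')$.

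The point that is not routine — and which I expect to be the main obstacle — is verifying full geometric finiteness of $G\curvearrowright Q$, in particular that each new parabolic point $q(g'a)$ is \emph{bounded} parabolic, i.e.\ that $g'Hg'^{-1}$ acts cocompactly on $Q\setminus\{q(g'a)\}$; one must also confirm that the conical limit points of $M$ that survive as singletons remain conical in $Q$. Without these, Yaman's theorem does not apply. The cleanest way I would secure them is to build the cusp-uniform model for $(G,\bbp')$ directly: attach a combinatorial horoball to each line of the locally finite family $\Orb_G\big(\Jn(\{a,b\})\big)$ to form a hyperbolic space $X'$ on which $G$ acts cusp-uniformly with new horoball stabilizers exactly the conjugates of $H$ (this is the geometric incarnation of Lemma \ref{lemma:Osin lemma}, and cocompactness of $H$ on the new horosphere is built into the construction, compare Lemmas \ref{lemma:cocompact on horoball} and \ref{lemma:horoball based in nu}). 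By Proposition \ref{proposition:cusp uniform is geom fin} the action on $\partial X'=\partial(G,\bbp')$ is geometrically finite, so every point is conical or bounded parabolic automatically; and since attaching a horoball to a line identifies its two endpoints with the single ideal point at the horoball's tip, $\partial X'$ is canonically $M$ with the pairs $\Orb_G(\{a,b\})$ collapsed, namely $Q$. This simultaneously supplies the missing bounded-parabolicity and yields the homeomorphism directly, so the uniqueness theorem can be bypassed if one prefers the geometric route.
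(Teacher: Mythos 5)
A point of comparison first: the paper does not actually prove this lemma --- immediately after the statement it records that the result is due to Dahmani \cite{Dahm03} when $\langle g\rangle$ is a maximal $2$-ended subgroup and follows from Lemma 4.16 of \cite{Y14} in general. So your proposal should be judged as a reconstruction of those citations, and its first half is sound. The descent of the action, compactness and metrizability of $Q$ via the null-family/upper semi-continuity argument (Proposition \ref{proposition:null fam uppersemi}), and the transfer of the convergence property through the proper map $q$ are all correct, and your identification of $\Stab_G\bigl(q(g'a)\bigr)$ with $g'Hg'^{-1}$ is right. One repair is needed even here: you need distinct translates of $\{a,b\}$ to be pairwise \emph{disjoint}, not merely to have distinct images in $Q$ (which is what Lemma \ref{lemma: quotient of loxs} addresses); otherwise the fibers of $q$ are not the pairs your decomposition $\D$ assumes. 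This is true, but for a dynamical reason you do not state: if $ha=a$ but $hb\neq b$, then $hgh^{-1}$ is a loxodromic sharing exactly one fixed point with $g$, which is impossible in a convergence group, so fixed pairs in the orbit are equal or disjoint.

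The genuine gap is exactly where you predicted it, and your proposed fix does not close it: everything is asserted at the crux. To apply Yaman's theorem you must show the collapse points are \emph{bounded} parabolic and the surviving points conical, and for the latter the pointwise transfer is subtler than it looks --- if $x$ is conical in $M$ with witness pair $(\alpha,\beta)$, nothing prevents $\{\alpha,\beta\}$ from lying in $\Orb_G\bigl(\{a,b\}\bigr)$, in which case $q(\alpha)=q(\beta)$ and the transferred witness is degenerate. Your geometric remedy --- attaching combinatorial horoballs along $\Orb_G\bigl(\Jn(\{a,b\})\bigr)$ --- would indeed deliver bounded parabolicity and conicality simultaneously, but the two claims it rests on are unproved and are of essentially the same depth as the cited results: (i) hyperbolicity of $X'$ and cusp uniformity of the $G$-action are not automatic from local finiteness of the family; they require uniformly bounded coarse intersection of distinct translates of the axis (almost malnormality of $H$), which is the geometric content behind Lemma \ref{lemma:Osin lemma}, together with a combination theorem in the style of \cite{GM08}; and (ii) the assertion that $\bndry X'$ is ``canonically'' $M$ with the pairs collapsed needs an actual argument --- the inclusion $X\hookrightarrow X'$ is $1$-Lipschitz, but boundary extensions of such maps are not formal, so one must prove via bounded-penetration and fellow-traveling estimates that it induces a continuous equivariant surjection $\relbndryG\rightarrow\bndry X'$ which is a quotient map whose only nontrivial fibers are the endpoint pairs. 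Those two items are precisely what Dahmani and Yang prove; as written, your argument reduces the lemma to claims it then takes on faith.
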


Lemma \ref{lemma:Q homeo to extended bndry} was proved by Dahmani \cite{Dahm03} in the case where $\langle g\rangle$ is a maximal $2$-ended subgroup and follows from Lemma 4.16 of \cite{Y14} in the general case.

\begin{lemma}
\label{lemma:cut pair implies splitting}
Assume $\relbndryG$ contains a translate inseparable loxodromic cut pair. Then $G$ splits relative to $\bbp$ over a two-ended group.
\end{lemma}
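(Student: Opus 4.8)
The plan is to reduce to the already-handled case of a \emph{global} cut point by enlarging the peripheral structure so that the loxodromic cut pair gets collapsed to a parabolic point, and then to invoke Bowditch's theorem that a global cut point forces a peripheral splitting. First I would fix a $G$-translate inseparable loxodromic cut pair $\{a,b\}$. Since it is loxodromic it is stabilized by a loxodromic element $g$, whose two fixed points are $a$ and $b$; standard relatively hyperbolic group theory provides a unique maximal $2$-ended subgroup containing $g$, and I would take $H=\Stab_G(\{a,b\})$ to be this subgroup.

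Next I would apply Lemma \ref{lemma:Osin lemma} to form the loxodromic extension $(G,\bbp')$ of $(G,\bbp)$ by $g$, adjoining $H$ and all its conjugates to the peripheral structure. Let $Q$ be the quotient of $\relbndryG$ obtained by collapsing each translate $\{g'a,g'b\}$ to a point, with quotient map $q$. By Lemma \ref{lemma:Q homeo to extended bndry}, $Q$ is equivariantly homeomorphic to $\partial(G,\bbp')$, and by Lemma \ref{lemma: quotient of loxs} the point $q(a)=q(b)$ is a cut point of $Q$. Hence $\partial(G,\bbp')$ has a global cut point.

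To apply Theorem \ref{theorem:global cut point implies splitting} to $(G,\bbp')$ I would check its hypotheses: the adjoined peripheral subgroups are $2$-ended, hence finitely presented, two-ended, and free of infinite torsion subgroups, i.e. tame, while the original peripherals remain tame; and $Q$, being a quotient of the connected space $\relbndryG$, is connected. The theorem then produces a non-trivial peripheral splitting of $(G,\bbp')$. The cut point $q(a)=q(b)$ is a global cut point of $\partial(G,\bbp')$, hence a parabolic (peripheral) point, with stabilizer exactly $H$. Applying Corollary \ref{corollary:theorem :non-trivial splitting} with $P=H$, I would conclude that $G$ splits non-trivially, relative to $\bbp'$, over an infinite subgroup $G_e$ of $H\cap\Stab_G(v)$ for a component vertex $v$ adjacent to the cut point. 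As an infinite subgroup of the $2$-ended group $H$, this edge group $G_e$ is itself $2$-ended. Finally, since $\bbp\subseteq\bbp'$, a splitting relative to $\bbp'$ is in particular relative to $\bbp$, yielding the desired splitting of $G$ over a two-ended subgroup relative to $\bbp$.

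The main obstacle I anticipate is the bookkeeping across the peripheral extension: verifying that $(G,\bbp')$ genuinely retains tame peripherals and connected boundary, and—more delicately—pinning down that the global cut point produced has $2$-ended stabilizer $H$, so that the resulting edge group is $2$-ended rather than merely infinite. The equivariant homeomorphism $Q\cong\partial(G,\bbp')$ supplied by Lemma \ref{lemma:Q homeo to extended bndry} is precisely the bridge that makes this identification of the cut-point stabilizer possible, so the argument hinges on that step.
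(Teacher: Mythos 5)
Your proposal is correct and follows essentially the same route as the paper's own proof: pass to the loxodromic extension $(G,\bbp')$ via Lemma \ref{lemma:Osin lemma}, identify the quotient $Q$ with $\partial(G,\bbp')$ by Lemma \ref{lemma:Q homeo to extended bndry}, produce a global cut point with Lemma \ref{lemma: quotient of loxs}, and extract a splitting over an infinite (hence $2$-ended) subgroup of $H$ from Corollary \ref{corollary:theorem :non-trivial splitting}. Your extra bookkeeping (tameness of the enlarged peripheral structure, connectedness of $Q$, and the observation that a splitting relative to $\bbp'$ is relative to $\bbp$) simply makes explicit what the paper leaves implicit.
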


\begin{proof} Assume the hypothesis. Then there is a loxodromic group element $g\in G$  which stabilizes the loxodromic cut pair, and  such that $\langle g \rangle$ is contained in a maximal $2$-ended subgroup $H$. Let $(G,\bbp')$ be the loxodromic extension of $(G,\bbp)$ by $g$. By Lemma \ref{lemma: quotient of loxs} and Lemma \ref{lemma:Q homeo to extended bndry} there is a cut point $\bndry(G,\bbp')$ stabilized by $H$, which by Corollary \ref{corollary:theorem :non-trivial splitting} implies that $(G,\bbp')$ has a non-trivial peripheral splitting over a subgroup of $H$. As $\partial(G,\bbp')$ is connected $G$ does not split over a finite group relative to $\bbp$. Since every infinite subgroup of $H$ is $2$-ended, we are done. \end{proof}

Lastly, to prove Theorem \ref{theorem:Splitting Theorem} we will use the following lemma taken from the first paragraph in the proof of Theorem 7.8 of \cite{GM16}. Lemma \ref{lemma: Groves Manning} below is more general than what is stated in \cite{GM16}, but follows directly from Groves and Manning's proof, which we include for completeness. The proof of Lemma \ref{lemma: Groves Manning} uses the cusped space for $(G,\bbp),$ and we refer the reader to \cite{GM08} Section 3 for the construction of the cusped space. 

\begin{lemma}
\label{lemma: Groves Manning}
Let $(G,\bbp)$ be relatively hyperbolic with tame peripherals. Assume that $\relbndry$ is connected and not homeomorphic to a circle. if $G$ splits over a non-parabolic $2$-ended subgroup relative to $\bbp$, then $\relbndryG$ contains a non-parabolic local cut point.
\end{lemma}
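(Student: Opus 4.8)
The plan is to use the cusped space for $(G,\bbp)$ together with the $2$-ended splitting to produce a bi-infinite geodesic in the cusped space whose two endpoints in $\relbndryG$ form a cut pair, and then promote one of those endpoints to a non-parabolic local cut point. Suppose $G$ splits over a non-parabolic $2$-ended subgroup $C$ relative to $\bbp$. Let $T$ be the Bass–Serre tree of this splitting; since the splitting is relative to $\bbp$, each peripheral subgroup fixes a vertex of $T$, so $G$ acts on $T$ with $C$ stabilizing an edge $e$. Because $C$ is non-parabolic and $2$-ended, it contains a loxodromic element $g$ whose fixed pair $\{g^+,g^-\}\subset\relbndryG$ consists of conical limit points (by Proposition \ref{prop:conical vs parabolic} a loxodromic fixed point is not parabolic). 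The first step is to show that this pair $\{g^+,g^-\}$ separates $\relbndryG$.

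To see separation, I would work in the cusped space $X$ for $(G,\bbp)$ (as constructed in \cite{GM08} Section 3), on which $G$ acts properly and on whose boundary $\bndry X=\relbndryG$. The edge stabilizer $C$ coarsely stabilizes a bi-infinite geodesic line $\ell$ in $X$ whose endpoints at infinity are exactly $\{g^+,g^-\}$. The crucial point is that the splitting over $C$ means that $C$ (equivalently $\ell$) coarsely separates $X$: removing a neighborhood of $\ell$ disconnects $X$ into pieces corresponding to the two half-trees of $T\setminus e$, and each piece limits to a nonempty subset of $\relbndryG$. Passing to the boundary, this shows $\relbndryG\setminus\{g^+,g^-\}$ is disconnected, so $\{g^+,g^-\}$ is a cut pair. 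Because the two endpoints are loxodromic fixed points and hence conical, neither is a global cut point (global cut points are parabolic by the theorem of Bowditch quoted in Section \ref{subsec:relhypgrps}).

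The second step upgrades the cut pair to a local cut point at a single non-parabolic point. Since $\{g^+,g^-\}$ is a cut pair, $\relbndryG\setminus\{g^+,g^-\}$ has at least two components $U_1,U_2$. I claim $g^+$ is a local cut point: take a small neighborhood $V$ of $g^+$ meeting both $\overline{U_1}$ and $\overline{U_2}$ but not containing $g^-$. Points of $V\cap U_1$ and $V\cap U_2$ near $g^+$ cannot be joined by a connected subset of $V\setminus\{g^+\}$, because any such path would have to cross $\{g^+,g^-\}$, and $g^-\notin V$. This is precisely the neighborhood criterion for a local cut point recorded in Section \ref{subsec:Cut Point Structures In Metric Spaces}. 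Since $g^+$ is a conical (loxodromic) fixed point, it is non-parabolic, giving the desired non-parabolic local cut point.

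\textbf{The main obstacle} I expect is the separation argument in the first two paragraphs: making rigorous that the algebraic splitting of $G$ over $C$ forces the geometric line $\ell$ to coarsely separate the cusped space $X$, and that this coarse separation descends to a genuine topological separation of $\relbndryG$ by the two boundary points. One must rule out the degenerate possibility that one of the two half-tree sides limits only onto $\{g^+,g^-\}$ itself, which is where connectedness of $\relbndryG$ and the hypothesis that $\relbndryG$ is not a circle enter: if $\relbndryG$ were a circle the loxodromic pair would cut it into two arcs but each endpoint would have valence exactly $2$ and the global topology would be exceptional, so excluding the circle guarantees that at least one endpoint sits in a genuinely higher-complexity separation. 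Since this lemma is attributed to Groves–Manning (\cite{GM16}, proof of Theorem 7.8) and I am permitted to cite their cusped-space construction, I would structure the write-up to follow their argument, filling in the boundary-separation step with the neighborhood criterion from Section \ref{subsec:Cut Point Structures In Metric Spaces}.
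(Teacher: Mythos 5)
Your proposal follows essentially the same route as the paper: the paper's proof (quoting Groves--Manning) also realizes the cusped space $X(G,\bbp)$ as a tree of cusped spaces glued along the Bass--Serre tree of the splitting, concludes that the non-parabolic $2$-ended subgroup coarsely separates $X(G,\bbp)$, and identifies its two-point limit set as a pair of non-parabolic local cut points. Your extra step promoting the cut pair $\{g^+,g^-\}$ to a local cut point via the neighborhood criterion (using that neither point is a global cut point, since global cut points are parabolic) is a correct elaboration of what the paper leaves implicit.
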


\begin{proof}
Assume the hypotheses, and let $H$ be a non-parabolic $2$-ended subgroup over which $G$ splits relative to $\bbp$. Because $H$ is non-parabolic, $H$ quasi-isometrically embeds in the cusped space $X(G,\bbp)$. Since this splitting is relative to $\bbp$ the cusped space $X(G,\bbp)$ can be realized as a tree of cusped spaces glued together in the pattern of the Bass-Serre tree for the splitting over $H$. Thus $H$ coarsely separates $X(G,\bbp)$ into at least two components, and the limit set of $H$ is a pair of non-parabolic local cut points which separate $\relbndryG$.
\end{proof}

\begin{proof}[Proof of Theorem \ref{theorem:Splitting Theorem}]
By Lemma \ref{lemma: Groves Manning} if $G$ splits over a non-parabolic $2$-ended subgroup relative to $\bbp$, then $\relbndry$ contains a non-parabolic local cut point.


Now, assume that $x\in\relbndryG$ is a non-parabolic local cut point. By Theorem \ref{theorem:Collection thm} we know that $x$ is contained in either a translate inseparable loxodromic cut pair or a necklace. If $x$ is in a translate inseparable loxodromic cut pair we are done by Lemma \ref{lemma:cut pair implies splitting}.

Assume $x$ is in a necklace $\nu$. Then $\nu$ is either a circle or it is not. If $\nu$ is homeomorphic to $S^1$ we are done by Lemma \ref{lemma:component a circle}. If $\nu$ is not a circle, then $\nu$ contains a translate inseparable loxodromic cut pair by Lemma \ref{theorem:collecting conical}, and again we are done by Lemma \ref{lemma:cut pair implies splitting} and Corollary \ref{corollary:cut pair in block}.
\end{proof}

\section{Ends of generalized Peano continua admitting proper and cocompact group actions}
\label{section:Ends of GenPCont}

Let $G$ be a group with finite generating set $S$, and let $\caygs$ denote the Cayley graph of $(G,S)$. We define $\Ends(G)$ to be $\Ends(X)$ for any generalized Peano continuum $X$ on which $G$ acts properly and cocompactly. The goal of this section is to prove that $\Ends(G)$ is well defined (see Theorem \ref{theorem:ends theorem}). In particular, we show that $\Ends\big(\caygs\big)$ is homeomorphic to $\Ends(X)$ for all generalized Peano continua $X$. A special case of this result is well known for groups acting on CW-complexes (see for example \cite{Geo}). Theorem \ref{theorem:ends theorem} provides a generalization to generalized Peano continua, a classes of spaces which need not be CW-complexes. We remark that the techniques used to prove Theorem \ref{theorem:ends theorem} differ from those found in \cite{Geo}.

\medskip
One consequence of Theorem \ref{theorem:ends theorem} for the boundary $\relbndryG$ of a relatively hyperbolic group $G$ is that if the peripherals are one-ended then a parabolic point can be a local cut point if and only if it is a global cut point (see Corollary \ref{corrolary: p not local cut point}). This particular fact will be required for the proof of Theorem \ref{theorem: Classification thm}.

Let $G$ be a finitely generated discrete group acting properly and cocompactly on a generalized Peano continuum $X$. We will use Proposition \ref{proposition:Craig's prop} to prove Theorem \ref{theorem:ends theorem}. To begin the proof first construct a proper map $\Phi \colon\caygs\rightarrow X$ from the Cayley graph of $G$ to $S$ in the following way:

Let $S$ be a finite generating set for the group $G$. Fix a base point $x_0$ in the fundamental domain of the action of $G$ on $X$, and for every vertex $v_g$ in $\caygs$ define $\Phi(v_g)=g.x_0$. For every $s\in S\cup S^{-1}$ fix a path $p_s$ in $X$ with $p_s(0)=x_0$ and $p_s(1)=s.x_0$. We will denote $P(S)$ the collection of paths found in this way, i.e. $P(S)=\big\{p_s|s\in S\big\}$. Now, for any edge $e_s\in\caygs$ with end points $v_g$ and $v_{gs}$ define $\Phi(e_s)$ to be $gp_s$. Notice that $\Phi$ is well defined because $gp_s$ is a path with end points $gx_0$ and $gsx_0$ for every $g$ and $s$. Also, note that by the pasting lemma $\Phi$ is continuous.

\medskip
\begin{lemma}
The map $\Phi \colon\caygs\rightarrow X$ is proper for all $S$.
\end{lemma}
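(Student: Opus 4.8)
The plan is to show that the preimage under $\Phi$ of any compact subset $C \subseteq X$ is compact. Since $\caygs$ is a locally finite graph (because $S$ is finite), a subset is compact if and only if it is closed and meets only finitely many cells. As $\Phi$ is continuous, $\Phi^{-1}(C)$ is closed, so the crux is to verify that $\Phi^{-1}(C)$ meets only finitely many vertices and edges of the Cayley graph. Because each edge $e_s$ is sent to a translate $g p_s$ of one of the finitely many fixed paths, and each such path has compact image, it suffices to control the vertices: if only finitely many vertices $v_g$ have $gx_0$ close to $C$, then only finitely many edges can have their image meet $C$, since an edge image $gp_s$ lies within a uniformly bounded region around the two vertex images $gx_0$ and $gsx_0$.

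The key mechanism is the properness of the $G$-action on $X$. First I would enlarge $C$ to a compact set $C'$ that contains $C$ together with all the path images relevant to edges whose endpoints could map near $C$; concretely, let $N = \bigcup_{s} p_s([0,1])$ be the (compact) union of the finitely many chosen paths, and set $C' = C \cup \Orb$-controlled enlargement. More precisely, an edge $e_s$ with endpoint $v_g$ satisfies $\Phi(e_s) = g p_s \subseteq g N$, so $\Phi(e_s) \cap C \neq \emptyset$ forces $gN \cap C \neq \emptyset$, i.e. $g \in \set{h \in G}{hN \cap C \neq \emptyset}$. Thus both the vertices and edges meeting $\Phi^{-1}(C)$ are indexed by group elements $g$ with $g(N \cup \{x_0\}) \cap C \neq \emptyset$. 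Setting $K = N \cup \{x_0\}$, which is compact, this is exactly the set $\set{g \in G}{gK \cap C \neq \emptyset}$.

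The decisive step is then that this set of group elements is finite. This is precisely the statement that a proper action, in the sense that $\set{g \in G}{gK \cap C \neq \emptyset}$ is finite for all compact $K, C$, controls the relevant indexing set; this is the definition (or an immediate consequence of the definition) of a proper action on a locally compact space. Since $K$ and $C$ are both compact and the action is proper, only finitely many $g \in G$ satisfy $gK \cap C \neq \emptyset$. Consequently only finitely many vertices and only finitely many edges of $\caygs$ can have image meeting $C$, so $\Phi^{-1}(C)$ is contained in a finite subcomplex. A closed subset of a finite subcomplex of a locally finite graph is compact, so $\Phi^{-1}(C)$ is compact, proving that $\Phi$ is proper.

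The main obstacle, and the only genuinely substantive point, is making the reduction from edges to a single compact test set $K$ airtight: one must confirm that every edge image $\Phi(e_s) = g p_s$ is contained in the translate $gK$ for the fixed compact $K = \bigcup_s p_s([0,1]) \cup \{x_0\}$, so that meeting $C$ genuinely forces $g$ into the finite set supplied by properness. Once this containment is observed the argument is a direct appeal to properness of the action together with local finiteness of the Cayley graph; there is no hyperbolicity or boundary theory involved here, and the finiteness of $S$ (hence of the collection $P(S)$) is what keeps $K$ compact.
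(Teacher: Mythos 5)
Your proof is correct and follows essentially the same route as the paper's: both reduce properness of $\Phi$ to properness of the $G$-action on $X$ applied to a compact set built from the basepoint and the finitely many chosen paths $p_s$. The only difference is cosmetic — the paper handles vertices and edges separately (pigeonholing infinitely many edges onto a single label $s$ and testing $p_s\cup A$ against the action), while you package everything at once into the single compact set $K=\bigcup_s p_s([0,1])\cup\{x_0\}$, which is a slightly cleaner organization of the same argument.
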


\medskip
\begin{proof}  Let $A\subseteq X$ be compact. As $X$ is Hausdorff $A$ is closed, therefore $\Phi^{-1}(A)$ is closed.
We show that $\Phi^{-1}(A)$ intersects only finitely many vertices and edges. Assume that $\Phi^{-1}(A)$ meets infinitely vertices. This implies that $A$ contains $g_nx_0$ for infinitely many $g_n\in G$, contradicting properness of the action of $G$ on $X$.

 Now assume that infinitely many edges meet $\Phi^{-1}(A)$. As there are finitely many orbits of edges there must be infinitely many edges with the same label, say $s$, meeting $\Phi^{-1}(A)$. Thus we may find an infinite sequence of group elements, $(g_i)_{i=1}^{\infty}$ such that $g_ip_s\cap A\neq \emptyset$ for every $i$. Set $C=p_s\cup A$, then $C$ is compact and $C\cap g_iC\neq\emptyset$ for every $i$, again a contradiction.\end{proof}

\medskip
Define $\Phi^*\colon \Ends\big(\caygs\big)\rightarrow \Ends(X)$ be the ends map induced by $\Phi$.

\begin{lemma}
\label{lemma:ends surjection}
$\Phi^*$ is a 
surjection for all $S$.
\end{lemma}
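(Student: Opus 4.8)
The plan is to use the ladder-class description of ends from Proposition \ref{prop:ladder prop} together with the naturality in Lemma \ref{lemma: ladder equivalence} to convert surjectivity into a lifting problem. Given an arbitrary end $E\in\Ends(X)$, I would represent it by a proper ray $\alpha\colon[0,\infty)\to X$ with $\hat\alpha(\infty)=E$. Since $\Phi^*\big(\varphi_1([\gamma])\big)=\varphi_2\big([\Phi\gamma]\big)$ for every proper ray $\gamma$ in $\caygs$, it suffices to construct a single proper ray $\gamma$ in $\caygs$ whose image $\Phi\gamma$ is ladder equivalent to $\alpha$; then $\varphi_2([\Phi\gamma])=\varphi_2([\alpha])=E$, exhibiting $E$ in the image of $\Phi^*$. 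Thus the whole problem reduces to: shadow the proper ray $\alpha$ by an edge path in the Cayley graph.

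To build $\gamma$, I would first use cocompactness and local path-connectedness of the generalized Peano continuum $X$ to fix a path-connected, relatively compact open set $U$ with $x_0\in U$ and $GU=X$ (cover a compact fundamental set by relatively compact path-connected open sets and join them by finitely many arcs). Properness of the action makes $F=\bigset{f\in G}{U\cap fU\neq\emptyset}$ finite. Using a Lebesgue number for the open cover $\{\alpha^{-1}(gU)\}_{g\in G}$ of each interval $[n,n+1]$, I would choose breakpoints $0=s_0<s_1<\cdots\to\infty$ and elements $g_i\in G$ with $\alpha([s_i,s_{i+1}])\subseteq g_iU$. Because $\alpha(s_{i+1})\in g_iU\cap g_{i+1}U$, consecutive labels satisfy $g_i^{-1}g_{i+1}\in F$, so $v_{g_i}$ and $v_{g_{i+1}}$ are joined in $\caygs$ by an edge path of length at most $L=\max_{f\in F}|f|_S$. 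Concatenating these gives the ray $\gamma$, and its properness follows from properness of $\alpha$: the $(g_i)$ must leave every finite subset of $G$, since otherwise $\alpha(s_i)$ would return infinitely often to a relatively compact set, forcing the $s_i$ to be bounded.

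Finally, to verify $\Phi\gamma\simeq\alpha$, I would exhibit an explicit proper map $h$ of the ladder $L_{[0,\infty)}$ whose two sides are reparametrizations of $\alpha$ and $\Phi\gamma$, with the natural-number point $i$ of each side sitting at $\alpha(s_i)$ and at $\Phi(v_{g_i})=g_ix_0$ respectively. The rung over $i$ is a path from $\alpha(s_i)$ to $g_ix_0$; such a path exists inside $g_iU$ because $\alpha(s_i)\in g_iU$, $g_ix_0\in g_iU$ (as $x_0\in U$), and $U$ is path-connected. Properness of $h$ is where the hypotheses do the real work: each rung lies in $g_i\overline{U}$ and each side segment in a fixed compact translate $g_i\Omega$, and properness of the action makes the family $\{g\Omega\}_{g\in G}$ locally finite, so every compact set meets only finitely many rungs and segments. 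I expect this last step, controlling the rungs so that the ladder map is proper, to be the main obstacle, precisely because $G$ acts only by homeomorphisms rather than isometries: the usual orbit-map quasi-isometry argument is unavailable, and one must instead exploit local path-connectedness of $X$ and the local finiteness coming from properness.
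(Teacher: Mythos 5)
Your argument is correct, but it takes a genuinely different route from the paper's. The paper works directly with the inverse-limit definition of $\Ends$: it chooses a point $x_i$ in each nested component $E_i$, lifts each to a group element using the translates of a compact connected set $K$ covering $X$, and then uses properness of $\Phi$ together with compactness of the Freudenthal compactification $\caygs\cup\Ends(\caygs)$ to extract a subsequence of vertices converging to an end of $\caygs$; path-connectedness of the $E_i$ is then invoked to thread a proper ray through these vertices, whose end maps to $E$ by construction. You instead route through the proper-ray characterization (Proposition \ref{prop:ladder prop}) and its naturality (Lemma \ref{lemma: ladder equivalence}), reducing surjectivity to shadowing a proper ray $\alpha$ representing $E$ by an edge path $\gamma$ in $\caygs$, and then verifying $\Phi\gamma\simeq\alpha$ via an explicit proper ladder map. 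Notably, your chain-of-translates shadowing (consecutive labels with $g_i^{-1}g_{i+1}\in F$) is essentially the technique the paper saves for the \emph{injectivity} half of the proof of Theorem \ref{theorem:ends theorem} (the chains with $g_jK\cap g_{j+1}K\neq\emptyset$); you have front-loaded it into the surjectivity lemma. Your route buys two things: it works uniformly for an arbitrary finite generating set $S$ (via $L=\max_{f\in F}|f|_S$, with no need for the special generating set $\{\,s\in G : K\cap sK\neq\emptyset\,\}$ that the paper fixes later), and it makes rigorous the two steps the paper leaves terse, namely how one actually obtains a proper ray through the chosen vertices and why its end maps to $E$ (your local-finiteness argument for the family $\{g\Omega\}_{g\in G}$ is exactly the missing verification that the ladder map is proper). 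The paper's route buys brevity, avoiding the ladder machinery at this stage. One small repair to your construction of $U$: a union of relatively compact path-connected open sets ``joined by finitely many arcs'' is not open; you must thicken each connecting arc by a finite chain of relatively compact path-connected open neighborhoods (available by local path-connectedness and local compactness), after which the rest of your argument goes through unchanged.
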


\medskip
\begin{proof}  Let $K\subset X$ be a compact connected set whose $G$-translates cover $X$, let $\{C_i\}_{i=1}^{\infty}$ be an exhaustion of $X$, and let $E=(E_1,E_2,E_3,...)\in \Ends(X)$.

Let $x_i\in E_i$ for some $i$. The translates of $K$ cover $X$, so there exists some $g_i\in G$ such that $x_i\in g_iK$. As $g_iK$ is compact there exists some $j\in\bbn$ such that $gK\subseteq C_j$. Let $x_j\in E_j\subset X\backslash C_j$ as before there exists some $g_j\in G$ such that $x_j\in g_jK$ and some $C_k$ containing $g_jK$. So we may pass to a subsequence $(E_{i_1},E_{i_2},E_{i_3},...)$ of $E$ corresponding to a sequence of distinct group elements $(g_{i_1},g_{i_2},g_{i_3},...)$ of $G$ found in the manner just described.

The sequence  $(g_{i_1},g_{i_2},g_{i_3},...)$ corresponds to an infinite sequence, $(v_{g_{i_j}})_{j=1}^{\infty}$, of distinct vertices in $\caygs$. Because the map $\Phi$ is proper, compactness of $\caygs\cup\Ends(\caygs)$ we have that some subsequence $(v_{g_{i_{j_k}}})_{k=1}^{\infty}$of $(v_{g_{i_j}})_{j=1}^{\infty}$ must converge to an end of $\caygs$. Using the path connectedness of $E_i$ we may find a proper ray, $r$, in $\caygs$ containing the vertices $(v_{g_{i_{j_k}}})_{k=1}^{\infty}$. The ray $r$ determines an end of $\caygs$, which by construction $\Phi$ maps to the end $E$ under $\Phi^*$. Thus $\Phi^*$ is surjective.\end{proof}

To complete the proof of Theorem \ref{theorem:ends theorem} we will need following well known result about ends of Cayley graphs \cite{SW79}, which is a special case of Theorem \ref{theorem:ends theorem}.

\begin{theorem}
\label{theorem: change gen set}
Assume $G$ is finitely generated, and let $S$ and $T$ be two finite generating sets for $G$. Then $\Ends\big(\caygs\big)$ is homeomorphic to $\Ends\big(\caygt\big)$.
\end{theorem}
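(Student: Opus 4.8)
The plan is to mimic the construction of $\Phi$ from the preceding pages, building mutually quasi-inverse proper $G$-maps between the two Cayley graphs and showing that the induced maps on ends are mutually inverse homeomorphisms. First I would note that, since $S$ and $T$ are finite, both $\caygs$ and $\caygt$ are connected locally finite graphs, hence generalized Peano continua (locally compact, locally connected, $\sigma$-compact, connected, and metrizable), and $G$ acts properly and cocompactly---indeed vertex-transitively---on each. Their end spaces are therefore closed subsets of the compact metrizable Freudenthal compactifications, hence themselves compact and metrizable.

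Next I would construct a map $\Psi\colon\caygs\to\caygt$ exactly as $\Phi$ was built. Because $T$ generates $G$, each $s\in S\cup S^{-1}$ can be expressed as a word in $T$, giving a path $\gamma_s$ in $\caygt$ from the identity vertex to $s$; I send each vertex $v_g$ of $\caygs$ to the vertex $v_g$ of $\caygt$ and each edge from $v_g$ to $v_{gs}$ to the translated path $g\gamma_s$. The same argument that shows $\Phi$ is proper shows $\Psi$ is proper: only finitely many vertices or edges can meet the preimage of a compact set, using finiteness of $S$ and properness of the action. Interchanging the roles of $S$ and $T$ produces a proper map $\Psi'\colon\caygt\to\caygs$. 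By Proposition \ref{proposition:Craig's prop} each extends continuously to the Freudenthal compactifications, yielding continuous ends maps $\Psi^*$ and $\Psi'^*$ between the compact Hausdorff end spaces.

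The crux is to show that $\Psi^*$ and $\Psi'^*$ are inverse to one another. By construction $\Psi'\circ\Psi$ fixes every vertex of $\caygs$, and since each generator is sent to a path of uniformly bounded length there is a constant $D$ with $d\bigl(x,\Psi'\Psi(x)\bigr)\le D$ for all $x\in\caygs$. I would use the ladder description of ends (Proposition \ref{prop:ladder prop} and Lemma \ref{lemma: ladder equivalence}) to conclude that $\Psi'\Psi$ induces the identity on ends. Concretely, for a proper ray $\alpha$ I would form a ladder whose sides are $\alpha$ and $\Psi'\Psi\alpha$ and whose $n$-th rung is a shortest path of length at most $D$ joining $\alpha(n)$ to $\Psi'\Psi\alpha(n)$; properness of the resulting ladder map follows from properness of $\alpha$ together with the bound $D$, so $\alpha\simeq\Psi'\Psi\alpha$. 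Hence the map on ladder classes induced by $\Psi'\Psi$ is the identity, and the commutative diagram of Lemma \ref{lemma: ladder equivalence} forces $(\Psi'\Psi)^*=\mathrm{id}$; symmetrically $(\Psi\Psi')^*=\mathrm{id}$. Thus $\Psi^*$ is a continuous bijection of compact Hausdorff spaces, hence a homeomorphism, and $\Ends\big(\caygs\big)$ is homeomorphic to $\Ends\big(\caygt\big)$.

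The main obstacle is the verification that the ladder map built in the last step is genuinely proper; this is where the uniform displacement bound $D$ is essential and must be combined carefully with properness of $\alpha$ to control the preimages of balls along the sides and the rungs simultaneously. Everything else is a routine transcription of the arguments already given for $\Phi$. I note that one could alternatively invoke the standard quasi-isometry invariance of the space of ends, but the ladder argument keeps the proof self-contained within the framework of Section \ref{subsection: Ends of Spaces}.
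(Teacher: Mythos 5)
Your argument is correct, but it is worth noting that the paper does not actually prove this statement: it is quoted as a well-known result with a citation to \cite{SW79}, and it is then used as an ingredient in the proof of Theorem \ref{theorem:ends theorem} (of which it is a special case, so deriving it \emph{from} that theorem would be circular). What you have done instead is supply a direct, self-contained proof inside the paper's own framework, and it works: the maps $\Psi$ and $\Psi'$ are exactly the $\Phi$-construction applied with $X=\caygt$ (resp.\ $X=\caygs$), properness transfers verbatim, and since both compositions fix all vertices and have uniformly bounded displacement $D$ (bounded by roughly $1+L_1L_2$ where $L_1=\max_{s}|s|_T$, $L_2=\max_t|t|_S$), your ladder with geodesic rungs of length at most $D$ is proper: a compact set lies in a ball $B(e,R)$, and any side point or rung meeting $B(e,R)$ forces $\alpha(t)\in B(e,R+D)$, which by properness of $\alpha$ confines the preimage to an initial segment of the sides plus finitely many rungs. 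Hence $\alpha\simeq\Psi'\Psi\alpha$, and Proposition \ref{prop:ladder prop} with Lemma \ref{lemma: ladder equivalence} gives $(\Psi'\Psi)^*=\mathrm{id}$, symmetrically $(\Psi\Psi')^*=\mathrm{id}$. The one step you use silently is functoriality, $(\Psi'\Psi)^*=\Psi'^*\circ\Psi^*$; this follows from uniqueness of continuous extensions over dense subspaces into Hausdorff spaces, which is precisely the argument already made in the proof of Lemma \ref{lemma: ladder equivalence}, so it deserves a sentence but is not a gap. In effect your proof is the standard quasi-isometry invariance of ends specialized to the case where the quasi-isometry is a vertex-bijective graph map, which makes the bounded-displacement homotopy-by-ladders argument especially clean; what it buys over the paper's citation is removing the external dependency on \cite{SW79}, so that Section \ref{section:Ends of GenPCont} would become fully self-contained.
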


\medskip
We will also require the following lemma:

\begin{lemma}
\label{lemma:connected compact set}
Let $G$ act properly and cocompactly on a generalized Peano continuum $X$. Then there exists a connected compact set $K$ whose $G$-translates cover $X$.
\end{lemma}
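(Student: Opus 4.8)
The plan is to begin from an arbitrary compact set witnessing cocompactness and then enlarge it to a connected one using only the local and global topological structure of $X$. First I would invoke cocompactness to fix a compact set $K_0\subseteq X$ whose $G$-translates cover $X$. The problem then becomes purely topological: it suffices to produce a \emph{connected} compact set $K\supseteq K_0$, since automatically the $G$-translates of $K$ cover $X$ (they already cover $X$ when restricted to $K_0$).

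The second step is to replace $K_0$ by a finite union of connected compact sets. Here I would use local compactness and local connectedness simultaneously: since $X$ is locally compact Hausdorff, each $x\in K_0$ has an open neighborhood $W_x$ with $\overline{W_x}$ compact; since $X$ is locally connected, the connected component $U_x$ of $W_x$ containing $x$ is open, so $U_x$ is an open connected neighborhood of $x$ with $\overline{U_x}\subseteq\overline{W_x}$ compact. As the closure of a connected set, $\overline{U_x}$ is itself connected. By compactness of $K_0$ finitely many $U_{x_1},\dots,U_{x_n}$ cover $K_0$, and hence $K_1:=\bigcup_{i=1}^{n}\overline{U_{x_i}}$ is a compact set containing $K_0$ which is a union of $n$ connected compact pieces.

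The final step is to join these pieces. A generalized Peano continuum is connected and locally path connected, hence path connected. Choosing a point $p_i\in\overline{U_{x_i}}$ for each $i$ together with a path $\gamma_i\colon[0,1]\to X$ from $p_1$ to $p_i$, each image $\gamma_i([0,1])$ is compact and connected and contains $p_1=\gamma_i(0)$. Then each set $\overline{U_{x_i}}\cup\gamma_i([0,1])$ is connected and contains $p_1$, so their union
\[
K:=K_1\cup\bigcup_{i=1}^{n}\gamma_i([0,1])
\]
is a union of connected sets sharing the common point $p_1$, hence connected; it is also a finite union of compact sets, hence compact, and it contains $K_0$. This completes the construction.

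The argument is not deep, and the two places requiring care are exactly the two uses of the hypotheses. The first is the simultaneous application of local compactness and local connectedness to extract connected \emph{compact} neighborhoods, rather than merely connected open ones or merely precompact open ones. The second, and the genuinely essential use of the Peano hypothesis, is the passage from local path connectedness to honest path connectedness: it is path connectedness that supplies a compact connected set joining two given points, which mere connectedness would not guarantee. This is also precisely the point at which the metric (Peano) setting is needed and where the non-metrizable case posed in the Problem becomes delicate.
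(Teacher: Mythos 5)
Your proof is correct and follows essentially the same route as the paper's: take a compact set from cocompactness, cover it by finitely many compact connected neighborhoods obtained by combining local compactness with local connectedness, and then join the finitely many pieces using the path connectedness of $X$ (the paper attaches arcs where you use path images, an immaterial difference since both yield compact connected sets). Your closing remarks correctly identify the two load-bearing uses of the hypotheses, matching the structure of the paper's argument.
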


\begin{proof}
The proof is similar to that of Lemma 9.6 of \cite{HR1}. Let $C$ be a compact set whose $G$-translates cover $X$. Let $x\in C$. By local compactness we have that $x$ has a compact neighborhood $U$, and local connectedness implies that the interior $\Int(U)$ contains a connected neighborhood $V$ of $x$. The closure $\overline{V}$ of $V$ is a compact connected neighborhood of $x$. As $C$ is compact we may cover $C$ by finitely many such neighborhoods. The union of these neighborhoods $K'$ is compact and consists of finitely many components. As $X$ is arcwise connected, we may attach finitely many arcs to $K'$ to find a compact connected set $K$ such that $C\subset K'\subset K$.
\end{proof}

\begin{proof}[Proof of Theorem \ref{theorem:ends theorem}]
By Lemma \ref{lemma:connected compact set} there exists a connected compact set $K$ whose $G$ translates cover $X$. We will also assume that $K$ contains the base point $x_0$. Define $S$ to be $\bigset{s\in G}{K\cap sK\neq\emptyset}$. It is a standard result that $S$ generates $G$. By Theorem \ref{theorem: change gen set} $\Ends\big(\caygs\big)$ is independent of choice of generating set.

Let $\Phi\colon\caygs\rightarrow X$ be the map defined at the beginning of this section with $\Phi(1_G)=x_0$. By Lemma \ref{lemma:ends surjection} we need only show that $\Phi^*$ is injective. To do this we will make use of Lemma \ref{lemma: ladder equivalence}.

Let $\alpha$ and $\beta$ be proper rays in $\caygs$ and $(a_i)$ and $(b_i)$ the corresponding sequences of vertices. Note that, if necessary, $\alpha$ and $\beta$ may be homotoped to combinatorial proper rays, so we may assume that no vertex in $(a_i)$ or $(b_i)$  occurs infinitely many times. Assume that $\Phi(\alpha)$ and $\Phi(\beta)$ are in the same ladder equivalence class. Then we may find a proper map of the infinite ladder into $X$ such that $\Phi(\alpha)$ and $\Phi(\beta)$ form the sides; moreover, by concatenating paths if necessary we may assume that the rungs, $r_i$, of the ladder have end points $\Phi(a_i)$ and $\Phi(b_i)$. Call this ladder $L$. Note that the rungs $r_i$ of $L$ may not pull back to paths in $\caygs$ under $\Phi^{-1}$. We show that we can find an alternate sequence of rungs $\rho_i$ connecting $\Phi(a_i)$ to $\Phi(b_i)$ and such that each $\rho_i$ pulls back to an edge path in $\caygs$.

For any rung $r_i$ we may find a finite number of translates of $K$ that cover $r_i$. Let $\{g_1,g_2,...,g_n\}$ be such that $\im(r_i)\subset \bigcup_{j=1}^{n}g_j K$. Notice that by connectedness of the rung $r_i$ we may assume that $\{g_1,g_2,...,g_n\}$ is enumerated in such a way that $g_jK\cap g_{j+1}K\neq\emptyset$. Consequently, the $g_jK$ form a chain of connected compact neighborhoods such that the points $g_i x_0$ in the translates of $K$ can be connected by paths which are translates of paths in $P(S)$ (see the construction of $\Phi$); in other words, because of the specific choice of generating set they are the images of edges in $\caygs$. By concatenating paths in $\Orb_G(P(S))$ we may find a path $\rho_i$ which pulls back to an edge path in $\caygs$ connecting $(a_i)$ and $(b_i)$.

Lastly, we need to check that some sub-ladder of the ladder $L$ pulls back to a ladder in $\caygs$ under $\Phi$. Let $C\subset\caygs$ be a compact. We find a $\rho_i$ such that $\Phi^{-1}(\rho_i)$ is in $\caygs\setminus C$.

Set $C'=\Phi(C)$ and $K'=\big(\bigcup\limits_{s\in S} sK\big)\cup P(S)$. Assume that there does not exist a subsequence of rungs $\{\rho_i\}$ entirely outside of $C'$. Then we may find a compact set $N=\bigcup\limits_{g\in I} gK'$ where $I=\set{g\in G}{K'\cap gK'\neq\emptyset}$ such that every rung, $r_i$ of $L$ meets $N$. As the ladder $L$ was proper this is a contradiction. Thus there must exist a $\rho_i$ outside of $\Phi(C)$, which implies that $\Phi^{-1}(\rho_i)\subset\caygs\setminus C$. Therefore as $C$ was chosen to be arbitrary we have that $\alpha$ and $\beta$ represent the same end of $\caygs$.\end{proof}

As an immediate corollary we obtain:

\begin{corollary}
\label{corollary:one ended}
Let $G$ be a one-ended finitely generated group acting properly and cocompactly on a generalized Peano continuum $X$. Then $X$ is $1$-ended.
\end{corollary}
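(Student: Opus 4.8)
The plan is to deduce this directly from Theorem \ref{theorem:ends theorem}, of which it is an instantiation. The key observation is that for any finite generating set $S$ the Cayley graph $\caygs$ is itself a generalized Peano continuum on which $G$ acts properly and cocompactly. Indeed, $\caygs$ is a connected, locally finite graph, so it is connected, locally compact, locally connected, $\sigma$-compact, Hausdorff, and metrizable; this also follows from the remark in Section \ref{subsection: Ends of Spaces} that locally finite CW-complexes and proper geodesic metric spaces are generalized Peano continua. The hypothesis that $G$ is one-ended means precisely that $\Ends\big(\caygs\big)$ consists of a single point, in the traditional sense of ends of a finitely generated group.

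With this in place, I would apply Theorem \ref{theorem:ends theorem} to the two generalized Peano continua $\caygs$ and $X$, on both of which $G$ acts properly and cocompactly, to obtain a homeomorphism $\Ends\big(\caygs\big)\cong\Ends(X)$. Since the left-hand side is a single point, so is $\Ends(X)$, which is exactly the assertion that $X$ is $1$-ended.

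There is essentially no obstacle once Theorem \ref{theorem:ends theorem} is in hand; the corollary is immediate, and indeed the excerpt introduces it as such. The only point that deserves a moment's attention is confirming that $\caygs$ genuinely falls within the class of generalized Peano continua, so that the theorem may legitimately be invoked with the Cayley graph as one of the two spaces. Once that routine verification is made, the one-point space $\Ends\big(\caygs\big)$ transports across the homeomorphism to $\Ends(X)$ and the proof is complete.
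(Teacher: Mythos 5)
Your proposal is correct and matches the paper's (implicit) argument: the paper derives Corollary \ref{corollary:one ended} as an immediate consequence of Theorem \ref{theorem:ends theorem}, whose proof itself runs through the Cayley graph $\caygs$ as a generalized Peano continuum carrying a proper cocompact $G$-action, exactly as you do. Your extra verification that $\caygs$ lies in the relevant class of spaces is the right point to check, and it is the same observation the paper relies on when it asserts that $\Ends(G)$ agrees with the traditional notion of ends of a Cayley graph.
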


In particular, we have:

\begin{corollary}
\label{corrolary: p not local cut point}
 Let $(G, \bbp)$ be relatively hyperbolic with tame peripherals and every $P\in\bbp$ $1$-ended. If $p$ is parabolic point in $\relbndryG$ which is not a global cut point, then $p$ cannot be a local cut point.
\end{corollary}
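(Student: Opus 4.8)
The plan is to reduce the statement to an application of the ends machinery developed in Section~\ref{section:Ends of GenPCont}. Since $p$ is assumed not to be a global cut point, $\relbndryG\setminus\{p\}$ is by definition connected. Recalling that a point $x$ is a local cut point precisely when $\relbndryG\setminus\{x\}$ is disconnected, or is connected with more than one end (see Section~\ref{subsec:Cut Point Structures In Metric Spaces}), it therefore suffices to prove that the connected space $\relbndryG\setminus\{p\}$ has exactly one end. In other words, I would argue that $\relbndryG\setminus\{p\}$ is $1$-ended and then read off that $p$ fails both clauses of the definition.

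To set up the ends argument, I would first identify the relevant group action. By Proposition~\ref{proposition:cusp uniform is geom fin} the action of $G$ on $\relbndryG$ is geometrically finite, so every point is either a conical limit point or a bounded parabolic point. Since $p$ is parabolic, Proposition~\ref{prop:conical vs parabolic} shows $p$ is not conical, hence $p$ is a bounded parabolic point. Writing $P=\Stab_G(p)$, this means exactly that $P$ acts properly and cocompactly on $\relbndryG\setminus\{p\}$ (see the definition of bounded parabolic point in Section~\ref{subsection:Convergence Groups}). Moreover $P$ is a maximal parabolic subgroup, so $P\in\bbp$; by the tame peripheral hypothesis $P$ is finitely presented and, by assumption, $1$-ended.

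The next step is to check that $\relbndryG\setminus\{p\}$ lies in the class of spaces covered by Theorem~\ref{theorem:ends theorem}. Under the standing hypotheses, Theorem~\ref{theorem:Bow Local conn} gives that $\relbndryG$ is a Peano continuum, and $\relbndryG\setminus\{p\}$ is a connected open subset of it; as noted in Section~\ref{subsection: Ends of Spaces}, a connected open subset of a Peano continuum is a generalized Peano continuum. Thus $P$ is a finitely generated (indeed finitely presented) group acting properly and cocompactly on the generalized Peano continuum $\relbndryG\setminus\{p\}$. Applying Corollary~\ref{corollary:one ended} with the one-ended group $P$ in place of $G$ and $X=\relbndryG\setminus\{p\}$ yields that $\relbndryG\setminus\{p\}$ is $1$-ended. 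Combined with its connectedness, this shows $p$ is not a local cut point.

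I do not expect a serious obstacle here, since the corollary is essentially immediate once the right objects are in place; the only points requiring care are the two verifications feeding Corollary~\ref{corollary:one ended}, namely that $P$ acts properly and cocompactly on $\relbndryG\setminus\{p\}$ (which is exactly bounded parabolicity, obtained via geometric finiteness and Proposition~\ref{prop:conical vs parabolic}) and that $\relbndryG\setminus\{p\}$ is genuinely a generalized Peano continuum rather than merely a Peano continuum with a point deleted. Both are handled by results already established above, so the argument is short once Theorem~\ref{theorem:ends theorem} is available — indeed this is precisely the application for which the ends theorem was needed in the introduction.
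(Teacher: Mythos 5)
Your proposal is correct and matches the paper's proof essentially step for step: bounded parabolicity of $p$ gives a proper cocompact action of $P=\Stab_G(p)$ on $\relbndryG\setminus\{p\}$, which is a connected open subset of a Peano continuum and hence a generalized Peano continuum, so Corollary~\ref{corollary:one ended} applies. The only (harmless) difference is that you derive the proper cocompact action from geometric finiteness via Propositions~\ref{proposition:cusp uniform is geom fin} and~\ref{prop:conical vs parabolic}, where the paper simply cites Bowditch directly.
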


\begin{proof} Assume the hypotheses and let $P$ be the maximal parabolic subgroup which stabilizes $p$. Bowditch \cite{Bow1} has shown that $P$ acts properly and cocompactly on $\relbndryG\setminus\{p\}$. Because $p$ is not a global cut point, we know that $\relbndryG\setminus\{p\}$ is connected. We are assuming that $(G,\bbp)$ has tame peripherals, so $\relbndryG$ is locally connected. Thus, $\relbndryG\setminus\{p\}$ is an open connected subset of a Peano continuum; consequently, $\relbndryG\setminus\{p\}$ is a generalized Peano continuum and we may apply Corollary \ref{corollary:one ended}. \end{proof}


\section{Classification theorem}

\label{section:classification}

In this section we prove Theorem \ref{theorem: Classification thm}. This theorem is a generalization of a theorem due to Kapovich and Kleiner \cite{KK00} concerning the boundaries of hyperbolic groups. Kapovich and Kleiner's proof used the topological characterizations of the Menger curve \cite{A58a, A58b} and Sierpinski carpet \cite{Whyburn}. A compact metric space $M$ is a Menger curve provided: M is $1$-dimensional, M is connected, M is locally connected, M has no local cut points, and no non-empty open subset of M is planar. If the last condition is replaced with, ``M is planar,'' then we have the topological characterization of the Sierpinski carpet. Having proved Theorem \ref{theorem:Splitting Theorem} and Theorem \ref{theorem:ends theorem}, the only remaining step in the proof of Theorem \ref{theorem: Classification thm} is the following argument inspired by Kapovich and Kleiner \cite{KK00}.

\begin{proof}[Proof of Theorem \ref{theorem: Classification thm}] Assume the hypotheses and assume that $\relbndryG$ is not homeomorphic to a circle. Then $\relbndryG$ is a compact and 1-dimensional metric space. Because $G$ is one-ended, $\relbndryG$ is connected. Since we are assuming $(G,\bbp)$ has tame peripherals, connectedness of $\relbndryG$ implies that it must also be locally connected (see Theorem \ref{theorem:Bow Local conn}).

There are two types of local cut points, those that separate $\relbndryG$ globally and those that  do not. By Theorem \ref{theorem:global cut point implies splitting} the no peripheral splitting hypothesis implies that $\relbndryG$ is without global cut points. Additionally, the peripheral subgroups are assumed to be $1$-ended, so by Theorem \ref{theorem:ends theorem} there are no parabolic local cut points. Thus any local cut point must be a conical limit point. If there were a conical limit local cut point, then Theorem \ref{theorem:Splitting Theorem} would imply that $G$ splits over a $2$-ended subgroup, a contradiction.

Now, $\relbndryG$ is planar or it is not. If it is planar then it is a Sierpinski carpet. Assume $\relbndryG$ is not planar, then by the Claytor embedding theorem \cite{Cla} it must contain a  topological embedding of a non-planar graph $K$. It suffices to find a homeomorphic copy of $K$ inside any open neighborhood $V$ in $\relbndryG$.

As conical limit points are dense, let $x$ be a conical limit point in $\relbndryG\setminus\{K\}$. By definition of conical limit point there exists $a,b\in\relbndryG$ and a sequence of group elements $(g_i)\subset G$ such that $g_i x\rightarrow a$ and $g_i z\rightarrow b\neq a$ for every $z\in\relbndryG\setminus\{x\}$. Now, $G$ acts on $\relbndryG$ as a convergence group. Thus we have that the sequence$(g_i)$ restricted to $\relbndryG\setminus\{x\}$ converges locally uniformly to $b$, so we may find a homeomorphic copy of $K$ inside any neighborhood $U$ of $b$.

Let $V$ be any neighborhood in $\relbndryG$. The action of $G$ on $\relbndryG$ is minimal (see \cite{Bow1}), so we have that there exists some group element $g$ such that $gb\in V$. Let $W$ be a neighborhood of $gb$ inside $V$ and set $U$ from the previous paragraph equal to $g^{-1}(W)$. Then we may find a homeomorphic copy of $K$ inside of $V$. \end{proof}

\bibliographystyle{plain}
\bibliography{mybib}{}

\end{document}